\patchcmd{\ttlh@hang}{\parindent\z@}{\parindent\z@\leavevmode}{}{}
\patchcmd{\ttlh@hang}{\noindent}{}{}{}
\newcommand{\addperiod}[1]{#1.}
\titleformat{\section}[block]{\scshape\Large\bfseries\filcenter}{\thesection.}{1em}{}
\titleformat{\subsection}[runin]{\normalfont\large\bfseries}{\thesubsection.}{1em}{\addperiod}
\titleformat{\subsubsection}[runin]{\normalfont\bfseries}{\thesubsubsection.}{1em}{\addperiod}
\renewcommand{\qedsymbol}{\rule{0.7em}{0.7em}}
\numberwithin{equation}{section}
\theoremstyle{plain}
\newtheorem{thm}{Theorem}[section]
\newtheorem{cor}[thm]{Corollary}
\newtheorem{lem}[thm]{Lemma}
\newtheorem{prop}[thm]{Proposition}
\theoremstyle{definition}
\newtheorem{defi}[thm]{Definition}
\newtheorem{cond}[thm]{Condition}
\theoremstyle{remark}
\newtheorem{rem}[thm]{Remark}
\newtheorem*{conv}{Convention}
\newtheorem*{nota}{Notation}
\newcommand{\blocktheorem}[1]{%
  \csletcs{old#1}{#1}
  \csletcs{endold#1}{end#1}
  \RenewDocumentEnvironment{#1}{o}
    {\par\addvspace{1.5ex}
     \noindent\begin{minipage}{\textwidth}
     \IfNoValueTF{##1}
       {\csuse{old#1}}
       {\csuse{old#1}[##1]}}
    {\csuse{endold#1}
     \end{minipage}
     \par\addvspace{1.5ex}}
}
\newenvironment{enumromanup}
{\begin{enumerate}[font=\upshape, labelindent=\parindent, label=(\roman*)]}
{\end{enumerate}}
\DeclareMathOperator*{\bmoplus}{\text{\raisebox{0.25ex}{\scalebox{0.7}{$\bigoplus$}}}}
\DeclareMathOperator*{\bmwedge}{\text{\raisebox{0.25ex}{\scalebox{0.7}{$\bigwedge$}}}}
\DeclareMathOperator*{\bmstar}{\text{\raisebox{0.25ex}{\scalebox{0.8}{$\bigstar$}}}}
\DeclareMathOperator*{\smstar}{\text{\raisebox{0.25ex}{\scalebox{0.65}{$\bigstar$}}}}
\DeclareMathAlphabet{\pazocal}{OMS}{zplm}{m}{n}
\newcommand{\isomorphic}{\xrightarrow{\hspace{0.5mm} \sim \hspace{0.5mm}}}
\newcommand{\lisomorphic}{\xleftarrow{\hspace{0.5mm} \sim \hspace{0.5mm}}}
\newcommand{\paza}{\pazocal{A}}
\newcommand{\pazf}{\pazocal{F}}
\newcommand{\pazg}{\pazocal{G}}
\newcommand{\pazi}{\pazocal{I}}
\newcommand{\pazo}{\pazocal{O}}
\def\CC{\mathbb{C}}
\def\FF{\mathbb{F}}
\def\NN{\mathbb{N}}
\def\QQ{\mathbb{Q}}
\def\ZZ{\mathbb{Z}}
\newcommand{\mbfa}{\mathbf{A}}
\newcommand{\mbfb}{\mathbf{B}}
\newcommand{\mbfd}{\mathbf{D}}
\newcommand{\mbfe}{\mathbf{E}}
\newcommand{\mbfn}{\mathbf{N}}
\newcommand{\smbfk}{\mathbf{k}}
\newcommand{\crys}{\textup{cris}}
\newcommand{\CRYS}{\textup{CRIS}}
\newcommand{\cycl}{\textup{cycl}}
\newcommand{\dlog}{\hspace{1mm}d\textup{\hspace{0.5mm}log\hspace{0.3mm}}}
\newcommand{\dprm}{^{\prime\prime}}
\newcommand{\dR}{\textup{dR}}
\newcommand{\etale}{\textup{\'et}}
\newcommand{\Fil}{\textup{Fil}}
\newcommand{\Fr}{\textup{Fr}\hspace{0.5mm}}
\newcommand{\fini}{\textup{f}}
\newcommand{\free}{\hspace{0.5mm}\textup{free}}
\newcommand{\GL}{\textup{GL}}
\newcommand{\Gal}{\textup{Gal}}
\newcommand{\gr}{\textup{gr}}
\newcommand{\Hom}{\textup{Hom}}
\newcommand{\kert}{\textup{Ker }}
\newcommand{\Lie}{\textup{Lie }}
\newcommand{\MF}{\textup{MF}}
\newcommand{\Mat}{\textup{Mat}}
\newcommand{\unrami}{\textup{ur}}
\newcommand{\padic}{p\textrm{-adic}}
\newcommand{\PD}{\textup{PD}}
\newcommand{\prm}{^{\prime}}
\newcommand{\Rep}{\textup{Rep}}
\newcommand{\rig}{\hspace{0.2mm}\textup{rig}}
\newcommand{\Spec}{\textup{Spec}\hspace{0.5mm}}
\newcommand{\Spf}{\textup{Spf }}
\newcommand{\Sym}{\textup{Sym}}
\newcommand{\sep}{\textup{sep}}
\newcommand{\textdr}{\textup{dR}}
\newcommand{\textpd}{\textup{PD}}
\title{\textsc{Crystalline representations and Wach modules \\ in the relative case}}
\author{\textsc{Abhinandan}}
\newcommand{\Addresses}{{
	\bigskip
	\footnotesize
	
	\rule{2cm}{0.4pt}\vspace{4mm}

	\hspace{-2.1mm}\textsc{Abhinandan}\par\nopagebreak
	\vspace{-2mm}
	\begin{enumerate}
	        \item[(1)] \textsc{Université de Bordeaux, France}\par\nopagebreak\vspace{-0.7mm}
	        \item[(2)] \textsc{Université de Lille, France}\par\nopagebreak\vspace{-0.7mm}
		\item[(3)] \textsc{University of Tokyo, Japan}\par\nopagebreak\vspace{-0.9mm}
	\end{enumerate}\vspace{-1.2mm}
	\hspace{4mm}\textit{E-mail} :  \footnotesize{\href{abhinandan@math.cnrs.fr}{abhinandan@math.cnrs.fr}}, \footnotesize{\href{abhi@ms.u-tokyo.ac.jp}{abhi@ms.u-tokyo.ac.jp}}
}}
\date{ }
\begin{document}

\pagenumbering{arabic}

\sloppy

\goodbreak

\maketitle
{
	\textbf{Abstract.} We study the notion of Wach modules in relative setting, generalizing the arithmetic case.
		Over an unramified base, for a $p$-adic representation admitting such structure, we examine the relationship between its relative Wach module and filtered $(\varphi, \partial)$-module.
		Moreover, we show that such a representation is crystalline (in the sense of Brinon), and one can recover its filtered $(\varphi, \partial)$-module from the relative Wach module.
		Conversely, for low Hodge-Tate weights $[0, p-2]$, we construct relative Wach modules from free relative Fontaine-Laffaille modules (in the sense of Faltings).

}
{\hypersetup{linkcolor=black}
\tableofcontents}


\section{Introduction}

The theory of Wach modules for $\padic$ crystalline representations of the absolute Galois group of a finite unramified extension of $\QQ_p$ was introduced in the paper of  Fontaine \cite{fontaine-festschrift}.
This notion was further developed by Wach \cite{wach-pot-crys, wach-cristallines-torsion} and Berger \cite{berger-limites-cristallines}.
Over the years, this theory has found many applications, for example, to the Iwasawa theory of crystalline representations in \cite{benois-iwasawa, benois-berger-iwasawa}, and in the study of the $\padic$ local Langlands program \cite{berger-breuil-gl2qp}.
Wach modules were also among one of the motivations for Scholze's idea of $q\textrm{-deformations}$ \cite{scholze-q-deformations}, which in turn paved the way for the theory of prisms and prismatic cohomology of Bhatt and Scholze developed in \cite{bhatt-scholze-prismatic}.

Our goal in this article is to upgrade the notion of Wach modules to the relative case by which we mean certain étale algebras over a formal torus (see \S \ref{subsec:setup_nota} for precise setup).
But before examining the relative case, let us recall the relation between Wach modules and crystalline representations in the arithmetic case.

\subsection{The arithmetic case}

Let $p$ be a fixed prime number and let $\kappa$ denote a finite field of characteristic $p$; set $O_F = W(\kappa)$ to be the ring of $p\textrm{-typical}$ Witt vectors with coefficients in $\kappa$ and $F = \Fr(O_F)$.
Let $\overline{F}$ denote a fixed algebraic closure of $F$, $\CC_p := \widehat{\overline{F}}$ the $\padic$ completion, and $G_F = \Gal(\overline{F}/F)$ the absolute Galois group of $F$.
Further, let $F_{\infty} = \cup_n F(\mu_{p^n})$ with $\Gamma_F := \Gal(F_{\infty}/F)$ and $H_F := \Gal(\overline{F}/F_{\infty})$.
Finally, let $\CC_p^{\flat}$ denote the tilt of $\CC_p$.

\subsubsection{\texorpdfstring{$(\varphi, \Gamma_F)$}{-}-modules}

Using a certain period ring $\mbfa \subset W(\CC_p^{\flat})$ stable under the Frobenius on Witt vectors and the $G_F\textrm{-action}$ (see \S \ref{subsec:relative_phi_gamma_mod} for precise definition), Fontaine functorially attached to any $\ZZ_p\textrm{-representation}$ $T$ of $G_F$ (i.e. finitely generated $\ZZ_p\textrm{-modules}$ equipped with a linear and continuous $G_F\textrm{-action}$), the module $\mbfd(T) = (\mbfa \otimes_{\ZZ_p} T)^{H_F}$ over the two dimensional local ring $\mbfa_F = \mbfa^{H_F}$.
The module $\mbfd(T)$ is equipped with a (induced from $\mbfa$) Frobenius-semilinear operator $\varphi$ such that the image of $\varphi$ generates $\mbfd(T)$, i.e. $\mbfd(T)$ is étale.
Moreover, $\mbfd(T)$ is equipped with a continuous and semilinear action of $\Gamma_F$ and if $T$ is free the $\mbfa_F\textrm{-rank}$ of $\mbfd(T)$ equals the $\ZZ_p\textrm{-rank}$ of $T$.
In \cite{fontaine-festschrift} Fontaine estalished an equivalence of categories between $\ZZ_p\textrm{-representation}$s of $G_F$ and étale $(\varphi, \Gamma_F)\textrm{-modules}$ over $\mbfa_F$.
Furthermore, this construction naturally extends to $\padic$ representations of $G_F$.
Namely, using the period ring $\mbfb = \mbfa\big[\frac{1}{p}\big]$, Fontaine functorially attached to any $\padic$ representation $V$ of $G_F$ an étale $(\varphi, \Gamma_F)\textrm{-module}$ $\mbfd(V) = (\mbfb \otimes_{\QQ_p} V)^{H_F}$ over $\mbfb_F = \mbfb^{H_F}$ (i.e. there exists a $\ZZ_p\textrm{-lattice}$ $T \subset V$ such that $\mbfd(T)$ is an étale $(\varphi, \Gamma_F)\textrm{-module}$ over $\mbfa_F$).
Moreover, he showed that this induces an equivalence between $\padic$ representations of $G_F$ and étale $(\varphi, \Gamma_F)\textrm{-modules}$ over $\mbfb_F$.

\subsubsection{Crystalline representations of \texorpdfstring{$G_F$}{-}}

Using another period ring $\mbfb_{\crys}$ also equipped with a Frobenius and continuous $G_F\textrm{-action}$ (see \S \ref{subsec:relative_crystalline_period_rings} for precise definition), Fontaine functorially attached to any $\padic$ representation $V$ of $G_F$ an $F\textrm{-vector}$ space $\mbfd_{\crys}(V) = (\mbfb_{\crys} \otimes_{\QQ_p} V)^{G_F}$.
The $F\textrm{-vector}$ space $\mbfd_{\crys}(V)$ is a filtered $\varphi\textrm{-module}$, i.e. it is equipped with a (induced from $\mbfb_{\crys}$) Frobenius-semilinear operator $\varphi$ and a filtration.
In case $\dim_F \mbfd_{\crys}(V) = \dim_{\QQ_p} V$, such a representation is said to be crystalline (the terminology \textit{crystalline} comes from the fact that for a smooth proper scheme $X/O_F$ and $i \in \NN$ the $\padic$ étale cohomology group of the generic fiber $V_i = H^i_{\etale}(X_{\overline{F}}, \QQ_p)$ is crystalline as a $G_F\textrm{-representation}$ and the crystalline cohomology group of the special fiber $H^i_{\crys}(X_{\kappa}/F)$ is naturally isomorphic to $\mbfd_{\crys}(V_i)$).
Restricting the functor $\mbfd_{\crys}$ to the subcategory of crystalline representations, in \cite{fontaine-annals} Fontaine observed that the associated filtered $\varphi\textrm{-modules}$ are weakly admissible (a property relating the endomorphism $\varphi$ and filtration on $\mbfd_{\crys}(V)$ in a non-trivial manner).
In fact, in \cite{fontaine-colmez-weakadmis} Colmez and Fontaine showed that crystalline representations of $G_F$ are equivalent to weakly admissible filtered $\varphi\textrm{-modules}$.

\subsubsection{Arithmetic Wach modules}

From the discussion above, it is a natural question to ask: Does there exist some direct relation between the \'etale $(\varphi, \Gamma)\textrm{-module}$ of a crystalline representation and its associated weakly admissible filtered $\varphi\textrm{-module}$?
For a fixed representation, this question could be rephrased in terms of comparing certain elements of the period rings $\mbfb$ and $\mbfb_{\crys}$.
However, the rings $\mbfb$ and $\mbfb_{\crys}$ are not comparable.
So to answer this question, Fontaine considered a smaller period ring $\mbfb^+ \subset \mbfb$ stable under Frobenius and $G_F\textrm{-action}$ and such that $\mbfb^+ \rightarrowtail \mbfb_{\crys}$ stable under Frobenius and $G_F\textrm{-action}$.
Using $\mbfb^+$ he defined: a $\padic$ representation $V$ of $G_F$ is said to be of finite height if the associated $(\varphi, \Gamma_F)\textrm{-module}$ $\mbfd(V)$ admits a $(\varphi, \Gamma_F)\textrm{-stable}$ lattice over the subring $\mbfb_F^+ = (\mbfb^+)^{H_F} \subset \mbfb_F$ (see \S \ref{subsec:arithmetic_wach} for precise definitions).

In \cite{fontaine-festschrift} Fontaine conjectured that for a crystalline representation $V$ of $G_F$ there exist lattices inside $\mbfd(V)$ over which the action of $\Gamma_F$ admits a simpler form.
More precisely, finite height and crystalline representations of $G_F$ are related as follows: 

\begin{thm}[{\cite[Wach]{wach-pot-crys}, \cite[Colmez]{colmez-finite-height}, \cite[Berger]{berger-differentielles}}]\label{thm:wach_crys_arith}
	Let $V$ be a $\padic$ representation of $G_F$.
	Then $V$ is crystalline if and only if it is of finite height and there exists $r \in \ZZ$ and a free $\mbfb_F^+\textrm{-submodule}$ $N \subset \mbfd(V)$ of rank $= \dim_{\QQ_p} V$, stable under the action of $\Gamma_F$ and such that $\Gamma_F$ acts trivially over $(N/\pi N)(-r)$.
\end{thm}

Here $(-r)$ denotes the Tate twist.
Note that in the situation of Theorem \ref{thm:wach_crys_arith}, the module $N$ is not unique.
A functorial construction was given by Berger in \cite{berger-limites-cristallines}, i.e. to any $\padic$ crystalline representation $V$ of $G_F$ he attached a canonical $\mbfb_F^+\textrm{-submodule}$ $\mbfn(V) \subset \mbfd(V)$ which he called the Wach module of $V$.
Moreover, Berger established an equivalence of categories between crystalline representations of $G_F$ and Wach modules over $\mbfb_F^+$.
Furthermore, Berger obtained an integral version of his result by considering the period ring $\mbfa^+ = \mbfa \cap \mbfb^+ \subset \mbfb$ stable under Frobenius and $G_F\textrm{-action}$.
He showed that for a crystalline representation $V$ of $G_F$, there exists a bijection between $G_F\textrm{-stable}$ $\ZZ_p\textrm{-lattices}$ $T \subset V$ and integral Wach modules $\mbfn(T) \subset \mbfn(V)$ where $\mbfn(T)$ is defined over the integral subring $\mbfa_F^+ = (\mbfa^+)^{H_F}$.
Finally, given $\mbfn(V)$ one can canonically recover the other linear algebraic object attached to $V$, i.e. $\mbfd_{\crys}(V)$ (see \cite[Propositions II.2.1 \& III.4.4]{berger-limites-cristallines}).

\subsection{The relative case}

The motivation for defining Wach modules in the relative case and exploring its relation with $\pazo \mbfd_{\crys}(V)$ (see \S \ref{sec:relative_padic_Hodge_theory} for notations) comes from the hope of computing Galois cohomology of $\padic$ representations using syntomic complexes with coefficients in $\pazo \mbfd_{\crys}(V)$.
Using syntomic complexes and techniques from the theory of $(\varphi, \Gamma)\textrm{-modules}$, this was done for the trivial representation by Colmez and Nizio{\l} \cite{colmez-niziol-nearby-cycles}.
A generalization of these complexes to non-trivial coefficients can be found in \cite{abhinandan-syntomic} and \cite[Chapter 5]{abhinandan-thesis}.

In this article, we are interested in the $\padic$ Hodge theory of an \'etale algebra over a formal torus defined over $O_F$.
More precisely, let $d \in \NN$ and $X = (X_1, X_2, \ldots, X_d)$ be some indeterminates, $O_F\{X, X^{-1}\}$ the $\padic$ completion of a $d\textrm{-dimensional}$ torus over $O_F$ and let $R$ denote the $\padic$ completion of an \'etale algebra over $O_F\{X, X^{-1}\}$ with non-empty and geometrically integral special fiber.
Next, let $G_{R}$ denote the \'etale fundamental group of $R\big[\frac{1}{p}\big]$ and $\Gamma_{R}$ the Galois group of the cyclotomic tower over $R$ and $H_{R} = \kert(G_{R} \rightarrow \Gamma_{R})$ (see \S \ref{subsec:relative_phi_gamma_mod} for precise definitions).
In the relative setting, on one hand Brinon has developed the theory of crystalline representations of $G_R$ \cite{brinon-padicrep-relatif}, while on the other hand Andreatta, Brinon and Iovita have developed the theory of $(\varphi, \Gamma_R)\textrm{-modules}$ in \cite{andreatta-generalized-phiGamma, andreatta-brinon-surconvergence, andreatta-iovita-relative-phiGamma}.

\begin{rem}
	Note that in Theorem \ref{thm:wach_crys_arith} it is important to restrict to an unramified extension $F/\QQ_p$.
	For ramified extensions, such a statement does not hold in general.
	Therefore, in the relative setting we consider an analogue of ``unramified extension of $\QQ_p$'' (indeed, by removing the geometric coordinates one obtains $R = O_F$).
\end{rem}

\subsubsection{\texorpdfstring{$(\varphi, \Gamma_R)$}{-}-modules}

Analogous to the arithmetic case, we have relative period rings $\mbfa \subset \mbfb \supset \mbfb^+$ and $\mbfa^+ = \mbfa \cap \mbfb^+ \subset \mbfb$ (see \S \ref{subsec:relative_phi_gamma_mod} for precise definition) equipped with Frobenius and a continuous action of $G_R$.
Let $V$ be a $\padic$ representation of $G_{R}$, then one can functorially attach to $V$ a projective and \'etale $(\varphi, \Gamma_{R})\textrm{-module}$ $\mbfd(V) = (\mbfb \otimes_{\QQ_p} V)^{H_R}$ over $\mbfb_{R} = \mbfb^{H_R}$ of rank $=\dim_{\QQ_p} V$ equipped with a Frobenius-semilinear operator $\varphi$ and a semilinear and continuous action of $\Gamma_{R}$.
This induces an equivalence of categories between $\padic$ representations of $G_R$ and étale $(\varphi, \Gamma_R)\textrm{-modules}$ over $\mbfb_R$.
Similarly, using the period ring $\mbfa$ one can functorially attach to any $\ZZ_p\textrm{-representation}$ $T$ of $G_R$ an étale $(\varphi, \Gamma_R)\textrm{-module}$ $\mbfd(T) = (\mbfa \otimes_{\ZZ_p} T)^{H_R}$ over the period ring $\mbfa_{R} = \mbfa^{H_R}$.
Again, this induces an equivalence between $\ZZ_p\textrm{-representation}$s of $G_R$ and étale $(\varphi, \Gamma_R)\textrm{-modules}$ over $\mbfa_R$.

\subsubsection{Relative Wach modules}

Using the period ring $\mbfa^+$ we set $\mbfd^+(T) = (\mbfa^+ \otimes_{\ZZ_p} T)^{H_{R}}$, which is a $(\varphi, \Gamma_{R})\textrm{-module}$ over $\mbfa_R^+ = (\mbfa^+)^{H_R}$ and let $q = \frac{\varphi(\pi)}{\pi}$, where $\pi$ is the usual element in Fontaine's constructions (see \S \ref{subsec:rel_deRham_ring} for notations).
Note that for a finite free $\ZZ_p\textrm{-representation}$ $T$ of $G_R$ the $\mbfa_R\textrm{-module}$ $\mbfd(T)$ is finite projective, however it is not known whether $\mbfd^+(T)$ is projective.
So, we introduce the following definition:
\begin{defi}\label{defi:posfinq_height_reps}
	A \textit{positive finite $q\textrm{-height}$} representation is a $\padic$ representation $V$ of $G_{R}$ admitting a $\ZZ_p\textrm{-lattice}$ $T \subset V$ such that there exists a finite projective $\mbfa_R^+\textrm{-submodule}$ $\mbfn(T) \subset \mbfd^+(T)$ of rank $=\dim_{\QQ_p} V$ satisfying the following conditions:
	\begin{enumromanup}
		\item $\mbfn(T)$ is stable under the action of $\varphi$ and $\Gamma_{R}$ and $\mbfa_{R} \otimes_{\mbfa_R^+} \mbfn(T) \isomorphic \mbfd(T)$;
		
		\item The $\mbfa_R^+\textrm{-module}$ $\mbfn(T) / \varphi^{\ast}(\mbfn(T))$ is killed by $q^{s}$ for some $s \in \NN$;
		
		\item The action of $\Gamma_{R}$ is trivial on $\mbfn(T) / \pi \mbfn(T)$;
		
		\item There exists $R\prm \subset \overline{R}$ finite étale over $R$ such that the $\mbfa_{R\prm}^+\textrm{-module}$ $\mbfa_{R\prm}^+ \otimes_{\mbfa_R^+} \mbfn(T)$ is free.
	\end{enumromanup}
	The module $\mbfn(T)$ is a \textit{Wach module} associated to $T$ and we set $\mbfn(V) := \mbfn(T)\big[\frac{1}{p}\big]$ which satisfies analogous properties.
	The \textit{height} of $V$ is the smallest $s \in \NN$ satisfying (ii) above.
\end{defi}

\begin{rem}
	\begin{enumromanup}
	\item A finite $q\textrm{-height}$ representation is twist of a positive one by some power of the $\padic$ cyclotomic character (see Definition \ref{defi:wach_reps} for details).
		The terminology ``positive'' refers to the fact that the Wach module $\mbfn(T)$ is stable under the Frobenius-semilinear operator $\varphi$.
		It is motivated by the fact (and as we will show) that $V$ is positive crystalline (see Theorem \ref{thm:intro_wachtocrys}).

	\item In the arithmetic case, i.e. $R = O_F$, the notion of finite height representations in Theorem \ref{thm:wach_crys_arith} and finite $q\textrm{-height}$ representations in Definition \ref{defi:posfinq_height_reps} are related.
		In fact, in the arithmetic case using Definition \ref{defi:posfinq_height_reps} one obtains the functorial Wach module of Berger mentioned above (see \cite[Proposition II.1.1]{berger-limites-cristallines}).
	\end{enumromanup}
\end{rem}

\subsubsection{Crystalline representations of \texorpdfstring{$G_R$}{-}}

Using the period ring $\pazo \mbfb_{\crys}(\overline{R})$ Brinon functorially attaches to any $\padic$ representation $V$ of $G_R$ an $R\big[\frac{1}{p}\big]\textrm{-module}$
\begin{equation*}
	\pazo \mbfd_{\crys}(V) := \big(\pazo \mbfb_{\crys}(\overline{R}) \otimes_{\QQ_p} V\big)^{G_R}.
\end{equation*}
The module $\pazo \mbfd_{\crys}(V)$ is called a filtered $(\varphi, \partial)\textrm{-module}$, i.e. it is equipped with a filtration, a Frobenius-semilinear endomorphism $\varphi$ and a quasi-nilpotent integrable connection $\partial$ satisfying Griffiths transversality with respect to the filtration (see \S \ref{subsec:relative_padic_reps} for precise definitons).
The representation $V$ is said to be crystalline if the natural map is an isomorphism
\begin{equation*}
	\pazo \mbfb_{\crys}(\overline{R}) \otimes_{R[1/p]} \pazo \mbfd_{\crys}(V) \isomorphic \pazo \mbfb_{\crys}(\overline{R}) \otimes_{\QQ_p} V,
\end{equation*}
compatible with Frobenius, filtration, connection and the action of $G_R$ on each side.
Moreover, Brinon also defined the notion of weak admissibility in the relative case and showed that $\pazo \mbfd_{\crys}(V)$ is weakly admissible for crystalline representations (see \cite[Chapitre 8]{brinon-padicrep-relatif} for more details). 

\begin{nota}
	We use period rings such as $\pazo \mbfb_{\crys}(\overline{R})$ which is a modified version of Fontaine's relative period ring $\mbfb_{\crys}(\overline{R})$ (see \S \ref{subsec:relative_crystalline_period_rings} for details).
	The notation $\pazo$ here indicates that apart from Frobenius, filtration and $G_R\textrm{-action}$, we have a connection over $\pazo \mbfb_{\crys}(\overline{R})$ and we will call such rings fat relative period rings.
	However, note that in \cite{brinon-padicrep-relatif} Brinon denotes these rings as $B_{\crys}(R)$ and $B_{\crys}^{\nabla}(R)$, respectively.
	Similarly, we will use the notation $\pazo \mbfd_{\crys}(V)$ and $\mbfd_{\crys}(V)$ for modules instead of Brinon's $D_{\crys}(V)$ and $D_{\crys}^{\nabla}(V)$, respectively.
	We hope it is not too confusing for the reader.
\end{nota}

\subsubsection{Main result}

Our aim is to show that for positive finite $q\textrm{-height}$ representations, the $\mbfb_{R}^+\textrm{-module}$ $\mbfn(V)$ and the $R\big[\frac{1}{p}\big]\textrm{-module}$ $\pazo \mbfd_{\crys}(V)$ are related in a precise manner and the latter can be recovered from the former.
To relate these objects we consider the ring $R[\varpi]$ where $\varpi = \zeta_{p}-1$ for a primitive $p\textrm{-th}$ root of unity $\zeta_{p}$ (take $\varpi = \zeta_{p^2}-1$ if $p=2$ for a primitve $p^2\textrm{-th}$ root of unity $\zeta_{p^2}$), and using this ring we construct a fat relative period ring $\pazo \mbfa_{R, \varpi}^{\textpd} \subset \pazo \mbfb_{\crys}(\overline{R})$ equipped with compatible Frobenius, filtration, connection and the action of $\Gamma_{R}$ (see \S \ref{subsec:main_result} for precise definitions).
The main result of this article is as follows:
\begin{thm}[see Theorem \ref{thm:crys_wach_comparison}]\label{thm:intro_wachtocrys}
	Let $V$ be a positive finite $q\textrm{-height}$ representation of $G_{R}$, then
	\begin{enumromanup}
	\item $V$ is a positive crystalline representation.

	\item Let $M := \big(\pazo\mbfa_{R, \varpi}^{\textpd} \otimes_{\mbfa_R^+} \mbfn(T)\big)^{\Gamma_{R}}$, then after extending scalars to $\pazo \mbfa_{R, \varpi}^{\textpd}$ and inverting $p$, we obtain a natural isomorphism
		\begin{equation*}
			\pazo \mbfa_{R, \varpi}^{\textpd} \otimes_R M\big[\tfrac{1}{p}\big] \isomorphic \pazo \mbfa_{R, \varpi}^{\textpd} \otimes_{\mbfa_R^+} \mbfn(V),
		\end{equation*}
		compatible with Frobenius, filtration, connection and the action of $\Gamma_{R}$ on each side.

	\item We have an isomorphism of $R\big[\frac{1}{p}\big]\textrm{-modules}$ 
		\begin{equation*}
			\pazo \mbfd_{\crys}(V) \lisomorphic \big(\pazo\mbfa_{R, \varpi}^{\textpd} \otimes_{\mbfa_R^+} \mbfn(T)\big)^{\Gamma_{R}}\big[\tfrac{1}{p}\big],
		\end{equation*}
		compatible with Frobenius, filtration, and connection on each side.
		Therefore, we obtain a comparison isomorphism
		\begin{equation*}
			\pazo \mbfa_{R, \varpi}^{\textpd} \otimes_{\mbfa_R^+} \mbfn(V) \isomorphic \pazo \mbfa_{R, \varpi}^{\textpd} \otimes_R \pazo \mbfd_{\crys}(V),
		\end{equation*}
		compatible with Frobenius, filtration, connection and the action of $\Gamma_{R}$ on each side.
	\end{enumromanup}
\end{thm}

Let us mention the idea of the proof.
In case $\mbfn(T)$ is free, we proceed in two steps: 
First, we describe a process (see Proposition \ref{prop:crys_from_wach_mod} for details) by which we can recover a submodule of $\pazo \mbfd_{\crys}(V)$ starting with the Wach module $\mbfn(T)$, establishing a comparison over $\pazo \mbfa_{R, \varpi}^{\textpd}$ between the submodule obtained and the Wach module.
Next, the claims made in the theorem are shown by exploiting properties of Wach modules and the comparison obtained in the first step.
In the first step, one can take two approaches to obtain generators of the promised submodule of $\pazo \mbfd_{\crys}(V)$: either by taking $\Gamma_R\textrm{-fixed}$ points of $\pazo \mbfa_{R, \varpi}^{\textpd} \otimes_{\mbfa_R^+} \mbfn(T)$ (by successively approximating for $\Gamma_R\textrm{-action}$ on a basis of $\mbfn(T)$); or by taking elements killed by differential operators defined using topological generators of $\Gamma_R$ (see Lemma \ref{lem:horizontal_elems} for details).
In this paper, we take the latter approach whereas the former approach is detailed in author's thesis (see \cite[Chapter 3]{abhinandan-thesis}).
In the general case when $\mbfn(T)$ is projective, using property (iv) in Definition \ref{defi:posfinq_height_reps} one can pass to an extension $\mbfa_R^+ \subset \mbfa_{R\prm}^+$ to obtain a free Wach module, then use the preceding argument and finally apply Galois descent to obtain the theorem (see Proposition \ref{prop:crys_from_wach_mod} for details).
Finally, we also show that all one-dimensional crystalline representations are of finite $q\textrm{-height}$ and for such representations one can directly compare $\pazo \mbfd_{\crys}(V)$ and the Wach module $\mbfn(V)$.

\subsection{Relative Fontaine-Laffaille modules}

After obtaining Theorem \ref{thm:intro_wachtocrys} above, it is natural to wonder if a converse statement could be true, i.e. starting with a lattice $T \subset V$ of a crystalline representation $G_R$, is it possible to construct the Wach module $\mbfn(T)$?
In the arithmetic setting, for $\padic$ crystalline representations of $G_F$, this was shown to be true by Wach \cite{wach-pot-crys}, and the statement was refined by Berger \cite{berger-limites-cristallines}.
In the relative case, the picture is quite encouraging when we restrict to Hodge-Tate length $\leq p-2$ (also see Remark \ref{rem:converse_result}).

For a $\padic$ crystalline representation of $G_F$ with Hodge-Tate length $\leq p-1$, there exists a canonical $O_F\textrm{-lattice}$ inside $\mbfd_{\crys}(V)$ called the Fontaine-Laffaille module defined in \cite{fontaine-laffaille}.
In this case, Wach constructed Wach modules out of Fontaine-Laffaille data in \cite{wach-cristallines-torsion}.
In the relative setting, Faltings studied relative Fontaine-Laffaille modules in \cite{faltings-crystalline} and used them to functorially recover $\ZZ_p\textrm{-lattices}$ inside crystalline representations of $G_{R}$.
Recently, for free relative Fontaine-Laffaille modules of filtration length $\leq p-2$, adapting techniques from Wach's computations, Tsuji has constructed generalized representations of $G_{R}$ over $\mbfa_{\inf}(\overline{R})$ (see \cite{tsuji-ainf-genrep}).
In fact, it is possible to show that starting with a free relative Fontaine-Laffaille module, one can obtain a free relative Wach module over $\mbfa_R^+$.

\begin{thm}[see Theorem \ref{thm:fl_to_wach}]
	Let $M$ be a free relative Fontaine-Laffaille module over $R$ of level $[0, p-2]$, and let $T_{\crys}(M)$ denote the associated $\ZZ_p\textrm{-representation}$ of $G_{R}$.
	Then, the $\padic$ representation $V_{\crys}(M) := \QQ_p \otimes_{\ZZ_p} T_{\crys}(M)$ is a positive finite $q\textrm{-height}$ representation.
\end{thm}
Twisting the representation thus obtained by powers of the cyclotomic character, generalizes the statement to all free Fontaine-Laffaille modules with filtration length $\leq p-2$.

The proof of the theorem crucially exploits the computation of Fontaine \cite{fontaine-corps-des-periodes}, Wach \cite{wach-cristallines-torsion} and Tsuji \cite{tsuji-ainf-genrep}.
It follows in three steps:
First, starting with a Fontaine-Laffaille module, we obtain an $\mbfa_{R, \varpi}^{\textpd}\textrm{-module}$ using formal properties of crystalline site for maps $\theta : \mbfa_{R, \varpi}^{\textpd} \twoheadrightarrow R$ and $\theta_{R} : \pazo \mbfa_{R, \varpi}^{\textpd} \twoheadrightarrow R$ (see \S \ref{subsubsec:fl_to_arplus_mod} for details).
Next, we exploit equivalence of categories in Theorem \ref{thm:arplus_arpd_cat_equiv} obtained by scalar extension along the maps $\mbfa_{R, \varpi}^{\textpd} \twoheadrightarrow \mbfa_{R, \varpi}^{\textpd} / I^{(p-1)} \mbfa_{R, \varpi}^{\textpd} \lisomorphic \mbfa_{R, \varpi}^+ / I^{(p-1)} \mbfa_{R, \varpi}^+ \twoheadleftarrow \mbfa_{R, \varpi}^+$ (see Proposition \ref{prop:arplus_mod_arpd_mod_iso} for explanations).
This gives us an $\mbfa_{R, \varpi}^+\textrm{-module}$ with precise description of the Frobenius and the action of $\Gamma_{R}$.
Finally, we descend over to the ring $\mbfa_R^+$ by exploiting the Frobenius and $\Gamma_{R}\textrm{-action}$, thus obtaining a Wach module over $\mbfa_R^+$ and proving the theorem (see \S \ref{subsubsec:obtain_wach_mod}).

\begin{rem}
	In a recent work, Morrow and Tsuji have developed a theory of coefficients for integral $\padic$ Hodge theory in \cite{morrow-tsuji-coeff}.
	Extending scalars of relative Wach modules along $O_F[[\pi]] \rightarrow \mbfa_{\inf}(O_{\overline{F}})$ would yield generalized representions over $\mbfa_{\inf}^{\square}(R)$ in the sense of Morrow-Tsuji.
\end{rem}

\begin{rem}\label{rem:converse_result}
	Recent developments in the theory of prismatic crystals \cite{bhatt-scholze-prismatic-crystals, du-liu-moon-shimizu, guo-reinecke-analytic}, indicate that to obtain a full converse statement, i.e. to construct Wach modules from lattices inside crystalline representations, one needs to generalize Definition \ref{defi:posfinq_height_reps} slightly.
	This is a work in progress and we will report further on this line of investigation in future.
\end{rem}

\subsection{Setup and notations}\label{subsec:setup_nota}

In this section we will describe the setup for the rest of the text and fix some notations.

\begin{conv}
	We will work under the convention that $0 \in \NN$, the set of natural numbers.
\end{conv}

Let $p$ be a fixed prime number, $\kappa$ a finite field of characteristic $p$, $W := W(\kappa)$ the ring of $p\textrm{-typical}$ Witt vectors with coefficients in $\kappa$ and $F := W\big[\frac{1}{p}\big]$, the fraction field of $W$.
In particular, $F$ is an unramified extension of $\QQ_p$ with ring of integers $O_F = W$.
Let $\overline{F}$ be a fixed algebraic closure of $F$ so that its residue field, denoted as $\overline{\kappa}$, is an algebraic closure of $\kappa$.
Further, we denote by $G_F = \Gal(\overline{F}/F)$, the absolute Galois group of $F$.

Let $Z = (Z_1, \ldots, Z_s)$ denote a set of indeterminates and $\smbfk = (k_1, \ldots, k_s) \in \NN^s$ be a multi-index, then we write $Z^{\smbfk} := Z_1^{k_1} \cdots Z_s^{k_s}$.
For $\smbfk \rightarrow +\infty$ we will mean that $\sum k_i \rightarrow +\infty$.
Now for a topological algebra $\Lambda$ we define 
\begin{equation*}
	\Lambda\{Z\} := \Big\{\sum_{\smbfk \in \NN^s} a_{\smbfk} Z^{\smbfk}, \hspace{1mm} \textrm{where} \hspace{1mm} a_{\smbfk} \in \Lambda \hspace{1mm} \textrm{and} \hspace{1mm} a_{\smbfk} \rightarrow 0 \hspace{1mm} \textrm{as} \hspace{1mm} \smbfk \rightarrow +\infty\Big\}.
\end{equation*}
We fix $d \in \NN$ and let $X = (X_1, X_2, \ldots, X_d)$ be some indeterminates.
Let $R$ be the $\padic$ completion of an \'etale algebra over $O_F\{X, X^{-1}\}$ with non-empty geometrically integral special fiber. 
In particular, we have a presentation
\begin{equation*}
	R = O_F\{X, X^{-1}\}\{Z_1, \ldots, Z_s\} / \hspace{0.5mm} (Q_1, \ldots, Q_s),
\end{equation*}
where $Q_i(Z_1, \ldots, Z_s) \in O_F\{X, X^{-1}\}[Z_1, \ldots, Z_s]$ for $1 \leq i \leq s$ are multivariate polynomials such that $\det \big(\frac{\partial Q_i}{\partial Z_j}\big)_{1 \leq i,j \leq s}$ is invertible in $R$.
The algebra $R\big[\frac{1}{p}\big]$ is the relative analogue of ``finite unramified extension of $\QQ_p$'' (indeed, by removing the geometric coordinates we will obtain $R\big[\frac{1}{p}\big] = F$).

\begin{rem}
	Note that Theorem \ref{thm:wach_crys_arith} serves as our main motivation for the theory developed in this article.
	The assumptions we put on $R$ generalizes the fact that ``$F$ is unramified over $\QQ_p$''.
\end{rem}

The $\padic$ Hodge theory over $R$ is the study of $\padic$ representations of the \'etale fundamental group of $R\big[\frac{1}{p}\big]$, which we introduce next.
We fix an algebraic closure $\overline{\Fr(R)}$ of $\Fr(R)$ containing $\overline{F}$.
Let $\overline{R}$ denote the union of finite $R\textrm{-subalgebras}$ $S \subset \overline{\Fr(R)}$, such that $S\big[\frac{1}{p}\big]$ is \'etale over $R\big[\frac{1}{p}\big]$.
Let $\overline{\eta}$ denote a geometric point of the generic fiber $\Spec R\big[\frac{1}{p}\big]$ and let $G_{R} := \pi_1^{\etale}\big(\Spec R\big[\frac{1}{p}\big], \overline{\eta}\big)$ denote the \'etale fundamental group.
By \cite[Expos\'e V, \S 8]{sga1}, we can write this étale fundamental group as the Galois group (of the fraction field of $\overline{R}\big[\frac{1}{p}\big]$ over the fraction field of $R\big[\frac{1}{p}\big]$)
\begin{equation*}
	G_{R} = \pi_1^{\etale}\big(\Spec R\big[\tfrac{1}{p}\big], \overline{\eta}\big) = \Gal\big(\overline{R}\big[\tfrac{1}{p}\big] / R\big[\tfrac{1}{p}\big]\big).
\end{equation*}

For $n \in \NN$, let $F_n := F(\mu_{p^n})$.
From now onwards, we will fix some $m \in \NN_{\geq 1}$ (take $m \in \NN_{\geq 2}$ if $p=2$) and set $K := F_m$, with its ring of integers $O_K$.
The element $\varpi = \zeta_{p^m}-1 \in O_K$ is a uniformizer of $K$, and its minimal polynomial $P_{\varpi}(X) = \frac{(1+X)^{p^m}-1}{(1+X)^{p^{m-1}}-1}$ is an Eisenstein polynomial in $W[X]$ of degree $e := [K:F] = p^{m-1}(p-1)$.
Finally, for $S = R[\varpi] = O_K \otimes_{O_F} R$ we have that $R[\varpi]$ is totally ramified at the prime ideal $(p) \subset R[\varpi]$.
And similar to above, we obtain Galois groups $G_K \triangleleft G_F$ and $G_S \triangleleft G_R$ respectively, such that $G_R / G_S = G_F / G_K = \Gal(K/F)$.
Finally, we have that $R$ and $R[\varpi]$ are \textit{small} algebras in the sense of Faltings (see \cite[\S II 1(a)]{faltings-padic-hodge-theory}).

For $k \in \NN$, let $\Omega^k_R$ denote the $\padic$ completion of the module of $k\textrm{-differentials}$ of $R$ relative to $\ZZ$.
Then, we have
\begin{equation*}
	\Omega^1_{R} = \bmoplus_{i=1}^d R \dlog X_i, \hspace{2mm} \textrm{and} \hspace{2mm} \Omega^k_{R} = \bmwedge_R^k \Omega^1_{R}.
\end{equation*}
For $S = R[\varpi]$, the natural map $\Omega^k_R \otimes_{R} S \rightarrow \Omega^k_{S}$ is bijective.
In particular, we get that 
\begin{equation*}
	\Omega^k_{S} = \bmwedge_R^k \big(\bmoplus_{i=1}^d S \dlog X_i\big).
\end{equation*}

We also have that $R/pR \isomorphic S/\varpi S$ and for all $n \in \NN$, $R / p^nR$ is a smooth $\ZZ / p^n\ZZ\textrm{-algebra}$.
Finally, we have a unique lift $\varphi : R \rightarrow R$ of the absolute Frobenius $x \mapsto x^p$ over $R/p R$ such that $\varphi(X_i) = X_i^p$, for all $1 \leq i \leq d$ (in general, a lift of Frobenius modulo $p$ need not be unique, see \cite[p.9]{brinon-padicrep-relatif}).

\begin{conv}
	Let $A$ be a ring and $I \subsetneq A$ an ideal.
	We say that an $A\textrm{-module}$ $M$ is $I\textrm{-adically}$ complete if and only if $M \isomorphic \lim_n M / I^n M$.
\end{conv}

\vspace{3mm}

\noindent \textbf{Acknowledgements.} The work presented here was part of my doctoral thesis.
I would like to thank my supervisor Denis Benois for encouraging me to think about this question, having many helpful discussions, reading the article and suggesting improvements.
I would also like to thank Takeshi Tsuji for pointing out connections between our work and his results in \cite{tsuji-ainf-genrep} and helpful discussions concerning Fontaine-Laffaille modules which resulted in a simplified exposition of the proof of Theorem \ref{thm:fl_to_wach}.
Moreover, I would like to thank Lorenzo Ramero for discussions around Remark \ref{rem:spectral_norm} and suggesting improvements in writing and Nicola Mazzari for suggesting many improvements in the exposition.
Finally, many thanks to the referee for carefully reading the article and suggesting several improvements, in particular, proofs of Lemma \ref{lem:reg_frob_finht} and Lemma \ref{lem:horizontal_elems} could be shortened.

\cleardoublepage

\section{\texorpdfstring{$p$}{-}-adic Hodge theory}\label{sec:relative_padic_Hodge_theory}

In this section we will recall some constructions and results in relative $\padic$ Hodge theory developed in \cite{brinon-padicrep-relatif}, albeit in a simpler setting compared to Brinon's book.
As we will be using different notations compared to Brinon, we will make most of the definitions explicit.

We are interested in exploring the relationship between $\padic$ crystalline representations and finite height representations of $G_{R}$.
This will be detailed in \S \ref{sec:relative_finite_height} and \S \ref{sec:fontaine_laffaile_to_wach}.
To carry out some computations in the aforementioned sections, we will need to extend our base field (hence the base ring) by adjoining some $p\textrm{-power}$ roots of unity (see the field $K$ and the ring $S = R[\varpi]$ in \S \ref{subsec:setup_nota}).
As a consequence, we will also require the corresponding period rings defined for such rings.
However, in \S \ref{subsec:rel_deRham_ring}, \S \ref{subsec:relative_crystalline_period_rings} \& \S \ref{subsec:relative_padic_reps} we will only recall results from \cite{brinon-padicrep-relatif} by fixing our base as $R$.
As we shall see the period rings will only depend on $\overline{R}$ and we have $\overline{S} = \overline{R} \subset \overline{\Fr(R)} = \overline{\Fr(S)}$, therefore  fixing our base as $R$ is sufficient (see \cite{brinon-padicrep-relatif} for general constructions).

\subsection{The de Rham period ring}\label{subsec:rel_deRham_ring}

We will recall definitions and properties of the relative version of Fontaine's period ring $\mbfb_{\dR}$ (see \cite{fontaine-corps-des-periodes} for classical case).

\subsubsection{The ring \texorpdfstring{$\CC^+(\overline{R})$}{-} and its tilt}

Let $\CC_p$ denote the $\padic$ completion of $\overline{F}$.
Recall that $\overline{R}$ is the union of finite $R\textrm{-subalgebras}$ $S \subset \overline{\Fr(R)} = \overline{\Fr(R[\varpi])}$, such that $S\big[\frac{1}{p}\big]$ is étale over $R\big[\frac{1}{p}\big]$.
Let $\CC^+(\overline{R})$ denote the $\padic$ completion of $\overline{R}$ and $\CC(\overline{R}) = \CC^+(\overline{R})\big[\frac{1}{p}\big]$.
We define the tilt $\CC^+(\overline{R})^{\flat} := \lim_{x \mapsto x^p} \CC^+(\overline{R}) / p = \lim_{x \mapsto x^p} \overline{R}/p$ and equip it with the inverse limit topology (where we equip $\overline{R}/p$ with the discrete topology) and let $\CC(\overline{R})^{\flat} = \CC^+(\overline{R})^{\flat}\big[\frac{1}{p^{\flat}}\big]$ for $p^{\flat} := (p, p^{1/p}, p^{1/p^2}, \ldots) \in \CC^+(\overline{R})^{\flat}$ and equipped with the coarsest ring topology such that $\CC^+(\overline{R})$ is an open subring.
Note that an element $x \in \CC(\overline{R})^{\flat}$ can be described as a sequence $(x_n)_{n \in \NN}$, with $x_n \in \CC(\overline{R})$ and $x_{n+1}^p = x_n$ for all $n \in \NN$.
These rings admit a continuous $G_R\textrm{-action}$ for the topology described.

We will fix some choices of compatible $p\textrm{-power}$ roots which will appear throughout the text.
Let $\varepsilon := (1, \zeta_p, \zeta_{p^2}, \ldots) \in \CC_p^{\flat}$, $X_i^{\flat} := \big(X_i, X_i^{1/p}, X_i^{1/p^2}, \ldots\big) \in \CC(\overline{R})^{\flat}$ for $1 \leq i \leq d$.
We set $\mbfa_{\inf}(\overline{R}) := W(\CC^+(\overline{R})^{\flat})$, the ring of $p\textrm{-typical}$ Witt vectors with coefficients in $\CC^+(\overline{R})^{\flat}$ equipped with weak topology (see \cite[\S 2.10]{andreatta-iovita-relative-phiGamma}).
The absolute Frobenius on $\CC^+(\overline{R})^{\flat}$ lifts to an endomorphism $\varphi : \mbfa_{\inf}(\overline{R}) \rightarrow \mbfa_{\inf}(\overline{R})$ and the $G_R\textrm{-action}$ extends to $\mbfa_{\inf}(\overline{R})$ such that the action is continuous for the weak topology.
For $x \in \CC^+(\overline{R})^{\flat}$, let $[x] = (x, 0, 0, \ldots) \in \mbfa_{\inf}(\overline{R})$ denote its Teichm\"uller representative.
So we have $[\varepsilon] \in \mbfa_{\inf}(\overline{R})$ with $g[\varepsilon] = [\varepsilon]^{\chi(g)}$ for $g \in G_R$ and $\chi : G_R \rightarrow \ZZ_p^{\times}$ the $\padic$ cyclotomic character and $\varphi([\varepsilon]) = [\varepsilon]^p$.
Now any element $x \in \mbfa_{\inf}(\overline{R})$ can be uniquely written as $x = \sum_{k \in \NN} p^k [x_k]$ for $x_k \in \CC^+(\overline{R})^{\flat}$.
So we set $\pi := [\varepsilon] - 1, \hspace{2mm} \pi_1 := \varphi^{-1}(\pi) = [\varepsilon^{1/p}] - 1$, and $\xi := \tfrac{\pi}{\pi_1}$.
Clearly we have $g(\pi) = (1 + \pi)^{\chi(g)} - 1$ for $g \in G_R$ and $\varphi(\pi) = (1+\pi)^p - 1$.

\subsubsection{Definition of \texorpdfstring{$\pazo \mbfb_{\textdr}(\overline{R})$}{-}}\label{subsubsec:deRham_defi}

We have Fontaine's $\theta\textrm{-map}$ defined as $\theta : \mbfa_{\inf}(\overline{R}) \rightarrow \CC^+(\overline{R})$ sending $\sum_{k \in \NN} p^k [x_k] \mapsto \sum_{k \in \NN} p^k x_k^{\sharp}$, it is a $G_R\textrm{-equivariant}$ surjective ring homomorphism whose kernel is principal and generated by, for example, $p - [p^{\flat}]$ or $\xi$ (see \cite[Proposition 2.4 (ii)]{fontaine-annals}).
By $\QQ_p\textrm{-linearity}$, the map $\theta$ can be extended to $\theta : \mbfa_{\inf}(\overline{R})\big[\frac{1}{p}\big] \rightarrow \CC(\overline{R})$ and we define
\begin{equation*}
		\mbfb_{\dR}^+(\overline{R}) := \lim_n \mbfa_{\inf}(\overline{R})\big[\tfrac{1}{p}\big] / \hspace{0.5mm} (\kert \theta)^n,
\end{equation*}
as the $(\kert \theta)\textrm{-adic}$ completion of $\mbfa_{\inf}(\overline{R})\big[\frac{1}{p}\big]$.
The ring $\mbfb_{\dR}^+(\overline{R})$ is an $F\textrm{-algebra}$ and admits a $G_R\textrm{-action}$.
The map $\theta$ further extends to a $G_R\textrm{-equivariant}$ surjective ring homomorphism $\theta : \mbfb_{\dR}^+(\overline{R}) \rightarrow \CC(\overline{R})$ with $\kert \theta = t \mbfb_{\dR}^+(\overline{R})$, where $t := \log [\varepsilon] = \log (1+\pi) = \sum_{k \in \NN} (-1)^k \frac{\pi^{k+1}}{k+1}  \in \mbfb_{\dR}^+(\overline{R})$ such that $g \in G_R$ acts by $g(t) = \chi(g) t$.
By functoriality of the construction of $\mbfb_{\dR}^+(\overline{R})$, the homomorphism $O_{\overline{F}} \rightarrow \overline{R}$ induces an injection $\mbfb_{\dR}^+(O_{\overline{F}}) \rightarrow \mbfb_{\dR}^+(\overline{R})$.
The ring $\mbfb_{\dR}^+(\overline{R})$ is $t\textrm{-torsion}$ free, so we set $\mbfb_{\dR}(\overline{R}) := \mbfb_{\dR}^+(\overline{R})\big[\tfrac{1}{t}\big]$.
The $G_R\textrm{-action}$ extends to $\mbfb_{\dR}(\overline{R})$ and the ring $\mbfb_{\dR}^+(\overline{R})$ admits a natural $G_R\textrm{-stable}$ filtration given as $\Fil^r \mbfb_{\dR}(\overline{R}) := t^r \mbfb_{\dR}^+(\overline{R})$ for $r \in \ZZ$ and we equip $\mbfb_{\dR}^+(\overline{R})$ with the induced filtration (see \cite[\S 5.1]{brinon-padicrep-relatif} for details).

We can extend the map $\theta : \mbfa_{\inf}(\overline{R}) \rightarrow \CC^+(\overline{R})$ by $R\textrm{-linearity}$ to obtain a ring homomorphism $\theta_R : R \otimes_{\ZZ} \mbfa_{\inf}(\overline{R}) \rightarrow \CC^+(\overline{R})$.
Let $\pazo\mbfa_{\inf}(\overline{R})$ denote the $\theta_R^{-1}(p\CC^+(\overline{R}))\textrm{-adic}$ completion of $R \otimes_{\ZZ} \mbfa_{\inf}(\overline{R})$ (the ideal $\theta_R^{-1}(p\CC^+(\overline{R}))$ is generated by $p$ and $\kert \theta_R$), then $\theta_R$ extends to a surjective homomorphism $\theta_R : \pazo\mbfa_{\inf}(\overline{R})\big[\tfrac{1}{p}\big] \rightarrow \CC(\overline{R})$.
Define
\begin{equation*}
	\pazo \mbfb_{\dR}^+(\overline{R}) := \lim_n \pazo\mbfa_{\inf}(\overline{R})\big[\tfrac{1}{p}\big] / \hspace{0.5mm} (\kert \theta_R)^n,
\end{equation*}
as the $(\kert \theta_R)\textrm{-adic}$ completion of $\pazo\mbfa_{\inf}(\overline{R})\big[\frac{1}{p}\big]$.
The ring $\pazo\mbfb_{\dR}^+(\overline{R})$ is an $R\big[\frac{1}{p}\big]\textrm{-algebra}$ and admits a $G_R\textrm{-action}$.
The homomorphism $\theta_R$ extends to a $G_R\textrm{-equivariant}$ surjective homomorphism $\theta_R : \pazo\mbfb_{\dR}^+(\overline{R}) \rightarrow \CC(\overline{R})$.
The ring $\mbfb_{\dR}^+(\overline{R})$ is $t\textrm{-torsion}$ free and we set $\pazo \mbfb_{\dR}(\overline{R}) := \pazo\mbfb_{\dR}^+(\overline{R})\big[\tfrac{1}{t}\big]$.
Moreover, the $G_R\textrm{-action}$ extends to $\pazo \mbfb_{\dR}(\overline{R})$.

\subsubsection{Structure and properties of \texorpdfstring{$\pazo \mbfb_{\textdr}(\overline{R})$}{-}}\label{subsubsec:deRham_prop}

A more explicit description of the ring $\pazo\mbfb_{\dR}^+(\overline{R})$ can be given.
Note that $X_i \otimes 1 - 1 \otimes [X_i^{\flat}] \in \kert \theta_R \subset R \otimes_{\ZZ} \mbfa_{\inf}(\overline{R})$ for $1 \leq i \leq d$ and let $z_i$ denote its image in $\pazo\mbfa_{\inf}(\overline{R}) \subset \pazo\mbfb_{\dR}^+(\overline{R})$.
Since $\pazo\mbfb_{\dR}^+(\overline{R})$ is complete for the $(\kert \theta_R)\textrm{-adic}$ topology, the homomorphism $\mbfb_{\dR}^+(\overline{R}) \rightarrow \pazo\mbfb_{\dR}^+(\overline{R})$ extends to a homomorphism
\begin{align*}
	f : \mbfb_{\dR}^+(\overline{R})[[T_1, \ldots, T_d]] &\longrightarrow \pazo\mbfb_{\dR}^+(\overline{R})\\
	T_i &\longmapsto z_i, \hspace{5mm} \textrm{for} \hspace{1mm} 1 \leq i \leq d.
\end{align*}
In fact, $f$ is an isomorphism and $\kert \theta_R = (t, z_1, \ldots, z_d) \subset \pazo \mbfb_{\dR}^+(\overline{R})$.
Therefore, one can identify $\mbfb_{\dR}^+(\overline{R})$ with a subring of $\pazo \mbfb_{\dR}^+(\overline{R})$.
There is a natural $G_R\textrm{-stable}$ filtration on $\pazo\mbfb_{\dR}^+(\overline{R})$ given by $\Fil^r \pazo\mbfb_{\dR}^+(\overline{R}) = (\kert \theta_R)^r$ for $r \in \NN$.
We set $\Fil^0 \pazo\mbfb_{\dR}(\overline{R}) := \sum_{n=0}^{+\infty} t^{-n} \Fil^n \pazo\mbfb_{\dR}^+(\overline{R}) = \pazo\mbfb_{\dR}^+(\overline{R})\big[\tfrac{z_1}{t}, \ldots, \tfrac{z_d}{t}\big]$ and $\Fil^r \pazo\mbfb_{\dR}(\overline{R}) := t^r \Fil^0 \pazo\mbfb_{\dR}(\overline{R})$ for $r \in \ZZ$, satisfying the same conditions.
Moreover, the induced filtrations on $\pazo\mbfb_{\dR}^+(\overline{R})$, $\mbfb_{\dR}^+(\overline{R})$ and $\mbfb_{\dR}(\overline{R})$ match with the ones defined before.
Finally, we have $\big(\pazo\mbfb_{\dR}(\overline{R})\big)^{G_R} = R\big[\frac{1}{p}\big]$ (see \cite[\S 5.2]{brinon-padicrep-relatif} for details).

We can equip the rings $\pazo \mbfb_{\dR}^+(\overline{R})$ and $\pazo \mbfb_{\dR}(\overline{R})$ with a connection.
Let $N_i$ denote the unique $(\kert \theta_R)\textrm{-adically}$ continuous, $\mbfb_{\dR}^+(\overline{R})\textrm{-linear}$ derivation on $\pazo\mbfb_{\dR}^+(\overline{R})$ given as $N_i(z_j) = \delta_{ij} X_j$ for $1 \leq i, j \leq d$, where $\delta_{ij}$ denotes the Kronecker delta symbol.
The derivation $N_i$ extends to a $\mbfb_{\dR}(\overline{R})\textrm{-linear}$ derivation on $\pazo\mbfb_{\dR}(\overline{R})$, since $N_i(t) = 0$.
Define a connection
\begin{align*}
	\partial : \pazo\mbfb_{\dR}(\overline{R}) &\longrightarrow \pazo\mbfb_{\dR}(\overline{R}) \otimes_{R\big[\frac{1}{p}\big]} \Omega^1_R\big[\tfrac{1}{p}\big]\\
			x &\longmapsto \sum_{i=1}^d N_i(x) \otimes \dlog X_i.
\end{align*}
The connection $\partial$ is $G_R\textrm{-equivariant}$ and satisfies Griffiths transversality with respect to the filtration, i.e. $\partial\big(\Fil^r \pazo\mbfb_{\dR}(\overline{R})\big) \subset \Fil^{r-1} \pazo\mbfb_{\dR}(\overline{R}) \otimes_{R\big[\frac{1}{p}\big]} \Omega^1_R\big[\tfrac{1}{p}\big]$.
Its restriction to $R\big[\frac{1}{p}\big]$ is the canonical differential operator.
Moreover, we have $\big(\pazo\mbfb_{\dR}^+(\overline{R})\big)^{\partial=0} = \mbfb_{\dR}^+(\overline{R})$ and $\big(\pazo\mbfb_{\dR}(\overline{R})\big)^{\partial=0} = \mbfb_{\dR}(\overline{R})$ (see \cite[\S 5.3]{brinon-padicrep-relatif} for details).

\subsection{The crystalline period ring}\label{subsec:relative_crystalline_period_rings}

In this section, we will recall the definition and properties of crystalline period rings following \cite{brinon-padicrep-relatif}.
Note that Brinon defines these rings under a certain assumption on his base ring (see condition (BR) on \cite[p. 9]{brinon-padicrep-relatif}) which is always true in our setting.

\subsubsection{Definition of \texorpdfstring{$\pazo \mbfb_{\crys}(\overline{R})$}{-}}\label{subsubsec:crystalline_defi}

Let us consider the map $\theta : \mbfa_{\inf}(\overline{R}) \rightarrow \CC^+(\overline{R})$ whose kernel is a principal ideal generated by $\xi$ or $p-[p^{\flat}]$.
Let us denote $x^{[k]} := \frac{x^k}{k!}$ for $x \in \kert \theta \subset \mbfa_{\inf}(\overline{R})$ and $k \in \NN$.
The divided power envelope of $\mbfa_{\inf}(\overline{R})$ with respect to $\kert \theta$ is given as $\mbfa_{\inf}(\overline{R})\big[x^{[k]}, \hspace{1mm} x \in \kert \theta\big]_{k \in \NN} = \mbfa_{\inf}(\overline{R})\big[\xi^{[k]}\big]_{k \in \NN}$.
We define 
\begin{equation*}
	\mbfa_{\crys}(\overline{R}) := p\textrm{-adic completion of} \hspace{1mm} \mbfa_{\inf}(\overline{R})\big[\xi^{[k]}\big]_{k \in \NN}.
\end{equation*}
This is a $W(\kappa)\textrm{-algebra}$ equipped with a continuous action of $G_R$.
The ring $\mbfa_{\crys}(\overline{R})$ is $p\textrm{-torsion}$ free and the Frobenius on $\mbfa_{\inf}(\overline{R})$ extends to $\mbfa_{\crys}(\overline{R})$.
The homomorphism $\theta$ in \S \ref{subsubsec:deRham_defi} extends to a surjective homomorphism $\theta : \mbfa_{\crys}(\overline{R}) \rightarrow \CC^+(\overline{R})$.
Also, we have $t = \log (1+\pi) \in \kert \theta \subset \mbfa_{\crys}(\overline{R})$ and the Frobenius $\varphi$ on this element is given as $\varphi(t) = pt$.
Moreover, $\kert \theta \subset \mbfa_{\crys}(\overline{R})$ is a divided power ideal.
Further, the ring $\mbfa_{\crys}(\overline{R})$ is $t\textrm{-torsion}$ free, so we set $\varphi\big(\frac{1}{t}\big) = \frac{1}{pt}$ and define $\mbfb_{\crys}^+(\overline{R}) := \mbfa_{\crys}(\overline{R})\big[\tfrac{1}{p}\big]$ and $\mbfb_{\crys}(\overline{R}) := \mbfb_{\crys}^+(\overline{R})\big[\tfrac{1}{t}\big]$.
These are $F\textrm{-algebra}$s, equipped with a continuous action of $G_R$ and the Frobenius $\varphi$ (see \cite[\S 6.1 and \S 6.2]{brinon-padicrep-relatif} for details).

Next, let us consider the map $\theta_{R} : R \otimes_{\ZZ} \mbfa_{\inf}(\overline{R}) \rightarrow \CC^+(\overline{R})$.
The kernel of this map is an ideal generated by $\{1 \otimes \xi, z_1, \ldots, z_d\}$, where $z_i = X_i \otimes 1 - 1 \otimes [X_i^{\flat}]$ for $1 \leq i \leq d$.
The divided power envelope of $R \otimes_{\ZZ} \mbfa_{\inf}(\overline{R})$ with respect to $\kert \theta_{R}$ is given as $R \otimes_{\ZZ} \mbfa_{\inf}(\overline{R})\big[x^{[k]}, \hspace{1mm} x \in \kert \theta_{R}\big]_{k \in \NN}$.
We define
\begin{equation*}
	\pazo\mbfa_{\crys}(\overline{R}) := p\textrm{-adic completion of} \hspace{1mm} R \otimes_{\ZZ} \mbfa_{\inf}(\overline{R})\big[x^{[k]}, \hspace{1mm} x \in \kert \theta_{R}\big]_{k \in \NN}.
\end{equation*}
This is an $R\textrm{-algebra}$ equipped with a continuous action of $G_R$.
Taking the diagonal action of the Frobenius on $R \otimes_{\ZZ} \mbfa_{\inf}(\overline{R})$ it can be shown that the Frobenius extends to $\pazo\mbfa_{\crys}(\overline{R})$ and we denote this extension again by $\varphi$.
The homomorphism $\theta_{R}$ from \S \ref{subsec:rel_deRham_ring} extends to surjective homomorphism $\theta_{R} : \pazo\mbfa_{\crys}(\overline{R}) \rightarrow \CC^+(\overline{R})$ (see \cite[p. 64]{brinon-padicrep-relatif} for details).

\subsubsection{Structure and properties of \texorpdfstring{$\pazo \mbfb_{\crys}(\overline{R})$}{-}}\label{subsubsec:crystalline_prop}

Let $T = (T_1, \ldots, T_d)$ be some indeterminates as in \S \ref{subsubsec:deRham_prop} and let $\mbfa_{\crys}(\overline{R})\langle T \rangle^{\wedge}$ denote the $\padic$ completion of the divided power polynomial algebra in indeterminates $T$ and coefficients in $\mbfa_{\crys}(\overline{R})$.
Then we obtain an isomorphism of $\mbfa_{\crys}(\overline{R})\textrm{-algebra}$s (see \cite[Proposition 6.1.5]{brinon-padicrep-relatif})
\begin{align}
	f_{\crys} : \mbfa_{\crys}(\overline{R})\langle T \rangle^{\wedge} &\longrightarrow \pazo\mbfa_{\crys}(\overline{R})\\
	T_i &\longmapsto z_i \hspace{4mm} \textrm{for} \hspace{2mm} 1 \leq i \leq d.
\end{align}

The ring $\pazo \mbfa_{\crys}(\overline{R})$ is $p\textrm{-torsion}$ free as well as $t\textrm{-torsion}$ free, so we set $\pazo\mbfb_{\crys}^+(\overline{R}) := \pazo\mbfa_{\crys}(\overline{R})\big[\tfrac{1}{p}\big]$ and $\pazo\mbfb_{\crys}(\overline{R}) := \pazo\mbfb_{\crys}^+(\overline{R})\big[\tfrac{1}{t}\big]$.
These $R\big[\frac{1}{p}\big]\textrm{-algebra}$s are equipped with a continuous action of $G_R$ and the action of Frobenius extends to these rings and we denote this extension again by $\varphi$ (see \cite[\S 6.1 and \S 6.2]{brinon-padicrep-relatif} for details).

Note that there exist natural morphisms of rings $\mbfa_{\crys}(\overline{R}) \rightarrow \mbfb_{\dR}^+(\overline{R})$ and $\pazo\mbfa_{\crys}(\overline{R}) \rightarrow \pazo\mbfb_{\dR}^+(\overline{R})$.
So we obtain induced homomorphisms $\mbfb_{\crys}^+(\overline{R}) \rightarrow \mbfb_{\dR}^+(\overline{R})$, $\pazo\mbfb_{\crys}^+(\overline{R}) \rightarrow \pazo\mbfb_{\dR}^+(\overline{R})$, $\mbfb_{\crys}(\overline{R}) \rightarrow \mbfb_{\dR}(\overline{R})$ and $\pazo\mbfb_{\crys}(\overline{R}) \rightarrow \pazo\mbfb_{\dR}(\overline{R})$, which are injective and $G_R\textrm{-equivariant}$.
Using this, we get induced filtrations on crystalline period rings as $\Fil^r \mbfb_{\crys}(\overline{R}) := \mbfb_{\crys}(\overline{R}) \cap \Fil^r \mbfb_{\dR}(\overline{R})$ and $\Fil^r \pazo\mbfb_{\crys}(\overline{R}) := \pazo\mbfb_{\crys}(\overline{R}) \cap \Fil^r \pazo\mbfb_{\dR}(\overline{R})$ for $r \in \ZZ$ (see \cite[\S 6.2]{brinon-padicrep-relatif} for details).

Next, we will consider a connection on $\pazo\mbfb_{\crys}(\overline{R})$ induced from the connection on $\pazo \mbfb_{\dR}(\overline{R})$.
For $n \in \NN$, we have $\partial(z_i^{[n]}) = z_i^{[n-1]} \otimes \textup{d}\hspace{0.3mm} X_i$ for $1 \leq i \leq d$, so we get that for any $x \in \pazo\mbfa_{\crys}(\overline{R}) = \mbfa_{\crys}(\overline{R})\langle T \rangle^{\wedge}$, we have $\partial(x) \in \pazo\mbfa_{\crys}(\overline{R}) \otimes_{R} \Omega^1_{R}$.
This gives us an induced connection
\begin{equation*}
	\partial : \pazo\mbfb_{\crys}(\overline{R}) \longrightarrow \pazo \mbfb_{\crys}(\overline{R}) \otimes_{R[\frac{1}{p}]} \Omega^1_{R}\big[\tfrac{1}{p}\big].
\end{equation*}
The connection $\partial$ is $G_R\textrm{-equivariant}$ and satisfies Griffiths transversality with respect to the filtration, since the same is true over $\pazo \mbfb_{\dR}(\overline{R})$.
Its restriction to $R\big[\frac{1}{p}\big]$ is the canonical differential operator.
Moreover, $\big(\pazo\mbfa_{\crys}^+(\overline{R})\big)^{\partial=0} = \mbfa_{\crys}(\overline{R})$, $\big(\pazo\mbfb_{\crys}^+(\overline{R})\big)^{\partial=0} = \mbfb_{\crys}^+(\overline{R})$ and $\big(\pazo\mbfb_{\crys}(\overline{R})\big)^{\partial=0} = \mbfb_{\crys}(\overline{R})$.
We equip $\Omega^1_R\big[\frac{1}{p}\big]$ with the unique Frobenius-linear map $\varphi$ satisfying $\varphi(dx) = d \varphi(x)$ for $x \in R$.
Then, over $\pazo\mbfb_{\crys}(\overline{R})$ the Frobenius operator commutes with the connection, i.e. $\varphi \partial = \partial \varphi$ (see \cite[Proposition 6.2.5]{brinon-padicrep-relatif}).
Furthermore, we have $\big(\pazo\mbfb_{\crys}(\overline{R})\big)^{G_R} = R\big[\frac{1}{p}\big]$.
Finally, the natural map $R\big[\frac{1}{p}\big] \rightarrow \pazo \mbfb_{\crys}(\overline{R})$ is faithfully flat (see \cite[\S 6.2 and \S 6.3]{brinon-padicrep-relatif} for details).

\subsection{\texorpdfstring{$p$}{-}-adic representations}\label{subsec:relative_padic_reps}

In this section we will recall results on linear algebra data associated to $\padic$ de Rham and crystalline representations of the Galois group $G_R$.
We will use the $G_R\textrm{-regularity}$ of a topological $\QQ_p\textrm{-algebra}$ $B$ in the sense of \cite[p. 106]{brinon-padicrep-relatif}.
If $V$ is a $\padic$ representation of $G_R$, we set
\begin{equation*}
	\mbfd_B(V) := (B \otimes_{\QQ_p} V)^{G_R}.
\end{equation*}
This is a $B^{G_R}\textrm{-module}$ and we have a natural morphism of $B\textrm{-module}$s, functorial in $V$
\begin{align*}
	\alpha_B(V) : B \otimes_{B^{G_R}} \mbfd_B(V) &\longrightarrow B \otimes_{\QQ_p} V\\
			b \otimes d &\longmapsto bd.
\end{align*}
The representation $V$ is said to be \textit{$B\textrm{-admissible}$} if $\alpha_B$ is an isomorphism.

\subsubsection{Unramified representations}

Let $R^{\unrami}$ denote the union of finite \'etale $R\textrm{-subalgebras}$ $S \subset \overline{R}$, and let $\widehat{R^{\unrami}}$ denote its $\padic$ completion.
It is an $R\textrm{-subalgebra}$ of $\CC(\overline{R})$ equipped with a continuous action of $G_R$.
Further, we have $\big(\widehat{R^{\unrami}}\big[\frac{1}{p}\big]\big)^{G_R} = R\big[\frac{1}{p}\big]$ and $\widehat{R^{\unrami}}\big[\frac{1}{p}\big]$ is $G_R\textrm{-regular}$.
Let us set $G_R^{\unrami} := \Gal(R^{\unrami} / R)$ which is a quotient of $G_R$.
A $\padic$ representation $\rho : G_R \rightarrow \GL(V)$ is said to be \textit{unramified}, if $\rho$ factorizes through $G_R \rightarrow G_R^{\unrami}$.

Let $V$ be a $\padic$ representation of $G_R$ and we set
\begin{equation*}
	\mbfd_{\unrami}(V) := \big(\widehat{R^{\unrami}}\big[\tfrac{1}{p}\big] \otimes_{\QQ_p} V\big)^{G_R}.
\end{equation*}
It is an $R\big[\frac{1}{p}\big]\textrm{-module}$ and $V$ is unramified if and only if $V$ is $\widehat{R^{\unrami}}\big[\frac{1}{p}\big]\textrm{-admissible}$ (see \cite[\S 8.1]{brinon-padicrep-relatif}).

\begin{rem}\label{rem:unrami_trivialised}
	Let $V$ be an $h\textrm{-dimensional}$ $\padic$ representation of $G_R$ and $T \subset V$ a $\ZZ_p\textrm{-lattice}$ stable under the action of $G_R$ such that the action is trivial modulo $p$.
	Consider the associated continuous cocycle $f : G_R^{\unrami} \rightarrow \textup{GL}_h(\widehat{R^{\unrami}})$ describing the action of $G_R^{\unrami}$ over $\widehat{R^{\unrami}} \otimes_{\ZZ_p} T$.
	Since $V$ is unramified, $f$ is cohomologous to the trivial cocycle and from \cite[proof of Proposition 8.1.2]{brinon-padicrep-relatif}, there exists $b \in 1 + p \cdot \Mat(h, \widehat{R^{\unrami}})$ such that $f$ is given as $g \mapsto f(g) = g(b) b^{-1}$ for $g \in G_R$.
	In this case, we say that $f$ is \textit{trivialised} by $b \in 1 + p \cdot \Mat(h, \widehat{R^{\unrami}})$.
\end{rem}

\subsubsection{de Rham representations}

Note that $\pazo \mbfb_{\dR}(\overline{R})$ is a $G_R\textrm{-regular}$ $R\big[\frac{1}{p}\big]\textrm{-algebra}$.
We set
\begin{equation*}
	\pazo \mbfd_{\dR}(V) := \big(\pazo \mbfb_{\dR}(\overline{R}) \otimes_{\QQ_p} V\big)^{G_R}.
\end{equation*}
The representation $V$ is said to be de Rham representations if it is $\pazo \mbfb_{\dR}(\overline{R})\textrm{-admissible}$.
The $R\big[\frac{1}{p}\big]\textrm{-module}$ $\pazo\mbfd_{\dR}(V)$ is equipped with a decreasing, separated and exhaustive filtration induced from the filtration on $\pazo\mbfb_{\dR}(\overline{R}) \otimes_{\QQ_p} V$ where we consider the $G_R\textrm{-stable}$ filtration on $\pazo\mbfb_{\dR}(\overline{R})$ from \S \ref{subsubsec:deRham_prop}.
Moreover, the module $\pazo\mbfd_{\dR}(V)$ is equipped with an integrable connection, induced from the $G_R\textrm{-equivariant}$ integrable connection
\begin{align*}
	\partial : V \otimes_{\QQ_p} \pazo \mbfb_{\dR}(\overline{R}) &\longrightarrow V \otimes_{\QQ_p} \pazo\mbfb_{\dR}(\overline{R}) \otimes_{R[\frac{1}{p}]} \Omega^1_R\big[\tfrac{1}{p}\big]\\
	v \otimes b &\longmapsto v \otimes \partial(b).
\end{align*}
We denote the induced connection on $\pazo\mbfd_{\dR}(V)$ again by $\partial$.
Since the connection $\partial$ on $\pazo\mbfb_{\dR}(\overline{R})$ satisfies Griffiths transversality, the same is true for $\pazo\mbfd_{\dR}(V)$, i.e. $\partial(\Fil^r \pazo\mbfd_{\dR}(V)) \subset \Fil^{r-1} \pazo\mbfd_{\dR}(V) \otimes_{R[\frac{1}{p}]} \Omega^1_R\big[\tfrac{1}{p}\big]$.
Further, $\pazo\mbfd_{\dR}(V)$ is projective of rank $\leq \dim(V)$ over $(\pazo \mbfb_{\dR}(\overline{R}))^{G_R} = R\big[\frac{1}{p}\big]$.
If $V$ is de Rham then for all $r \in \ZZ$, the $R\big[\frac{1}{p}\big]\textrm{-module}$s $\Fil^r \pazo\mbfd_{\dR}(V)$ and $\gr^r \pazo\mbfd_{\dR}(V)$ are projective of finite type and for such a representation the collection of integers $r_i$ for $1 \leq i \leq \dim_{\QQ_p}(V)$ such that $\gr^{-r_i} \pazo\mbfd_{\dR}(V) \neq 0$ are called \textit{Hodge-Tate weights} of $V$.
Moreover, we say that $V$ is positive if and only if $r_i \leq 0$ for all $1 \leq i \leq \dim_{\QQ_p}(V)$ (see \cite[\S 8.3]{brinon-padicrep-relatif} for details).

\subsubsection{Crystalline representations}

Note that $\pazo \mbfb_{\crys}(\overline{R})$ is a $G_R\textrm{-regular}$ $R\big[\frac{1}{p}\big]\textrm{-algebra}$.
We set
\begin{equation*}
	\pazo \mbfd_{\crys}(V) := \big(\pazo \mbfb_{\crys}(\overline{R}) \otimes_{\QQ_p} V\big)^{G_R}.
\end{equation*}
We will denote the category of crystalline representations (i.e. $\pazo \mbfb_{\crys}(\overline{R})\textrm{-admissible}$) as $\Rep_{\QQ_p}^{\pazo\crys}(G_R)$.
The $R\big[\frac{1}{p}\big]\textrm{-module}$ $\pazo\mbfd_{\crys}(V)$ is equipped with a Frobenius-semilinear operator $\varphi$ induced from the Frobenius on $\pazo\mbfb_{\crys}(\overline{R}) \otimes_{\QQ_p} V$, where we consider the $G_R\textrm{-equivariant}$ Frobenius on $\pazo\mbfb_{\crys}(\overline{R})$.
Further, $\pazo\mbfd_{\crys}(V)$ is an $R\big[\frac{1}{p}\big]\textrm{-submodule}$ of $\pazo\mbfd_{\dR}(V)$, and we equip the former with induced filtration and connection which satisfies Griffiths transversality with respect to the filtration.
Additionally, we have $\partial \varphi = \varphi\partial$ over $\pazo\mbfd_{\crys}(V)$ (see \cite[\S 8.3]{brinon-padicrep-relatif} for details).

The $R\big[\frac{1}{p}\big]\textrm{-module}$ $\pazo\mbfd_{\crys}(V)$ is projective of rank $\leq \dim(V)$.
If $V$ is crystalline, then the $R\big[\frac{1}{p}\big]\textrm{-linear}$ homomorphism $1 \otimes \varphi : R\big[\tfrac{1}{p}\big] \otimes_{R[\frac{1}{p}], \varphi} \pazo \mbfd_{\crys}(V) \rightarrow \pazo\mbfd_{\crys}(V)$ is an isomorphism and $\pazo\mbfd_{\crys}(V)$ is called a filtered $(\varphi, \partial)\textrm{-module}$.
The inclusion $\pazo\mbfb_{\crys}(\overline{R}) \rightarrowtail \pazo\mbfb_{\dR}(\overline{R})$ induces the inclusion $\pazo\mbfd_{\crys}(V) \rightarrowtail \pazo\mbfd_{\dR}(V)$.
Let $V$ be a non-trivial de Rham representation of $G_R$, then the inclusion $\pazo\mbfd_{\crys}(V) \rightarrowtail \pazo\mbfd_{\dR}(V) \neq 0$ is surjective if and only if $V$ is crystalline (see \cite[\S 8.2 and \S 8.3]{brinon-padicrep-relatif} for details).

In conclusion, we have a functor
\begin{equation*}
	\pazo\mbfd_{\crys} : \Rep_{\QQ_p}^{\pazo\crys}(G_R) \longrightarrow \textrm{filtered} \hspace{1mm} (\varphi, \partial)\textrm{-modules over} \hspace{1mm} R\big[\tfrac{1}{p}\big].
\end{equation*}
The objects in the essential image are called \textit{admissible} filtered $(\varphi, \partial)\textrm{-module}$s and the functor induces an equivalence of categories with the essential image (see \cite[Th\'eor\`emes 8.4.2, 8.5.1]{brinon-padicrep-relatif}).

\begin{rem}
	In the arithmetic case, the essential image of $\mbfd_{\crys}$, i.e.\ admissible filtered $\varphi\textrm{-modules}$ can be described more explicitly.
	In particular, using certain invariants attached to filtered $\varphi\textrm{-modules}$ one considers the full subcategory of \textit{weakly admissible} filtered $\varphi\textrm{-modules}$ and it is a result of Colmez and Fontaine that weakly admissible filtered $\varphi\textrm{-modules}$ are admissible (in the sense above, see \cite[Théorème A]{fontaine-colmez-weakadmis}).
	In the relative case, Brinon gave a definition of weakly admissible filtered $(\varphi, \partial)\textrm{-modules}$ (see \cite[p. 136]{brinon-padicrep-relatif}).
	However, the notion is not completely satisfactory as one does not obtain an equivalence between admissible and weakly admissible filtered $(\varphi, \partial)\textrm{-modules}$ (see \cite[Theorem 1.3]{moon-weakly-admissible}).
\end{rem}

\subsubsection{One dimensional de Rham and crystalline representations}

In the 1-dimensional case, it is possible to classify all crystalline representations:

\begin{prop}[{\cite[Propositions 8.4.1, 8.6.1]{brinon-padicrep-relatif}}]\label{prop:onedim_unramrep_struct}
	Let $\eta : G_{R} \rightarrow \ZZ_p^{\times}$ be a continuous character.
	\begin{enumromanup}
	\item  $\eta$ is de Rham if and only if we can write $\eta = \eta_{\fini}\eta_{\unrami} \chi^n$ where $\eta_{\fini}$ is a finite character, $\eta_{\unrami}$ is an unramified character taking values in $1 + p\ZZ_p$ (therefore trivialized $\alpha \in 1 + p\widehat{R^{\unrami}}$, see Remark \ref{rem:unrami_trivialised}) and $\chi$ is the $\padic$ cyclotomic character and $n \in \ZZ$.

	\item $\eta$ is crystalline if and only if we can write $\eta = \eta_{\fini}\eta_{\unrami} \chi^n$ where $\eta_{\fini}$ is a finite unramified character, $\eta_{\unrami}$ is an unramified character taking values in $1 + p\ZZ_p$ (therefore trivialized by some $\alpha \in 1 + p\widehat{R^{\unrami}}$, see Remark \ref{rem:unrami_trivialised}) and $\chi$ is the $\padic$ cyclotomic character and $n \in \ZZ$.
	\end{enumromanup}
	In particular, a 1-dimensional de Rham representation is potentially crystalline.

	\begin{enumromanup}
		\setcounter{enumi}{2}
	\item Let $V = \QQ_p(\eta)$ be a one-dimensional crystalline representation.
		Then there exists a finite \'etale extension $R \rightarrow R\prm$ such that the $R\prm\big[\frac{1}{p}\big]\textrm{-module}$ $R\prm\big[\frac{1}{p}\big] \otimes_{R[\frac{1}{p}]} \pazo \mbfd_{\crys}(V)$ is free.
		In particular, if $\eta_{\fini}$ is trivial then $\pazo\mbfd_{\crys}(V)$ is a free $R\big[\frac{1}{p}\big]\textrm{-module}$ of rank 1.
	\end{enumromanup}
\end{prop}

\cleardoublepage

\section{\texorpdfstring{$(\varphi, \Gamma)$}{-}-modules and crystalline coordinates}

We will keep the setting and notations of \S \ref{sec:relative_padic_Hodge_theory}.
In particular, we have that $F$ is a finite unramified extension of $\QQ_p$ and $K = F(\mu_{p^m})$ for a fixed $m \in \NN_{\geq 1}$ (fix $m \in \NN_{\geq 2}$ if $p=2$).
Recall that $R$ is étale over $O_F\{X, X^{-1}\}$ and we have multivariate polynomials $Q_i(Z_1, \ldots, Z_s) \in O_F\{X, X^{-1}\}[Z_1, \ldots, Z_s]$ for $1 \leq i \leq s$ such that $\det \big(\frac{\partial Q_i}{\partial Z_j}\big)$ is invertible in $R$.
In particular, the ring $O_F\{X, X^{-1}\}$ provides a system of coordinates for $R$.

\subsection{\texorpdfstring{$(\varphi, \Gamma)$}{-}-modules}\label{subsec:relative_phi_gamma_mod}

In this section, we briefly recall the theory of relative $(\varphi, \Gamma)\textrm{-modules}$ from \cite{andreatta-generalized-phiGamma, andreatta-brinon-surconvergence, andreatta-iovita-relative-phiGamma}.

Let $F_n = F(\mu_{p^n})$ for $n \in \NN$ and $F_{\infty} = \cup_n F_n$.
We take $R_n$ to be the integral closure of $R \otimes_{O_F[X^{\pm 1}]} O_{F_n}\big[X_1^{p^{-n}}, \ldots X_d^{p^{-n}}\big]$ inside $\overline{R}\big[\frac{1}{p}\big]$ and set $R_{\infty} := \cup_{n \geq m} R_n$ noting that $F_{\infty} \subset R_{\infty}\big[\frac{1}{p}\big]$.
From \S \ref{subsubsec:deRham_defi} recall that $\CC(\overline{R}) = \CC^+(\overline{R})\big[\frac{1}{p}\big]$ and $\CC(\overline{R})^{\flat}$ denotes its tilt.
The ring $\CC(\overline{R})^{\flat}$ is perfect of characteristic $p$ and we set $\mbfa_{\overline{R}} := W(\CC(\overline{R})^{\flat})$, the ring of $p\textrm{-typical}$ Witt vectors with coefficients in $\CC(\overline{R})^{\flat}$ and endowed with the weak topology (see \cite[\S 2.10]{andreatta-iovita-relative-phiGamma}).
The absolute Frobenius over $\CC(\overline{R})^{\flat}$ lifts to an endomorphism $\varphi : \mbfa_{\overline{R}} \rightarrow \mbfa_{\overline{R}}$, which we again call the Frobenius.
The action of $G_{R}$ on $\CC(\overline{R})^{\flat}$ extends to a continuous action on $\mbfa_{\overline{R}}$ commuting with the Frobenius.
The inclusion $\overline{F} \subset \overline{R}\big[\tfrac{1}{p}\big]$ induces inclusions $\CC_p^{\flat} \subset \CC(\overline{R})^{\flat}$ and $\mbfa_{\overline{F}} \subset \mbfa_{\overline{R}}$.
Recall that we set $\mbfa_{\inf}(\overline{R}) := W(\CC^+(\overline{R})^{\flat})$.
The inclusion $O_{\overline{F}} \subset \overline{R}$ induces inclusions $O_{\CC_p}^{\flat} \subset \CC^+(\overline{R})^{\flat} \hspace{2mm} \textrm{and} \hspace{2mm} \mbfa_{\inf}(O_{\overline{F}}) \subset \mbfa_{\inf}(\overline{R})$.

\subsubsection{The group \texorpdfstring{$\Gamma_R$}{-}}

The ring $R_{\infty}\big[\frac{1}{p}\big]$ is a Galois extension of $R\big[\frac{1}{p}\big]$ with Galois group $\Gamma_{R} := \Gal\big(R_{\infty}\big[\frac{1}{p}\big] / R\big[\frac{1}{p}\big]\big)$ isomorphic to the semidirect product of $\Gamma_F$ and $\Gamma_{R}\prm$, where $\Gamma_F = \Gal(F_{\infty} / F)$ and $\Gamma_{R}\prm = \Gal\big(R_{\infty}\big[\frac{1}{p}\big] / F_{\infty}R\big[\frac{1}{p}\big]\big)$.
In particular, we have an exact sequence
\begin{equation}\label{eq:gammar_semidirect_product}
	1 \longrightarrow \Gamma_{R}\prm \longrightarrow \Gamma_{R} \longrightarrow \Gamma_F \longrightarrow 1,
\end{equation}
where (see \cite[p. 9]{brinon-padicrep-relatif} and \cite[\S 2.4]{andreatta-generalized-phiGamma})
\begin{align*}
	\Gamma_{R}\prm &= \Gal\big(R_{\infty}\big[\tfrac{1}{p}\big] / F_{\infty}R\big[\tfrac{1}{p}\big]\big) \isomorphic \ZZ_p^d, \\
	\chi : \Gamma_F &= \Gal(F_{\infty}/F) \isomorphic \ZZ_p^{\times}.
\end{align*}
The group $\Gamma_F$ can be viewed as a subgroup of $\Gamma_{R}$, i.e. we can take a section of the projection map in \eqref{eq:gammar_semidirect_product} such that for $\gamma \in \Gamma_F$ and $g \in \Gamma_{R}\prm$, we have $\gamma g \gamma^{-1} = g^{\chi(\gamma)}$.
So we can choose topological generators $\{\gamma, \gamma_1, \ldots, \gamma_d\}$ of $\Gamma_{R}$ such that
\begin{align*}
	\gamma(\varepsilon) &= \varepsilon^{\chi(\gamma)}, \hspace{2mm} \gamma_i(\varepsilon) = \varepsilon \hspace{13mm} \textrm{for} \hspace{1mm} 1 \leq i \leq d, \\
	\gamma_i(X_i^{\flat}) &= \varepsilon X_i^{\flat}, \hspace{2mm} \gamma_i(X_j^{\flat}) = X_j^{\flat} \hspace{7.5mm} \textrm{for} \hspace{1mm} i \neq j \hspace{1mm} \textrm{and} \hspace{1mm} 1 \leq j \leq d,
\end{align*}
and that $\gamma_0 = \gamma^e$ with $\chi(\gamma_0) = \exp(p^m)$, is a topological generator of $\Gamma_K = \Gal(K_{\infty} / K)$, where $K_{\infty} = F_{\infty}$ and $e = [K:F]$.
It follows that $\{\gamma_1, \ldots, \gamma_d\}$ are topological generators of $ \Gamma_{R}\prm $, $\gamma$ is a lift of a topological generator of $\Gamma_F$, and $\gamma_0$ is a topological generator of $\Gamma_K$.
In particular, 
\begin{equation*}
	\chi : \Gamma_K = \Gal(F_{\infty}/K) \isomorphic 1 + p^{m}\ZZ_p.
\end{equation*}

\subsubsection{Setup}

In \cite{fontaine-wintenberger-corpsNormes-i, fontaine-wintenberger-corpsNormes-ii, wintenberger-corpsNormes}, using the field-of-norms functor, Fontaine and Wintenberger constructed a non-archimedean complete discrete valuation field $\mbfe_K \subset \widehat{K}_{\infty}^{\flat}$ of characteristic $p$ with residue field $\kappa$ and admitting a continuous action of $\Gamma_K$ (notation is a bit unfortunate as $\mbfe_K$ depends only on $K_{\infty}$).
Utilizing the isomorphism of Galois groups $\Gal(\overline{F}/K_{\infty}) \isomorphic \Gal(\mbfe_K^{\sep}/\mbfe_K)$ (also see tilting correspondence in \cite{scholze-perfectoid-spaces} for a modern treatment), Fontaine classified $\textup{mod-} p$ representations of $G_K$ in terms of \'etale $(\varphi, \Gamma_K)\textrm{-modules}$ over $\mbfe_K$.
By some technical considerations one can then lift this to the classification of $\ZZ_p\textrm{-representations}$ of $G_F$ in terms of \'etale $(\varphi, \Gamma_K)\textrm{-modules}$ over a certain two dimensional regular local ring $\mbfa_K \subset W(\widehat{K}_{\infty}^{\flat})$ (see \cite{fontaine-festschrift} for details).

We have an analogous theory in the relative setting, to describe which we need to consider generically étale algebras over finite extensions of $R$ in the cyclotomic tower $R_{\infty}/R$.
More precisely, let $S \subset \overline{R}$ be a finite $R_n\textrm{-algebra}$ with $S\big[\frac{1}{p}\big]$ étale over $R_n\big[\frac{1}{p}\big]$.
For $k \geq n$ denote by $S_k$ the integral closure of $S \otimes_{R_n} R_k$ in $\overline{R}\big[\frac{1}{p}\big]$ and set $S_{\infty} := \cup_{k \geq n}S_k$.
We have that $S_{\infty}$ is a normal $R_{\infty}\textrm{-algebra}$ and an integral domain as a subring of $\overline{R}$.
As in the case of $R$, for $S$ we define $G_S := \Gal\big(\overline{R}\big[\tfrac{1}{p}\big] / S\big[\tfrac{1}{p}\big]\big)$, $\Gamma_S := \Gal\big(S_{\infty}\big[\tfrac{1}{p}\big] / S\big[\tfrac{1}{p}\big]\big)$ and $H_S := \kert(G_S \rightarrow \Gamma_S)$.
Again, $\Gamma_S$ is isomorphic to the semidirect product of $\Gamma_{F_n}$ and $\Gamma_S\prm$, where $\Gamma_S\prm = \Gal\big(S_{\infty}\big[\frac{1}{p}\big] / F_{\infty}S\big[\frac{1}{p}\big]\big)$ is a finite index subgroup of $\Gamma_R\prm \isomorphic \ZZ_p^d$.

\subsubsection{Rings in characteristic \texorpdfstring{$p$}{-}}

In the relative setting, Andreatta in \cite{andreatta-generalized-phiGamma} constructed an analogue of the subfield $\mbfe_K \subset \widehat{K}_{\infty}^{\flat}$, i.e. to any $S$ as above, he associated a ring $\mbfe_S \subset \Fr \widehat{S}_{\infty}^{\flat}$ functorial in $S_{\infty}$.
Let us recall his definition:
Let $\mbfe_F^+$ denote the valuation ring of $\mbfe_F$ and we have $\pi \in W\big(\widehat{F}_{\infty}^{\flat}\big)$ such that its reduction modulo $p$, denoted as $\overline{\pi} = \varepsilon-1$, is a uniformizer of $\mbfe_F^+$.
Depending on $S$, let $\delta \in \QQ \cap [0, 1]$ small enough and $N \in \NN$ large enough (see \cite[Definition 4.2]{andreatta-generalized-phiGamma} for precise formulations of $\delta$ and $N$), and define the ring 
\begin{equation*}
	\mbfe_S^+ := \big\{(a_0, \ldots, a_k, \ldots) \in \widehat{S}_{\infty}^{\flat}, \hspace{1mm} \textrm{such that} \hspace{1mm} a_k \in S_k / p^{\delta}S_k \hspace{1mm} \textrm{for all} \hspace{1mm} k \geq N \big\}.
\end{equation*}
The ring $\mbfe_S^+$ is finite and torsion free as an $\mbfe_R^+\textrm{-module}$.
It is a reduced Noetherian ring which is $\overline{\pi}\textrm{-adically}$ complete.
By construction, it is endowed with a $\overline{\pi}\textrm{-adically}$ continuous action of $\Gamma_S$ and a Frobenius endomorphism $\varphi$, commuting with each other and compatible with respective structures on $\widehat{S}_{\infty}^{\flat}$.
Moreover, $\mbfe_S^+$ is a normal extension of $\mbfe_R^+$, \'etale after inverting $\overline{\pi}$ and of degree equal to the generic degree of $R_m \subset S$.
Further, the set of elements $\{\overline{\pi}, X_1^{\flat}, \ldots, X_d^{\flat}\}$ form an absolute $p\textrm{-basis}$ of $\mbfe_R^+$ (see \cite[Proposition 4.5, Corollaries 5.3 \& 5.4]{andreatta-generalized-phiGamma}).
The ring $\widehat{S}_{\infty}^{\flat}$ coincides with the $\overline{\pi}\textrm{-adic}$ completion of the perfect closure of $\mbfe_S^+$ and the extension $\mbfe_S^+ \rightarrow \widehat{S}_{\infty}^{\flat}$ is faithfully flat.
Finally, set $\mbfe_S := \mbfe_S^+\big[\tfrac{1}{\overline{\pi}}\big]$.

\begin{defi}\label{defi:phi_gamma_modp_ring}
	Define $\mbfe^+ := \cup_{S} \mbfe_S^+$, where the union runs over $R_n\textrm{-subalgebras}$ $S \subset \overline{R}$ for some $n \in \NN$ such that $S$ is normal and finite as an $R_n\textrm{-module}$ and $S\big[\tfrac{1}{p}\big]$ is \'etale over $R_n\big[\tfrac{1}{p}\big]$.
	Also, we set $\mbfe := \mbfe^+\big[\tfrac{1}{\overline{\pi}}\big]$.
	These rings are $\overline{\pi}\textrm{-adically}$ complete and equipped with a Frobenius and a continuous $G_R\textrm{-action}$.
\end{defi}

\begin{rem}
	From \cite[Proposition 2.9]{andreatta-iovita-relative-phiGamma}, we have $\big(\CC^+(\overline{R})\big)^{H_R} = \widehat{R}_{\infty}$, $\big(\CC^+(\overline{R})^{\flat}\big)^{H_R} = \widehat{R}_{\infty}^{\flat}$, $\big(\CC(\overline{R})^{\flat}\big)^{H_R} = \widehat{R}_{\infty}^{\flat}\big[\tfrac{1}{\overline{\pi}}\big]$, $(\mbfe^+)^{H_R} = \mbfe_R^+$ and $\mbfe^{H_R} = \mbfe_R$.
\end{rem}

\begin{rem}\label{rem:spectral_norm}
	We will describe $\CC^+(\overline{R})^{\flat}$ as the ring of power-bounded elements inside $\CC(\overline{R})^{\flat}$ (for the spectral norm).
	Recall that $\overline{R}$ is the union of finite $R\textrm{-subalgebras}$ $S \subset \overline{\Fr(R)}$ such that $S\big[\frac{1}{p}\big]$ is étale over $R\big[\frac{1}{p}\big]$.
	Since $\overline{R}$ is an integral domain and $p\textrm{-adically}$ separated, i.e. $\cap_{k \in \NN} p^k \overline{R} = 0$, we obtain that the filtration by powers of the ideal $p\overline{R} \subset \overline{R}$ induces a sub-multiplicative norm (see \cite[\S 1.3.3, Proposition 1]{bosch-guntzer-remmert}) which extends to $\overline{R}\big[\frac{1}{p}\big]$.
	A further ``smoothening'' of the aforementioned norm yields a power-multiplicative norm on $\overline{R}\big[\frac{1}{p}\big]$ (see \cite[\S 1.3.2]{bosch-guntzer-remmert}) which we call the \textit{spectral norm} on $\overline{R}\big[\frac{1}{p}\big]$.
	Let $C$ denote the completion of $\overline{R}\big[\frac{1}{p}\big]$ for the spectral norm and $C^{\circ}$ its power-bounded elements.

	Next, one can show that under the spectral norm the power-bounded elements (or equivalently, the closed unit ball) of $\overline{R}\big[\frac{1}{p}\big]$ written as $\big(\overline{R}\big[\frac{1}{p}\big]\big)^{\circ}$ is exactly $\overline{R}$.
	Indeed, we have the obvious inclusion $\overline{R} \subset \big(\overline{R}\big[\frac{1}{p}\big]\big)^{\circ}$ and for the converse taking $x \in \big(\overline{R}\big[\frac{1}{p}\big]\big)^{\circ}$, one can reduce the claim to a finite $R\textrm{-subalgebra}$ $S \subset \overline{R}$ integrally closed in $\overline{R}\big[\frac{1}{p}\big]$ and such that $x \in S\big[\frac{1}{p}\big]$.
	Then it easily follows that $S = \big(S\big[\frac{1}{p}\big]\big)^{\circ} = S\big[\frac{1}{p}\big] \cap \big(\overline{R}\big[\frac{1}{p}\big]\big)^{\circ} \subset \overline{R}\big[\frac{1}{p}\big]$.
	So we obtain that the topology induced by the spectral norm is equivalent to the $\padic$ topology on $\overline{R}\big[\frac{1}{p}\big]$, therefore $C = \CC(\overline{R})$ and $C^{\circ} = \CC^+(\overline{R})$ and $(\CC(\overline{R}), \CC^+(\overline{R}))$ is a uniform adic Banach $\QQ_p\textrm{-algebra}$ (see \cite[Definitions 2.4.1 and 2.8.1]{kedlaya-liu-relative}).

	Finally, by the perfectoid correspondence of uniform adic Banach algebras in \cite[Theorem 3.6.5]{kedlaya-liu-relative}, we obtain that $(\CC(\overline{R})^{\flat}, \CC^+(\overline{R})^{\flat})$ is a uniform adic Banach $\FF_p\textrm{-algebra}$ such that the topology induced by the spectral norm (arising from the sub-multiplicative norm induced by the ideal $p^{\flat} \CC^+(\overline{R})^{\flat} \subset \CC^+(\overline{R})^{\flat}$) is equivalent to the topology on $(\CC(\overline{R})^{\flat}, \CC^+(\overline{R})^{\flat})$ described in \S \ref{subsubsec:deRham_defi}.
	Finally, since $\CC^+(\overline{R})$ is the ring of power-bounded elements in $\CC(\overline{R})$ we obtain that the its tilt $\CC^+(\overline{R})^{\flat}$ is the ring of power-bounded elements in $\CC(\overline{R})^{\flat}$.
\end{rem}

\begin{rem}\label{rem:ring_intersect_modp}
	Let us denote the natural valuation on $\CC_p^{\flat}$ by $\upsilon^{\flat}$.
	Then one can show that $\upsilon^{\flat}(\overline{\pi}) = \frac{p}{p-1} > 0$, i.e. $\overline{\pi}$ is not invertible in $O_{\CC_p}^{\flat}$.
	Since $O_{\CC_p}^{\flat} = \CC_p^{\flat} \cap \CC^+(\overline{R})^{\flat} \subset \CC(\overline{R})^{\flat}$, we obtain that $\overline{\pi}$ is not invertible in $\CC^+(\overline{R})^{\flat}$.
	Moreover, as $\CC^+(\overline{R})^{\flat}$ is the ring of power-bounded elements in $\CC^+(\overline{R})^{\flat}$ (see Remark \ref{rem:spectral_norm}) we conclude that $\mbfe^+ = \mbfe \cap \CC^+(\overline{R})^{\flat} \subset \CC(\overline{R})^{\flat}$.
\end{rem}

\subsubsection{Rings in characteristic \texorpdfstring{$0$}{-}}

We have liftings of the rings discussed above to characteristic 0.
In other words, there exists a Noetherian regular domain $\mbfa_R \subset W\big(\widehat{R}_{\infty}^{\flat}\big[\tfrac{1}{\overline{\pi}}\big]\big)$, complete for the weak topology and endowed with a continuous action of $\Gamma_R$ and a Frobenius such that $\mbfa_R / p \mbfa_R = \mbfe_R$.
Moreover, $\mbfa_R$ contains a subring $\mbfa_R^+$ lifting $\mbfe_R^+$ complete for the weak topology with $\pi, [X_1^{\flat}], \ldots, [X_d^{\flat}] \in \mbfa_R^+$ (see \cite[Appendix C]{andreatta-generalized-phiGamma}).
Furthermore, for $S$ as in Definition \ref{defi:phi_gamma_modp_ring} let $\mbfa_S$ denote the unique finite \'etale $\mbfa_R\textrm{-algebra}$ lifting the finite \'etale extension $\mbfe_R \subset \mbfe_S$.
It is a Noetherian regular domain, complete for the weak topology and endowed with a continuous action of $\Gamma_S$ and a Frobenius, lifting the ones defined on $\mbfe_S$.
Moreover, it contains a subring $\mbfa_S^+$ lifting $\mbfe_S^+$ so that the former is complete for the weak topology.
In characteristic 0, we set $\mbfb_{\overline{R}} := \mbfa_{\overline{R}}\big[\tfrac{1}{p}\big] = \cup_{j \in \NN} p^{-j} \mbfa_{\overline{R}}$ equipped with the direct limit topology (see \cite[\S 7]{andreatta-generalized-phiGamma} for details).

\begin{defi}\label{defi:phi_gamma_ring}
	Define $\mbfa :=$ completion of $\cup_{S} \mbfa_S \subset \mbfa_{\overline{R}}$ for the $\padic$ topology, where the union runs over all $R_n\textrm{-subalgebras}$ $S \subset \overline{R}$ as in Definition \ref{defi:phi_gamma_modp_ring}.
	Equip $\mbfa$ with the weak topology induced by the inclusion $\mbfa \subset \mbfa_{\overline{R}}$.
	Moreover, we set $\mbfa^+ := \mbfa \cap \mbfa_{\inf}(\overline{R})$, $\mbfb^+ := \mbfa^+\big[\tfrac{1}{p}\big]$ and $\mbfb := \mbfa\big[\tfrac{1}{p}\big]$ equipped with induced weak topology.
	These rings are stable under $\varphi$ and admit a continuous $G_R\textrm{-action}$.
\end{defi}

\begin{rem}\label{rem:a_varpi}
	In Definition \ref{defi:phi_gamma_ring} one can take the base ring as $R[\varpi]$ instead of $R$ to obtain period rings $\mbfa_{\varpi}^+ \subset \mbfa_{\varpi}$ (instead of $\mbfa^+ \subset \mbfa$).
	In particular, one has that $\pi_m = \varphi^{-m}(\pi) \in \mbfa_{\varpi}^+$ and it easily follows that $\mbfa^+ \subset \mbfa_{\varpi}^+ \subset \mbfa_{\inf}(\overline{R})$ compatible with Frobenius and $G_R\textrm{-action}$.
\end{rem}

\begin{rem}\phantomsection\label{rem:ring_intersect}
	\begin{enumromanup}
	\item It follows from definitions that $p\mbfa^+ = p\mbfa \cap \mbfa_{\inf}(\overline{R}) = \mbfa \cap p\mbfa_{\inf}(\overline{R}) = p(\mbfa \cap \mbfa_{\inf}(\overline{R}))$.
		Therefore, from Remark \ref{rem:ring_intersect_modp} it easily follows that $\mbfa^+ / p \mbfa^+ = \mbfe^+$.

	\item From \cite[Lemma 2.11]{andreatta-iovita-relative-phiGamma} we have $\mbfa^{H_R} = \mbfa_R$ and $(\mbfa^+)^{H_R} = \mbfa_R^+$.
	\end{enumromanup}
\end{rem}

\subsubsection{Some lemmas on matrices}

Let us note some results which will be useful in the proof of Proposition \ref{prop:wach_approx_aplus_admis}.
\begin{lem}\label{lem:reg_frob_finht_modp}
	Let $h \in \NN$ and matrices $Y \in \Mat(h, \mbfe)$ and $X, Z, W \in \Mat(h, \mbfe^+)$ such that $\varphi(Y) = XYZ + W$, then $Y \in \Mat(h, \mbfe^+)$.
\end{lem}
\begin{proof}
	From Remark \ref{rem:ring_intersect_modp} we have $\mbfe^+ = \mbfe \cap \CC^+(\overline{R})^{\flat}$.
	So it is enough to show that $Y \in \Mat(h, \CC^+(\overline{R})^{\flat})$.
	Recall that we have $\CC(\overline{R})^{\flat} = \CC^+(\overline{R})^{\flat}\big[\frac{1}{p^{\flat}}\big]$.
	Therefore, for some smallest $k \in \NN$, we can write $Y = \frac{1}{(p^{\flat})^k} Y_1$ with $Y_1 \in \Mat(h, \CC^+(\overline{R})^{\flat})$.
	Now, applying $\varphi$ we get that $\varphi\big(\frac{1}{(p^{\flat})^k} Y_1\big) = \frac{1}{(p^{\flat})^k} X Y_1 Z + W$, which can be rewritten as $\frac{(p_1^{\flat})^k}{(p^{\flat})^k} Y_1 = \varphi^{-1}(X Y_1 Z + (p^{\flat})^k W)$, where $p_1^{\flat} = \varphi^{-1}(p^{\flat})$.
	In the last equality, note that the expression on the left $\frac{(p_1^{\flat})^k}{(p^{\flat})^k} Y_1 \in \Mat(h, \CC(\overline{R})^{\flat})$, whereas the expression on the right $\varphi^{-1}(X Y_1 Z + (p^{\flat})^k W) \in \Mat(h, \varphi^{-1}(\CC^+(\overline{R})^{\flat})) = \Mat(h, \CC^+(\overline{R})^{\flat})$ since $\CC^+(\overline{R})^{\flat}$ is perfect.
	So we obtain that $\frac{(p_1^{\flat})^k}{(p^{\flat})^k} Y_1 \in \Mat(h, \CC^+(\overline{R})^{\flat})$, i.e. $Y = \frac{1}{(p^{\flat})^k} Y_1 \in \Mat\big(h, \frac{1}{(p_1^{\flat})^k}\CC^+(\overline{R})^{\flat}\big)$.
	Next, we write $Y = \frac{1}{(p_1^{\flat})^k} Y_2$ with $Y_2 \in \Mat(h, \CC^+(\overline{R})^{\flat})$.
	Again, applying $\varphi$ and arguing as above, one obtains that $Y \in \Mat\big(h, \frac{1}{(p_2^{\flat})^k}\CC^+(\overline{R})^{\flat}\big)$, where $p_2^{\flat} = \varphi^{-2}(p^{\flat})$.
	Now, it easily follows by induction on $n \in \NN$ that $Y \in \Mat\big(h, \frac{1}{(p_n^{\flat})^k}\CC^+(\overline{R})^{\flat}\big)$, where $p_n^{\flat} = \varphi^{-n}(p^{\flat})$.
	Therefore, $Y \in \Mat\big(h, \cap_{n \in \NN} \frac{1}{(p_n^{\flat})^k}\CC^+(\overline{R})^{\flat}\big) \subset \Mat(h, \CC(\overline{R})^{\flat})$.
	But since $\CC^+(\overline{R})^{\flat}$ is the ring of power-bounded elements in $\CC(\overline{R})^{\flat}$, we obtain that $\cap_{n \in \NN} \frac{1}{(p_n^{\flat})^k}\CC^+(\overline{R})^{\flat} = \CC^+(\overline{R})^{\flat}$.
	Hence, we get $Y \in \Mat(h, \CC^+(\overline{R})^{\flat})$ as desired.
\end{proof}

\begin{lem}\label{lem:reg_frob_finht}
	Let $h \in \NN$ and matrices $Y \in \Mat(h, \mbfa)$ and $X, Z, W \in \Mat(h, \mbfa^+)$ such that $\varphi(Y) = XYZ + W$, then $Y \in \Mat(h, \mbfa^+)$.
\end{lem}
\begin{proof}
	Reducing the equation modulo $p$ we have $\varphi(\overline{Y}) = \overline{X} \hspace{1mm} \overline{Y} \hspace{1mm} \overline{Z} + \overline{W}$, with $\overline{Y} \in \Mat(h, \mbfe)$ and $\overline{X}, \overline{Z}, \overline{W} \in \Mat(h, \mbfe^+)$.
	Therefore, from Lemma \ref{lem:reg_frob_finht_modp} we obtain that $\overline{Y} \in \Mat(h, \mbfe^+)$.
	As we have $\mbfa^+ / p\mbfa^+ = \mbfe^+$ (see Remark \ref{rem:ring_intersect} (ii)), let $V_0 \in \Mat(h, \mbfa^+)$ such that $\overline{Y} = \overline{V}_0$ and $\varphi(\overline{V}_0) = \overline{X} \hspace{1mm} \overline{V}_0 \hspace{1mm} \overline{Z} + \overline{W}$.
	So we can write $Y = V_0 + p Y_1$ with $Y_1 \in \Mat(h, \mbfa)$, and obtain that $\varphi(V_0 + p Y_1) = X(V_0 + pY_1)Z + W$.
	Simplifying the latter expression, we have $\varphi(V_0) - (X V_0 Z + W) = p(X Y_1 Z - \varphi(Y_1))$.
	Since $\varphi(V_0) - (X V_0 Z + W) \in \Mat(h, p\mbfa^+)$, we conclude that $\varphi(Y_1) - X Y_1 Z \in \Mat(h, \mbfa^+)$.
	In other words, we have an equality $\varphi(Y_1) = X Y_1 Z + W_1$ with $Y_1 \in \Mat(h, \mbfa)$ and $X, Z, W_1 \in \Mat(h, \mbfa^+)$.
	Repeating the argument as above, we get that $\overline{Y}_1 \in \Mat(h, \mbfe^+)$ and we can take a lift to write $Y_1 = V_1 + pY_2$ with $V_1 \in \Mat(h, \mbfa^+)$ and $Y_2 \in \Mat(h, \mbfa)$.
	This gives us that $Y = V_0 + p V_1 + p^2 Y_2$.
	Now, it easily follows by induction on $n \in \NN$ that $Y = V_0 + p V_1 + \cdots + p^{n-1} V_{n-1} + p^n Y_n$ with $V_i \in \Mat(h, \mbfa^+)$ for $0 \leq i \leq n-1$ and $Y_n \in \Mat(h, \mbfa)$.
	Letting $n \rightarrow +\infty$ and noting that $\mbfa^+$ is $\padic$ally complete, we obtain that $Y \in \Mat(h, \mbfa^+)$ as desired.
\end{proof}

\subsubsection{Étale \texorpdfstring{$(\varphi, \Gamma_R)\textrm{-modules}$}{-}}

\begin{defi}\label{defi:phigamma_modules}
	A $(\varphi, \Gamma_R)\textrm{-module}$ $D$ over $\mbfa_R$ is a finitely generated module equipped with
	\begin{enumromanup}
	\item A semilinear action of $\Gamma_R$, continuous for the weak topology, 

	\item A $\Gamma_R\textrm{-equivariant}$ Frobenius-semilinear endomorphism $\varphi$.
	\end{enumromanup}
	We say that $D$ is \textit{\'etale} if the natural map $1 \otimes \varphi : \mbfa_R \otimes_{\mbfa_R, \varphi} D \rightarrow D$ is an isomorphism of $\mbfa_R\textrm{-modules}$.
\end{defi}

Denote by $(\varphi, \Gamma_R)\textup{-Mod}_{\mbfa_R}^{\etale}$ the category of \'etale $(\varphi, \Gamma_R)\textrm{-modules}$ over $\mbfa_R$ with morphisms between objects being continuous, $(\varphi, \Gamma_R)\textrm{-equivariant}$ morphisms of $\mbfa_R\textrm{-modules}$.
Next, denote by $\Rep_{\ZZ_p}(G_R)$ the category of finitely generated $\ZZ_p\textrm{-modules}$ equipped with a linear and continuous action of $G_R$, with morphisms between objects being continuous and $G_R\textrm{-equivariant}$ morphisms of $\ZZ_p\textrm{-modules}$.

Let $T$ be a $\ZZ_p\textrm{-representation}$ of $G_R$.
The $\mbfa_R\textrm{-module}$ $\mbfd(T) := (\mbfa \otimes_{\ZZ_p} T)^{H_R}$ is equipped with a semilinear operator $\varphi$ and a continuous (for the weak topology) and semilinear action of $\Gamma_R$, commuting with each other.
Moreover, $\mbfd(T)$ is an \'etale $(\varphi, \Gamma_R)\textrm{-module}$.
Furthermore, if $T$ is free of finite rank, then $\mbfd(T)$ is a projective module of rank $= \textup{rk}_{\ZZ_p} T$ (see \cite[Theorem 7.11]{andreatta-generalized-phiGamma}).
The functor
\begin{equation}\label{eq:phi_gamma_equiv}
	\mbfd : \Rep_{\ZZ_p}(G_R) \longrightarrow (\varphi, \Gamma_R)\textup{-Mod}_{\mbfa_R}^{\etale},
\end{equation}
induces an equivalence of categories (see \cite[Theorem 7.11]{andreatta-generalized-phiGamma} and \cite[Th\'eor\`eme 4.35]{andreatta-brinon-surconvergence}), and the natural map $\mbfa \otimes_{\mbfa_R} \mbfd(T) \isomorphic \mbfa \otimes_{\ZZ_p} T$ is an isomorphism of $\mbfa\textrm{-modules}$ compatible with Frobenius and the action of $G_R$ on each side.

\subsection{Crystalline coordinates}\label{subsec:pd_envelope}

In this section we will introduce certain ``coordinate'' rings.
As we shall see in the next section, these rings are related to period rings appearing in \S \ref{sec:relative_padic_Hodge_theory} and \S \ref{subsec:relative_phi_gamma_mod}.

Let $r_{\varpi}^+$ and $r_{\varpi}$ denote the algebras $O_F[[X_0]]$ and $O_F[[X_0]]\{X_0^{-1}\}$.
Sending $X_0$ to $\varpi$ induces a surjective homomorphism $r_{\varpi}^+ \twoheadrightarrow O_K$.
Let $R^+_{\varpi, \square}$ denote the completion of $O_F[X_0, X, X^{-1}]$ for the $(p, X_0)\textrm{-adic}$ topology.
Sending $X_0$ to $\varpi$ induces a surjective homomorphism $R^+_{\varpi, \square} \twoheadrightarrow O_K\{X, X^{-1}\}$, whose kernel is generated by $P = P_{\varpi}(X_0)$.
This provides a closed embedding of $\Spf O_K\{X, X^{-1}\}$ into a formal scheme $\Spf R_{\varpi, \square}^+$, which is smooth over $O_F$.
Recall that $R$ is \'etale over $O_F\{X, X^{-1}\}$ and we have multivariate polynomials $Q_i(Z_1, \ldots, Z_s) \in O_F\{X, X^{-1}\}[Z_1, \ldots, Z_s]$ for $1 \leq i \leq s$ such that $\det \big(\frac{\partial Q_i}{\partial Z_j}\big)$ is invertible in $R$.
So we can set $R_{\varpi}^+$ to be the quotient by $(Q_1, \ldots, Q_s)$ of the completion of $R_{\varpi, \square}^+[Z_1, \ldots, Z_s]$ for $(p, X_0)\textrm{-adic}$ topology.
Again, we have that $\det \big(\frac{\partial Q_i}{\partial Z_j}\big)$ is invertible in $R_{\varpi}^+$ (since $R \rightarrowtail R_{\varpi}^+$).
Hence, $R_{\varpi}^+$ is \'etale over $R_{\varpi, \square}^+$ and smooth over $O_F$.
Sending $X_0$ to $\varpi$ induces a surjective homomorphism $R^+_{\varpi} \twoheadrightarrow R[\varpi]$ whose kernel is generated by $P = P_{\varpi}(X_0)$.
This can be summarized by the commutative diagram
\begin{center}
	\begin{tikzcd}
		\Spf R[\varpi] \arrow[rr, rightarrowtail] \arrow[rd, rightarrow] \arrow[ddd] & & \Spf R_{\varpi}^+ \arrow[ddd] \arrow[ld, rightarrow]\\
		& \Spf R \arrow[d]\\
		& \Spf O_F\{X, X^{-1}\} \\
		\Spf O_K\{X, X^{-1}\} \arrow[rr, rightarrowtail] \arrow[ru, rightarrow] & & \Spf R_{\varpi, \square}^+ \arrow[lu, rightarrow],
	\end{tikzcd}
\end{center}
where the vertical arrows are \'etale extensions and the horizontal maps are obtained by sending $X_0 \mapsto \varpi$, and the rest are natural maps.
Finally, we set $R_{\varpi} = \padic \textrm{ completion of } R_{\varpi}^+\big[\frac{1}{X_0}\big]$.

Next, since $P \equiv X_0^e \mod p$, we have $R^+_{\varpi}\big[\tfrac{P^k}{k!}\big]_{k \in \NN} = R^+_{\varpi}\big[\tfrac{X_0^k}{[k/e]!}\big]_{k \in \NN}$.
So, we set $R_{\varpi}^{\PD} := p\textrm{-adic}$ completion of $R^+_{\varpi}\big[\tfrac{P^k}{k!}\big]_{k \in \NN}$.
In summary, we have a diagram of formal schemes where the horizontal arrows are closed embeddings into formal schemes smooth over $O_F$, obtained by sending $X_0 \mapsto \varpi$ on the level of algebras,
\begin{center}
	\begin{tikzcd}
		& \Spf R_{\varpi}^{\PD} \arrow[rd]\\
		\Spf R[\varpi] \arrow[ru, rightarrowtail] \arrow[rr, rightarrowtail] \arrow[d] & & \Spf R_{\varpi}^+ \arrow[d]\\
		\Spf O_K\{X, X^{-1}\} \arrow[rr, rightarrowtail] \arrow[d] & & \Spf R_{\varpi, \square}^+ \arrow[d]\\
		\Spf O_K \arrow[rr, rightarrowtail] \arrow[d] & & \Spf r_{\varpi}^+ \arrow[lld]\\
		\Spf O_F.
	\end{tikzcd}
\end{center}

Recall that $P$ generates the kernel of the surjective map $R_{\varpi}^+ \twoheadrightarrow R[\varpi]$ and divided powers of $P$ generate the kernel of the surjective map $R_{\varpi}^{\textpd} \twoheadrightarrow R[\varpi]$.
\begin{defi}\label{defi:filtration_vanishing_varpi}
	Endow the ring $R_{\varpi}^{\textpd}$ with a filtration by divided power ideals as
	\begin{equation*}
		\Fil^k R_{\varpi}^{\textpd} = (P^{[n]}, \hspace{1mm} n \geq k) \subset R_{\varpi}^{\textpd} \hspace{2mm} \textrm{for} \hspace{2mm} k \in \NN.
	\end{equation*}
	In other words, the filtration on $R_{\varpi}^{\textpd}$ is given by divided powers of the kernel of $R_{\varpi}^{\textpd} \twoheadrightarrow R[\varpi]$.
	Furthermore, the ring $R_{\varpi}^+$ is endowed with the induced filtration
	\begin{equation*}
		\Fil^k R_{\varpi}^+ := R_{\varpi}^+ \cap \Fil^k R_{\varpi}^{\textpd} = P^k R_{\varpi}^+ \hspace{2mm} \textrm{for} \hspace{2mm} k \in \NN,
	\end{equation*}
	where the last equality follows since $P$ generates the kernel of $R_{\varpi}^+ \twoheadrightarrow R[\varpi]$.
\end{defi}

\subsection{Cyclotomic embedding}\label{subsec:cyclotomic_embeddings}

In this section, we will describe the relationship between $R_{\varpi}^{\bmstar}$ for $\smstar \in \{\hspace{1mm}, +, \textpd\}$ and the period rings discussed in \S \ref{sec:relative_padic_Hodge_theory} and \S \ref{subsec:relative_phi_gamma_mod}.
We start by defining the (cyclotomic) Frobenius endomorphism on the former rings.
Over $R_{\varpi, \square}^+$ define a lift of the absolute Frobenius on $R_{\varpi, \square}^+ / p$ by
\begin{align*}
	\varphi : R_{\varpi, \square}^+ &\longrightarrow R_{\varpi, \square}^+\\
			X_0 &\longmapsto (1+X_0)^p - 1\\
			X_i &\longmapsto X_i^p, \hspace{2mm} \textrm{for} \hspace{2mm} 1 \leq i \leq d,
\end{align*}
which we will call the (cyclotomic) Frobenius.
Clearly, $\varphi(x) - x^p \in p R_{\varpi, \square}^+$ for $x \in R_{\varpi, \square}^+$.
Using the implicit function theorem for topological rings \cite[Proposition 2.1]{colmez-niziol-nearby-cycles}, we can extend the Frobenius homomorphism to $\varphi : R_{\varpi}^+ \rightarrow R_{\varpi}^+$.
By continuity, the Frobenius endomorphism $\varphi$ admits unique extensions $\varphi : R_{\varpi}^{\textpd} \rightarrow R_{\varpi}^{\textpd}$ and $\varphi : R_{\varpi} \rightarrow R_{\varpi}$.

\subsubsection{The rings \texorpdfstring{$\mbfa_{R, \varpi}^{\bmstar}$}{-}}

We will describe the (cyclotomic) embeddings of $R_{\varpi}^{\bmstar}$ into various period rings discussed in \S \ref{sec:relative_padic_Hodge_theory} and \S \ref{subsec:relative_phi_gamma_mod}.
Define an embedding
\begin{align*}
	\iota_{\cycl} : R_{\varpi, \square}^+ &\longrightarrow \mbfa_{\inf}(\overline{R})\\
			X_0 &\longmapsto \pi_{m} = \varphi^{-m}(\pi),\\
			X_i &\longmapsto [X_i^{\flat}], \hspace{2mm} \textrm{for} \hspace{2mm} 1 \leq i \leq d.
\end{align*}
\begin{lem}
	The map $\iota_{\cycl}$ has a unique extension to an embedding $R_{\varpi}^+ \rightarrow \mbfa_{\inf}(\overline{R})$ such that $\theta \circ \iota_{\cycl}$ is the projection $R_{\varpi}^+ \rightarrow R[\varpi]$.
\end{lem}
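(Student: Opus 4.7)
The plan is to produce the required extension via a Hensel-type lifting argument exploiting the formal étaleness of $R_{\varpi}^+$ over $R_{\varpi, \square}^+$ and the completeness of $\mbfa_{\inf}(R)$.

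First, I would verify the compatibility on the generators of $R_{\varpi, \square}^+$. Using $[\varepsilon^{1/p^m}]^{\sharp} = \zeta_{p^m}$ and $[X_i^{\flat}]^{\sharp} = X_i$, one computes $\theta(\iota_{\cycl}(X_0)) = \theta(\pi_m) = \zeta_{p^m} - 1 = \varpi$ and $\theta(\iota_{\cycl}(X_i)) = X_i$. Hence $\theta \circ \iota_{\cycl}$ coincides on $R_{\varpi, \square}^+$ with the natural composite $R_{\varpi, \square}^+ \twoheadrightarrow R_{\square} \hookrightarrow R \hookrightarrow \CC^+(R)$.

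Next, recall that $R_{\varpi}^+$ is the $(p, X_0)$-adic completion of the étale $R_{\varpi, \square}^+$-algebra $R_{\varpi, \square}^+[Z_1, \ldots, Z_s]/(Q_1, \ldots, Q_s)$, whose Jacobian $\det(\partial Q_j/\partial Z_k)$ is a unit. To extend $\iota_{\cycl}$, I need to produce elements $\tilde z_1, \ldots, \tilde z_s \in \mbfa_{\inf}(R)$ satisfying $Q_j(\iota_{\cycl}(X), \tilde z) = 0$ and $\theta(\tilde z_k) = z_k$, where $z_k$ is the image of $Z_k$ in $R$. In $\mbfa_{\inf}(R)/\kert \theta = \CC^+(R)$ the $z_k$'s solve the system, and upon further reduction modulo $p$ the Jacobian remains a unit in $\mbfa_{\inf}(R)/(p, \xi) = \CC^+(R)/p$ (with $\xi$ generating $\kert \theta$). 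Since $\mbfa_{\inf}(R)$ is $(p, \xi)$-adically complete, Hensel's lemma for systems supplies a unique lift $\tilde z$. This defines a ring map out of $R_{\varpi, \square}^+[Z_1, \ldots, Z_s]/(Q_1, \ldots, Q_s)$; to extend continuously to the $(p, X_0)$-adic completion $R_{\varpi}^+$, I would argue that $\mbfa_{\inf}(R)$ is also $(p, \pi_m)$-adically complete, using its $p$-adic completeness together with the $\overline{\pi}_m$-adic completeness of $\mbfa_{\inf}(R)/p = \CC^+(R)^{\flat}$ (in which the image of $\pi_m$ has positive valuation $1/e$). Uniqueness of the extension then follows from the uniqueness in Hensel's lemma, and the identity $\theta \circ \iota_{\cycl} = $ projection holds on the generators $X_j$ by construction and on $Z_k$ by the normalization $\theta(\tilde z_k) = z_k$.

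The main obstacle lies in the interplay between the two relevant ideals of $\mbfa_{\inf}(R)$: the ideal $(p, \pi_m)$ coming from the $(p, X_0)$-adic topology of $R_{\varpi}^+$, and $(p, \xi)$, which is the natural framework for Hensel's lemma. The former is strictly finer, since $v^{\flat}(\pi_m) = 1/e < 1 = v^{\flat}(\xi)$, so one must carefully check that the Hensel lift produced in the $(p, \xi)$-adic framework is compatible with the finer $(p, \pi_m)$-adic topology required for the continuous extension. Finally, for the embedding (injectivity) property, it follows from the fact that $\iota_{\cycl}$ sends the algebraically independent generators $X_0, X_1, \ldots, X_d$ of $R_{\varpi, \square}^+$ to the algebraically independent elements $\pi_m, [X_1^{\flat}], \ldots, [X_d^{\flat}]$ of $\mbfa_{\inf}(R)$, combined with the flatness of the étale extension $R_{\varpi, \square}^+ \to R_{\varpi}^+$ and the fact that $R_{\varpi}^+$ is an integral domain.
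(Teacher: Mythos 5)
Your argument is essentially the paper's: the proof there simply invokes the implicit function theorem for topological rings (\cite[Proposition 2.1]{colmez-niziol-nearby-cycles}), which is exactly the Hensel-type lifting over the formally \'etale extension $R_{\varpi, \square}^+ \rightarrow R_{\varpi}^+$ into the $(p,\xi)$-adically (equivalently $(p,\pi_m)$-adically) complete ring $\mbfa_{\inf}(R)$ that you carry out by hand, with the identity $\theta \circ \iota_{\cycl} = \textrm{projection}$ checked on generators in the same way. The only soft spot is your closing injectivity argument --- flatness of $R_{\varpi, \square}^+ \rightarrow R_{\varpi}^+$ together with injectivity on the base does not formally yield injectivity of the extension --- but the paper's own proof leaves this point equally implicit, so this is not a divergence from its approach.
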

\begin{proof}
	We can use the implicit function theorem \cite[Proposition 2.1]{colmez-niziol-nearby-cycles} to extend the embedding to $\iota_{\cycl} : R_{\varpi}^+ \rightarrow \mbfa_{\inf}(\overline{R})$.
	Next, from defintions we already have that $\theta \circ \iota_{\cycl} : R_{\varpi, \square}^+ \twoheadrightarrow O_K\{X, X^{-1}\}$ coincides with the canonical projection and $R_{\varpi}^+$ is \'etale over $R_{\varpi, \square}^+$, hence the second claim follows.
\end{proof}

This embedding commutes with Frobenius on either side, i.e. $ \iota_{\cycl} \circ \varphi = \varphi \circ \iota_{\cycl}$.
By continuity, the morphism $\iota_{\cycl}$ extends to embeddings $\iota_{\cycl}  : R_{\varpi}^{\textpd} \rightarrowtail \mbfa_{\crys}(\overline{R})$ and $\iota_{\cycl} : R_{\varpi} \rightarrowtail \mbfa_{\overline{R}}$.
Denote by $\mbfa_{R, \varpi}^+$ and $\mbfa_{R, \varpi}$ the image in $\mbfa_{\overline{R}}$ of $R_{\varpi}^+$ and $R_{\varpi}$ respectively, under the map $\iota_{\cycl}$.
Similarly, let $\mbfa_{R, \varpi}^{\textpd} := \iota_{\cycl}\big(R_{\varpi}^{\textpd}\big) \subset \mbfa_{\crys}(\overline{R})$.
These rings are stable under the action of $\Gamma_{R}$ (see \cite[\S 2.5.3]{colmez-niziol-nearby-cycles}).
Moreover, these embeddings induce a filtration on $\mbfa_{R, \varpi}^{\bmstar}$ for $\smstar \in \{+, \textpd\} $ and $r \in \ZZ$ (use Definition \ref{defi:filtration_vanishing_varpi}).

\begin{rem}
	Note that we write $\mbfa_{R, \varpi}^+$ and so on instead of slightly cumbersome notation $\mbfa_{R[\varpi]}^+$ or simpler notation $\mbfa_S^+$ for $S = R[\varpi]$, in order to emphasize the choice of root of unity in the definition.
\end{rem}

We note a simple lemma that will be useful later.
\begin{lem}\label{lem:t_over_pi_unit}
	$\frac{t}{\pi}$ is a unit in $\mbfa_{F, \varpi}^{\textpd} \subset \mbfa_{R, \varpi}^{\textpd}$.
\end{lem}
\begin{proof}
	We can write the fraction
	\begin{equation*}
		\frac{t}{\pi} = \frac{\log (1+\pi)}{\pi} = \sum_{k\geq 0} (-1)^k \tfrac{\pi^k}{k+1}.
	\end{equation*}
	Formally, we can write
	\begin{equation*}
		\frac{\pi}{t} = \frac{\pi}{\log(1+\pi)} = 1 + b_1 \pi + b_2 \pi^2 + b_3 \pi^3 + \cdots,
	\end{equation*}
	where $\upsilon_p(b_k) \geq -\frac{k}{p-1}$ for all $k \geq 1$.
	Since $\pi = (1 + \pi_m)^{p^m} - 1$, we get that $\pi \in (p, \pi_m^{p^m}) \mbfa_{F, \varpi}^+$ (as $m \geq 1$).
	By induction over $k$, we can easily conclude that $\pi^k \in \big(p, \pi_m^{p^m}\big)^k\mbfa_{F, \varpi}^{\textpd}$.
	Using this, we can re-express the series $\sum_k b_k \pi^k$ as a power series in $\pi_m$, written as $\sum_i c_i \pi_m^i$.
	We need to check that this re-expressed series converges in $\mbfa_{F, \varpi}^{\textpd}$.
	To do this, we collect the terms with coefficients having the smallest $\padic$ valuation for each power of $\pi_m^{p^m}$ in the re-expressed series.
	For $k \geq 1$, $b_k$ has the smallest $\padic$ valuation among the coefficients of $\pi_m^{p^m k}$, and therefore it has the least $\padic$ valuation among coefficients of $\pi_m^i$ for $p^m k \leq i < p^m(k+1)$.
	We write the collection of these terms as
	\begin{equation}\label{eq:smallest_valuation_t_over_pi}
		\sum_{k \geq 1} (-1)^{k+1} b_k \pi_m^{p^m k} = \sum_{k \geq 1} (-1)^{k+1} b_k \big\lfloor \tfrac{p^m k}{e} \big\rfloor! \tfrac{\pi_m^{p^m k}}{\lfloor p^m k/e \rfloor!},
	\end{equation}
	and by the preceding discussion it is sufficient to show that these coefficients go to $0$ as $k \rightarrow +\infty$.
	Moreover, for \eqref{eq:smallest_valuation_t_over_pi} it would suffice to check the estimate for $k = (p-1)j$ as $j \rightarrow +\infty$ (this gets rid of the floor function above).
	With the observation in Remark \ref{rem:factorial_padic_estimate}, we have
	\begin{equation*}
		\upsilon_p\Big(b_k \big\lfloor \tfrac{p^mk}{e} \big\rfloor!\Big) = \upsilon_p(b_k) + \upsilon_p((pj)!) \geq -\tfrac{(p-1)j}{p-1} + \tfrac{pj - s_p(pj)}{p-1}  = \tfrac{j - s_p(j)}{p-1} = \upsilon_p(j!),
	\end{equation*}
	which goes to $+\infty$ as $j \rightarrow +\infty$.
	Hence, $\frac{\pi}{t}$ converges in $\mbfa_{F, \varpi}^{\textpd}$ and is an inverse to $\frac{t}{\pi}$.
\end{proof}

The following elementary observation was used above,
\begin{rem}\label{rem:factorial_padic_estimate}
	Let $n \in \NN$, so we can write $n = \sum_{i=0}^k n_i p^i$ for some $k \in \NN$, where $0 \leq n_i \leq p-1$ for $0 \leq i \leq k$.
	Let us set $s_p(n) = \sum_{i=0}^k n_i$.
	Then we have
	\begin{align*}
		\upsilon_p(n!) &= \sum_{j \geq 1} \big\lfloor\tfrac{n}{p^j}\big\rfloor = \sum_{j \geq 0} \big\lfloor\tfrac{\sum_{i=0}^k n_ip^i}{p^j}\big\rfloor = \sum_{j=1}^k \sum_{i=j}^k n_i p^{i-j}\\
		&= \sum_{i=1}^k n_i \sum_{j=1}^i p^j = \sum_{i=1}^k n_i \tfrac{p^i-1}{p-1} = \tfrac{n-s_p(n)}{p-1}.
	\end{align*}
	Also, note that we have $s_p(pn) = s_p(n)$ for any $n \in \NN$.
\end{rem}

\begin{lem}\label{lem:gamma_minus_1_pd}
	Let $i \in \{0, 1, \ldots, d\}$.
	Then $(\gamma_i-1) \mbfa_{R, \varpi}^{\bmstar} \subset \pi \mbfa_{R, \varpi}^{\bmstar}$ for $\bmstar \in \{+, \textpd\}$;
\end{lem}
\begin{proof}
	First, let $i = 0$.
	Then we have
	\begin{align*}
		(\gamma_0-1)\pi_m &= (1 + \pi_m)\big((1 + \pi_m)^{\chi(\gamma_0)-1} - 1\big) = (1 + \pi_m)\big((1 + \pi_m)^{p^ma} - 1\big) \\
				&= (1 + \pi_m)((1 + \pi)^a - 1) = (1 + \pi_m)\big(a\pi + \tfrac{a(a-1)}{2!}\pi^2 + \tfrac{a(a-1)(a-2)}{3!} \pi^3 + \cdots\big) = \pi x,
	\end{align*}
	for some $x \in \mbfa_{F, \varpi}^+$, i.e. $(\gamma_0-1)\pi_m \in \pi \mbfa_{F, \varpi}^+$.
	Then it follows that $(\gamma_0-1)\mbfa_{F, \varpi}^{\bmstar} \subset \pi \mbfa_{F, \varpi}^{\bmstar}$ for $\bmstar \in \{+, \textpd\}$

	Next, for $i \in \{1, \ldots, d\}$ we have $(\gamma_i-1)[X_i^{\flat}] = \pi [X_i^{\flat}] \in \pi \mbfa_{R, \varpi}^+$ and $(\gamma_i-1)\big([X_i^{\flat}]^{-1}\big) = -\pi(1+\pi)^{-1}[X_i^{\flat}]^{-1} \in \pi \mbfa_{R, \varpi}^+$.
	Therefore, we get the claim.
\end{proof}

\subsubsection{The ring \texorpdfstring{$\mbfa_R^+$}{-}}

The preceding discussion works well for $R[\varpi]$ where $\varpi = \zeta_{p^m}-1$ for $m \in \NN_{\geq 1}$ ($m \in \NN_{\geq 2}$ if $p=2$).
For $R$ one can repeat the construction above to obtain the period ring $\mbfa_R^+ \subset \mbfa_{R, \varpi}^+$ (the embedding $R_{\varpi}^+ \rightarrowtail \mbfa_{\inf}(\overline{R})$ for $R$ sends $X_0 \mapsto \pi$).
Moreover, restriction of the map $\theta$ gives us a surjective map $\theta : \mbfa_R^+ \twoheadrightarrow R$ whose kernel is principal and generated by $\pi$ (since $\theta \circ \iota_{\cycl} = id$ on $R$).
Next, over $\mbfa_{R, \varpi}^+$ the filtration is given as $\Fil^k \mbfa_{R, \varpi}^+ = \xi^k \mbfa_{R, \varpi}^+$, where $\xi = \frac{\pi}{\pi_1}$.
However, $\xi \not\in \mbfa_{R}^+$.
Therefore, we equip $\mbfa_{R}^+$ with the induced filtration $\Fil^k \mbfa_{R}^+ = \mbfa_{R}^+ \cap \Fil^k \mbfa_{R, \varpi}^+$.
Then describing the filtration as kernel of the $\theta$ map, we obtain

\begin{lem}\label{lem:fil_ar0plus}
	$\Fil^k \mbfa_{R}^+ = \pi^k \mbfa_{R}^+$.
\end{lem}

\begin{rem}\label{rem:a_varpi_iso}
	Let $\mbfa^+$ be the ring from Definition \ref{defi:phi_gamma_ring} and $\mbfa_{\varpi}^+$ be the ring defined in Remark \ref{rem:a_varpi}.
	From the definitions it follows that $\mbfa_{R, \varpi}^+ \otimes_{\mbfa_R^+} \mbfa^+ \isomorphic \mbfa_{\varpi}^+$ compatible with Frobenius and $G_R\textrm{-action}$.
	Moreover, we have $\mbfa_R^+ = (\mbfa^+)^{H_R}$ and $\mbfa_{R, \varpi}^+ = (\mbfa_{\varpi}^+)^{H_{R, \varpi}}$ where $H_{R, \varpi} = H_R$.
	Now, if we equip $\mbfa^+ \subset \mbfa_{\varpi}^+ \subset \mbfa_{\inf}(\overline{R})$ with the induced filtration, then we see that the isomorphism $\mbfa_{R, \varpi}^+ \otimes_{\mbfa_R^+} \mbfa^+ \isomorphic \mbfa_{\varpi}^+$ is compatible with filtrations as well (where on the left we consider the tensor product filtration).
\end{rem}

\subsection{Fat period rings}\label{subsec:fat_period_rings}

In this section we will introduce an alternative construction of fat period rings.
This will be helpful in constructing some auxiliary rings in the proof of Proposition \ref{prop:crys_from_wach_mod}.
Let $S$ and $\Lambda$ be $\padic$ally complete filtered $O_F\textrm{-algebras}$.
Let $\iota : S \rightarrow \Lambda$ be a continuous injective morphism of filtered $O_F\textrm{-algebras}$ and let $f : S \otimes \Lambda \rightarrow \Lambda$ be the morphism sending $x \otimes y \mapsto \iota(x)y$.

\begin{defi}\label{defi:fat_ring_const}
	Define $S\Lambda$ to be the $\padic$ completion of the divided power envelope of $S \otimes \Lambda$ with respect to $\kert f$.
\end{defi}

Now, let $S = R, R_{\varpi}^{\textpd}$, where over $R$ we consider the trivial filtration, whereas over $R_{\varpi}^{\textpd}$ we consider the filtration described in Definition \ref{defi:filtration_vanishing_varpi}.
Then we have,
\begin{rem}
	\begin{enumromanup}
	\item The ring $S\Lambda$ is the $\padic$ completion of $S \otimes \Lambda$ adjoined $(x \otimes 1 - 1 \otimes \iota(x))^{[k]}$, for $x \in S$ and $n \in \NN$ and $(V_i - 1)^{[k]}$ for $1 \leq i \leq d$ and $k \in \NN$, where $V_i = \frac{X_i \otimes 1}{1 \otimes \iota(X_i)}$ for $1 \leq i \leq d$.

	\item The morphism $f : S \otimes \Lambda \rightarrow \Lambda$ extends uniquely to a continuous morphism $f : S \Lambda \rightarrow \Lambda$.

	\item There is a natural filtration over $S \Lambda$ where we define $\Fil^r S \Lambda$ to be the topological closure of the ideal generated by the products of the form $x_1 x_2 \prod(V_i - 1)^{[k_i]}$, with $x_1 \in \Fil^{r_1}S$, $x_2 \in \Fil^{r_2}\Lambda$ and $r_1 + r_2 + \sum k_i \geq r$.
		
	\item From \cite[Lemma 2.36]{colmez-niziol-nearby-cycles}, we have that any element $x \in S\Lambda$ can be uniquely written as $x = \sum_{\smbfk \in \NN^{d}} x_{\smbfk}(1-V_1)^{[k_1]} \cdots (1-V_d)^{[k_d]}$ with $x_{\smbfk} \in \Lambda$ for all $\smbfk = (k_1, \ldots, k_d) \in \NN^{d}$ and $x_{\smbfk} \rightarrow 0$ as $|\smbfk| = \sum_{i=1}^d k_i \rightarrow +\infty$.
		Moreover, an element $x \in \Fil^r S \Lambda$ if and only if $x_{\smbfk} \in \Fil^{r - |\smbfk|} \Lambda$ for all $\smbfk \in \NN^{d}$.
	\end{enumromanup}
\end{rem}

\cleardoublepage

\section{Finite height representations}\label{sec:relative_finite_height}

In this section we will study Wach modules and their relationship with crystalline modules for crystalline representations.

\subsection{The arithmetic case}\label{subsec:arithmetic_wach}

Recall that we have $G_F = \Gal(\overline{F}/F)$ as the absolute Galois group of $F$, $\Gamma_F := \Gal(F_{\infty}/F)$ and $H_F := \Gal(\overline{F}/F_{\infty})$, where $F_{\infty} = \cup_n F(\mu_{p^n})$.
From the theory of $(\varphi, \Gamma_F)\textrm{-modules}$, we have a two dimensional local ring $\mbfa_F$ given as the $\padic$ completion of $O_F[[\pi]]\big[\frac{1}{\pi}\big]$ and $\mbfb_F := \mbfa_F\big[\frac{1}{p}\big]$ is a complete discrete valuation field with uniformizer $p$ and residue field $\kappa((\overline{\pi}))$, the field of Laurent series with uniformizer $\overline{\pi}$ (the reduction of $\pi$ modulo $p$).

Next, we have certain subrings $\mbfa_F^+ := O_F[[\pi]] \subset \mbfa_F$ and $\mbfb_F^+ = \mbfa_F^+\big[\frac{1}{p}\big] \subset \mbfb_F$, stable under the action of $\varphi$ and $\Gamma_F$.
Let $V$ be a $\padic$ representation of $G_F$, then $\mbfd^+(V) = (\mbfb^+ \otimes_{\QQ_p} V)^{H_F}$ is a free module over the principal domain $\mbfb_F^+$ of rank $\leq \dim_{\QQ_p} V$, equipped with a Frobenius-semilinear endomorphism $\varphi$ and a continuous and semilinear action of $\Gamma_F$.
Further, let $\mbfd(V) = (\mbfb \otimes_{\QQ_p} V)^{H_F}$ be the associated $(\varphi, \Gamma_F)\textrm{-module}$ which is a $\mbfb_F\textrm{-vector}$ space of dimension $= \dim_{\QQ_p} V$, equipped with a Frobenius-semilinear endomorphism $\varphi$ and a continuous and semilinear action of $\Gamma_F$.
We have a $\mbfb_F^+\textrm{-linear}$ inclusion $\mbfd^+(V) \subset \mbfd(V)$ compatible with the action of $\varphi$ and $\Gamma_F$.
We say that $V$ is of \textit{finite height} if $\mbfd^+(V)$ is a $\mbfb_F^+\textrm{-lattice}$ inside $\mbfd(V)$.

Similarly, if $T \subset V$ is a free $\ZZ_p\textrm{-lattice}$, stable under the action of $G_F$, then $\mbfd^+(T) = (\mbfa^+ \otimes_{\ZZ_p} T)^{H_F}$ is a free $\mbfa_F^+\textrm{-module}$ of rank $\leq \dim_{\QQ_p} V$, stable under the action of $\varphi$ and $\Gamma_F$ (see \cite[\S B.1.2]{fontaine-festschrift}).
Moreover, $\mbfd(T) = (\mbfa \otimes_{\ZZ_p} T)^{H_F}$ is a free $\mbfa_F\textrm{-module}$ of rank $=\dim_{\QQ_p} V$ equipped with a Frobenius-semilinear operator $\varphi$ and a continuous and semilinear action of $\Gamma_F$, and we have $\mbfd^+(T) \subset \mbfd(T)$.

Fontaine showed that $V$ is of finite height if and only if there exists a finite free $\mbfb_F^+\textrm{-submodule}$ of $\mbfd(V)$ of rank $= \dim_{\QQ_p} V$, stable under the operator $\varphi$ (see \cite[\S B.2.1]{fontaine-festschrift} and \cite[\S III.2]{colmez-finite-height}).
Moreover, if $T \subset V$ is a free $\ZZ_p\textrm{-lattice}$ as above and $V$ of finite height, then $\mbfd^+(T)$ is a free $\mbfa_F^+\textrm{-module}$ of rank $= \dim_{\QQ_p} V$ such that $\mbfa_F \otimes_{\mbfa_F^+} \mbfd^+(T) \isomorphic \mbfd(T)$ (see \cite[Th\'eor\`eme B.1.4.2]{fontaine-festschrift}).

For crystalline representations there exist submodules of $\mbfd^+(V)$ admitting a simpler action of $\Gamma_F$.
Finite height and crystalline representations of $G_F$ are related by the following result:
\begin{thm}[{\cite{wach-pot-crys}, \cite{colmez-finite-height}, \cite{berger-differentielles}}]\label{thm:crystalline_finite_height_unrami}
	Let $V$ be a $\padic$ representation of $G_F$.
	Then $V$ is crystalline if and only if it is of finite height and there exists $r \in \ZZ$ and a $\mbfb_F^+\textrm{-submodule}$ $N \subset \mbfd^+(V)$ of rank $= \dim_{\QQ_p} V$, stable under the action of $\Gamma_F$, such that $\Gamma_F$ acts trivially over $(N/\pi N)(-r)$.
\end{thm}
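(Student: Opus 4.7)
The plan is to treat the two implications separately. For ``$V$ crystalline $\Rightarrow$ $N$ exists'', I reduce to the case of non-positive Hodge--Tate weights by twisting: if $r$ is the largest Hodge--Tate weight of $V$, then $V(-r)$ has all weights $\leq 0$, so by Fontaine's theorem \cite[\S B.2.1]{fontaine-festschrift} it is of finite height. Berger's functorial construction \cite[Proposition II.1.1]{berger-limites-cristallines} then produces a Wach module $\mbfn(V(-r)) \subset \mbfd^+(V(-r))$, free of rank $h$ over $\mbfb_F^+$, stable under $\Gamma_F$, with $\Gamma_F$ acting trivially on $\mbfn(V(-r))/\pi \mbfn(V(-r))$ (the latter identifies with $\mbfd_{\crys}(V(-r))$ on which $\Gamma_F$ acts trivially, being $G_F$-invariant). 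Untwisting by $\chi^r$ gives $N \subset \mbfd^+(V)$ with $\Gamma_F$ acting trivially on $(N/\pi N)(-r)$, and $V$ is of finite height because $V(-r)$ is.

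For the reverse direction I reduce, after a twist, to $r = 0$: the hypothesis becomes $(\gamma - 1)N \subset \pi N$ for every $\gamma \in \Gamma_F$. The strategy is to build $\mbfd_{\crys}(V)$ as the $\Gamma_F$-invariants of $\mbfb_{\crys}^+ \sideset{ }{_{\mbfb_F^+}}\smotimes N$. The key inputs are the cyclotomic embedding $\mbfb_F^+ \hookrightarrow \mbfb_{\crys}^+$ (the arithmetic analogue of \S \ref{subsec:cyclotomic_embeddings}) sending $\pi \mapsto [\varepsilon] - 1$; the fact that $\pi$ lies in the divided power ideal of $\mbfa_{\crys}$, so powers $\pi^k/k!$ tend to zero $p$-adically; and the arithmetic analogue of Lemma \ref{lem:t_over_pi_unit}, which ensures $\pi/t$ is a unit in $\mbfb_{\crys}^+$. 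For a fixed topological generator $\gamma \in \Gamma_F$, the inclusion $(\gamma - 1)^k(N) \subset \pi^k N$ makes the logarithmic series $\log(\gamma) = -\sum_{k \geq 1} (-1)^k (\gamma - 1)^k/k$ converge as an operator on $\mbfb_{\crys}^+ \sideset{ }{_{\mbfb_F^+}}\smotimes N$, and a Taylor-type inversion of $\log(\gamma)/\log\chi(\gamma)$ produces $h$ independent $\gamma$-invariant lifts of a basis of $N/\pi N$ in $\mbfb_{\crys}^+ \sideset{ }{_{\mbfb_F^+}}\smotimes N$.

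The main obstacle, shared by the proofs of \cite{wach-pot-crys,colmez-finite-height,berger-differentielles}, is ensuring that these invariants (i) are $F$-linearly independent of full rank $h$, (ii) form a $\varphi$-stable $F$-subspace, and (iii) after extending scalars to $\mbfb_{\crys}$ recover $\mbfb_{\crys} \sideset{ }{_{\QQ_p}}\smotimes V$, thereby proving the comparison isomorphism $\mbfb_{\crys} \sideset{ }{_F}\smotimes \mbfd_{\crys}(V) \isomorphic \mbfb_{\crys} \sideset{ }{_{\QQ_p}}\smotimes V$ that characterises crystallinity. This final step, known as \emph{regularization by Frobenius}, uses the Frobenius-semilinear $\varphi$ on $N$ (commuting with the $\Gamma_F$-action and compatible with the finite-height hypothesis via $\varphi(\pi)/\pi = q$) to upgrade approximate invariants modulo $\pi^k$ to genuine invariants; one bootstraps by comparing $\varphi^n(\widetilde{x})$ for candidate lifts $\widetilde{x}$ and exploiting $q \equiv p \pmod{\pi}$ so that Frobenius iteration improves $\pi$-adic accuracy geometrically fast. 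Once this is in place, the filtered $\varphi$-module structure on the resulting $F$-space identifies with $\mbfd_{\crys}(V)$, completing the equivalence.
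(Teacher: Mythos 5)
This theorem is not proved in the paper: it is the classical arithmetic result quoted from Wach, Colmez and Berger, and the surrounding text records the division of labour explicitly (Wach: for a finite height representation, crystallinity is equivalent to the existence of $N$; Colmez: every crystalline representation is of finite height; Berger: an independent complete proof). So there is no internal proof to compare against, and your proposal must be judged on its own terms.

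There is a genuine gap in your forward direction. You claim that after twisting to non-positive Hodge--Tate weights, $V$ is of finite height ``by Fontaine's theorem [\S B.2.1]''. But the result of Fontaine cited in the introduction is only the \emph{characterization} of finite height (namely, $V$ is of finite height iff $\mbfd(V)$ contains a finite free $\varphi$-stable $\mbfb_F^+$-submodule of full rank); the implication ``crystalline $\Rightarrow$ finite height'' is precisely Colmez's Th\'eor\`eme 1 in \cite{colmez-finite-height}, the hardest step of the whole story, and it does not follow from Fontaine's characterization. Compounding this, invoking \cite[Proposition II.1.1]{berger-limites-cristallines} to produce $N$ is circular in the logical order adopted here: as Remark \ref{rem:wach_berger_module_const}(i) points out, Berger \emph{uses} the existence of the module $N$ from Theorem \ref{thm:crystalline_finite_height_unrami} to define $\mbfn(V) := \mbfd^+(V) \cap N\big[\frac{1}{\varphi^{n-1}(q)}\big]_{n \geq 1}$. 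So your forward direction either presupposes the conclusion or silently imports Colmez's theorem under a wrong attribution. Your converse direction (finite height plus $N$ implies crystalline) is a reasonable outline of Wach's argument --- it is the same successive-approximation mechanism the paper adapts in Lemmas \ref{lem:geom_approx} and \ref{lem:arith_approx} for the relative setting --- but as written it remains a sketch: the inclusion $(\gamma-1)^k(N) \subset \pi^k N$ is not what one actually gets from $(\gamma-1)N \subset \pi N$ (the term $(\gamma(\pi^k)-\pi^k)x$ contributes $(\chi(\gamma)-1)\pi^k N$ rather than $\pi^{k+1}N$), so the convergence of $\log\gamma$ and the lifting of a basis of $N/\pi N$ require the finer divided-power estimates in $\mbfa_{\crys}$, and the ``regularization by Frobenius'' step that establishes full rank and the comparison isomorphism is named rather than carried out.
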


In the situation of Theorem \ref{thm:crystalline_finite_height_unrami}, the module $N$ is not unique.
A functorial construction was given by Berger:

\begin{prop}[{\cite[Proposition II.1.1]{berger-limites-cristallines}}]\label{prop:wach_module_existence}
	Let $V$ be a positive crystalline representation of $G_F$, i.e. all Hodge-Tate weights of $V$ are $\leq 0$.
	Let $T \subset V$ be a free $\ZZ_p\textrm{-lattice}$, stable under the action of $G_F$.
	Then there exists a unique $\mbfa_F^+\textrm{-module}$ $\mbfn(T) \subset \mbfd(T)$, which is free of rank $=\dim_{\QQ_p} V$, stable under the action of $\varphi$ and $\Gamma_F$, and the action of $\Gamma_F$ is trivial over $\mbfn(T)/\pi \mbfn(T)$.
	Moreover, there exists $s \in \NN$ such that $\pi^s\mbfd^+(T) \subset \mbfn(T)$.
	Finally, set $\mbfn(V) := \mbfb_F^+ \otimes_{\mbfa_F^+} \mbfn(T)$, then $\mbfn(V)$ is a unique $\mbfb^+_F\textrm{-submodule}$ of $\mbfd^+(V)$ satisfying analogous properties.
\end{prop}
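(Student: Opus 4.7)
The plan is to first construct $\mbfn(V)$ as a distinguished $\mbfb_F^+$-submodule of $\mbfd^+(V)$ and then obtain $\mbfn(T)$ by intersecting with $\mbfd^+(T)$. Uniqueness will follow from a formal comparison argument: given two candidates $N_1, N_2 \subset \mbfd^+(V)$ satisfying the listed properties, the sum $N_1 + N_2$ is again a $\mbfb_F^+$-submodule stable under $\varphi$ and $\Gamma_F$, and the triviality of $\Gamma_F$ on $(N_1 + N_2)/\pi(N_1 + N_2)$ together with the rank constraint and Nakayama's lemma will force $N_1 = N_1 + N_2 = N_2$.

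For existence, set $D := \mbfd_{\crys}(V)$, which is a filtered $\varphi$-module over $F$ of dimension $h$ by the crystalline hypothesis. The assumption of non-positive Hodge--Tate weights translates to $\Fil^0 D = D$, and via the crystalline comparison one obtains an embedding $V \hookrightarrow \mbfb_{\crys}^+ \sideset{ }{_F}\smotimes D$. The main tool I plan to bring in is the ``rigid-analytic'' plus-subring $\mbfb^+_{\rig, F} := \bmcap_{n \geq 0} \varphi^n(\mbfb_F^+)\big[\tfrac{1}{p}\big]$, viewed inside $\mbfb^+_{\crys}$ via the cyclotomic embedding. The key input, equivalent to the forward direction of Theorem \ref{thm:crystalline_finite_height_unrami}, is the identification
\begin{equation*}
\mbfb^+_{\rig, F} \sideset{ }{_{\mbfb_F^+}}\smotimes \mbfd^+(V) \isomorphic \mbfb^+_{\rig, F} \sideset{ }{_F}\smotimes D.
\end{equation*}
Under this isomorphism, the natural $\mbfb_F^+$-lattice $\mbfb_F^+ \sideset{ }{_F}\smotimes D$ on the right pulls back to a submodule of $\mbfd^+_{\rig}(V) := \mbfb^+_{\rig, F} \sideset{ }{_{\mbfb_F^+}}\smotimes \mbfd^+(V)$, and I would define $\mbfn(V)$ to be the intersection of this submodule with $\mbfd^+(V)$ inside $\mbfd^+_{\rig}(V)$. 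Stability under $\varphi$ and $\Gamma_F$ is then immediate since both objects being intersected carry these actions compatibly, and reduction modulo $\pi$ identifies $\mbfn(V)/\pi\mbfn(V)$ with $D$, on which $\Gamma_F$ acts trivially. The inclusion $\pi^s \mbfd^+(V) \subset \mbfn(V)$ for some $s \in \NN$ follows by comparing generic fibres: after inverting $\pi$, both modules recover $\mbfd(V)$, so finite generation forces a power of $\pi$ to clear denominators. Finally, setting $\mbfn(T) := \mbfn(V) \smcap \mbfd^+(T)$ gives the integral version; freeness of rank $h$ over the two-dimensional regular local ring $\mbfa_F^+$ follows from torsion-freeness combined with stability under $\varphi$ and $\Gamma_F$ and the identity $\mbfn(T)\big[\tfrac{1}{p}\big] = \mbfn(V)$.

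The main obstacle is justifying the displayed isomorphism: this is precisely the content of Colmez's theorem that every crystalline representation is of finite height, upgraded to the level of $\mbfb^+_{\rig, F}$. Its proof requires the regularization-by-Frobenius machinery recalled in the excerpt (cf.\ \cite[\S I.4]{berger-limites-cristallines} and \cite[Proposition 3.2]{berger-differentielles}), by which iterated application of $\varphi$ shrinks an a priori too-large lattice onto a canonical $\varphi$-stable subobject of the correct rank. A secondary technical point is verifying that the intersection defining $\mbfn(V)$ is genuinely of full rank $h$ over $\mbfb_F^+$ rather than a $\pi$-saturated subobject of smaller rank, which again hinges on using $\varphi$-stability to rule out degeneration. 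Granting these ingredients, the remaining steps are essentially formal manipulations with $(\varphi, \Gamma_F)$-modules and their reductions modulo $\pi$.
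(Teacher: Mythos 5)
The paper does not reprove this statement: it is quoted from Berger, and the construction the paper has in mind is the one recalled in Remark \ref{rem:wach_berger_module_const}~(i), namely start from the module $N$ furnished by the Wach--Colmez theorem (Theorem \ref{thm:crystalline_finite_height_unrami}) and saturate, $\mbfn(V) := \mbfd^+(V) \smcap N\big[\tfrac{1}{\varphi^{n-1}(q)}\big]_{n \geq 1}$, then set $\mbfn(T) := \mbfn(V) \smcap \mbfd(T)$. Your route through $\mbfb^+_{\rig,F}$ is genuinely different, and its key input is false as stated. The isomorphism $\mbfb^+_{\rig,F} \smotimes_{\mbfb_F^+} \mbfd^+(V) \simeq \mbfb^+_{\rig,F} \smotimes_F \mbfd_{\crys}(V)$ only holds after inverting $t$; without that, the two sides differ inside $\mbfb^+_{\rig,F}[1/t]\smotimes \mbfd(V)$ by powers of $t/\pi = \prod_{n\geq 0}\varphi^n(q)/p$, which is \emph{not} a unit in $\mbfb^+_{\rig,F}$ (it vanishes at every $\zeta_{p^n}-1$, $n\geq 1$); it is a unit only in the much smaller PD-type ring of Lemma \ref{lem:t_over_pi_unit}. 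Concretely, for $V=\QQ_p(-1)$ one has $\mbfd^+(V)=\mbfb_F^+e_{-1}$ and $\mbfb_F^+\smotimes_F \mbfd_{\crys}(V)= t\,\mbfb_F^+ e_{-1}$, so your intersection $(\mbfb_F^+\smotimes_F D)\smcap \mbfd^+(V)$ is $(t\mbfb_F^+ \smcap \mbfb_F^+)e_{-1}=0$ (a nonzero bounded power series has only finitely many zeros by Weierstrass preparation), whereas the correct answer is $\pi\mbfb_F^+e_{-1}$. So the intersection you propose does not produce $\mbfn(V)$; the $q$-denominators in Berger's saturation are exactly what compensates for this discrepancy.

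Two further steps are asserted rather than proved. First, uniqueness: passing to $N_1+N_2$ and invoking Nakayama does not close the argument, since $N_1\subset N_1+N_2$ with both of rank $h$ and $\Gamma_F$ trivial mod $\pi$ does not by itself give equality; the actual argument (Berger's, reproduced in the relative setting in Lemma \ref{lem:wach_module_uniqueness}) takes the minimal $k$ with $\pi^k N_1\subset N_2$ and derives a contradiction for $k\geq 1$ from the identity $(\gamma-1)(\pi^k x)=\gamma(\pi^k)(\gamma(x)-x)+(\gamma(\pi^k)-\pi^k)x$ together with $\gamma(\pi)-\pi\in p\pi\mbfa_F^+$. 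Second, freeness of $\mbfn(T)$ over the two-dimensional regular local ring $\mbfa_F^+$ does not follow from torsion-freeness alone (the maximal ideal $(p,\pi)$ is torsion-free but not free); one needs reflexivity, which Berger extracts from realizing $\mbfn(T)$ as an intersection of free modules. If you want to run a $\mbfb^+_{\rig,F}$-style construction, you must first prove the finite-height statement (Colmez) and the regularization-by-Frobenius input, at which point you are essentially reproving Theorem \ref{thm:crystalline_finite_height_unrami} and Berger's Proposition II.1.1 rather than deducing the present statement from them.
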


\begin{nota}
	For an algebra $S$ admitting an action of the Frobenius and an $S\textrm{-module}$ $M$ admitting a Frobenius-semilinear endomorphism $\varphi : M \rightarrow M$, we denote by $\varphi^{\ast}(M) \subset M$ the $S\textrm{-submodule}$ generated by the image of $\varphi$.
\end{nota}

\begin{rem}\phantomsection\label{rem:wach_berger_module_const}
	\begin{enumromanup}
	\item In Proposition \ref{prop:wach_module_existence} for positive crystalline representations, Berger applies Theorem \ref{thm:crystalline_finite_height_unrami} with $r=0$ to define $\mbfn(V) := \mbfd^+(V) \cap N\big[\frac{1}{\varphi^{n-1}(q)}\big]_{n \geq 1}$, where $q = \frac{\varphi(\pi)}{\pi}$.
		Using this one can take $\mbfn(T) := \mbfn(V) \cap \mbfd(T)$ and it can be shown to satisfy the desired properties.

	\item Berger further showed that in the setup of Proposition \ref{prop:wach_module_existence}, if we take $s$ to be the maximum among the absolute values of Hodge-Tate weights of $V$, then $\mbfn(T)/\varphi^{\ast}(\mbfn(T))$ is killed by $q^s$ and we have that $\pi^s \mbfa^+ \otimes_{\ZZ_p} T \subset \mbfa^+ \otimes_{\mbfa_F^+} \mbfn(T)$ (see \cite[Th\'eor\`eme III.3.1]{berger-limites-cristallines}).
		The former observation can be thought of as a finite $q\textrm{-height}$ property of Wach modules.
		We will impose it as one of the main conditions for defining finite $q\textrm{-height}$ representations in the relative case (see \ref{defi:wach_reps}).
	\end{enumromanup}
\end{rem}

\begin{defi}\label{defi:wach_modules}
	Let $a$, $b \in \ZZ$ with $b \geq a$.
	A \textit{Wach module} with weights in the interval $[a, b]$ is a finite free $\mbfa_F^+\textrm{-module}$ or a $\mbfb_F^+\textrm{-module}$ $N$, equipped with a continuous and semilinear action of $\Gamma_F$ such that the action of $\Gamma_F$ is trivial on $N/\pi N$ and a Frobenius-semilinear operator $\varphi: N\big[\frac{1}{\pi}\big] \rightarrow N\big[\frac{1}{\varphi(\pi)}\big]$ commuting with the action of $\Gamma_F$, $\varphi(\pi^b N) \subset \pi^b N$ and $\pi^bN / \varphi^{\ast}(\pi^b N)$ is killed by $q^{b-a}$.
\end{defi}

\begin{rem}
	The definition of the functor $\mbfn$ can be extended to crystalline representations of arbitrary Hodge-Tate weights quite easily.
	Indeed, let $V \in \Rep_{\QQ_p}^{\crys}(G_F)$ with Hodge-Tate weights in the interval $[a, b]$ and let $T \subset V$ a free $\ZZ_p\textrm{-lattice}$, stable under the action of $G_F$.
	Then $\mbfn(T) = \pi^{-b} \mbfn(T(-b)) \otimes_{\ZZ_p} \ZZ_p(b)$ is a Wach module over $\mbfa_F^+$ with weights in the interval $[a, b]$.
\end{rem}

As it turns out, one can recover the crystalline representation from a given Wach module:
\begin{prop}[{\cite[Proposition III.4.2]{berger-limites-cristallines}}]\label{prop:wach_modules_equivalent_categories}
	The functor
	\begin{align*}
		\mbfn : \Rep_{\QQ_p}^{\crys}(G_F) &\longrightarrow \textup{ Wach modules over } \mbfb_F^+\\
			V &\longmapsto \mbfn(V),
	\end{align*}
	establishes an equivalence of categories with a quasi-inverse given by $N \mapsto (\mbfb \otimes_{\mbfb_F^+} N)^{\varphi=1}$.
	These functors are compatible with tensor products, duality and preserve exact sequences.
	Moreover, for a crystalline representation $V$, the map $T \mapsto \mbfn(T)$ induces a bijection between $\ZZ_p\textrm{-lattices}$ inside $V$ and Wach modules over $\mbfa_F^+$ contained in $\mbfn(V)$.
\end{prop}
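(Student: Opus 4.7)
The plan is to establish that $\mbfn$ and the candidate quasi-inverse $\mbfv \colon N \mapsto (\mbfb \otimes_{\mbfb_F^+} N)^{\varphi=1}$ are mutually inverse equivalences; tensor compatibility, duality, exactness, and the lattice bijection will then follow formally from the construction.

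First I would verify well-definedness of both functors. For $\mbfn$, the positive case is Proposition \ref{prop:wach_module_existence} together with Remark \ref{rem:wach_berger_module_const}, and for arbitrary Hodge-Tate weights $[a,b]$ one defines $\mbfn(V) := \pi^{-b}\mbfn(V(-b)) \otimes_{\ZZ_p} \ZZ_p(b)$ and checks the Wach-module axioms directly. For $\mbfv$, since $\mbfb_F$ is a complete discretely valued field and $q = \varphi(\pi)/\pi$ is a non-zero element, the base change $M := \mbfb_F \otimes_{\mbfb_F^+} N$ is an \'etale $(\varphi, \Gamma_F)$-module over $\mbfb_F$ of rank $h$, and Fontaine's equivalence identifies $\mbfv(N) = (\mbfb \otimes_{\mbfb_F} M)^{\varphi=1}$ with a $p$-adic representation of $G_F$ of dimension $h$. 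To verify that $\mbfv(N)$ is crystalline, I would apply Theorem \ref{thm:crystalline_finite_height_unrami}: the inclusion $\mbfb_F^+ \subset \mbfb^{H_F}$ places $N$ (or the Tate twist appropriate to non-zero weights) inside $\mbfd^+(\mbfv(N))$, exhibiting simultaneously the finite-height condition and a submodule on which $\Gamma_F$ acts trivially modulo $\pi$.

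The substantive content is checking the two compositions. For $\mbfv \circ \mbfn \simeq \textup{id}$, the module $\mbfn(V)$ is a $\mbfb_F^+$-lattice inside $\mbfd^+(V)$ whose $\mbfb_F$-span is $\mbfd(V)$, so extending to $\mbfb$ and taking $\varphi$-invariants returns $V$ by Fontaine's equivalence. For $\mbfn \circ \mbfv \simeq \textup{id}$ one observes that an arbitrary Wach module $N$ sits inside $\mbfd^+(\mbfv(N))$ and satisfies exactly the characterizing conditions (finite generation, $\varphi$- and $\Gamma_F$-stability, trivial $\Gamma_F$-action modulo $\pi$, and the appropriate $q$-bound on $N/\varphi^{\ast}(N)$) of the Berger module $\mbfn(\mbfv(N))$; the uniqueness asserted in Proposition \ref{prop:wach_module_existence} then forces equality. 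I expect this last identification to be the main obstacle: pinning $N$ down canonically inside $\mbfd^+(\mbfv(N))$ requires tracking Fontaine's functor $\mbfd$ explicitly and checking that the intrinsic $(\varphi, \Gamma_F)$-action on $N$ agrees with the one induced from the Galois action on $V = \mbfv(N)$, which in turn relies on the containment $N \subset \mbfd^+(\mbfv(N))$ having the right Frobenius behaviour.

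Finally, tensor compatibility follows from the identification $(\mbfb \otimes_{\mbfb_F^+} (N_1 \otimes_{\mbfb_F^+} N_2))^{\varphi=1} \simeq \mbfv(N_1) \otimes_{\QQ_p} \mbfv(N_2)$; duality is handled via the natural pairing between $N$ and $N^{\vee} := \Hom_{\mbfb_F^+}(N, \mbfb_F^+)$ extended to $\mbfb$; exactness follows from the flatness of $\mbfb$ over $\mbfb_F^+$; and the lattice bijection results from replaying the entire argument integrally with $\mbfa_F^+$ and $\mbfa$ in place of $\mbfb_F^+$ and $\mbfb$, using the corresponding integral version of Fontaine's equivalence.
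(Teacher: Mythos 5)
The paper does not prove this proposition; it is recalled verbatim from Berger (\cite[Proposition III.4.2]{berger-limites-cristallines}) as background for the relative constructions, so there is no internal proof to compare your proposal against. Judged on its own, your outline follows the natural (and essentially Berger's) strategy, but it has a genuine gap at the point you yourself flag, and the gap occurs earlier than you place it. To show that $\mbfv(N) := (\mbfb \otimes_{\mbfb_F^+} N)^{\varphi=1}$ is crystalline via Theorem \ref{thm:crystalline_finite_height_unrami}, you assert that ``the inclusion $\mbfb_F^+ \subset \mbfb^{H_F}$ places $N$ inside $\mbfd^+(\mbfv(N))$.'' That inclusion only gives $N \subset (\mbfb \otimes_{\QQ_p} V)^{H_F} = \mbfd(V)$; landing in $\mbfd^+(V) = (\mbfb^+ \otimes_{\QQ_p} V)^{H_F}$ is the statement that, under the identification $\mbfb \otimes_{\mbfb_F^+} N \simeq \mbfb \otimes_{\QQ_p} V$, the elements of $N$ actually lie in $\mbfb^+ \otimes_{\QQ_p} V$. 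This is precisely where Berger's regularization-by-Frobenius arguments (the circle of ideas around \cite[Th\'eor\`eme III.3.1]{berger-limites-cristallines}, cf. Remark \ref{rem:wach_berger_module_const}) enter, and without it both the well-definedness of $\mbfv$ as landing in crystalline representations and the identification $\mbfn(\mbfv(N)) = N$ are unproved. Deferring it to ``the main obstacle'' in the second composition does not discharge it.

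Two further steps are stated as formal when they are not. Exactness of $\mbfn$ on short exact sequences of crystalline representations does not follow from flatness of $\mbfb$ over $\mbfb_F^+$ (that only helps with $\mbfv$, and even there one needs surjectivity of $\varphi - 1$ to preserve exactness of the invariants); exactness of $\mbfn$ requires a separate argument using the height bounds and the uniqueness characterization. Likewise, duality of Wach modules is not the naive $\Hom_{\mbfb_F^+}(N, \mbfb_F^+)$: because $\mbfn(V(r)) = \pi^{-r}\mbfn(V)(r)$, the dual Wach module carries a $\pi$-power twist that must be tracked. None of this makes the approach wrong, but the proposal as written records the skeleton of Berger's proof while leaving its load-bearing technical inputs unverified.
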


We have a natural filtration on Wach modules given as
\begin{equation*}
	\Fil^k \mbfn(V) = \{x \in \mbfn(V) \hspace{1mm} \textrm{such that} \hspace{1mm} \varphi(x) \in q^k \mbfn(V)\} \hspace{2mm} \textrm{for} \hspace{2mm} k \in \ZZ.
\end{equation*}
If $V$ is positive crystalline, i.e. all its Hodge-Tate weights are $\leq 0$, then for $r \in \NN$ we have
\begin{equation*}
	\Fil^k \mbfn(V(r)) = \Fil^k \pi^{-r} \mbfn(V)(r) = \pi^{-r} \Fil^{k+r} \mbfn(V)(r).
\end{equation*}
Using this filtration on $\mbfn(V)$, one can also recover the other linear algebraic object associated to $V$, i.e. the filtered $\varphi\textrm{-module}$ $\mbfd_{\crys}(V)$:
Let $\mbfb_{\rig, F}^+ \subset F[[\pi]]$ denote the subring of convergent power series over the open unit disc.
Then we have $\mbfd_{\crys}(V) \subset \mbfb_{\rig, F}^+ \otimes_{\mbfb_F^+} \mbfn(V)$ and this gives $\mbfd_{\crys}(V) = \big(\mbfb_{\rig, F}^+ \otimes_{\mbfb_F^+} \mbfn(V)\big)^{\Gamma_F}$ (see \cite[Proposition II.2.1]{berger-limites-cristallines}).
Moreover, the induced map 
\begin{equation*}
	\mbfd_{\crys}(V) \longrightarrow \big(\mbfb_{\rig, F}^+ \otimes_{\mbfb_F^+} \mbfn(V)\big) / \pi\big(\mbfb_{\rig, F}^+ \otimes_{\mbfb_F^+} \mbfn(V)\big) = \mbfn(V)/\pi\mbfn(V),
\end{equation*}
is an isomorphism of filtered $\varphi\textrm{-modules}$ (see \cite[Proposition III.4.4]{berger-limites-cristallines}).

\subsection{The relative case}\label{subsec:rel_wach_crys}

In this section, we will introduce the notion of relative Wach modules and study representations of finite height.
Recall that we fixed $m \in \NN_{\geq 1}$ (fix $m \in \NN_{\geq 2}$ if $p=2$) and we have $K = F_m = F(\zeta_{p^m})$.
The element $\varpi = \zeta_{p^m}-1$ is a uniformizer of $K$.
We have $X = (X_1, \ldots, X_d)$ a set of indeterminates and we defined $R$ to be the $\padic$ completion of an \'etale algebra over $O_F[X, X^{-1}]$ having non-empty and geometrically integral special fiber and $R[\varpi] = O_K \otimes_{O_F} R$.
For $R$ and $R[\varpi]$, we can use the $(\varphi, \Gamma)\textrm{-module}$ theory discussed in \S \ref{subsec:relative_phi_gamma_mod}, as well as the constructions in \S \ref{subsec:pd_envelope} and \S \ref{subsec:cyclotomic_embeddings}.

Setting $q = \frac{\varphi(\pi)}{\pi}$ and using the formulation in Definition \ref{defi:wach_modules}, we define relative Wach modules:
\begin{defi}\label{defi:rel_wach_mods}
	Let $a$, $b \in \ZZ$ with $b \geq a$.
	A \textit{Wach module} over $\mbfa_R^+$ (resp. $\mbfb_{R}^+$) with weights in the interval $[a, b]$ is a finite projective $\mbfa_R^+\textrm{-module}$ (resp. $\mbfb_{R}^+\textrm{-module}$) $N$, equipped with a continuous and semilinear action of $\Gamma_{R}$ such that the action of $\Gamma_R$ is trivial on $N/\pi N$.
	Further, there is a Frobenius-semilinear operator $\varphi: N\big[\frac{1}{\pi}\big] \rightarrow N\big[\frac{1}{\varphi(\pi)}\big]$ commuting with the action of $\Gamma_{R}$ such that $\varphi(\pi^b N) \subset \pi^b N$ and $\pi^bN / \varphi^{\ast}(\pi^b N)$ is killed by $q^{b-a}$.
\end{defi}

Let $V$ be a $\padic$ representation of the Galois group $G_{R}$ admitting a $\ZZ_p\textrm{-lattice}$ $T \subset V$ stable under the action of $G_{R}$.
Then we have the finitely generated $\mbfa_R^+\textrm{-module}$ $\mbfd^+(T) := (\mbfa^+ \otimes_{\QQ_p} T)^{H_{R}}$.
We introduce the following definition:

\begin{defi}\label{defi:wach_reps}
	A \textit{positive finite $q\textrm{-height}$} representation is a $\padic$ representation $V$ of $G_{R}$ admitting a $\ZZ_p\textrm{-lattice}$ $T \subset V$ such that there exists a finite projective $\mbfa_R^+\textrm{-submodule}$ $\mbfn(T) \subset \mbfd^+(T)$ of rank $=\dim_{\QQ_p} V$ satisfying the following conditions:
	\begin{enumromanup}
		\item $\mbfn(T)$ is stable under the action of $\varphi$ and $\Gamma_{R}$, and $\mbfa_{R} \otimes_{\mbfa_R^+} \mbfn(T) \isomorphic \mbfd(T)$;
		
		\item The $\mbfa_R^+\textrm{-module}$ $\mbfn(T) / \varphi^{\ast}(\mbfn(T))$ is killed by $q^{s}$ for some $s \in \NN$;
		
		\item The action of $\Gamma_{R}$ is trivial on $\mbfn(T) / \pi \mbfn(T)$;
		
		\item There exists a $R\prm \subset \overline{R}$ finite étale over $R$ such that the $\mbfa_{R\prm}^+\textrm{-module}$ $\mbfa_{R\prm}^+ \otimes_{\mbfa_R^+} \mbfn(T)$ is free.
	\end{enumromanup}
	The module $\mbfn(T)$ is a \textit{Wach module} associated to $T$ with weights in the interval $[-s, 0]$ and we set $\mbfn(V) := \mbfn(T)\big[\frac{1}{p}\big]$ satisfying properties analogous to (i)-(iv) above.
	The \textit{height} of $V$ is defined to be the smallest $s \in \NN$ satisfying (ii) above.
\end{defi}

For $r \in \ZZ$, we set $V(r) := V \otimes_{\QQ_p} \QQ_p(r)$ and $T(r) := T \otimes_{\ZZ_p} \ZZ_p(r)$.
We will call these twists as representations of \textit{finite $q\textrm{-height}$} and define
\begin{equation*}
	\mbfn(T(r)) := \tfrac{1}{\pi^r}\mbfn(T)(r) \hspace{2mm} \textrm{and} \hspace{2mm} \mbfn(V(r)) := \tfrac{1}{\pi^r} \mbfn(V)(r).
\end{equation*}
Since $\mbfn(V)$ and $\mbfn(T)$ are Wach modules with weights in the interval $[-s, 0]$, twisting by $r$ gives us Wach modules in the sense of Definition \ref{defi:rel_wach_mods} with weights in the interval $[r-s, r]$.
We will say that \textit{height} of $V(r) = $ height of $V$.

\begin{rem}
	\begin{enumromanup}
	\item In the arithmetic case, i.e. $R = O_F$, the notion of finite height representations in Theorem \ref{thm:crystalline_finite_height_unrami} and finite $q\textrm{-height}$ representations in Definition \ref{defi:wach_reps} are related.
		In fact, in the arithmetic case using Definition \ref{defi:wach_reps} one obtains the functorial object of Berger mentioned above (see \cite[Proposition II.1.1]{berger-limites-cristallines}).

	\item In Definition \ref{defi:wach_reps} conditions (i), (ii) and (iii) are motivated from the definition of finite height representations of $G_F$ admitting a Wach module structure.
		The last condition, i.e. (iv) is inspired by Brinon's definition of weak admissibility in the relative case (see \cite[p. 136]{brinon-padicrep-relatif}). 

	\item In Definition \ref{defi:wach_reps} following Remark \ref{rem:wach_berger_module_const} (i), one can first define Wach module for the representation $V$ and then consider the module $\mbfn(T) = \mbfn(V) \cap \mbfd(T)$ associated to $T$.
		However, it is not clear whether the latter module, defined in this fashion, is a projective $\mbfa_R^+\textrm{-module}$.
		Therefore, we impose the condition on $\mbfn(T)$ to be projective, which is required in establishing several results in this section.
	\end{enumromanup}
\end{rem}

\subsubsection{Some properties of Wach modules}

Let us note some important properties of Wach modules associated to finite $q\textrm{-height}$ representations
\begin{prop}\label{prop:wach_approx_aplus_admis}
	Let $V$ be a positive finite $q\textrm{-height}$ representation and $T \subset V$ a $G_{R}\textrm{-stable}$ $\ZZ_p\textrm{-lattice}$.
	Then we have $\pi^s \mbfa^+ \otimes_{\ZZ_p} T \subset \mbfa^+ \otimes_{\mbfa_R^+} \mbfn(T)$, where $s \in \NN$ is the height of the representation $V$.
\end{prop}
\begin{proof}
	To show the claim, we can assume that $\mbfn(T)$ is free by base changing to the period ring corresponding to the finite \'etale extension $R\prm$ of $R$.
	Then $\mbfa^+ \otimes_{\mbfa_{R\prm}^+} \big(\mbfa_{R\prm}^+ \otimes_{\mbfa_R^+} \mbfn(T)\big) = \mbfa^+ \otimes_{\mbfa_R^+} \mbfn(T)$ is free.
	Since the discussion of previous chapters hold for the $\padic$ completion of a finite \'etale extension of $R$ (see \cite[Chapitre 2]{brinon-padicrep-relatif} and \cite[\S 2]{andreatta-iovita-relative-phiGamma} for more on this), base changing to $R\prm$ is harmless.
	So with a slight abuse of notation, below we will replace $R\prm$ obtained in this manner by $R$ and assume $\mbfn(T)$ to be free of rank $h = \dim_{\QQ_p} V$ over $\mbfa_R^+$.

	Note that by definition we have $\mbfn(T) \subset \mbfd^+(T) = (\mbfa^+ \otimes_{\ZZ_p} T)^{H_{R}} \subset \mbfa^+ \otimes_{\ZZ_p} T$.
	So let $A \in \Mat(h, \mbfa^+)$ be the matrix obtained by expressing a basis of $\mbfn(T)$ in a chosen basis of $T$.
	Also, let $P \in \Mat(h, \mbfa_R^+)$ be the matrix of $\varphi$ in the basis of $\mbfn(T)$.
	Then we have $\varphi(A) = AP$ and therefore $\varphi(\pi^s A^{-1}) = (q^s P^{-1})(\pi^s A^{-1})$.
	The fact that $\mbfn(T) / \varphi^{\ast}(\mbfn(T))$ is killed by $q^s$ implies that $q^s P^{-1} \in \Mat(h, \mbfa_R^+)$, therefore from Lemma \ref{lem:reg_frob_finht} we obtain that $\pi^s A^{-1} \in \Mat(h, \mbfa^+)$.
	Hence, we conclude that $\pi^s \mbfa^+ \otimes_{\ZZ_p} T \subset \mbfa^+ \otimes_{\mbfa_R^+} \mbfn(T)$.
\end{proof}

\begin{cor}\label{cor:wach_uniqueness_crit}
	By taking $H_{R}\textrm{-invariants}$ in Proposition \ref{prop:wach_approx_aplus_admis} it follows that $\pi^s \mbfd^+(T) \subset \mbfn(T)$.
\end{cor}

\begin{prop}\label{prop:wach_module_uniqueness}
	Let $V$ be a finite $q\textrm{-height}$ representation $G_{R}$.
	The Wach module $\mbfn(V)$ over $\mbfb_{R}^+$ is unique.
	Same holds true for the $\mbfa_R^+\textrm{-module}$ $\mbfn(T)$.
\end{prop}
\begin{proof}
	The argument carries over from the classical case \cite[p. 13]{berger-limites-cristallines}.
	First note that we can assume that $V$ is positive, since by definition the uniquess of Wach module for such a representation is equivalent to uniqueness for all its Tate twists.
	In this case, let $N_1$ and $N_2$ be two $\mbfa_R^+\textrm{-modules}$ satisfying the conditions of Definition \ref{defi:wach_reps} (the proof stays the same for $\mbfn(V)$).
	By symmetry, it is enough to show that $N_1 \subset N_2$.
	Since we have $\pi^s N_1 \subset \pi^s \mbfd^+(T) \subset N_2$ (see Corollary \ref{cor:wach_uniqueness_crit}) and $N_2$ is $\pi\textrm{-torsion}$ free, therefore for any $x \in N_1$ there exists $k \leq s$ such that $\pi^k x \in N_2$ but $\pi^k x \not\in \pi N_2$.
	Varying over all $x \in N_1 \setminus \pi N_1$, we can take $k \leq s$ to be the minimal integer such that $\pi^k N_1 \subset N_2$.
	Since $\pi^k x \in N_2$ and $\Gamma_{R}$ acts trivially on $N_2 /\pi N_2$, we have that $(\gamma_0 - 1)(\pi^k x) \in \pi N_2$.
	So we can write
	\begin{equation*}
		(\gamma_0 - 1)(\pi^k x) = \gamma_0(\pi^k)(\gamma_0(x) - x) + (\gamma_0(\pi^k) - \pi^k)x.
	\end{equation*}
	Since $\Gamma_{R}$ also acts trivially on $N_1/\pi N_1$ and $\pi^k N_1 \subset N_2$, we see that $\gamma_0(\pi^k)(\gamma_0(x)-x) \in \pi N_2$, therefore $(\gamma_0(\pi^k) - \pi^k)x \in \pi N_2$, which means that $(\chi(\gamma_0)^k-1)\pi^k x \in \pi N_2$.
	But $\pi \nmid (\chi(\gamma_0)^k-1)$ if $k \geq 1$, and $\pi^k x \not\in \pi N_2$.
	Hence, we must have $k = 0$, i.e. $N_1 \subset N_2$. 
\end{proof}

The uniqueness of Wach modules helps us in establishing compatibility with usual operations:
\begin{prop}\label{prop:wach_module_sum_tensor}
	Let $V$ and $V\prm$ be two finite $q\textrm{-height}$ representations of $G_{R}$.
	Then we have that $\mbfn(V \oplus V\prm) = \mbfn(V) \oplus \mbfn(V\prm)$ and $\mbfn(V \otimes V\prm) = \mbfn(V) \otimes \mbfn(V\prm)$.
	Similar statements hold for $\mbfn(T)$ and $\mbfn(T\prm)$.
\end{prop}
\begin{proof}
	We note similar to previous lemma that it is enough to show the statement for $V$ and $V\prm$ such that both representations are positive.
	By uniqueness of Wach modules proved in Proposition \ref{prop:wach_module_uniqueness}, it is enough to show that direct sum and tensor product of finite $q\textrm{-height}$ representations are again of finite $q\textrm{-height}$.

	First, it is straightforward to see that $\mbfn(T) \oplus \mbfn(T\prm) \subset \mbfd^+(T \oplus T\prm)$ is a projective $\mbfa_R^+\textrm{-module}$ of rank $\textup{rk}_{\ZZ_p} (T \oplus T\prm)$ such that $\mbfa_{R} \otimes_{\mbfa_R^+} (\mbfn(T) \oplus \mbfn(T\prm)) \isomorphic \mbfd(T) \oplus \mbfd(T\prm)$.
	Similarly, we have that $\mbfn(T) \otimes \mbfn(T\prm) \subset \mbfd^+(T \otimes T\prm)$ is a projective $\mbfa_R^+\textrm{-module}$ of rank $\textup{rk}_{\ZZ_p} (T \otimes T\prm)$ such that $\mbfa_{R} \otimes_{\mbfa_R^+} (\mbfn(T) \otimes \mbfn(T\prm)) \isomorphic \mbfd(T) \otimes \mbfd(T\prm)$.
	
	Next, let $s$ and $s\prm$ denote the height of representations $V$ and $V\prm$ respectively and let $i := \max(s, s\prm)$.
	Then we see that $(\mbfn(T) \oplus \mbfn(T\prm)) / \varphi^{\ast}(\mbfn(T) \oplus \mbfn(T\prm))$ is killed by $q^i$ and $(\mbfn(T) \otimes \mbfn(T\prm)) / \varphi^{\ast}(\mbfn(T) \otimes \mbfn(T\prm))$ is killed by $q^{s+s\prm}$.
	Further, $\Gamma_{R}$ acts trivially modulo $\pi$ on $\mbfn(T) \oplus \mbfn(T\prm)$ and $\mbfn(T) \otimes \mbfn(T\prm)$.
	This verifies conditions (i), (ii) and (iii) for these modules.
For condition (iv), note that given any two finite étale extensions $R\prm$ and $R\dprm$ of $R$, there exists a finite étale extension $S$ over $R$ such that $S$ is finite étale over $R\prm$ as well as $R\dprm$.
	Hence, we get the claim.
\end{proof}

\begin{cor}\label{cor:wach_module_sym_wedge}
	Let $V$ be a finite $q\textrm{-height}$ representation of $G_{R}$.
	Then, for $k \in \NN$ the representations $\Sym^k(V)$ and $\wedge^k V$ are of finite $q\textrm{-height}$.
\end{cor}
\begin{proof}
	Note that the compatibility with tensor products in Proposition \ref{prop:wach_module_sum_tensor} is enough to establish the compatibility with symmetric powers and exterior powers because then we can set
	\begin{equation*}
		\mbfn\big(\Sym^k(T)\big) := \Sym^k(\mbfn(T)), \hspace{2mm} \textrm{and} \hspace{2mm} \mbfn\big(\wedge^k T\big) := \wedge^k\mbfn(T).
	\end{equation*}
	We have $\mbfn\big(\Sym^k(T)\big) \subset \Sym^k(\mbfd^+(T)) \subset \mbfd^+\big(\Sym^k(T)\big)$, since $\mbfa^+ \otimes_{\mbfa_R^+} \Sym^k(\mbfd^+(T)) \subset \mbfa^+ \otimes_{\mbfa_R^+} \mbfd^+\big(\Sym^k(T)\big)$.
	Similarly, $\mbfn\big(\wedge^k T\big) \subset \mbfd^+\big(\wedge^k T\big)$.
	Rest of the assumptions of Definition \ref{defi:wach_reps} follows in a same manner as in the proof of Proposition \ref{prop:wach_module_sum_tensor}.
	This establshes that $\Sym^k(V)$ and $\wedge^k V$ are finite $q\textrm{-height}$ representations and gives us the corresponding Wach modules.
\end{proof}

\subsubsection{Filtration on Wach modules}

There is a natural filtration on Wach modules associated to finite $q\textrm{-height}$ representations.
We will introduce this filtration next and prove a lemma concerning this filtration. 
\begin{defi}\label{defi:wach_mod_fil}
	Let $V$ be a positive finite $q\textrm{-height}$ represenation of $G_{R}$ and $r \in \NN$.
	Then there is a natural filtration on the associated Wach modules given as
	\begin{equation*}
		\Fil^k \mbfn(V(r)) := \{x \in \mbfn(V(r)), \hspace{1mm} \textrm{such that} \hspace{1mm} \varphi(x) \in q^k \mbfn(V(r))\} \hspace{2mm} \textrm{for} \hspace{1mm} k \in \ZZ,
	\end{equation*}
	and we set $\Fil^k \mbfn(T(r)) := \Fil^k \mbfn(V(r)) \cap \mbfn(T(r))$, where the intersection is taken inside $\mbfn(V(r))$.
\end{defi}

\begin{lem}\label{lem:wach_mod_twist_fil}
	With notations as above, we have
	\begin{enumromanup}
	\item $\Fil^k \mbfn(T(r)) = \{x \in \mbfn(T(r)), \hspace{1mm} \textrm{such that} \hspace{1mm} \varphi(x) \in q^k \mbfn(T(r))\}$.

	\item $\Fil^k \mbfn(V(r)) = \Fil^k \pi^{-r} \mbfn(V)(r) = \pi^{-r} \Fil^{k+r} \mbfn(V)(r)$ and similarly for $\Fil^k \mbfn(T(r))$.
	\end{enumromanup}
\end{lem}
\begin{proof}
	\begin{enumromanup}
	\item For $k \leq 0$, the claim is obvious, so we assume that $k > 0$.
		Then we are reduced to showing that $q^k \mbfn(V(r)) \cap \mbfn(T(r)) = q^k \mbfn(T(r))$.

		To prove the latter claim, note that it is enough to work under the assumption that $\mbfn(T(r))$ is free.
		Indeed, for any finite $q\textrm{-height}$ representation $V(r)$, there exists a finite étale $R\textrm{-algebra}$ $R\prm$ such that $\mbfa_{R\prm}^+ \otimes_{\mbfa_R^+} \mbfn(T(r))$ is free.
		Since $\mbfa_{R\prm}^+$ is faithfully flat over $\mbfa_R^+$, the claim is equivalent to showing that $\mbfa_{R\prm}^+ \otimes_{\mbfa_R^+} (q^k \mbfn(V) \cap \mbfn(T)) = q^k \mbfa_{R\prm}^+ \otimes_{\mbfa_R^+} \mbfn(T)$.
		But one can easily obtain that $\mbfa_{R\prm}^+ \otimes_{\mbfa_R^+} (q^k \mbfn(V) \cap \mbfn(T)) = (q^k \mbfa_{R\prm}^+ \otimes_{\mbfa_R^+} \mbfn(V)) \cap (\mbfa_{R\prm}^+ \otimes_{\mbfa_R^+} \mbfn(T))$ (or see \cite[Theorem 7.4 (i)]{matsumura-reid-commutative}) as submodules of $\mbfa_{R\prm}^+ \otimes_{\mbfa_R^+} \mbfn(V)$.
		So below we will assume that $\mbfn(T(r))$ is free over $\mbfa_R^+$ with a basis $\{f_1, \ldots, f_h\}$, where $h = \dim_{\QQ_p} V(r)$.

		Let $x = \sum_{i=1}^h x_i f_i \in q^k \mbfn(V(r)) \cap \mbfn(T(r))$ with $x_i \in \mbfa_R^+$.
		Since $\{f_1, \ldots, f_h\}$ is also a $\mbfb_{R}^+\textrm{-basis}$ of $\mbfn(V(r))$, we can write $x = q^k \sum_{i=1}^h y_i f_i$ with $y_i \in \mbfb_{R}^+$.
		Comparing the two expressions for $x$ we obtain that $q^k y_i = x_i \in \mbfa_R^+$, i.e. $y_i \in \mbfa_{R}$ for $1 \leq i \leq h$.
		But this just means that $y_i \in \mbfb_{R}^+ \cap \mbfa_{R} = \mbfa_R^+$, therefore $x_i = q^k y_i \in q^k \mbfa_R^+$ for $1 \leq i \leq h$.
		Hence, $x \in q^k \mbfn(T(r))$ as desired.
		The other inclusion is obvious.

	\item Note that the inclusion $\pi^{-r} \Fil^{k+r} \mbfn(V)(r) \subset \Fil^k \pi^{-r} \mbfn(V)(r)$ is obvious.
		To show the converse let $\pi^{-r} x \otimes \epsilon^{\otimes r} \in \Fil^k \pi^{-r} \mbfn(V)(r)$, with $x \in \mbfn(V)$ and $\epsilon^{\otimes r}$ being a basis of $\QQ_p(r)$.
		Then we have that $\varphi(\pi^{-r} x \otimes \epsilon^{\otimes r}) = q^{-r}\pi^{-r} \varphi(x) \otimes \epsilon^{\otimes r} \in q^k \pi^{-r} \mbfn(V)(r)$.
		Therefore, we obtain that $\varphi(x) \in q^{k+r} \mbfn(V)$, i.e. $x \in \Fil^{k+r} \mbfn(V)$.
	\end{enumromanup}
\end{proof}

\begin{rem}\label{rem:fil_coincide_ar0plus}
	For $V = \QQ_p$ the filtration in Definition \ref{defi:wach_mod_fil} coincides with the filtration in Lemma \ref{lem:fil_ar0plus}
\end{rem}
\begin{proof}
	We have $T = \ZZ_p$ and $\mbfn(T) = \mbfa_R^+$ and let $\varpi = \zeta_p - 1$ (let $\varpi = \zeta_{p^2}-1$ if $p=2$) in this proof.
	Since $\pi^k \mbfa_R^+ \subset \Fil^k \mbfn(T)$ (where the term on right is the filtration in Definiton \ref{defi:wach_mod_fil}), we only need to show that $\Fil^k \mbfn(T) \subset \pi^k \mbfa_R^+ = \mbfa_R^+ \cap \xi^k \mbfa_{R, \varpi}^+$.
	Let $x \in \mbfa_R^+$ such that $\varphi(x) = q^k y$ for some $y \in \mbfa_R^+$.
	As we have $\mbfa_R^+ \subset \mbfa_{R, \varpi}$, we can also write $\varphi(x) = \varphi(\xi^k) y \in \varphi(\mbfa_{R, \varpi}) \subset \mbfa_{R}$, i.e. $y \in \varphi(\mbfa_{R, \varpi}) \cap \mbfa_R^+ = \varphi(\mbfa_{R, \varpi}^+)$ (where the intersection is taken inside $\mbfa_{R}$).
	Therefore, we obtain that $y = \varphi(z)$ for some $z \in \mbfa_{R, \varpi}^+$.
	Since $\varphi : \mbfa_{R, \varpi}^+ \rightarrow  \mbfa_{R, \varpi}^+$ is injective, we must have $x = \xi^k z \in \mbfa_R^+ \cap \xi^k \mbfa_{R, \varpi}^+$, as desired.
\end{proof}

\subsection{Statement of the main result}\label{subsec:main_result}

In this section, we will relate the notion of crystalline and finite $q\textrm{-height}$ representations.
As we will see, we can recover the $R\big[\frac{1}{p}\big]\textrm{-module}$ $\pazo \mbfd_{\crys}(V)$ from the $\mbfa_R^+\textrm{-module}$ $\mbfn(T)$ after passing to a larger period ring and inverting $p$.
We begin by introducing this ring below.

Recall from \S \ref{subsec:setup_nota} that we have $F$ as a finite unramified extenion of $\QQ_p$ with ring of integers $O_F$ and we take $K = F(\zeta_{p^m})$ for a fixed $m \in \NN_{\geq 1}$ (fix $m \in \NN_{\geq 2}$ if $p=2$).
Note that the formulation of the results and proofs depend on $m$ and it is necessary to have $m \geq 1$ ($m \geq 2$ if $p=2$) for the discussion below to make sense.

\subsubsection{The ring \texorpdfstring{$\pazo \mbfa_{R, \varpi}^{\textpd}$}{-}}\label{subsubsec:oarpd}

In this section, we will work with the ring $\mbfa_{R, \varpi}^+$ defined in \S \ref{subsec:cyclotomic_embeddings}, equipped with an action of the Frobenius $\varphi$ and a continuous action of $\Gamma_{R}$.
Since we have a natural injection $\mbfa_{R, \varpi}^+ \rightarrowtail \mbfa_{\inf}(\overline{R})$, we obtain a $G_{R}\textrm{-equivariant}$ commutative diagram
\begin{center}
	\begin{tikzcd}[row sep=large, column sep=large]
		\mbfa_{R, \varpi}^+ \arrow[d, rightarrowtail] \arrow[r, twoheadrightarrow, "\theta"] & R[\varpi] \arrow[d, rightarrowtail] \\
		\mbfa_{\inf}(\overline{R}) \arrow[r, twoheadrightarrow, "\theta"] & \CC^+(\overline{R}).
	\end{tikzcd}
\end{center}
By $R\textrm{-linearity}$, extending scalars for the map $\theta$ above, we obtain a ring homomorphism
\begin{equation*}
	\theta_{R} : R \otimes_{\ZZ} \mbfa_{R, \varpi}^+ \longrightarrow R[\varpi],
\end{equation*}
sending $X_i \otimes 1 \mapsto X_i$, $1 \otimes [X_i^{\flat}] \mapsto X_i$ for $1 \leq i \leq d$ and $1 \otimes \pi_m \mapsto \zeta_{p^m}-1$.
Note that we have inclusion of ideals $\big(\xi, X_i \otimes 1 - 1 \otimes [X_i^{\flat}], \hspace{1mm} \textrm{for} \hspace{1mm} 1 \leq i \leq d\big) \subset \kert \theta_{R} \subset R \otimes_{\ZZ} \mbfa_{R, \varpi}^+$, where $\xi = \frac{\pi}{\pi_1}$.
We have $\mbfa_{R, \varpi}^+ \subset \mbfa_{\inf}(\overline{R})$ and $\theta_{R}$ above is the restriction of $\theta_{R} : R \otimes_{\ZZ} \mbfa_{\inf}(\overline{R}) \twoheadrightarrow \CC^+(\overline{R})$ (see \S \ref{subsubsec:crystalline_defi}).
So similar to $\pazo \mbfa_{\inf}(\overline{R})$ in \S \ref{subsubsec:deRham_prop} and $\pazo \mbfa_{\crys}(\overline{R})$ in \S \ref{subsubsec:crystalline_prop} we define the following rings:

\begin{defi}\phantomsection\label{defi:oarpd}
	\begin{enumromanup}
	\item Define $\pazo \mbfa_{R, \varpi}^+$ to be $\theta_R^{-1}(pR[\varpi])\textrm{-adic}$ completion of $R \otimes_{\ZZ} \mbfa_{R, \varpi}^+$.

	\item Let $x^{[n]} := x^n/n!$ for $x \in \kert \theta_{R}$.
		Define $\pazo \mbfa_{R,\varpi}^{\textpd}$ to be the $\padic$ completion of the divided power envelope of $R \otimes_{\ZZ} \mbfa_{R, \varpi}^+$ with respect to $\kert \theta_{R}$.
	\end{enumromanup}
	Note that we have $\pazo \mbfa_{R, \varpi}^+ = \pazo \mbfa_{\inf}(\overline{R}) \cap \pazo \mbfa_{R, \varpi}^{\textpd} \subset \pazo \mbfa_{\crys}(\overline{R})$.
\end{defi}

Next, taking the divided power envelope of $\theta_{R} / p^n$, we notice that $\pazo \mbfa_{R,\varpi}^{\textpd} / p^n \rightarrowtail \pazo \mbfa_{\crys}(\overline{R}) / p^n$.
Since $\pazo \mbfa_{R,\varpi}^{\textpd} = \lim_n \pazo \mbfa_{R,\varpi}^{\textpd} / p^n$ and $\pazo \mbfa_{\crys}(\overline{R}) = \lim_n \pazo \mbfa_{\crys}(\overline{R}) / p^n$, and (projective) limit is left exact, it follows that for the $\padic$ completion of divided power envelope of $\theta_{R}$, we have $\pazo \mbfa_{R,\varpi}^{\textpd} \subset \pazo \mbfa_{\crys}(\overline{R})$.
Now, over the ring $\pazo \mbfa_{R, \varpi}^{\textpd}$ we can consider the induced action of $\Gamma_{R}$ under which it is stable, and it admits a Frobenius endomorphism arising from the Frobenius on each component of the tensor product.
In particular, from the diagram above we obtain a $G_{R}\textrm{-equivariant}$ commutative diagram
\begin{center}
	\begin{tikzcd}[row sep=large, column sep=large]
		\pazo \mbfa_{R,\varpi}^{\textpd} \arrow[d, rightarrowtail] \arrow[r, twoheadrightarrow, "\theta_{R}"] & R[\varpi] \arrow[d, rightarrowtail] \\
		\pazo \mbfa_{\crys}(\overline{R}) \arrow[r, twoheadrightarrow, "\theta_{R}"] & \CC^+(\overline{R}).
	\end{tikzcd}
\end{center}
Note that the left vertical arrow is Frobenius-equivariant.

Next, we will give an alternative description of the ring $\pazo \mbfa_{R,\varpi}^{\textpd}$.
Let $T = (T_1, \ldots, T_d)$ denote a set of indeterminates and let $\mbfa_{\crys}(\overline{R})\langle T \rangle^{\wedge}$ denote the $\padic$ completion of the divided power polynomial algebra $\mbfa_{\crys}(\overline{R})\langle T \rangle = \mbfa_{\crys}(\overline{R})[T_i^{[n]}, \hspace{1mm} n \in \NN, \hspace{1mm} 1 \leq i \leq d]$.
Recall from \S \ref{subsubsec:crystalline_prop} that we have an isomorphism of rings
\begin{align*}
	f_{\crys} : \mbfa_{\crys}(\overline{R})\langle T \rangle^{\wedge} &\isomorphic \pazo \mbfa_{\crys}(\overline{R})\\
		T_i &\longmapsto X_i \otimes 1 - 1 \otimes [X_i^{\flat}], \hspace{2mm} \textrm{for} \hspace{1mm} 1 \leq i \leq d.
\end{align*}
Now recall that $\mbfa_{R,\varpi}^{\textpd}$ is the $\padic$ completion of the divided power envelope of the surjective map $\theta : \mbfa_{R, \varpi}^+ \twoheadrightarrow R[\varpi]$ with respect to its kernel (see \S \ref{subsec:pd_envelope}).
Next, let $\mbfa_{R,\varpi}^{\textpd}\langle T \rangle^{\wedge}$ denote the $\padic$ completion of the divided power polynomial algebra $\mbfa_{R,\varpi}^{\textpd}\langle T \rangle = \mbfa_{R,\varpi}^{\textpd}[T_i^{[n]}, \hspace{1mm} n \in \NN, \hspace{1mm} 1 \leq i \leq d]$.
Then via the isomorphism $f^{\textpd}$ (see Lemma \ref{lem:fpd_iso} below), we will show that the preimage of $\pazo \mbfa_{R,\varpi}^{\textpd}$, under $f_{\crys}$ is exactly $\mbfa_{R,\varpi}^{\textpd}\langle T \rangle^{\wedge}$.
In other words,
\begin{lem}\label{lem:fpd_iso}
	The morphism of rings
	\begin{align*}
		f^{\textpd} : \mbfa_{R,\varpi}^{\textpd} \langle T \rangle^{\wedge} &\longrightarrow \pazo \mbfa_{R,\varpi}^{\textpd}\\
		T_i &\longmapsto X_i \otimes 1 - 1 \otimes [X_i^{\flat}], \hspace{2mm} \textrm{for} \hspace{1mm} 1 \leq i \leq d,
	\end{align*}
	is an isomorphism.
\end{lem}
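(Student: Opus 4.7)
The plan is to deduce this lemma from Brinon's analogous isomorphism $\mbfa_{\crys}(R)\langle T\rangle^{\wedge} \isomorphic \pazo\mbfa_{\crys}(R_0)$ (cf.\ \cite[Proposition 6.1.5]{brinon-padicrep-relatif}) by realizing our $f^{\textpd}$ as a suitable restriction. The cyclotomic embedding $\iota_{\cycl}$ of \S\ref{subsec:cyclotomic_embeddings} provides an inclusion $\mbfa_R^{\textpd} \hookrightarrow \mbfa_{\crys}(R)$, which induces a morphism $\mbfa_R^{\textpd}\langle T\rangle^{\wedge} \to \mbfa_{\crys}(R)\langle T\rangle^{\wedge}$ compatible with $T_i \mapsto T_i$; composing with Brinon's isomorphism matches the composite of our $f^{\textpd}$ with the inclusion $\pazo\mbfa_R^{\textpd} \hookrightarrow \pazo\mbfa_{\crys}(R_0)$. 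Injectivity of $f^{\textpd}$ therefore reduces to injectivity of $\mbfa_R^{\textpd}\langle T\rangle^{\wedge} \to \mbfa_{\crys}(R)\langle T\rangle^{\wedge}$, which follows by reducing modulo $p^n$ and observing that divided-power polynomial rings are free modules on the divided monomials $\prod T_i^{[k_i]}$ over the coefficient ring.

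For the image, I would verify both inclusions. Containment of the image of $f^{\textpd}$ inside $\pazo\mbfa_R^{\textpd}$ is automatic because each $T_i^{[k]}$ maps to $z_i^{[k]} = (X_i \otimes 1 - 1 \otimes [X_i^{\flat}])^{[k]} \in \pazo\mbfa_R^{\textpd}$, and the source is $p$-adically complete. For the reverse, by Definition \ref{defi:oarpd}, $\pazo\mbfa_R^{\textpd}$ is the $p$-adic completion of $R_0 \smotimes_W \mbfa_R^{\textpd}$ with the $z_i^{[k]}$ adjoined (divided powers of $\xi$ already lie in $\mbfa_R^{\textpd}$), and the $z_i^{[k]}$ are directly hit. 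The remaining task is to realize $R_0 \smotimes_W \mbfa_R^{\textpd}$ in the image. Since $X_i \otimes 1 = T_i + 1 \otimes [X_i^{\flat}]$ is in the image, the assignment $W\{X^{\pm 1}\} \to \mbfa_R^{\textpd}\langle T\rangle^{\wedge}$, $X_i \mapsto T_i + [X_i^{\flat}]$, is well-defined; by the implicit function theorem for $p$-adically complete rings (cf.\ \cite[Proposition 2.1]{colmez-niziol-nearby-cycles}), applied along the étale presentation $R_0 = W\{X^{\pm 1}\}\{Z_j\}/(Q_i)$ with invertible Jacobian $\det(\partial Q_i/\partial Z_j)$, this extends uniquely to a ring map $R_0 \to \mbfa_R^{\textpd}\langle T\rangle^{\wedge}$ whose composition with $f^{\textpd}$ recovers the canonical inclusion $R_0 \hookrightarrow \pazo\mbfa_R^{\textpd}$. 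Together with the constants $\mbfa_R^{\textpd}$, this exhibits the full subalgebra $R_0 \smotimes_W \mbfa_R^{\textpd}$ inside the image, after which $p$-adic closedness of the image yields the required surjectivity.

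The principal difficulty lies in this Hensel-type lifting step: one must ensure that invertibility of $\det(\partial Q_i/\partial Z_j)$ in $R_0$ transfers to $\mbfa_R^{\textpd}\langle T\rangle^{\wedge}$ modulo appropriate powers of $p$, so that the implicit function theorem can be invoked with the starting data $X_i \mapsto T_i + [X_i^{\flat}]$. This is entirely parallel to the earlier argument in \S\ref{subsec:cyclotomic_embeddings} extending $\iota_{\cycl}$ from $R_{\varpi, \square}^+$ to $R_{\varpi}^+$, and to Brinon's construction of the ambient isomorphism, so no genuinely new conceptual ingredient is required.
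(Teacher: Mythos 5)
Your proposal is correct and follows essentially the same route as the paper, which simply invokes Brinon's isomorphism $\mbfa_{\crys}(R)\langle T\rangle^{\wedge}\simeq\pazo\mbfa_{\crys}(R_0)$ and asserts that the preimage of $\pazo\mbfa_R^{\textpd}$ is exactly $\mbfa_R^{\textpd}\langle T\rangle^{\wedge}$; your write-up supplies the injectivity and surjectivity verifications (including the Hensel-type lift of $R_0$ along $X_i\mapsto T_i+[X_i^{\flat}]$) that the paper leaves implicit. One small point: for injectivity it is cleaner to use uniqueness of the expansion $\sum a_{\smbfk}T^{[\smbfk]}$ with $a_{\smbfk}\to 0$ in $\mbfa_{\crys}(R)\langle T\rangle^{\wedge}$ together with injectivity of $\mbfa_R^{\textpd}\hookrightarrow\mbfa_{\crys}(R)$ itself, rather than reducing modulo $p^n$, since the latter would additionally require $\mbfa_{\crys}(R)/\mbfa_R^{\textpd}$ to be $p$-torsion free.
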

\begin{proof}
	The proof follows \cite[Proposition 6.1.5]{brinon-padicrep-relatif} closely.

	Recall that we have a surjective ring homomorphism $\theta : \mbfa_{R,\varpi}^{\textpd} \twoheadrightarrow R[\varpi]$, which is the restriction of the map $\theta : \mbfa_{\crys}(\overline{R}) \twoheadrightarrow \CC^+(\overline{R})$ defined in \S \ref{subsec:relative_crystalline_period_rings}.
	This can be extended in a unique manner into the homomorphism $\theta : \mbfa_{\crys}(\overline{R})\langle T \rangle^{\wedge} \twoheadrightarrow \CC^+(\overline{R})$.
	Restriction of the latter map gives us $\theta : \mbfa_{R,\varpi}^{\textpd}\langle T \rangle^{\wedge} \twoheadrightarrow R[\varpi]$ such that $\theta(T_i^{[n]}) = 0$ for $1 \leq i \leq d$ and $n \geq 1$.

	First, we will show that the $O_F\{X^{\pm 1}\}\textrm{-algebra}$ structure on $\mbfa_{R,\varpi}^{\textpd}\langle T \rangle^{\wedge}$ given by $X_i \mapsto [X_i^{\flat}] + T_i$, extends uniquely to an $R\textrm{-algebra}$ structure.	
	Let $\paza := (\mbfe_{R,\varpi}^+ / \overline{\pi}^{p-1} \mbfe_{R,\varpi}^+)[T_1, \ldots, T_d] / (T_1^p, \ldots, T_d^p)$.
	We have a surjective map $\theta : \mbfa_{R, \varpi}^+ \twoheadrightarrow R[\varpi]$ and its reduction modulo $p$ is given as $\overline{\theta} : \mbfe_{R,\varpi}^+ \twoheadrightarrow R[\varpi] / pR[\varpi]$.
	Since $\xi^p \equiv \overline{\pi}^{p-1} \mod p$, where $\xi = \frac{\pi}{\pi_1}$ is a generator of $\kert \theta \subset \mbfa_{R, \varpi}^+$, we obtain that $\overline{\theta}$ factors as $\overline{\theta} : \mbfe_{R,\varpi}^+ / \overline{\pi}^{p-1} \mbfe_{R,\varpi}^+ \twoheadrightarrow R[\varpi] / pR[\varpi]$.
	This can be extended to a map $\overline{\theta} : \paza \twoheadrightarrow R[\varpi] / pR[\varpi]$ by setting $\overline{\theta}(T_i) = 0$ for $1 \leq i \leq d$.
	The kernel $\pazi = \kert \overline{\theta} \subset \paza$ is generated by $\xi \equiv \overline{\pi}_1^{p-1} \mod p$ and $\{T_i\}_{1 \leq i \leq d}$.
	Now from the natural inclusion $R/pR \rightarrowtail R[\varpi] / pR[\varpi]$ and the isomorphism $\paza/\pazi \isomorphic R[\varpi] / pR[\varpi]$ via $\overline{\theta}$, we obtain a map $\overline{g} : R/pR \rightarrow \paza/\pazi$ such that $\overline{g}(X_i) = X_i$, which is the image of $X_i^{\flat} \in \paza$ under the map $\overline{\theta}$.
	So we obtain a commutative diagram
	\begin{center}
		\begin{tikzcd}[row sep=large, column sep=large]
			\kappa[X^{\pm 1}] \arrow[d] \arrow[r] & \paza \arrow[d, twoheadrightarrow]\\
			R/pR \arrow[r, rightarrowtail] \arrow[ru, dashed, "\overline{g}"] & \paza/\pazi
		\end{tikzcd}
	\end{center}
	where the top horizontal arrow is the map $X_i \mapsto X_i^{\flat} + T_i$.
	Note that $\pazi^{(d+1)p} = 0$.
	Since $R/pR$ is \'etale over $\kappa[X^{\pm 1}]$, there exists a unique lift of $\overline{g} : R/pR \rightarrow \paza/\pazi$ to a homomorphism $\overline{g} : R/pR \rightarrow \paza$ (which we again denote by $\overline{g}$ by slight abuse of notations).

	Further, by the description of divided power envelope in \cite[Proposition 6.1.1]{brinon-padicrep-relatif} we have that
	\begin{align*}\label{eq:arpd_alt_desc}
		\begin{split}
			\mbfa_{R, \varpi}^+[Y_0, Y_1, \ldots] / (pY_0 - \xi^p, pY_{n+1} - Y_n^p)_{n \geq 1} &\isomorphic \mbfa_{R,\varpi}^{\textpd}\\
					Y_n &\longmapsto \tfrac{\xi^{p^{n+1}}}{p^{n+1}}.
		\end{split}
	\end{align*}
	Therefore,
	\begin{equation*}
		(\mbfe_{R,\varpi}^+ / \overline{\pi}^{p-1} \mbfe_{R,\varpi}^+)[Y_0, Y_1, \ldots] / (Y_n^p)_{n \geq 1} \isomorphic \mbfa_{R,\varpi}^{\textpd} / p \mbfa_{R,\varpi}^{\textpd},
	\end{equation*}
	Similarly, we have 
	\begin{equation*}\label{eq:arpd_al_desc}
		(\mbfa_{R,\varpi}^{\textpd}[T_1, \ldots, T_d])[T_{i,0}, T_{i,1}, \ldots] / (pT_{i,0} - T_i^p, pT_{i, n+1} - T_{i, n}^p)_{1 \leq i \leq d, \hspace{0.5mm} n \in \NN} \isomorphic \mbfa_{R,\varpi}^{\textpd}\langle T \rangle.
	\end{equation*}
	Therefore,
	\begin{equation*}
		(\mbfa_{R,\varpi}^{\textpd} / p\mbfa_{R,\varpi}^{\textpd})[T_1, \ldots, T_d][T_{i, 0}, T_{i, 1}, \ldots] / (T_i^p, T_{i, n}^p)_{1 \leq i \leq d, \hspace{0.5mm} n \in \NN} \isomorphic \mbfa_{R,\varpi}^{\textpd}\langle T \rangle / p\mbfa_{R,\varpi}^{\textpd}\langle T \rangle.
	\end{equation*}
	In conclusion, we have
	\begin{equation*}
		\paza [Y_0, Y_1, \ldots, T_{i, 0}, T_{i, 1}, \ldots] / (Y_n^p, T_{i, n}^p)_{1 \leq i \leq d, \hspace{0.5mm} n \in \NN} \isomorphic \mbfa_{R,\varpi}^{\textpd}\langle T \rangle / p\mbfa_{R,\varpi}^{\textpd}\langle T \rangle.
	\end{equation*}
	Therefore, from the discussion above we obtain a natural map of $\kappa[X^{\pm 1}]\textrm{-algebras}$ by composition $\overline{g}_1 : R/pR \rightarrow \paza \rightarrow \mbfa_{R,\varpi}^{\textpd}\langle T \rangle / p \mbfa_{R,\varpi}^{\textpd}\langle T \rangle$.

	Now let $n \in \NN$, then modulo $p^n$ we have the natural map $O_F\{X^{\pm 1}\} / p^n O_F\{X^{\pm 1}\} \rightarrow \mbfa_{R,\varpi}^{\textpd}\langle T \rangle / p^n \mbfa_{R,\varpi}^{\textpd}\langle T \rangle$.
	Again, since $R / p^n R$ is \'etale over $O_F\{X^{\pm 1}\} / p^n O_F\{X^{\pm 1}\}$, we have a unique lift of $\overline{g}_n : R/p^nR \rightarrow \mbfa_{R,\varpi}^{\textpd}\langle T \rangle / p^n \mbfa_{R,\varpi}^{\textpd}\langle T \rangle$ in the commutative diagram
	\begin{center}
		\begin{tikzcd}[row sep=large, column sep=large]
			O_F\{X^{\pm 1}\} / p^n O_F\{X^{\pm 1}\} \arrow[d] \arrow[r] & \mbfa_{R,\varpi}^{\textpd}\langle T \rangle / p^n \mbfa_{R,\varpi}^{\textpd}\langle T \rangle \arrow[d, twoheadrightarrow]\\
			R / p^n R \arrow[r] \arrow[ru, dashed, "\overline{g}_n"] & \mbfa_{R,\varpi}^{\textpd}\langle T \rangle / p \mbfa_{R,\varpi}^{\textpd}\langle T \rangle.
		\end{tikzcd}
	\end{center}
	Via this lifting, the following diagram commutes
	\begin{center}
		\begin{tikzcd}[row sep=large, column sep=large]
			R/p^{n+1}R \arrow[d] \arrow[r] & \mbfa_{R,\varpi}^{\textpd}\langle T \rangle / p^{n+1} \mbfa_{R,\varpi}^{\textpd}\langle T \rangle \arrow[d]\\
			R/p^nR \arrow[r] & \mbfa_{R,\varpi}^{\textpd}\langle T \rangle / p^n \mbfa_{R,\varpi}^{\textpd}\langle T \rangle,
		\end{tikzcd}
	\end{center}
	where the vertical arrows are natural projection maps.
	From the universal property of inverse limit of the right side of the diagram, we obtain a natural map of $O_F\{X^{\pm 1}\}\textrm{-algebras}$
	\begin{equation*}
		g : R \longrightarrow \lim_n \mbfa_{R,\varpi}^{\textpd}\langle T \rangle / p^n \mbfa_{R,\varpi}^{\textpd}\langle T \rangle = \mbfa_{R,\varpi}^{\textpd} \langle T \rangle^{\wedge}.
	\end{equation*}
	
	Now, let $\overline{\theta} : \mbfa_{R,\varpi}^{\textpd}\langle T \rangle / p\mbfa_{R,\varpi}^{\textpd}\langle T \rangle \rightarrow R[\varpi] / pR[\varpi]$ denote the reduction of $\theta$ modulo $p$.
	Recall that by construction, $\overline{\theta} \circ \overline{g}$ is the inclusion of $R/pR$ in $R[\varpi] / pR[\varpi]$.
	Therefore, the reduction modulo $p$ of $\theta \circ g$ and the natural inclusion $R \rightarrowtail R[\varpi]$ coincide.
	Since $R$ is $p\textrm{-torsion}$ free, arguing as above we obtain that for each $n \in \NN$, the natural inclusion and $\theta \circ g$ coincide modulo $p^n$.

	Next, by $\mbfa_{R, \varpi}^+\textrm{-linearity}$, $g$ can be extended to a map $g : R \otimes_{O_F} \mbfa_{R, \varpi}^+ \rightarrow \mbfa_{R,\varpi}^{\textpd} \langle T \rangle^{\wedge}$.
	From the discussion above and the definition of $\theta_{R}$, we have that $\theta_R$ coincides with the homomorphism $\theta \circ g : R \otimes_{O_F} \mbfa_{R, \varpi}^+ \rightarrow R[\varpi]$.
	In particular, $g(\kert \theta_{R}) \subset \kert \theta \subset \mbfa_{R,\varpi}^{\textpd} \langle T \rangle ^{\wedge}$.
	Since $\kert \theta$ contains divided powers, the map $g$ extends to a map 
	\begin{equation*}
		g : (R \otimes_{O_F} \mbfa_{R, \varpi}^+)[x^{[n]}, x \in \kert \theta_{R}, n \in \NN] \longrightarrow \mbfa_{R,\varpi}^{\textpd}\langle T \rangle^{\wedge}.
	\end{equation*}
	Finally, since $\mbfa_{R,\varpi}^{\textpd}\langle T \rangle^{\wedge}$ is $\padic$ally complete, $g$ extends to a map $g : \pazo \mbfa_{R,\varpi}^{\textpd} \rightarrow \mbfa_{R,\varpi}^{\textpd}\langle T \rangle^{\wedge}$.

	Now by uniqueness of $g : R \rightarrow \mbfa_{R,\varpi}^{\textpd}\langle T \rangle^{\wedge}$, the composition
	\begin{equation*}
		\pazo \mbfa_{R,\varpi}^{\textpd} \xrightarrow{\hspace{1mm} g \hspace{1mm}} \mbfa_{R,\varpi}^{\textpd}\langle T \rangle^{\wedge} \xrightarrow{\hspace{1mm} f^{\textpd} \hspace{1mm}} \pazo \mbfa_{R,\varpi}^{\textpd},
	\end{equation*}
	coincides with the identity over $R \subset \pazo \mbfa_{R,\varpi}^{\textpd}$.
	Since it also coincides with identity on the image of $\mbfa_{R, \varpi}^+$ (by $\mbfa_{R, \varpi}^+\textrm{-linearity}$), we obtain that $f^{\textpd} \circ g = \textup{id}$ over $\pazo \mbfa_{R,\varpi}^{\textpd}$.
	Similarly, the homomorphism $g \circ f^{\textpd}$ coincides with identity over $\mbfa_{R, \varpi}^+$ as well as over $O_F\{X^{\pm 1}\}$ (since $g$ lifts the map $O_F\{X^{\pm 1}\} \rightarrow \mbfa_{R,\varpi}^{\textpd}\langle T \rangle^{\wedge}$), therefore it is identity over $\mbfa_{R,\varpi}^{\textpd}\langle T \rangle^{\wedge}$.
	This establishes that $f^{\textpd}$ is an isomorphism of rings.
\end{proof}


\begin{rem}\label{rem:fat_pd_ring_struct}
	We can give an alternative construction of the ring $\pazo \mbfa_{R,\varpi}^{\textpd}$.
	Note that we have a ring homomorphism $\iota : R \rightarrow \mbfa_{R,\varpi}^{\textpd}$, where $X_i \mapsto [X_i^{\flat}]$ for $1 \leq i \leq d$.
	As in Definition \ref{defi:fat_ring_const}, we define a map $g : R \otimes_{\ZZ} \mbfa_{R,\varpi}^{\textpd} \rightarrow \mbfa_{R,\varpi}^{\textpd}$, where $x \otimes y \mapsto \iota(x)y$.
	We obtain that $\kert g = \big(X_i \otimes 1 - 1 \otimes [X_i^{\flat}], \hspace{1mm} \textrm{for} \hspace{1mm} 1 \leq i \leq d\big) \subset \kert \theta_{R} \subset \pazo \mbfa_{\crys}(\overline{R})$.
	Since $R \otimes_{\ZZ} \mbfa_{R,\varpi}^{\textpd}$ already contains divided powers of $\xi$, from Definition \ref{defi:oarpd} we obtain that the $\padic$ completion of the divided power envelope of $R \otimes_{\ZZ} \mbfa_{R,\varpi}^{\textpd}$ with respect to $\kert g$ is the same as $\pazo \mbfa_{R,\varpi}^{\textpd}$.
\end{rem}

There is a natural filtration over the ring $\pazo \mbfa_{R,\varpi}^{\textpd}$ by $\Gamma_{R}\textrm{-stable}$ submodules:
\begin{defi}\label{defi:fil_oarpd}
	Let $U_i := \frac{1 \otimes [X_i^{\flat}]}{X_i \otimes 1}$ for $1 \leq i \leq d$ and $r \in \ZZ$, define the filtration over $\pazo \mbfa_{R,\varpi}^{\textpd}$ as
	\begin{equation*}
		\Fil^r \pazo \mbfa_{R,\varpi}^{\textpd} := \Big\langle(a \otimes b) \prod_{i=1}^d(U_i-1)^{[k_i]} \in \pazo \mbfa_{R,\varpi}^{\textpd}, \hspace{1mm} \textrm{such that} \hspace{1mm} a \in R, b \in \Fil^j \mbfa_{R,\varpi}^{\textpd}, \hspace{1mm} \textrm{and} \hspace{1mm} j + \sum_i k_i \geq r\Big\rangle.
	\end{equation*}
\end{defi}

\begin{rem}\label{rem:oarpd_oacris_fil_comp}
	The filtration over $\mbfa_{R,\varpi}^{\textpd}$ (via its identification with $R_{\varpi}^{\textpd}$, see \S \ref{subsec:cyclotomic_embeddings} and Definition \ref{defi:filtration_vanishing_varpi}) coincides with the filtration induced from its embedding in $\mbfa_{\crys}(\overline{R})$.
	Indeed, in both cases we have $\Fil^r \mbfa_{R,\varpi}^{\textpd} = \big(\xi^{[k]}, \hspace{1mm} k \leq r\big) \subset \mbfa_{R,\varpi}^{\textpd}$ for $r \geq 0$, whereas $\Fil^r \mbfa_{R,\varpi}^{\textpd} = \mbfa_{R,\varpi}^{\textpd}$ for $r < 0$.
	Next, the filtration on $\pazo \mbfa_{\crys}(\overline{R})$ is defined as the induced filtration from its embedding inside $\pazo \mbfb_{\dR}^+(\overline{R})$ and the filtration on the latter ring is given by powers of $\kert \theta_R$ (see \S \ref{subsec:rel_deRham_ring} \& \ref{subsec:relative_crystalline_period_rings} for definition and notation).
	The induced filtration over $\pazo \mbfa_{\crys}(\overline{R})$ is therefore given by divided powers of the ideal $\kert \theta_{R} \subset \pazo \mbfa_{\crys}(\overline{R})$.
	Since the filtration over $\pazo \mbfa_{R,\varpi}^{\textpd}$ in Definition \ref{defi:fil_oarpd} is again given by divided powers of the ideal $\kert \theta_{R} \subset \pazo \mbfa_{R,\varpi}^{\textpd}$, we infer that this filtration coincides with the one induced by its embedding into $\pazo \mbfa_{\crys}(\overline{R})$.
\end{rem}

\begin{lem}\phantomsection\label{lem:gamma_inv_oarpd}
	\begin{enumromanup}
	\item The action of $\Gamma_{R, \varpi}$ is trivial on $\pazo \mbfa_{R,\varpi}^{\textpd} / \pi$, whereas $\Gamma_R / \Gamma_{R, \varpi}$ acts trivially over $\pazo \mbfa_{R,\varpi}^{\textpd} / \pi_m$.

	\item We have $(\pazo \mbfa_{R,\varpi}^{\textpd})^{\Gamma_{R}} = R$ and $(\Fil^1 \pazo \mbfa_{R}^{\textpd})^{\Gamma_{R}} = 0$.
	\end{enumromanup}
\end{lem}
\begin{proof}
	\begin{enumromanup}
	\item The first part follows from the definition of $\pazo \mbfa_{R,\varpi}^{\textpd}$ and the action of $\Gamma_{R, \varpi}$ on $\mbfa_{R,\varpi}^{\textpd}$ (see Lemma \ref{lem:gamma_minus_1_pd}).
		The second part follows from observing that $\Gamma_R / \Gamma_{R, \varpi} = \Gamma_F / \Gamma_K$ is a finite cyclic group of order $[K:F] = p^{m-1}(p-1)$, and a lift $g \in \Gamma_{R}$ of a generator of $\Gamma_R / \Gamma_{R, \varpi}$ acts as $g(\pi_m) = (1+\pi_m)^{\chi(g)} - 1$.

	\item This is straightforward, since $R \subset \big(\pazo \mbfa_{R,\varpi}^{\textpd}\big)^{\Gamma_{R}} \subset \big(\pazo \mbfa_{\crys}(\overline{R})\big)^{G_{R}} = R$ and $(\Fil^1 \pazo \mbfa_{R}^{\textpd})^{\Gamma_{R}} \subset (\Fil^1 \pazo \mbfb_{\crys}(\overline{R}))^{G_{R}} \subset (\Fil^1 \pazo \mbfb_{\dR}(\overline{R}))^{G_{R}} = 0$ (for last equality see the proof of \cite[Proposition 5.2.12]{brinon-padicrep-relatif}).
	\end{enumromanup}
\end{proof}

Next we consider a connection over $\pazo \mbfa_{R,\varpi}^{\textpd}$ induced by the connection on $\pazo \mbfa_{\crys}(\overline{R})$,
\begin{equation*}
	\partial : \pazo \mbfa_{R,\varpi}^{\textpd} \longrightarrow \pazo \mbfa_{R,\varpi}^{\textpd} \otimes \Omega^1_{R},
\end{equation*}
where we have $\partial\big(X_i \otimes 1 - 1 \otimes [X_i^{\flat}]\big)^{[n]} = \big(X_i \otimes 1 - 1 \otimes [X_i^{\flat}]\big)^{[n-1]} \hspace{1mm} dX_i$.
This connection over $\pazo \mbfa_{R,\varpi}^{\textpd}$ satisfies Griffiths transversality with respect to the filtration since it does so over $\pazo \mbfa_{\crys}(\overline{R})$.

\subsubsection{Main result}

\begin{thm}\label{thm:crys_wach_comparison}
	Let $V$ be a positive finite $q\textrm{-height}$ representation of $G_{R}$, then
	\begin{enumromanup}
	\item $V$ is a positive crystalline representation.

	\item Let $M := \big(\pazo\mbfa_{R,\varpi}^{\textpd} \otimes_{\mbfa_R^+} \mbfn(T)\big)^{\Gamma_{R}}$, then after extending scalars to $\pazo \mbfa_{R,\varpi}^{\textpd}$ and inverting $p$, we obtain a natural isomorphism
		\begin{equation*}
			\pazo \mbfa_{R,\varpi}^{\textpd} \otimes_{R} M\big[\tfrac{1}{p}\big] \isomorphic \pazo \mbfa_{R,\varpi}^{\textpd} \otimes_{\mbfa_R^+} \mbfn(V),
		\end{equation*}
		compatible with Frobenius, filtration, connection and the action of $\Gamma_{R}$ on each side.

	\item We have an isomorphism of $R\big[\frac{1}{p}\big]\textrm{-modules}$ 
		\begin{equation*}
			\pazo \mbfd_{\crys}(V) \lisomorphic \big(\pazo \mbfa_{R,\varpi}^{\textpd} \otimes_{\mbfa_R^+} \mbfn(T)\big)^{\Gamma_R}\big[\tfrac{1}{p}\big],
		\end{equation*}
		compatible with Frobenius, filtration, and connection on each side.
		Therefore, we obtain a comparison isomorphism
		\begin{equation*}
			\pazo \mbfa_{R,\varpi}^{\textpd} \otimes_{\mbfa_R^+} \mbfn(V) \isomorphic \pazo \mbfa_{R,\varpi}^{\textpd} \otimes_{R} \pazo \mbfd_{\crys}(V),
		\end{equation*}
		compatible with Frobenius, filtration, connection and the action of $\Gamma_{R}$ on each side.
	\end{enumromanup}
\end{thm}

\begin{rem}
	The statement of Theorem \ref{thm:crys_wach_comparison} can be seen an analogue of the result of Berger \cite[Proposition II.2.1]{berger-limites-cristallines} (see the discussion after Proposition \ref{prop:wach_modules_equivalent_categories}).
\end{rem}

Recall that from Definition \ref{defi:wach_reps} any finite $q\textrm{-height}$ representation is a twist of a positive finite $q\textrm{-height}$ representation by $\QQ_p(r)$, for $r \in \NN$.
Since twist by $\QQ_p(r)$ of crystalline representations are again crystalline, we obtain that:
\begin{cor}
	All finite $q\textrm{-height}$ representations of $G_{R}$ are crystalline.
\end{cor}

The proof of Theorem \ref{thm:crys_wach_comparison} will proceed in two steps: 
First, we will describe a process by which we can recover a submodule of $\pazo \mbfd_{\crys}(V)$ starting from the Wach module (see Proposition \ref{prop:crys_from_wach_mod}), here we establish the comparison displayed in (ii).
Next, the remaining claims made in the theorem are shown by exploiting some properties of Wach modules and the comparison obtained in the first step.

In \S \ref{subsec:onedim_reps}, we will explicitly state the structure of Wach module attached to a one-dimensional finite $q\textrm{-height}$ representation and we will also show that all one-dimensional crystalline representations are of finite $q\textrm{-height}$ and one can recover $\pazo \mbfd_{\crys}(V)$ starting with the Wach module $\mbfn(V)$.
Combining this with the theorem above, we will obtain that the notion of crystalline representations and finite $q\textrm{-height}$ representations coincide in dimension 1.

\subsection{From \texorpdfstring{$(\varphi, \Gamma)$}{-}-modules to \texorpdfstring{$(\varphi, \partial)$}{-}-modules}

The objective of this section is to prove the following:
\begin{prop}\label{prop:crys_from_wach_mod}
	Let $V$ be an $h\textrm{-dimensional}$ positive finite $q\textrm{-height}$ representation of $G_{R}$, $T \subset V$ a $\ZZ_p\textrm{-lattice}$ of rank $h$ stable under the action of $G_{R}$ and $\mbfn(T)$ the associated Wach module.
	Then 
	\begin{enumromanup}
	\item $M := \big(\pazo \mbfa_{R,\varpi}^{\textpd} \otimes_{\mbfa_R^+} \mbfn(T)\big)^{\Gamma_{R}}$ is a finitely generated $R\textrm{-module}$ contained in $\pazo \mbfd_{\crys}(V)$.

	\item $M\big[\frac{1}{p}\big]$ is a finitely generated projective $R\big[\frac{1}{p}\big]\textrm{-module}$ of rank $h$ and the natural inclusion
		\begin{equation*}
			\pazo \mbfa_{R,\varpi}^{\textpd} \otimes_{R} M\big[\tfrac{1}{p}\big] \longrightarrow \pazo \mbfa_{R,\varpi}^{\textpd} \otimes_{\mbfa_R^+} \mbfn(V),
		\end{equation*}
		is an isomorphism compatible with Frobenius, filtration, connection and the action of $\Gamma_{R}$.

	\item If $\mbfn(T)$ is free over $\mbfa_R^+$ then there exists a free $R\textrm{-module}$ $M_0 \subset M$ such that $M_0\big[\frac{1}{p}\big] = M\big[\frac{1}{p}\big]$ are free modules of rank $h$ over $R\big[\frac{1}{p}\big]$.
	\end{enumromanup}
\end{prop}
\begin{proof}
	We will use the notation of Definition \ref{defi:wach_reps} without repeating them.
	The first claim is easy to establish.
	Since we have $H_{R} = \Gal\big(\overline{R}\big[\frac{1}{p}\big] / R_{\infty}\big[\frac{1}{p}\big]\big)$, therefore we can write 
	\begin{align}\label{eq:wach_invar_in_crys}
		\begin{split}
			M &= \big(\pazo \mbfa_{R,\varpi}^{\textpd} \otimes_{\mbfa_R^+} \mbfn(T)\big)^{\Gamma_{R}} \subset \big(\pazo \mbfa_{R,\varpi}^{\textpd} \otimes_{\mbfa_R^+} \mbfd^+(T)\big)^{\Gamma_{R}} \subset \big(\pazo \mbfa_{\crys}(\overline{R})^{H_{R}} \otimes_{\mbfa_R^+} \mbfd^+(T) \big)^{\Gamma_{R}} \\
			&\subset \big(\pazo\mbfa_{\crys}(\overline{R})^{H_{R}} \otimes_{\mbfa_R^+} \big(\mbfa^+ \otimes_{\ZZ_p} T\big)^{H_{R}}\big)^{\Gamma_{R}} \subset \big(\pazo\mbfa_{\crys}(\overline{R}) \otimes_{\ZZ_p} T\big)^{G_{R}} \subset \pazo \mbfd_{\crys}(V).
		\end{split}
	\end{align}
	The module $\big(\pazo \mbfa_{\crys}(\overline{R}) \otimes_{\ZZ_p} T\big)^{G_{R}}$ is finitely generated over $R$.
	Since $R$ is Noetherian, $M$ is finitely generated.

	Independently, we have that $R\big[\frac{1}{p}\big]$ is Noetherian and $\pazo \mbfd_{\crys}(V)$ is a finitely generated $R\big[\frac{1}{p}\big]\textrm{-module}$, therefore $M\big[\frac{1}{p}\big] \subset \pazo \mbfd_{\crys}(V)$ is finitely generated over $R\big[\frac{1}{p}\big]$.
	Moreover, the module $\pazo \mbfa_{R,\varpi}^{\textpd} \otimes_{\mbfa_R^+} \mbfn(T)$ is equipped with an $\mbfa_{R,\varpi}^{\textpd}\textrm{-linear}$ and integrable connection $\partial_N = \partial \otimes 1$, where $\partial$ is the connection on $\pazo \mbfa_{R,\varpi}^{\textpd}$ described after Lemma \ref{lem:gamma_inv_oarpd}.
	Therefore, we can consider the induced connection on $M\big[\frac{1}{p}\big]$, which is integrable since it is integrable over $\pazo \mbfa_{R,\varpi}^{\textpd} \otimes_{\mbfa_R^+} \mbfn(T)$.
	This connection is compatible with the one on $\pazo \mbfd_{\crys}(V)$ since the connection over $\pazo \mbfa_{R,\varpi}^{\textpd}$ is induced from the connection over $\pazo \mbfa_{\crys}(\overline{R})$.
	So by \cite[Proposition 7.1.2]{brinon-padicrep-relatif} we obtain that $M\big[\frac{1}{p}\big]$ must be projective of rank $\leq h$.
	Furthermore, the inclusion $M\big[\frac{1}{p}\big] \subset \pazo \mbfd_{\crys}(V)$ is compatible with natural Frobenius on each module since all the inclusions in \eqref{eq:wach_invar_in_crys} are compatible with Frobenius.

	Next, we will show that the rank of $M\big[\frac{1}{p}\big]$ as a projective $R\big[\frac{1}{p}\big]\textrm{-module}$ is exactly $h$.
	But first let us prove that it is enough to show that the rank is $h$ after a finite \'etale extension of $R$.
	Let us consider $R\prm$ to be a finite \'etale extension of $R$ such that the corresponding scalar extension $\mbfa_{R\prm}^+ \otimes_{\mbfa_R^+} \mbfn(T)$ is a free module of rank $h$ (see Definition \ref{defi:wach_reps}) and $R\prm\big[\frac{1}{p}\big] / R\big[\frac{1}{p}\big]$ is Galois.
	The discussion of previous chapters hold for $R\prm$ (see \cite[Chapitre 2]{brinon-padicrep-relatif} and \cite[\S 2]{andreatta-iovita-relative-phiGamma} for more on this).
	In particular, for $R\prm[\varpi]$ we have rings $\mbfa_{R\prm}^+$, $\mbfa_{R\prm, \varpi}^+$, $\mbfa_{R\prm,\varpi}^{\textpd}$ and $\pazo \mbfa_{R\prm,\varpi}^{\textpd}$.
	Let $R_{\infty}\prm\big[\frac{1}{p}\big]$ denote the cyclotomic tower over $R\prm\big[\frac{1}{p}\big]$ and
	\begin{equation*}
		\Gamma_{R\prm} = \Gal\big(R_{\infty}\prm\big[\tfrac{1}{p}\big] / R\prm\big[\tfrac{1}{p}\big]\big) \hspace{2mm} \textrm{and} \hspace{2mm} H_{R\prm} = \kert(G_{R\prm} \rightarrow \Gamma_{R\prm}).
	\end{equation*}
	Similarly, we have Galois groups $\Gamma_{R\prm}$ and $H_{R\prm}$.
	Let 
	\begin{equation*}
		G\prm := \Gal\big(R_{\infty}\prm\big[\tfrac{1}{p}\big] / R_{\infty}\big[\tfrac{1}{p}\big]\big) = \Gal\big(R\prm[\varpi]\big[\tfrac{1}{p}\big] / R[\varpi]\big[\tfrac{1}{p}\big]\big) = \Gal\big(R\prm\big[\tfrac{1}{p}\big] / R\big[\tfrac{1}{p}\big]\big),
	\end{equation*}
	then we have that $H_{R, \varpi} / H_{R\prm, \varpi} = H_{R} / H_{R\prm} = G\prm$.
	So we obtain that 
	\begin{equation*}
		\mbfa_R^+ = (\mbfa^+)^{H_{R}} = \big((\mbfa^+)^{H_{R\prm}}\big)^{H_{R} / H_{R\prm}} = \big(\mbfa_{R\prm}^+\big)^{G\prm}.
	\end{equation*}
	Moreover, for the base ring $R[\varpi]$ (instead of $R$) one can consider the ring $\mbfa_{\varpi}^+$ as in Remark \ref{rem:a_varpi}.
	Then we have
	\begin{equation*}
		\mbfa_{R, \varpi}^+ = (\mbfa_{\varpi}^+)^{H_{R, \varpi}} = \big((\mbfa_{\varpi}^+)^{H_{R\prm, \varpi}}\big)^{H_{R, \varpi} / H_{R\prm, \varpi}} = \big(\mbfa_{R\prm,\varpi}^+\big)^{G\prm}.
	\end{equation*}
	From these equalities and the description of the action of $\Gamma_{R}$ on $\xi = \frac{\pi}{\pi_1}$, it is clear that
	\begin{equation*}
		\mbfa_{R,\varpi}^{\textpd} = \big(\mbfa_{R\prm,\varpi}^{\textpd}\big)^{G\prm}, \hspace{2mm} \textrm{and therefore} \hspace{2mm} \pazo \mbfa_{R,\varpi}^{\textpd} = \big(\pazo \mbfa_{R\prm,\varpi}^{\textpd}\big)^{G\prm}.
	\end{equation*}
	Now, since $\mbfn(T)$ is projective and $G\prm$ acts trivially on it, we obtain that
	\begin{align*}
		\big(\pazo \mbfa_{R\prm,\varpi}^{\textpd} \otimes_{\mbfa_{R\prm}^+} \big(\mbfa_{R\prm}^+ \otimes_{\mbfa_R^+} \mbfn(T)\big)\big)^{G\prm} &= \pazo \mbfa_{R,\varpi}^{\textpd} \otimes_{\mbfa_R^+} \mbfn(T)\\
		\big(\pazo \mbfa_{R\prm,\varpi}^{\textpd} \otimes_{R\prm} \big(R\prm \otimes_{R} M\big[\tfrac{1}{p}\big]\big)\big)^{G\prm} &= \pazo \mbfa_{R,\varpi}^{\textpd} \otimes_{R} M\big[\tfrac{1}{p}\big].
	\end{align*}
	In particular, base changing to $\mbfa_{R\prm}^+$ to obtain $\mbfn(T)$ as a free module is harmless.
	For the convenience in notation, below we will replace $R\prm$ obtained in this manner by $R$ and assume $\mbfn(T)$ to be free over $\mbfa_R^+$.

	In order to show that the rank of $M\big[\frac{1}{p}\big]$ is at least $h$, we will find $\Gamma_R\textrm{-fixed}$ elements of $\pazo \mbfa_{R,\varpi}^{\textpd} \otimes_{\mbfa_R^+} \mbfn(T)$ corresponding to a basis of $\mbfn(T)$, which are linearly independent elements of $M\big[\frac{1}{p}\big]$.
	To carry this out, first we will define several new rings following \cite[\S B.1]{wach-pot-crys} and examine their relation with $\pazo \mbfa_{R,\varpi}^{\textpd}$.
	After extending scalars of $\mbfn(T)$, we will define differential operators on the obtained module, corresponding to the topological generators of $\Gamma_R$.
	Next, for any element of $\mbfn(T)$, we will write down a corresponding element killed by the differential operators which we will show is fixed by $\Gamma_R$.

\begin{rem}
	Note that the $\Gamma_R\textrm{-fixed}$ elements of $\pazo \mbfa_{R,\varpi}^{\textpd} \otimes_{\mbfa_R^+} \mbfn(T)$ can be obtained by successive approximation as well.
	This computation was carried out by the author in his thesis (see \cite[\S 3.2.3]{abhinandan-thesis}).
\end{rem}

\subsubsection{Auxiliary rings and modules}

	For $n \in \NN$, let us define a $p\textrm{-adically}$ complete ring
	\begin{equation*}
		S_n^{\textpd} := \mbfa_R^+\big\{\tfrac{\pi}{p^n}, \tfrac{\pi^2}{2!p^{2n}}, \ldots, \tfrac{\pi^k}{k!p^{kn}}, \ldots\big\}.
	\end{equation*}
	Let $I_n^{[i]}$ denote the ideal of $S_n^{\textpd}$ generated by $\frac{\pi^k}{k!p^{kn}}$ for $k \geq i$ and we set
	\begin{equation}\label{eq:snpd}
		\widehat{S}_n^{\textpd} := \lim_i S_n^{\textpd}\big/I_n^{[i]}.
	\end{equation}
	Note that $\widehat{S}_n^{\textpd}$ is $p\textrm{-adically}$ complete as well.
	Further, note that we can write $\varphi(\pi) = (1+\pi)^p - 1 = \pi^p + p \pi x$ for some $x \in \mbfa_F^+$, therefore
	\begin{align*}
		\frac{\varphi(\pi^k)}{k!p^{kn}} &= \frac{(\pi^p + p \pi x)^k}{k!p^{kn}} = \frac{\sum_{i=0}^{k} \binom{k}{i} \pi^{pi} \cdot (p \pi x)^{k-i}}{k!p^{kn}}\\
		&= \sum_{i=0}^{k} \frac{(k+(p-1)i)! p^{i(n(p-1)-p)}}{i!(k-i)!} \cdot \frac{\pi^{k+(p-1)i}x^{k-i}}{(k+(p-1)i)!p^{(k+(p-1)i)(n-1)}} \in \widehat{S}_{n-1}^{\textpd}
	\end{align*}
	Using this, the Frobenius operator on $S$ can be extended to a map $\varphi : \widehat{S}_n^{\textpd} \rightarrow \widehat{S}_{n-1}^{\textpd}$, which we will again call Frobenius.
	The ring $\widehat{S}_n^{\textpd}$ readily admits a continuous action of $\Gamma_{R}$ which commutes with the Frobenius.

\begin{lem}
	The ring $\widehat{S}_0^{\textpd}$ is a subring of $\mbfa_{R,\varpi}^{\textpd}$, and therefore $\varphi^n\big(\widehat{S}_n^{\textpd}\big) \subset \mbfa_{R,\varpi}^{\textpd}$.
\end{lem}
\begin{proof}
	The first claim is true because we have
	\begin{equation*}
		\pi_1^p \equiv \pi \hspace{1mm} \textrm{mod} \hspace{1mm} p \mbfa_{F, \varpi}^+, \hspace{2mm} \textrm{which gives} \hspace{2mm} \pi_1^{p^{i}} \equiv \pi^{p^{i-1}} \hspace{1mm} \textrm{mod} \hspace{1mm} p^i \mbfa_{F, \varpi}^+.
	\end{equation*}
	So for $k \geq p^i$ we can write
	\begin{equation*}
		\frac{\pi^k}{k!} = \frac{\xi^k \pi_1^k}{k!} = \frac{\xi^k}{k!} \pi_1^{k-p^i}\big(\pi^{p^{i-1}} + p^ia\big) = p^i a \pi_1^{k-p^i} \frac{\xi^k}{k!} + p^{i-1} \pi_1^{p^{i-1}} \frac{(k+p^{i-1})!}{k!p^{i-1}} \frac{\xi^{k+p^{i-1}}}{(k+p^{i-1})!} \in p^{i-1} \mbfa_{F, \varpi}^{\textpd},
	\end{equation*}
	for some $a \in \mbfa_{F, \varpi}^+$.
	Therefore, we get that $I_0^{[p^i]} \subset p^{i-1} \mbfa_{R,\varpi}^{\textpd}$ and hence $\widehat{S}_0^{\textpd} \subset \mbfa_{R,\varpi}^{\textpd}$.
	The second claim is obvious.
\end{proof}
	
	In the relative setting, we need slightly larger rings.
	Let us consider the $O_F\textrm{-linear}$ homomorphism of rings
	\begin{align*}
		\iota : R &\longrightarrow \widehat{S}_n^{\textpd} \\
		X_j &\longmapsto [X_j^{\flat}] \hspace{2mm} \textrm{for} \hspace{1mm} 1 \leq j \leq d.
	\end{align*}
	Using $\iota$ we can define an $O_F\textrm{-linear}$ morphism of rings
	\begin{align*}
		f : R \otimes_{O_F} \widehat{S}_n^{\textpd} &\longrightarrow \widehat{S}_n^{\textpd}\\
			a \otimes b &\longmapsto \iota(a) b.
	\end{align*}
	Let $\pazo \widehat{S}_n^{\textpd}$ denote the $\padic$ completion of the divided power envelope of $R \otimes_{O_F} \widehat{S}_n^{\textpd}$ with respect to $\kert f$.
	Further, the morphism $f$ extends uniquely to a continuous morphism $f : \pazo \widehat{S}_n^{\textpd} \rightarrow \widehat{S}_n^{\textpd}$.
	Now, it easily follows from the discussion in \S \ref{subsec:fat_period_rings} that the kernel of the morphism $f$ is generated by divided powers of the ideal generated by $(1-V_1, \ldots, 1-V_d)$, where $V_j = \frac{X_j \otimes 1}{1 \otimes [X_j^{\flat}]}$ for $1 \leq j \leq d$.
	The Frobenius operator extends to $\pazo \widehat{S}_n^{\textpd}$ as well as the continuous action of $\Gamma_{R}$.
	From the discussion above we have $\varphi^n(\widehat{S}_n^{\textpd}) \subset \widehat{S}_0^{\textpd} \subset \mbfa_{R,\varpi}^{\textpd}$, and following the description of $\pazo \widehat{S}_0^{\textpd}$ in \S \ref{subsec:fat_period_rings} and of $\pazo \mbfa_{R,\varpi}^{\textpd}$ from Remark \ref{rem:fat_pd_ring_struct}, we obtain that
	\begin{equation*}
		\pazo \widehat{S}_0^{\textpd} \subset \pazo \mbfa_{R,\varpi}^{\textpd} \hspace{2mm} \textrm{and} \hspace{2mm} \varphi^n\big(\pazo \widehat{S}_n^{\textpd}\big) \subset \pazo \mbfa_{R,\varpi}^{\textpd}.
	\end{equation*}
	Moreover, we have a canonical inclusion of $\widehat{S}_n^{\textpd} \subset \pazo \widehat{S}_n^{\textpd}$ compatible with all the structures.
	
	Now let us take $n \in \NN_{\geq 1}$ and consider the ring $\pazo \widehat{S}_n^{\textpd}$ below.
	We set the ideal
	\begin{equation*}
		J := \big(\tfrac{\pi}{p^n}, 1-V_1, \ldots, 1-V_d\big) \subset \pazo \widehat{S}_n^{\textpd},
	\end{equation*}
	and its divided power
	\begin{equation*}
		J^{[i]} := \Big\langle \tfrac{\pi^{[k_0]}}{p^{nk_0}} \prod_{j=1}^d (1-V_j)^{[k_j]}, \hspace{1mm} \smbfk = (k_0, k_1, \ldots, k_d) \in \NN^{d+1} \hspace{1mm} \textrm{such that} \hspace{1mm} \sum_{j=0}^d k_j \geq i\Big\rangle \subset \pazo \widehat{S}_n^{\textpd}.
	\end{equation*}
	By construction of $\pazo \widehat{S}_n^{\textpd}$, it is clear that a summation $\sum_{i \in \NN} x_i a_i$ with $a_i \in J^{[i]}$ and $x_i \in \widehat{S}_n^{\textpd}$ goes to $0$ as $i \rightarrow +\infty$, converges in $\pazo \widehat{S}_n^{\textpd}$.
	Moreover, every $x \in \pazo \widehat{S}_n^{\textpd}$ has a presentation as $x = \sum_{\smbfk \in \NN^{d+1}} x_{\smbfk} \frac{\pi^{[k_0]}}{p^{nk_0}} \prod_{j=1}^d (1-V_j)^{[k_j]}$, where $x_{\smbfk} \in \mbfa_R^+$ goes to $0$ as $|\smbfk| = \sum_j k_j \rightarrow +\infty$.

	Next, we set 
	\begin{equation*}
		\pazo N_n^{\textpd} := \pazo \widehat{S}_n^{\textpd} \otimes_{\mbfa_R^+} \mbfn(T).
	\end{equation*}
	Again, $\pazo N_n^{\textpd}$ is $p\textrm{-adically}$ complete and it is equipped with a Frobenius-semilinear operator $\varphi : \pazo \widehat{S}_n^{\textpd} \otimes_{\mbfa_R^+} \mbfn(T) \rightarrow \pazo \widehat{S}_{n-1}^{\textpd} \otimes_{\mbfa_R^+} \mbfn(T)$ and a continuous and semilinear action of $\Gamma_{R}$.
	Now recall that we fixed $m \in \NN_{\geq 1}$ (fix $m \in \NN_{\geq 2}$ if $p=2$) such that $K = F(\zeta_{p^m})$.
	So we take
	\begin{equation*}
		M\prm := \big(\pazo N_m^{\textpd}\big)^{\Gamma_{R}\prm} \hspace{2mm} \textrm{and} \hspace{2mm} M\dprm := (M\prm)^{\Gamma_F} = \big(\pazo N_m^{\textpd}\big)^{\Gamma_{R}}.
	\end{equation*}
	Since we assumed $\mbfn(T)$ to be free, we have that $\pazo N_m^{\textpd}$ is a free $\pazo \widehat{S}_m^{\textpd}\textrm{-module}$ of rank $h$.
	As we have $\varphi^m\big(\pazo \widehat{S}_m^{\textpd}\big) \subset \pazo \mbfa_{R,\varpi}^{\textpd}$ so we get that $\varphi^m(M\dprm) \subset \big(\pazo \mbfa_{R,\varpi}^{\textpd} \otimes_{\mbfa_R^+} \mbfn(T)\big)^{\Gamma_{R}} = M$.
	Therefore, to show that the $R\big[\frac{1}{p}\big]\textrm{-rank}$ of $M\big[\frac{1}{p}\big]$ is at least $h$, it is enough to show that for each $x \in \mbfn(T)$ there exists unique $x\dprm \in M\dprm \subset \pazo N_m^{\textpd}$ fixed by $\Gamma_R$ and $x \equiv x\dprm \mod J^{[1]} \pazo N_m^{\textpd}$ (see Lemma \ref{lem:unique_lift}).

\subsubsection{Infinitesimal action of \texorpdfstring{$\Gamma_{R}$}{-}}

From \S \ref{subsec:relative_phi_gamma_mod} recall that we have $\{\gamma, \gamma_1, \ldots, \gamma_d\}$ as a set of topological generators of $\Gamma_{R}$ such that $\{\gamma_1, \ldots, \gamma_d\}$ generate $\Gamma_{R}\prm$ topologically, and $\gamma$ is a lift of a topological generator of $\Gamma_F$ where $\gamma^{e} = \gamma_0$ is a lift of a topological generator of $\Gamma_K$, $e = [K:F]$ and $\chi(\gamma_0) = \exp(p^m)$ where we fixed $m \in \NN_{\geq 1}$ (fix $m \in \NN_{\geq 2}$ if $p=2$).
Further, we have the identity $\gamma_0 \gamma_i = \gamma_i^{\chi(\gamma_0)} \gamma_0$ for $1 \leq i \leq d$.
In this section we will study the infinitesimal action of $\Gamma_R$ on the rings and modules constructed in previous section.

\begin{lem}\label{lem:gamma_minus_1_smpd}
	Let $k \in \NN$, $n \geq m$ and $i \in \{0, 1, \ldots, d\}$.
	Then $(\gamma_i-1) (p^m, \pi)^k \widehat{S}_n^{\textpd} \subset (p^m, \pi)^{k+1} \widehat{S}_n^{\textpd}$.
\end{lem}
\begin{proof}
	First, let $i = 0$.
	Recall that we have $\chi(\gamma_0) = \exp(p^m)  = 1 + p^m a\in 1+ p^m \ZZ_p$.
	So we can write
	\begin{align*}
		(\gamma_0-1)\pi &= (1 + \pi)^{\chi(\gamma_0)} - (1 + \pi)\\
				&= \big(\chi(\gamma_0)\pi + \tfrac{\chi(\gamma_0)(\chi(\gamma_0)-1)}{2!}\pi^2 + \tfrac{\chi(\gamma_0)(\chi(\gamma_0)-1)(\chi(\gamma_0)-2)}{3!} \pi^3 + \cdots\big) - \pi \\
				&= (\chi(\gamma_0)u-1) \pi,
	\end{align*}
	for some $u = 1 + \pi x \in 1 + \pi \mbfa_R^+$.
	Therefore, $\chi(\gamma_0)u - 1 = p^m a + \pi x + p^m a \pi x \in (p^m, \pi) \mbfa_R^+$ which gives us that $(\gamma_0-1)\pi \in (p^m, \pi) \pi \mbfa_R^+$.
	Now we have $(\gamma_0-1) \mbfa_R^+ \subset \pi \mbfa_R^+ \subset (p^m, \pi) \mbfa_R^+$, so proceeding by induction on $k \geq 1$ and using the fact that $\gamma_0-1$ acts as a twisted derivation (i.e. $(\gamma_0-1)xy = (\gamma_0-1)x \cdot y + \gamma_0(x)(\gamma_0-1)y$ for $x, y \in \mbfa_R^+$), we conclude that
	\begin{equation*}
		(\gamma_0 - 1) (p^m, \pi\big)^k \mbfa_R^+ \subset (p^m, \pi)^{k+1} \mbfa_R^+.
	\end{equation*}

	Next, any $f \in \widehat{S}_n^{\textpd}$ can be written as $f = \sum_{s \in \NN} f_s \frac{\pi^s}{s!p^{ns}}$ such that $f_s \in \mbfa_R^+$ goes to $0$ as $s \rightarrow +\infty$.
	Clearly we have
	\begin{equation*}
		(\gamma_0-1)\frac{\pi^s}{s!p^{ns}} = \frac{(\chi(\gamma_0)^s u^s - 1) \pi^s}{s!p^{ns}} \in (p^m, \pi) \frac{\pi^s}{s! p^{ns}} \widehat{S}_n^{\textpd}.
	\end{equation*}
	Combining the discussion for $\mbfa_R^+$ and $\frac{\pi^s}{s!p^{ns}}$, using induction on $k \geq 1$ and using the fact that $\gamma_0-1$ acts as a twisted derivation, we conclude that
	\begin{equation*}
		(\gamma_0-1) (p^m, \pi)^k \widehat{S}_n^{\textpd} \subset (p^m, \pi)^{k+1} \widehat{S}_n^{\textpd}.
	\end{equation*}

	Finally, for $i \in \{1, \ldots, d\}$ we have $(\gamma_i-1)[X_i^{\flat}] = \pi [X_i^{\flat}] \in (p^m, \pi) \mbfa_R^+$ and $(\gamma_i-1)\big([X_i^{\flat}]^{-1}\big) = -\pi(1+\pi)^{-1}[X_i^{\flat}]^{-1} \in (p^m, \pi) \mbfa_R^+$.
	Again by induction on $k \geq 1$ and using the fact that $\gamma_i-1$ acts as a twisted derivation, we get that
	\begin{equation*}
		(\gamma_i-1) (p^m, \pi)^k \mbfa_R^+ \subset (p^m, \pi)^{k+1} \mbfa_R^+.
	\end{equation*}
	Now any $f \in \widehat{S}_n^{\textpd}$ can be written as $f = \sum_{s \in \NN} f_s \frac{\pi^s}{s!p^{ns}}$ such that $f_s \in \mbfa_{R}^+$ goes to $0$ as $s \rightarrow +\infty$, and $\gamma_i$ acts trivially on $\pi$ for $1 \leq i \leq d$, so we conclude that
	\begin{equation*}
		(\gamma_i-1) (p^m, \pi)^k \widehat{S}_n^{\textpd} \subset (p^m, \pi)^{k+1} \widehat{S}_n^{\textpd}.
	\end{equation*}
\end{proof}

\begin{lem}\label{lem:log_gamma_converges}
	For $n \geq m$ and $i \in \{0, 1, \ldots, d\}$ the operators
	\begin{equation*}
		\nabla_i := \log \gamma_i = \sum_{k \in \NN}(-1)^k \tfrac{(\gamma_i-1)^{k+1}}{k+1},
	\end{equation*}
	converge as series of operators on $\widehat{S}_n^{\textpd}$.
\end{lem}
\begin{proof}
	From Lemma \ref{lem:gamma_minus_1_smpd}, we have that for $k \in \NN$
	\begin{equation*}
		(\gamma_i - 1) (p^m, \pi)^k \widehat{S}_n^{\textpd} \subset (p^m, \pi)^{k+1} \widehat{S}_n^{\textpd}.
	\end{equation*}
	Therefore, using the fact that $\gamma_i-1$ acts as a twisted derivation (i.e. $(\gamma_i-1)xy = (\gamma_i-1)x \cdot y + \gamma_i(x)(\gamma_i-1)y$ for $x, y \in \widehat{S}_n^{\textpd}$), we obtain that for $x \in \widehat{S}_n^{\textpd}$
	\begin{equation}\label{eq:gamma0_minus_1_action}
		(\gamma_i-1)^{k+1}(x) \subset (p^m, \pi)^{k+1} \widehat{S}_n^{\textpd}.
	\end{equation}
	Therefore, the following series converges in $\widehat{S}_n^{\textpd}$
	\begin{equation*}
		\nabla_i(x) = \sum_{k \in \NN}(-1)^k \tfrac{(\gamma_i-1)^{k+1}(x)}{k+1}.
	\end{equation*}
	This allows us to conlcude.
\end{proof}

\begin{rem}\label{rem:gamma_mod_pi_sm}
	Note that $\Gamma_{R}$ acts trivially modulo $\pi$ on $\mbfa_R^+$.
	Therefore, we also get that it acts trivially modulo $\pi$ over $\widehat{S}_n^{\textpd}$.
	Hence, for $0 \leq i \leq d$ we have $\nabla_i(\widehat{S}_n^{\textpd}) \subset \pi \widehat{S}_n^{\textpd} = t \widehat{S}_n^{\textpd}$, where the last equality follow from the fact that $\frac{t}{\pi}$ is a unit in $\widehat{S}_n^{\textpd}$ (see Lemma \ref{lem:t_over_pi_sm} below).
\end{rem}

\begin{rem}
	The operators $\nabla_i$ for $0 \leq i \leq d$, defined in Lemma \ref{lem:log_gamma_converges}, describe the action of the Lie algebra $\Lie \Gamma_R$ on $\widehat{S}_n^{\textpd}$, i.e. $\nabla_i$ acts as a differential operator on $\widehat{S}_n^{\textpd}$.
\end{rem}

\begin{lem}\label{lem:t_over_pi_sm}
	$\frac{t}{\pi}$ is a unit in $\widehat{S}_n^{\textpd}$ for $n \geq m$.
\end{lem}
\begin{proof}
	We can write the fraction
	\begin{equation*}
		\frac{t}{\pi} = \frac{\log (1+\pi)}{\pi} = \sum_{k\geq 0} (-1)^k \tfrac{\pi^k}{k+1}.
	\end{equation*}
	Formally, we can write
	\begin{equation*}
		\frac{\pi}{t} = \frac{\pi}{\log(1+\pi)} = b_0 + b_1 \pi + b_2 \pi^2 + b_3 \pi^3 + \cdots,
	\end{equation*}
	where $b_0 = 1$ and $\upsilon_p(b_k) \geq -\frac{k}{p-1}$ for all $k \geq 1$.
	But rewriting the series as a power series in $\frac{\pi^k}{k!p^{nk}}$, we get that
	\begin{equation*}
		\frac{\pi}{t} = \sum_{k \in \NN} b_k k!p^{nk} \tfrac{\pi^k}{k!p^{nk}}.
	\end{equation*}
	The $\padic$ valuation of coefficients in the series above is given as
	\begin{equation*}
		\upsilon_p(b_k k! p^{nk}) \geq \tfrac{-k}{p-1} + nk + \upsilon_p(k!) = \tfrac{p-2}{p-1}nk + \upsilon_p(k!),
	\end{equation*}
	which clearly goes to $+\infty$ as $k \rightarrow +\infty$.
	Hence, $\frac{\pi}{t}$ converges in $\widehat{S}_n^{\textpd}$ and is an inverse to $\frac{t}{\pi}$.
\end{proof}

Now let us consider the ring $\pazo \widehat{S}_n^{\textpd}$ and divided power ideals
\begin{equation*}
	J^{[i]} := \Big\langle \tfrac{\pi^{[k_0]}}{p^{nk_0}} \prod_{j=1}^d (1-V_j)^{[k_j]}, \hspace{1mm} \smbfk = (k_0, k_1, \ldots, k_d) \in \NN^{d+1} \hspace{1mm} \textrm{such that} \hspace{1mm} \sum_{j=0}^d k_j \geq i\Big\rangle \subset \pazo \widehat{S}_n^{\textpd}.
\end{equation*}

Arguments similar to Lemmas \ref{lem:gamma_minus_1_smpd} and \ref{lem:log_gamma_converges} show that for $0 \leq i \leq d$ the series of operators $\nabla_i = \log \gamma_i = \sum_{k \in \NN} (-1)^k \frac{\pi^{k+1}}{k+1}$ converge over $\pazo \widehat{S}_n^{\textpd}$.
Moreover, from Remark \ref{rem:gamma_mod_pi_sm} we obtain that for $0 \leq i \leq d$, we have $\nabla_i(\pazo \widehat{S}_n^{\textpd}) \subset t \pazo \widehat{S}_n^{\textpd}$.
Also, it is easy to observe that we have $\nabla_0(t) = \log(\chi(\gamma_0)) t = p^m t$ and $\nabla_i(V_i) = tV_i$ for $1 \leq i \leq d$.
Finally, recall that $\gamma_i \gamma_j = \gamma_j \gamma_i$ for $1 \leq i, j \leq d$ and $\gamma_0 \gamma_i = \gamma_i^{\chi(\gamma_0)} \gamma_0$, therefore we conclude that
\begin{align*}
	[\nabla_i, \nabla_j] &= 0, \\
	[\nabla_i, \nabla_0] &= \log(\chi(\gamma_0)) \nabla_i = p^m \nabla_i.
\end{align*}

Now we will adapt the discussion above to scalar extension of Wach module $\mbfn(T)$ to $\pazo \widehat{S}_n^{\textpd}$, i.e. for $\pazo N_n^{\textpd} := \pazo \widehat{S}_n^{\textpd} \otimes_{\mbfa_R^+} \mbfn(T)$.

\begin{lem}\label{lem:nabla_converges_relative}
	For $n \geq m$ and $i \in \{0, 1, \ldots, d\}$ the operators
	\begin{equation*}
		\nabla_i = \log \gamma_i = \sum_{k \in \NN} (-1)^{k+1} \tfrac{(\gamma_i-1)^{k+1}}{k+1}
	\end{equation*}
	converge as series of operators on $\pazo N_n^{\textpd}$.
\end{lem}
\begin{proof}
	For $0 \leq i \leq d$, observe that $\gamma_i-1$ acts as a twisted derivation, i.e. for $a \in \pazo \widehat{S}_n^{\textpd}$ and $x \in \mbfn(T)$, we have
	\begin{equation*}
		(\gamma_i-1)(ax) = (\gamma_i-1)a \cdot x + \gamma_i(a) (\gamma_i-1)x.
	\end{equation*}
	The action of $\Gamma_R$ is trivial on $\mbfn(T)/\pi\mbfn(T)$, so we can write $(\gamma_i-1)x = \pi y$, for some $y \in \mbfn(T)$, i.e. $(\gamma_i-1) \pazo N_n^{\textpd} \subset (p^m, \pi) \pazo N_n^{\textpd}$.
	From the proof of Lemma \ref{lem:log_gamma_converges} and \eqref{eq:gamma0_minus_1_action} and induction over $k \geq 1$, it follows that
	\begin{equation*}
		(\gamma_i-1) (p^m, \pi)^k \pazo N_n^{\textpd} \subset (p^m, \pi)^{k+1} \pazo N_n^{\textpd}.
	\end{equation*}
	Next, using the fact that $\gamma_i-1$ acts as a twisted derivation, we obtain that 
	\begin{equation*}
		(\gamma_i-1)^{k+1}(ax) \subset (p^m, \pi)^{k+1} \pazo N_n^{\textpd}.
	\end{equation*}
	Therefore, the following series converges in $\pazo N_n^{\textpd}$
	\begin{equation*}
		\nabla_i(ax) = \sum_{k \in \NN}(-1)^k \tfrac{(\gamma_i-1)^{k+1}(ax)}{k+1}.
	\end{equation*}
	This allows us to conlcude.
\end{proof}

\begin{rem}\label{rem:gamma_mod_pi_on}
	Note that $\Gamma_{R}$ acts trivially modulo $\pi$ on $\pazo \widehat{S}_n^{\textpd}$ and $\mbfn(T)$.
	Therefore, we also get that it acts trivially modulo $\pi$ over $\pazo N_n^{\textpd}$.
	Hence, for $0 \leq i \leq d$ we have $\nabla_i(\pazo N_n^{\textpd}) \subset \pi \pazo N_n^{\textpd} = t \pazo N_n^{\textpd}$, where the last equality follows from the fact that $\frac{t}{\pi}$ is a unit in $\pazo \widehat{S}_n^{\textpd}$ (see Lemma \ref{lem:t_over_pi_sm}).
\end{rem}

Again, over $\pazo N_n^{\textpd}$ we have
\begin{align*}
	[\nabla_i, \nabla_j] &= 0, \\
	[\nabla_i, \nabla_0] &= \log(\chi(\gamma_0)) \nabla_i = p^m \nabla_i,
\end{align*}
which enables us to define differential operators $\partial_i$ over $\pazo N_n^{\textpd}$ using the formula
\begin{equation*}
	\partial_i = \left\{
		\begin{array}{ll}
			-t^{-1}\nabla_0 & \textrm{for} \hspace{1mm} i = 0,\\
			t^{-1}V_i^{-1}\nabla_i & \textrm{for} \hspace{1mm} 1 \leq i \leq d,
		\end{array}
	\right.
\end{equation*}
where $V_i = \frac{X_i \otimes 1}{1 \otimes [X_i^{\flat}]}$ for $1 \leq i \leq d$.
Note that $\partial_i$ is well defined since $\nabla_i(\pazo N_n^{\textpd}) \subset t \pazo N_n^{\textpd}$ (see Remark \ref{rem:gamma_mod_pi_on}).

\begin{lem}\label{lem:diff_op_commute}
	For $n \geq m$, the differential operators defined on $\pazo N_n^{\textpd}$ commute, i.e. $\partial_i \circ \partial_j = \partial_j \circ \partial_i$ for $0 \leq i, j \leq d$.
\end{lem}
\begin{proof}
	From above we have $[\nabla_i, \nabla_j] = 0$ for $1 \leq i, j \leq d$, whereas $[\nabla_0, \nabla_i] = p^m \nabla_i$, for $1 \leq i \leq d$.
	So it follows that over $\pazo N_n^{\textpd}$ we have the composition of operators
	\begin{equation*}
		t^2V_iV_j(\partial_i \circ \partial_j - \partial_j \circ \partial_i) = t V_i \partial_i \circ t V_j\partial_j - t V_j\partial_j \circ t V_i\partial_i = \nabla_i \circ \nabla_j - \nabla_j \circ \nabla_i = 0, \hspace{2mm} \textrm{for} \hspace{1mm} 1 \leq i, j \leq d.
	\end{equation*}
	Next, for $1 \leq i \leq d$, we have
	\begin{align*}
		\nabla_0 \circ \nabla_i - \nabla_i \circ \nabla_0 &= -t\partial_0 \circ (t V_i \partial_i) + tV_i\partial_i \circ (t\partial_0)\\
						&= -p^m t V_i \partial_i - t^2 V_i \partial_0 \circ \partial_i + t^2V_i\partial_i \circ \partial_0 = p^m \nabla_i - t^2 V_i (\partial_0 \circ \partial_i - \partial_i \circ \partial_0).
	\end{align*}
	In particular, $\partial_i \circ \partial_j - \partial_j \circ \partial_i = 0$ for $0 \leq i, j \leq d$ since $\pazo N_n^{\textpd}$ is $t\textrm{-torsion}$ free.
\end{proof}

For the rest of the section, let us now assume $n = m$.
\begin{lem}\label{lem:quasi_nilpotence}
	Let $1 \leq i \leq d$ and $x \in \mbfn(T)$, then we have that $\partial_i^k(x) \rightarrow 0$ in $\pazo N_m^{\textpd}$ as $k \rightarrow +\infty$.
\end{lem}
\begin{proof}
	First, let us note that since $\partial_i(V_i) = 1$, $\partial_i(V_j) = 0$ for $j \neq i$ and $\partial_i(\pi) = 0$, so we have that $\partial_i^p(\pazo \widehat{S}_m^{\textpd}) \subset p \pazo \widehat{S}_m^{\textpd}$.
	Moreover, an easy computation shows that for $x \in \mbfn(T)$ we have 
	\begin{equation*}
		\partial_i(\varphi(x)) = \tfrac{\nabla_i(\varphi(x))}{tV_i} = \tfrac{\varphi(\nabla_i(x))}{tV_i} = p V_i^{p-1} \varphi(\partial_i(x)) \in \pazo \widehat{S}_m^{\textpd} \otimes_{\varphi(\mbfa_R^+)} \varphi(\mbfn(T)),
	\end{equation*}
	where note that we have $\varphi(\partial_i(x)) \in \varphi(\pazo \widehat{S}_{m+1}^{\textpd} \otimes_{\mbfa_R^+} \mbfn(T)) \subset \pazo \widehat{S}_m^{\textpd} \otimes_{\varphi(\mbfa_R^+)} \varphi(\mbfn(T))$ since $\partial_i(x)$ converges over $\pazo \widehat{S}_{m+1}^{\textpd} \otimes_{\mbfa_R^+} \mbfn(T)$ by Lemma \ref{lem:nabla_converges_relative}.

	Next, from Definiton \ref{defi:wach_reps} recall that we have $q^s \mbfn(T) \subset \varphi^{\ast}(\mbfn(T))$.
	Let us write $q^s x = \sum_{j=1}^h a_j \varphi(e_j)$ for $a_j \in \mbfa_R^+$ and $\{e_1, \ldots, e_h\}$ an $\mbfa_R^+\textrm{-basis}$ of $\mbfn(T)$.
	Then from the discussion above it follows that $\partial_i^p(q^s x) \in p q^s \big(\pazo \widehat{S}_m^{\textpd} \otimes_{\varphi(\mbfa_R^+)} \varphi(\mbfn(T))\big)$, therefore $\partial_i^p(x) \in p \big(\pazo \widehat{S}_m^{\textpd} \otimes_{\varphi(\mbfa_R^+)} \varphi(\mbfn(T))\big)$.
	By induction on $k$ we see that $\partial_i^{pk}(x) \in p^k \big(\pazo \widehat{S}_m^{\textpd} \otimes_{\varphi(\mbfa_R^+)} \varphi(\mbfn(T))\big) \subset p^k \pazo N_m^{\textpd}$.
	Hence, the claim follows.
\end{proof}

\begin{rem}\label{lem:recover_gamma_exp}
	Note that one can recover the action of $\gamma_i$ using the differential operator $\partial_i$.
	For $i \in \{1, \ldots, d\}$ we have $\gamma_i = \exp(tV_i\partial_i)$, whereas for $i=0$ we have $\gamma_0 = \exp(-t\partial_0)$.
\end{rem}

From the remark above it is clear that for $0 \leq i \leq d$ and $x \in \pazo N_m^{\textpd}$ we have $\gamma_i(x) = x$ if and only if $\partial_i(x) = 0$.
\begin{lem}\label{lem:horizontal_elems}
	For any $x \in \mbfn(T)$ there exists a unique $x\dprm \in \pazo N_m^{\textpd}$ such that
	\begin{align*}
		x\dprm &\equiv x \hspace{2mm} \mod J^{[1]} \pazo N_m^{\textpd},\\
		\gamma_i(x\dprm) &= x\dprm \hspace{4mm} \textrm{for} \hspace{1mm} 0 \leq i \leq d.
	\end{align*}
	In particular, $x\dprm \in M\dprm = \big(\pazo N_m^{\textpd}\big)^{\Gamma_{R}}$.
\end{lem}
\begin{proof}
	For $x \in \mbfn(T)$, we set
	\begin{equation*}
		x' = \sum_{\smbfk \in \NN^{d}} \partial_1^{k_1} \circ \cdots \circ \partial_d^{k_d} (x) (1-V_1)^{[k_1]} \cdots (1-V_d)^{[k_d]} \in \pazo N_m^{\textpd}
	\end{equation*}
	The summation converges since for $1 \leq i \leq d$ we have that $\partial_0^{k_0} \circ \partial_1^{k_1} \circ \cdots \circ \partial_d^{k_d} (x) \rightarrow 0$ as $|\smbfk| = \sum_{i=1}^d k_i \rightarrow +\infty$ from Lemma \ref{lem:quasi_nilpotence}.
	Note that we have an isomorphism of rings $\widehat{S}_m^{\textpd} \isomorphic (\pazo \widehat{S}_m^{\textpd})^{\Gamma_{R'}}$ compatible with $\Gamma_R/\Gamma_{R'} = \Gamma_F\textrm{-action}$.
	Therefore, by the description of $\widehat{S}_m^{\textpd}$ in \eqref{eq:snpd} and since $x' \in (\pazo N_m^{\textpd})^{\Gamma_R'}$ we see that the following sum converges
	\begin{equation*}
		x\dprm = \sum_{k_0 \in \NN} \partial_0^{k_0} (x') \tfrac{t^{[k_0]}}{p^{mk_0}} \in \pazo N_m^{\textpd}.
	\end{equation*}
	Since the differential operators on $\pazo N_m^{\textpd}$ commute by Lemma \ref{lem:diff_op_commute}, we get that 
	\begin{equation}\label{eq:horizontal_elems}
		x\dprm = \sum_{\smbfk \in \NN^{d+1}} \partial_0^{k_0} \circ \partial_1^{k_1} \circ \cdots \circ \partial_d^{k_d} (x) \tfrac{t^{[k_0]}}{p^{mk_0}} (1-V_1)^{[k_1]} \cdots (1-V_d)^{[k_d]} \in \pazo N_m^{\textpd}
	\end{equation}
	By the definition of $x\dprm$ it is clear  that $x\dprm \equiv x \mod J^{[1]} \pazo N_m^{\textpd}$.
	Next, using the fact that $\partial_i \circ \partial_j = \partial_j \circ \partial_i$ for $0 \leq i, j \leq d$ (see Lemma \ref{lem:diff_op_commute}) as well as $\partial_0(t) = -p^m$ and $\partial_i(V_i) = 1$ for $1 \leq i \leq d$, it is easy to deduce that $\partial_i(x\dprm) = 0$ for $0 \leq i \leq d$.
	So by Remark \ref{lem:recover_gamma_exp}, we get that $\gamma_i(x\dprm) = x\dprm$ for $0 \leq i \leq d$.

	Uniqueness of $x\dprm$ follows from Lemma \ref{lem:unique_lift}.
	Finally, let $g \in \Gamma_{F}$ be a lift of a generator of the cyclic group $\Gamma_F / \Gamma_K$.
	Then we have that $g(x\dprm) \in \pazo N_m^{\textpd}$ satisfies the conditions of the claim (since $(g-1)x \in \pi \mbfn(T) \subset J^{[1]} \pazo N_m^{\textpd}$).
	But by uniqueness, we obtain that $g(x\dprm) = x\dprm$, i.e. $x\dprm \in \big(\pazo N_m^{\textpd}\big)^{\Gamma_{R}} = M\dprm$.
\end{proof}

\begin{rem}
	Note that the lemma above can also be obtained by a ``successive approximation'' argument (see \cite[Lemmas 3.33 \& 3.37]{abhinandan-thesis}).
\end{rem}

Following claim was used above:
\begin{lem}\label{lem:unique_lift}
	For any $x \in \mbfn(T)$ suppose there exists $x\dprm \in \pazo N_m^{\textpd}$ such that
	\begin{align*}
		x\dprm &\equiv x \hspace{2mm} \mod J^{[1]} \pazo N_m^{\textpd},\\
		\gamma_i(x\dprm) &= x\dprm \hspace{4mm} \textrm{for} \hspace{1mm} 0 \leq i \leq d.
	\end{align*}
	Then $x\dprm$ is unique.
\end{lem}
\begin{proof}
	Let $\{f_1, \ldots, f_h\}$ denote an $\mbfa_R^+\textrm{-basis}$ of $\mbfn(T)$.
	Then $\{f_1, \ldots, f_h\}$ is also an $\pazo \widehat{S}_m^{\textpd}\textrm{-basis}$ of $\pazo N_m^{\textpd}$.
	Now using the formula in \eqref{eq:horizontal_elems}, for all $1 \leq i \leq h$ let
	\begin{equation*}
		f_i\dprm = \sum_{\smbfk \in \NN^{d+1}} \partial_0^{k_0} \circ \partial_1^{k_1} \circ \cdots \circ \partial_d^{k_d} (f_i) \tfrac{t^{[k_0]}}{p^{mk_0}} (1-V_1)^{[k_1]} \cdots (1-V_d)^{[k_d]} \in \pazo N_m^{\textpd}.
	\end{equation*}
	We want to show that $\{f_1\dprm, \ldots, f_h\dprm\}$ also form an $\pazo \widehat{S}_m^{\textpd}\textrm{-basis}$ of $\pazo N_m^{\textpd}$
	Let us write $f_i\dprm = f_i + \sum_{j=1}^h a_{ij} f_j$ with $a_{ij} \in J^{[1]} \pazo \widehat{S}_m^{\textpd}$ and let $A = id_h + (a_{ij}) \in \Mat(h, \pazo \widehat{S}_m^{\textpd})$ denote the $h \times h$ matrix thus obtained.
	We have that $\det A = 1 + x$ with $x \in J^{[1]} \pazo \widehat{S}_m^{\textpd}$ and $1 - x + x^2 - x^3 + \cdots = \sum_{n \in \NN} (-1)^n n! x^{[n]}$ converges in $\pazo \widehat{S}_m^{\textpd}$ as an inverse of $1+x$, i.e. $\det A$ is invertible in $\pazo \widehat{S}_m^{\textpd}$.
	Therefore, $\{f_1\dprm, \ldots, f_h\dprm\}$ form a basis of $\pazo N_m^{\textpd}$.
	
	Now for any $x \in \mbfn(T)$, writing $x = \sum_{i=1}^h x_i f_i\dprm$ and plugging into the formula \eqref{eq:horizontal_elems} we obtain $x\dprm \in \pazo N_m^{\textpd}$ such that $x\dprm \equiv x \mod J^{[1]} \pazo N_m^{\textpd}$ and $\gamma_j(x\dprm) = x\dprm$ for all $0 \leq j \leq d$.
	By linear independence of $\{f_1\dprm, \ldots, f_h\dprm\}$ over $\pazo \widehat{S}_m^{\textpd}$ we obtain that $x\dprm$ is unique.
\end{proof}

\begin{rem}
	The uniquess claim can also be established by a ``successive approximation'' argument (see \cite[p.63-p.65]{abhinandan-thesis}).
\end{rem}

\begin{lem}\label{lem:osmpd_comp}
	We have $\pazo \widehat{S}_m^{\textpd} \otimes_{R} M\dprm \isomorphic \pazo \widehat{S}_m^{\textpd} \otimes_{\mbfa_R^+} \mbfn(T)$.
\end{lem}
\begin{proof}
	Let $\{f_1, \ldots, f_h\}$ denote an $\mbfa_R^+\textrm{-basis}$ of $\mbfn(T)$.
	Then $\{f_1, \ldots, f_h\}$ is also an $\pazo \widehat{S}_m^{\textpd}\textrm{-basis}$ of $\pazo N_m^{\textpd}$.
	From the proof of Lemmas \ref{lem:horizontal_elems} \& \ref{lem:unique_lift} we have $f_i\dprm \in M\dprm$ for all $1 \leq i \leq h$, such that $\{f_1\dprm, \ldots, f_d\dprm\}$ also form an $\pazo \widehat{S}_m^{\textpd}\textrm{-basis}$ of $\pazo N_m^{\textpd}$.
	Therefore, $\pazo \widehat{S}_m^{\textpd} \otimes_{R} M\dprm \isomorphic \pazo N_m^{\textpd}$.
\end{proof}

\subsubsection{Finishing the proof of Proposition \ref{prop:crys_from_wach_mod}}

	Recall that at the beginning of the proof we assumed $\mbfn(T)$ to be free of rank $h$ (after extension of scalars to $\mbfa_{R\prm}^+$ which we again wrote as $\mbfa_R^+$ by abusing notations), therefore $\pazo N_m^{\textpd}$ is free of rank $h$.
	Further, we have $M = \big(\pazo \mbfa_{R,\varpi}^{\textpd} \otimes_{\mbfa_R^+} \mbfn(T)\big)^{\Gamma_{R}}$ and since $M\big[\frac{1}{p}\big]$ is equipped with an integrable connection, it is projective of rank $\leq h$ (see the beginning of the proof).
	So applying Lemma \ref{lem:horizontal_elems} to a basis of $\mbfn(T)$, we obtain that the rank of $M\big[\frac{1}{p}\big]$ as an $R\big[\frac{1}{p}\big]\textrm{-module}$ is exactly $h$.

	Next, we want to show that the natural inclusion $\pazo \mbfa_{R,\varpi}^{\textpd} \otimes_{R} M\big[\frac{1}{p}\big] \rightarrowtail \pazo \mbfa_{R,\varpi}^{\textpd} \otimes_{\mbfa_R^+} \mbfn(V)$ is bijective.
	To show this claim, we require the following lemma:
\begin{lem}
	We have $\varphi^{\ast}\big(\pazo \mbfa_{R,\varpi}^{\textpd} \otimes_{\mbfa_R^+} \mbfn(V)\big) \isomorphic \big(\pazo \mbfa_{R,\varpi}^{\textpd} \otimes_{\mbfa_R^+} \mbfn(V)\big)$.
\end{lem}
\begin{proof}
	Recall that we are working under the assumption that $\mbfn(V)$ is free and by definition of a positive finite $q\textrm{-height}$ representation we have that the cokernel of the inclusion $\varphi^{\ast}(\mbfn(V)) \rightarrow \mbfn(V)$ is killed by $q^s$ where $s \in \NN$ is the height of the representation $V$.
	Extending scalars to $\pazo\mbfa_{R,\varpi}^{\textpd}$, we obtain that the cokernel of the inclusion $\varphi^{\ast}\big(\pazo \mbfa_{R,\varpi}^{\textpd} \otimes_{\mbfa_R^+} \mbfn(V)\big) \rightarrow \big(\pazo \mbfa_{R,\varpi}^{\textpd} \otimes_{\mbfa_R^+} \mbfn(V)\big)$ is killed by $q^s$.
	Now note that we have $q = \frac{\varphi(\pi)}{\pi} = p \varphi\big(\frac{\pi}{t}\big)\frac{t}{\pi}$ where $\frac{t}{\pi}$ is a unit in $\mbfa_{R,\varpi}^{\textpd}$ (see Lemma \ref{lem:t_over_pi_unit}), i.e. $p$ and $q$ are associates in $\mbfa_{R,\varpi}^{\textpd}$.
	Therefore, the cokernel of the inclusion in the claim is killed by $p^s$.
	But, $p$ is invertible in $\pazo \mbfa_{R,\varpi}^{\textpd}\big[\frac{1}{p}\big]$.
	Hence, we obtain that $\varphi^{\ast}\big(\pazo \mbfa_{R,\varpi}^{\textpd} \otimes_{\mbfa_R^+} \mbfn(V)\big) \isomorphic \big(\pazo \mbfa_{R,\varpi}^{\textpd} \otimes_{\mbfa_R^+} \mbfn(V)\big)$.
\end{proof}

	Since we assumed $\mbfn(T)$ to be a free module, let $\{f_1, \ldots, f_h\}$ be its $\mbfa_R^+\textrm{-basis}$.
	Let $P \in \textup{Mat}(h, \mbfa_R^+)$ denote the matrix for the action of Frobenius on $\mbfn(T)$ in the chosen basis.
	Using the lemma above, we have also obtained that $\det P$ is invertible in $\pazo \mbfa_{R,\varpi}^{\textpd}\big[\frac{1}{p}\big]$.

	Now, recall that $\pazo N_m^{\textpd} = \pazo \widehat{S}_m^{\textpd} \otimes_{\mbfa_R^+} \mbfn(T)$ and $M\dprm = \big(\pazo N_m^{\textpd}\big)^{\Gamma_{R}}$.
	So we consider the following commutative diagram
	\begin{center}
		\begin{tikzcd}[row sep=large]
			\pazo \widehat{S}_m^{\textpd} \otimes_{R} M\dprm \arrow[r, "\sim"] \arrow[d, "\varphi^m \otimes \varphi^m"'] & \pazo N_m^{\textpd} \arrow[d, "\varphi^m"]\\
			\pazo \mbfa_{R,\varpi}^{\textpd} \otimes_{R} M \arrow[r] & \pazo \mbfa_{R,\varpi}^{\textpd} \otimes_{\mbfa_R^+} \mbfn(T),
		\end{tikzcd}
	\end{center}
	where the top horizontal arrow is bijective (see Lemma \ref{lem:osmpd_comp}) and all other arrows are injective.
	We also have that $\{f_1, \ldots, f_h\}$ is an $\pazo \mbfa_{R,\varpi}^{\textpd}\textrm{-basis}$ of $\pazo \mbfa_{R,\varpi}^{\textpd} \otimes_{\mbfa_R^+} \mbfn(T)$ as well as an $\pazo \widehat{S}_m^{\textpd}\textrm{-basis}$ of $\pazo N_m^{\textpd}$.
	From Lemmas \ref{lem:horizontal_elems} \& \ref{lem:osmpd_comp} and the discussion above, for $1 \leq i \leq h$ we have $f_i\dprm \in M\dprm$ such that $f_i\dprm = f_i + \sum_{i=1}^h a_{ij} f_j$ for $a_{ij} \in J^{[1]} \pazo \widehat{S}_m^{\textpd}$ and let $A := id_h + (a_{ij}) \in \textup{Mat}(h, \pazo\widehat{S}_m^{\textpd})$ denote the $h \times h$ matrix obtained in this manner.
	From the proof of Lemma \ref{lem:unique_lift} we have that $\det A$ is invertible in $\pazo \widehat{S}_m^{\textpd}$.

	Now let $v_i = (\varphi^m \otimes \varphi^m)f_i\dprm = \varphi^m(f_i) + \sum_{j=1}^h \varphi^m(a_{ij}) \varphi^m(f_j) \in M$ and let $M_0$ be the free $R\textrm{-submodule}$ of $M$ generated by $\{v_1, \ldots, v_h\}$.
	From the expression of $\{v_1, \ldots, v_h\}$ in the basis of $\pazo \mbfa_{R,\varpi}^{\textpd} \otimes_{\mbfa_R^+} \mbfn(T)$, we get that the determinant of the inclusion $\pazo \mbfa_{R,\varpi}^{\textpd} \otimes_{R} M_0 \rightarrowtail \pazo \mbfa_{R,\varpi}^{\textpd} \otimes_{\mbfa_R^+} \mbfn(T)$ is given by $\varphi^m(\det A)\varphi^{m-1}(\det P)\varphi^{m-2}(\det P) \cdots \varphi(\det P) (\det P)$.
	Since $\det A$ is invertible in $\pazo \widehat{S}_m^{\textpd}$, we have that $\varphi^m(\det A)$ is invertible in $\pazo \mbfa_{R,\varpi}^{\textpd}$ and from above we already have that $\det P$ is invertible in $\pazo \mbfa_{R,\varpi}^{\textpd}\big[\frac{1}{p}\big]$.
	Therefore, the natural inclusions
	\begin{equation*}
		\pazo \mbfa_{R,\varpi}^{\textpd} \otimes_{R} M_0\big[\tfrac{1}{p}\big] \longrightarrow \pazo \mbfa_{R,\varpi}^{\textpd} \otimes_{R} M\big[\tfrac{1}{p}\big] \longrightarrow \pazo \mbfa_{R,\varpi}^{\textpd} \otimes_{\mbfa_R^+} \mbfn(V),
	\end{equation*}
	are bijective.
	These inclusions are compatible with Frobenius, filtration, connection and the action of $\Gamma_{R}$ on each side, which shows the second claim of Proposition \ref{prop:crys_from_wach_mod}.

	Finally, note that above we assumed $\mbfn(T)$ to be free of rank $h$, therefore we obtain a free $R\textrm{-submodule}$ $M_0 \subset M$ such that
	\begin{equation*}
		M_0\big[\tfrac{1}{p}\big] = \big(\pazo \mbfa_{R,\varpi}^{\textpd} \otimes_{R} M_0\big[\tfrac{1}{p}\big]\big)^{\Gamma_{R}} \isomorphic \big(\pazo \mbfa_{R,\varpi}^{\textpd} \otimes_{R} M\big[\tfrac{1}{p}\big]\big)^{\Gamma_{R}} = M\big[\tfrac{1}{p}\big],
	\end{equation*}
	which are free of rank $h$ over $R\big[\frac{1}{p}\big]$.
	This shows the last claim of Propostion \ref{prop:crys_from_wach_mod}.
	In general, when $\mbfn(T)$ is projective of rank $h$, we obtain that $M\big[\frac{1}{p}\big]$ is projective of rank $h$.
	This sums up our proof.
\end{proof}

\subsection{Proof of Theorem \ref{thm:crys_wach_comparison}}

Let $M = \big(\pazo \mbfa_{R,\varpi}^{\textpd} \otimes_{\mbfa_R^+} \mbfn(T)\big)^{\Gamma_{R}}$.
From Proposition \ref{prop:crys_from_wach_mod} we already have the isomorphism of $\pazo \mbfa_{R,\varpi}^{\textpd}\big[\frac{1}{p}\big]\textrm{-modules}$
\begin{equation*}
	\pazo \mbfa_{R,\varpi}^{\textpd} \otimes_{R} M\big[\tfrac{1}{p}\big] \isomorphic \pazo \mbfa_{R,\varpi}^{\textpd} \otimes_{\mbfa_R^+} \mbfn(V),
\end{equation*}
compatible with Frobenius, filtration, connection and the action of $\Gamma_{R}$ on each side.
This proves the second claim and we are left to show that $V$ is crystalline and $M\big[\tfrac{1}{p}\big] \isomorphic \pazo \mbfd_{\crys}(V)$ compatible with supplementary structures.
Also note from Proposition \ref{prop:crys_from_wach_mod} that we already have the inclusion of projective $R\big[\frac{1}{p}\big]\textrm{-modules}$ of rank $h = \dim_{\QQ_p} V$, $M\big[\tfrac{1}{p}\big] \subset \pazo \mbfd_{\crys}(V)$.
So we are left to show that this inclusion is bijective and compatible with supplementary structures.

First, we will show that $V$ is crystalline and the inclusion described above is in fact bijective.
Extending scalars along $\pazo \mbfa_{R,\varpi}^{\textpd}\big[\frac{1}{p}\big] \rightarrowtail \pazo \mbfb_{\crys}(\overline{R})$ for the isomorphism $\pazo \mbfa_{R,\varpi}^{\textpd} \otimes_{R} M\big[\tfrac{1}{p}\big] \isomorphic \pazo \mbfa_{R,\varpi}^{\textpd} \otimes_{\mbfa_R^+} \mbfn(V)$, we obtain an isomorphism of $\pazo \mbfb_{\crys}(\overline{R})\textrm{-modules}$
\begin{equation*}
	\pazo \mbfb_{\crys}(\overline{R}) \otimes_{R[\frac{1}{p}]} M\big[\tfrac{1}{p}\big] \isomorphic \pazo \mbfb_{\crys}(\overline{R}) \otimes_{\mbfb_{R}^+} \mbfn(V),
\end{equation*}
compatible with Frobenius, filtration, connection and $G_R\textrm{-action}$.
Now, recall that from the definitions we have a natural inclusion of free $\mbfa^+\textrm{-modules}$ $\mbfa^+ \otimes_{\mbfa_R^+} \mbfn(V) \rightarrowtail \mbfa^+ \otimes_{\mbfa_R^+} V$ compatible with supplementary structures and the cokernel of this inclusion is killed by $\pi^s$ (see Proposition \ref{prop:wach_approx_aplus_admis}).
Since $\pi$ is invertible in $\pazo \mbfb_{\crys}(\overline{R})$, extending scalars along $\mbfa^+ \rightarrowtail \pazo \mbfb_{\crys}(\overline{R})$, we obtain an isomorphism of $\pazo \mbfb_{\crys}(\overline{R})\textrm{-modules}$
\begin{equation*}
	\pazo \mbfb_{\crys}(\overline{R}) \otimes_{\mbfb_{R}^+} \mbfn(V) \isomorphic \pazo \mbfb_{\crys}(\overline{R}) \otimes_{\QQ_p} V,
\end{equation*}
compatible with Frobenius, connection and $G_R\textrm{-action}$.
Finally, since $R\big[\frac{1}{p}\big] \rightarrow \pazo \mbfb_{\crys}(\overline{R})$ is faithfully flat (see \cite[Th\'eor\`eme 6.3.8]{brinon-padicrep-relatif}), we obtain an inclusion of $\pazo \mbfb_{\crys}(\overline{R})\textrm{-modules}$
\begin{equation*}
	\pazo \mbfb_{\crys}(\overline{R}) \otimes_{R[\frac{1}{p}]} M\big[\tfrac{1}{p}\big] \subset \pazo \mbfb_{\crys}(\overline{R}) \otimes_{R[\frac{1}{p}]} \pazo \mbfd_{\crys}(V),
\end{equation*}
compatible with Frobenius, connection and the action of $G_{R}$.
In particular, we have a commutative diagram
\begin{center}
	\begin{tikzcd}[row sep=large, column sep=large]
		\pazo \mbfb_{\crys}(\overline{R}) \otimes_{R[\frac{1}{p}]} M\big[\tfrac{1}{p}\big] \arrow[r, "\sim"] \arrow[d, rightarrowtail] & \pazo \mbfb_{\crys}(\overline{R}) \otimes_{\mbfb_{R}^+} \mbfn(V) \arrow[d, "\sim"]\\
		\pazo \mbfb_{\crys}(\overline{R}) \otimes_{R[\frac{1}{p}]} \pazo \mbfd_{\crys}(V) \arrow[r, rightarrowtail] & \pazo \mbfb_{\crys}(\overline{R}) \otimes_{\QQ_p} V,
	\end{tikzcd}
\end{center}
compatible with Frobenius, connection and $G_R\textrm{-action}$.
As the top horizontal arrow and right vertical arrow are bijections, it is immediately clear from the diagram that the left vertical arrow and bottom horizontal arrow must be bijective as well.
The bijection of bottom horizontal arrow implies that $V$ is a crystalline representation of $G_{R}$.
Moreover, since $R\big[\frac{1}{p}\big] \rightarrow \pazo \mbfb_{\crys}(\overline{R})$ is faithfully flat (see \cite[Th\'eor\`eme 6.3.8]{brinon-padicrep-relatif}), we obtain an isomorphism of $R\big[\frac{1}{p}\big]\textrm{-modules}$ $M\big[\tfrac{1}{p}\big] \isomorphic \pazo \mbfd_{\crys}(V)$.

Finally, we note that the isomorphism $M\big[\tfrac{1}{p}\big] \isomorphic \pazo \mbfd_{\crys}(V)$ is compatible with supplementary structures.
From Proposition \ref{prop:crys_from_wach_mod} it is clear that this isomorphism is compatible with Frobenius and connection.
Combining Proposition \ref{prop:fil_compatible} with observations made before, we obtain that the isomorphism of $R\big[\frac{1}{p}\big]\textrm{-modules}$ $M\big[\frac{1}{p}\big] \isomorphic \pazo \mbfd_{\crys}(V)$ is compatible with Frobenius, filtration and connection on each side.

Finally, we can compose these natural maps as
\begin{equation*}
	\pazo \mbfa_{R,\varpi}^{\textpd} \otimes_{R} \pazo \mbfd_{\crys}(V) \lisomorphic \pazo \mbfa_{R,\varpi}^{\textpd} \otimes_{R} \big(\pazo \mbfa_{R,\varpi}^{\textpd} \otimes_{\mbfa_R^+} \mbfn(V)\big)^{\Gamma_{R}} \isomorphic \pazo \mbfa_{R,\varpi}^{\textpd} \otimes_{\mbfa_R^+} \mbfn(V),
\end{equation*}
where the second map is compatible with the Frobenius, filtration, connection and the action of $\Gamma_{R}$ on each side (see Proposition \ref{prop:crys_from_wach_mod}).
This proves the theorem.
\hfill \qedsymbol

\begin{rem}\label{rem:crys_wach_comparison_int}
	In the case when $\mbfn(T)$ is a free $\mbfa_R^+\textrm{-}$ module of rank $h$, from Proposition \ref{prop:crys_from_wach_mod} we obtain that $M\big[\frac{1}{p}\big] \isomorphic \pazo \mbfd_{\crys}(V)$ is a free $R\big[\frac{1}{p}\big]\textrm{-module}$ of rank $h$.
	In particular, for finite $q\textrm{-height}$ representations there exists a finite \'etale extension $R\prm$ over $R$ such that $R\prm\big[\frac{1}{p}\big] \otimes_{R[\frac{1}{p}]} \pazo \mbfd_{\crys}(V)$ is free of rank $h$.
\end{rem}

\begin{rem}
	For $0 \leq i \leq d$, one can define $[\varepsilon]\textrm{-derivatives}$ by the formula $\frac{\gamma_i-1}{\pi} : \mbfn(T) \rightarrow \mbfn(T)$.
	Considering the reduction modulo $\pi$ of Frobenius, filtration and $[\varepsilon]\textrm{-connection}$ on $\mbfn(T)$ defined above, we conjecture that we have $(\mbfn(T) / \pi \mbfn(T))\big[\frac{1}{p}\big] \isomorphic \pazo \mbfd_{\crys}(V)$ as filtered $(\varphi, \partial)\textrm{-modules}$ over $R\big[\frac{1}{p}\big]$.
	Details on this line of thought and its connection with \cite{bhatt-scholze-prismatic} and \cite{gros-lestum-quiros-qcrystals} will appear elsewhere.
\end{rem}

\subsubsection{Compatibility between filtrations}

Recall that using Definition \ref{defi:wach_mod_fil} and Remark \ref{rem:fat_pd_ring_struct} (ii), the filtration on $M\big[\frac{1}{p}\big]$ is given as
\begin{equation*}
	\Fil^k M\big[\tfrac{1}{p}\big] = \Big(\sum_{i \in \NN} \Fil^i \pazo \mbfa_{R,\varpi}^{\textpd} \widehat{\otimes}_{\mbfa_R^+} \Fil^{k-i} \mbfn(V)\Big)^{\Gamma_{R}}.
\end{equation*}
\begin{prop}\label{prop:fil_compatible}
	In the notations already described, we have $\Fil^k M\big[\frac{1}{p}\big] = \Fil^k \pazo \mbfd_{\crys}(V)$ for $k \in \ZZ$
\end{prop}
\begin{proof}
	We only need to show the claim for $k \geq 1$.
	Note that from \eqref{eq:wach_invar_in_crys} we have
	\begin{equation*}
		\Fil^k M\big[\tfrac{1}{p}\big] = \big(\Fil^k(\pazo \mbfa_{R,\varpi}^{\textpd} \otimes_{\mbfa_R^+} \mbfn(V))\big)^{\Gamma_{R}} \subset \big(\Fil^k(\pazo \mbfb_{\crys}(\overline{R}) \otimes_{\QQ_p} V)\big)^{G_{R}} = \Fil^k \pazo \mbfd_{\crys}(V).
	\end{equation*}

	Conversely, let $\{e_1, \ldots, e_h\}$ denote a $\QQ_p\textrm{-basis}$ of $V$ and let $x \in \Fil^k \pazo \mbfd_{\crys}(V) \setminus \Fil^{k+1} \pazo \mbfd_{\crys}(V)$.
	Since $x \neq 0$, we can write $x = \sum_{i=1}^h b_i e_i$ where either $b_i = 0$ or $b_i \in \Fil^k \pazo \mbfb_{\crys}(\overline{R}) \setminus \Fil^{k+1} \pazo \mbfb_{\crys}(\overline{R})$ for each $1 \leq i \leq h$ and at least one $b_i \neq 0$.
	Moreover, we have $M\big[\frac{1}{p}\big] \isomorphic \pazo \mbfd_{\crys}(V)$ as $R\big[\frac{1}{p}\big]\textrm{-modules}$, so we take $r \leq k$ to be the largest integer such that $x \in \Fil^r M\big[\frac{1}{p}\big]$, in particular $x \not\in \Fil^{r+1} M\big[\frac{1}{p}\big]$.
	Let us write $x = \sum_{j \in \NN} c_j \otimes f_{r-j}$ with $c_j \in \Fil^j \pazo \mbfa_{R,\varpi}^{\textpd}$ and $f_{r-j} \in \Fil^{r-j} \mbfn(V)$ for all $j \in \NN$.
	By assumption on $x$ there exists $\emptyset \neq I \subset \NN$ such that for each $j \in I$ we have $c_{j} \in \Fil^j \pazo \mbfa_{R,\varpi}^{\textpd} \setminus \Fil^{j+1} \pazo \mbfa_{R,\varpi}^{\textpd}$, $f_{r-j} \in \Fil^{r-j} \mbfn(V) \setminus \Fil^{r-j+1} \mbfn(V)$ with 
	\begin{align*}
		\sum_{j \in I} c_j \otimes f_{r-j} &\in \Fil^r (\pazo \mbfa_{R,\varpi}^{\textpd} \otimes_{\mbfa_R^+} \mbfn(V)) \setminus \Fil^{r+1} (\pazo \mbfa_{R,\varpi}^{\textpd} \otimes_{\mbfa_R^+} \mbfn(V)) \hspace{2mm} \textrm{and}\\
		\sum_{j \in \NN \setminus I} c_j \otimes f_{r-j} &\in \Fil^{r+1} (\pazo \mbfa_{R,\varpi}^{\textpd} \otimes_{\mbfa_R^+} \mbfn(V)).
	\end{align*}

	Next, we equip $\mbfb^+$ with the induced filtration $\Fil^n \mbfb^+ := \mbfb^+ \cap \Fil^n \mbfb_{\crys}(\overline{R}) := \mbfb^+ \cap \Fil^n \big(\mbfa_{\inf}(\overline{R})\big[\frac{1}{p}\big]\big)$ for $n \in \NN$.
	Using the definition of filtration on $\mbfn(V)$ (see Definition \ref{defi:wach_mod_fil}) and Lemma \ref{lem:filb_wach}, we have that $\Fil^{r-j} \mbfn(V) = (\Fil^{r-j} \mbfb^+ \otimes_{\QQ_p} V) \cap \mbfn(V)$ for all $j \in \NN$.
	Therefore, in the expression $\sum_{j \in I} c_j \otimes f_{r-j}$ we must have $f_{r-j} \in (\Fil^{r-j}\mbfb^+ \otimes_{\QQ_p} V) \setminus (\Fil^{r-j+1}\mbfb^+ \otimes_{\QQ_p} V)$ for all $j \in I$.
	This implies that in the basis of $V$ we can write $f_{r-j} = \sum_{i=1}^h f_{r-j}^{(i)} e_i$ with $f_{r-j}^{(i)} \in \Fil^{r-j} \mbfb^+ \setminus \Fil^{r-j+1} \mbfb^+$ for all $j \in I$ and all $1 \leq i \leq h$.
	In conclusion, we obtain 
	\begin{equation}\label{eq:xcjfrj}
		x - \sum_{j \in \NN \setminus I} c_j \otimes f_{r-j} = \sum_{j \in I} c_j \otimes \big(\sum_{i=1}^h f_{r-j}^{(i)} e_i\big) = \sum_{i=1}^h \big(\sum_{j \in I} c_j \otimes f_{r-j}^{(i)}\big) e_i,
	\end{equation}
	with $c_{j} \in \Fil^{j} \pazo \mbfa_{R,\varpi}^{\textpd} \setminus \Fil^{j+1} \pazo \mbfa_{R,\varpi}^{\textpd}$ and $f_{r-j}^{(i)} \in \Fil^{r-j} \mbfb^+ \setminus \Fil^{r-j+1} \mbfb^+$ for all $1 \leq i \leq h$ and $j \in I$.

	Let us set $g_i = \sum_{j \in I} c_j \otimes f_{r-j}^{(i)}$ for $1 \leq i \leq h$.
	Then by the the discussion above we have that $g_i \in \Fil^r(\pazo \mbfa_{R,\varpi}^{\textpd} \otimes_{\mbfa_R^+} \mbfb^+)$ for $1 \leq i \leq h$, where $\pazo \mbfa_{R,\varpi}^{\textpd} \otimes_{\mbfa_R^+} \mbfb^+$ is equipped with the tensor product filtration.
	Note that $x \in \Fil^r M\big[\frac{1}{p}\big] \setminus \Fil^{r+1} M\big[\frac{1}{p}\big]$ and $\sum_{j \in \NN \setminus I} c_j \otimes f_{r-j} \in \Fil^{r+1} (\pazo \mbfa_{R,\varpi}^{\textpd} \otimes_{\mbfa_R^+} \mbfn(V))$.
	Moreover, from Lemma \ref{lem:fil_oarpdn} we deduce that for $n \in \NN$ we have
	\begin{equation*}
		\Fil^n\big(\pazo \mbfa_{R,\varpi}^{\textpd} \otimes_{\mbfa_R^+} \mbfn(V)\big) = \big(\Fil^n \big(\pazo \mbfa_{R,\varpi}^{\textpd} \otimes_{\mbfa_R^+} \mbfb^+\big) \otimes_{\ZZ_p} V\big) \cap \big(\pazo \mbfa_{R,\varpi}^{\textpd} \otimes_{\mbfa_R^+} \mbfn(V)\big).
	\end{equation*}
	Therefore, we conclude that we must have at least one $i = i_0$ such that $g_{i_0} \in \Fil^r (\pazo \mbfa_{R,\varpi}^{\textpd} \otimes_{\mbfa_R^+} \mbfb^+) \setminus \Fil^{r+1} (\pazo \mbfa_{R,\varpi}^{\textpd} \otimes_{\mbfa_R^+} \mbfb^+)$.
	Now using Lemma \ref{lem:bpipd_fil} we further note that for $n \in \NN$
	\begin{equation*}
		\Fil^n \big(\pazo \mbfa_{R,\varpi}^{\textpd} \otimes_{\mbfa_R^+} \mbfb^+) = \big(\pazo \mbfa_{R,\varpi}^{\textpd} \otimes_{\mbfa_R^+} \mbfb^+\big) \cap \Fil^n \pazo \mbfb_{\crys}(\overline{R}) \subset \pazo \mbfb_{\crys}(\overline{R}).
	\end{equation*}
	Therefore, we get that $g_i \in \Fil^r \pazo \mbfb_{\crys}(\overline{R})$ for all $1 \leq i \leq h$ and $g_{i_0} \in \Fil^r \pazo \mbfb_{\crys}(\overline{R}) \setminus \Fil^{r+1} \pazo \mbfb_{\crys}(\overline{R})$.
	For convenience, let us write $\sum_{j \in \NN \setminus I} c_j \otimes f_{r-j} = \sum_{i=1}^h d_i e_i$ with $d_i \in \Fil^{r+1} \pazo \mbfb_{\crys}(\overline{R})$ for all $1 \leq i \leq h$.
	In particular, comparing \eqref{eq:xcjfrj} with the expression $x = \sum_{i=1}^h b_i e_i$ at the start of the proof, we get $b_{i_0} = g_{i_0} + d_{i_0} $.

	Finally, since $r \leq k$, consider the following commutative diagram with exact rows
	\begin{center}
		\begin{tikzcd}
			0 \arrow[r] & \Fil^{k+1} \pazo \mbfb_{\crys}(\overline{R}) \arrow[r] \arrow[d] & \Fil^{k} \pazo \mbfb_{\crys}(\overline{R}) \arrow[r] \arrow[d] & \gr^{k} \pazo \mbfb_{\crys}(\overline{R}) \arrow[r] \arrow[d] & 0\\
			0 \arrow[r] & \Fil^{r+1} \pazo \mbfb_{\crys}(\overline{R}) \arrow[r] & \Fil^{r} \pazo \mbfb_{\crys}(\overline{R}) \arrow[r] & \gr^{r} \pazo \mbfb_{\crys}(\overline{R}) \arrow[r] & 0,
		\end{tikzcd}
	\end{center}
	where the left and middle vertical arrows are injective and the right vertical arrow is non-trivial if and only if $r = k$.
	From the fact that $g_{i_0} \in \Fil^r \pazo \mbfb_{\crys}(\overline{R}) \setminus \Fil^{r+1} \pazo \mbfb_{\crys}(\overline{R})$, we see that the image of $b_{i_0}$ is non-zero in $\gr^{r} \pazo \mbfb_{\crys}(\overline{R})$.
	But we already have that image of $b_{i_0}$ is non-zero in $\gr^{k} \pazo \mbfb_{\crys}(\overline{R})$.
	Therefore, the left vertical arrow must be non-trivial, i.e.\ $r=k$.
	Hence $x \in \Fil^k M\big[\frac{1}{p}\big]$.
	This proves the claim.
\end{proof}

\begin{lem}\label{lem:fil_oarpdn}
	For $k \in \NN$ we have
	\begin{equation*}
		\Fil^k\big(\pazo \mbfa_{R,\varpi}^{\textpd} \otimes_{\mbfa_R^+} \mbfn(T)\big) = \big(\Fil^k \big(\pazo \mbfa_{R,\varpi}^{\textpd} \otimes_{\mbfa_R^+} \mbfa^+\big) \otimes_{\ZZ_p} T\big) \cap \big(\pazo \mbfa_{R,\varpi}^{\textpd} \otimes_{\mbfa_R^+} \mbfn(T)\big).
	\end{equation*}
\end{lem}
\begin{proof}
	From \S \ref{subsec:relative_phi_gamma_mod} we have rings $\mbfa^+ \subset \mbfa_{\varpi}^+ \subset \mbfa_{\inf}(\overline{R})$ equipped with an induced filtration from $\mbfa_{\crys}(\overline{R})$ and from Remark \ref{rem:a_varpi_iso} we have an isomorphism $\mbfa_{R, \varpi}^+ \otimes_{\mbfa_R^+} \mbfa^+ \isomorphic \mbfa_{\varpi}^+$ compatible with Frobenius, filtration and $G_R\textrm{-action}$.
	So using Lemma \ref{lem:filb_wach}, the fact that $\mbfa_R^+ \rightarrow \mbfa_{R, \varpi}^+$ is flat and $\Fil^i \mbfa_{R, \varpi}^+ = \xi^i \mbfa_{R, \varpi}^+$ we note that
	\begin{align}\label{eq:arpiplus_fil}
		\begin{split}
			\Fil^k \big(\mbfa_{R, \varpi}^+ \otimes_{\mbfa_R^+} \mbfn(T)\big) &= \sum_{i+j=k} \Fil^i \mbfa_{R, \varpi}^+ \otimes_{\mbfa_R^+} ((\Fil^j\mbfa^+ \otimes_{\ZZ_p} T) \cap \mbfn(T))\\
			&= \Big(\sum_{i+j=k} \Fil^i \mbfa_{R, \varpi}^+ \otimes_{\mbfa_R^+} \Fil^j\mbfa^+ \otimes_{\ZZ_p} T \Big) \cap \big(\mbfa_{R, \varpi}^+ \otimes_{\mbfa_R^+} \mbfn(T)\big)\\
			&= \big(\Fil^k \mbfa_{\varpi}^+ \otimes_{\ZZ_p} T\big) \cap \big(\mbfa_{R, \varpi}^+ \otimes_{\mbfa_R^+} \mbfn(T)\big).
		\end{split}
	\end{align}
	Next, from Definition \ref{defi:oarpd} we have the ring $\pazo \mbfa_{R, \varpi}^+$ flat over $\mbfa_{R, \varpi}^+$ since
	\begin{equation*}
		\pazo \mbfa_{R, \varpi}^+ = \oplus_{\smbfk \in \NN^d} \mbfa_{R, \varpi}^+ (X_1-[X_1^{\flat}])^{k_1} \cdots (X_d-[X_d^{\flat}])^{k_d},
	\end{equation*}
	with the structure map $\mbfa_{R, \varpi}^+ \rightarrow \pazo \mbfa_{R, \varpi}^+$ being injective and its image is identified with term at index $\smbfk = (0, \ldots, 0)$.
	Let us set $\pazo \mbfa_{\varpi}^+ := \pazo \mbfa_{R, \varpi}^+ \otimes_{\mbfa_{R}^+} \mbfa^+ \isomorphic \pazo \mbfa_{R, \varpi}^+ \otimes_{\mbfa_{R, \varpi}^+} \mbfa_{\varpi}^+$ equipped with natural filtration, Frobenius and $G_R\textrm{-action}$.
	Let $J = (X_1-[X_1^{\flat}], \ldots, X_d-[X_d^{\flat}])\pazo \mbfa_{R, \varpi}^+$ then the filtration on $\pazo \mbfa_{\varpi}^+$ can also be given as $\Fil^k \pazo \mbfa_{\varpi}^+ = \sum_{i+j=k} J^i \pazo \mbfa_{R, \varpi}^+ \otimes_{\mbfa_{R, \varpi}^+} \xi^j \mbfa_{\varpi}^+$.
	Let us set $N_{R, \varpi}^+ = \mbfa_{R, \varpi}^+ \otimes_{\mbfa_R^+} \mbfn(T)$ equipped with tensor product filtration.
	Then since $J$ is flat as an $\mbfa_{R, \varpi}^+\textrm{-module}$ an argument similar to \eqref{eq:arpiplus_fil} gives us that
	\begin{align}\label{eq:oarpiplus_fil}
		\begin{split}
			\Fil^k \big(\pazo \mbfa_{R, \varpi}^+ \otimes_{\mbfa_R^+} \mbfn(T)\big) &= \Fil^k \big(\pazo \mbfa_{R, \varpi}^+ \otimes_{\mbfa_{R, \varpi}^+} N_{R, \varpi}^+\big)\\
			&= \sum_{i+j=k} J^i \pazo \mbfa_{R, \varpi}^+ \otimes_{\mbfa_{R, \varpi}^+} ((\Fil^j \mbfa_{\varpi}^+ \otimes_{\ZZ_p} T) \cap N_{R, \varpi}^+)\\
			&= \Big(\sum_{i+j=k} J^i \pazo \mbfa_{R, \varpi}^+ \otimes_{\mbfa_{R, \varpi}^+} \xi^j \mbfa_{\varpi}^+ \otimes_{\ZZ_p} T \Big) \cap \big(\pazo \mbfa_{R, \varpi}^+ \otimes_{\mbfa_{R, \varpi}^+} N_{R, \varpi}^+\big)\\
			&= \big(\Fil^k \pazo \mbfa_{\varpi}^+ \otimes_{\ZZ_p} T\big) \cap \big(\pazo \mbfa_{R, \varpi}^+ \otimes_{\mbfa_R^+} \mbfn(T)\big).
		\end{split}
	\end{align}
	Furthermore, let us set $\pazo \mbfa_{\varpi}^{\textpd} := \pazo \mbfa_{R,\varpi}^{\textpd} \otimes_{\mbfa_{R, \varpi}^+} \mbfa_{\varpi}^+ \isomorphic \pazo \mbfa_{R,\varpi}^{\textpd} \otimes_{\mbfa_R^+} \mbfa^+$ where the isomorphism is compatible with Frobenius, filtration, connection and $G_R\textrm{-action}$.

	Now we will show our claim
	\begin{equation*}
		\Fil^k\big(\pazo \mbfa_{R,\varpi}^{\textpd} \otimes_{\mbfa_R^+} \mbfn(T)\big) = \big(\Fil^k \pazo \mbfa_{\varpi}^{\textpd} \otimes_{\ZZ_p} T\big) \cap \big(\pazo \mbfa_{R,\varpi}^{\textpd} \otimes_{\mbfa_R^+} \mbfn(T)\big).
	\end{equation*}
	Let $f \in \{\xi, X_1 - [X_1^{\flat}], \ldots, X_d - [X_d^{\flat}]\}$ be one of the generators of the ideal $(\xi, X_1 - [X_1^{\flat}], \ldots, X_d - [X_d^{\flat}]) \pazo \mbfa_{\varpi}^{\textpd}$.
	Then to obtain our claim, it is enough to show that if $f^{[k]} x \in \big(f^{[k]} \pazo \mbfa_{\varpi}^{\textpd} \otimes_{\ZZ_p} T\big) \setminus \big(f^{[k+1]} \pazo \mbfa_{\varpi}^{\textpd} \otimes_{\ZZ_p} T\big)$ such that $f^{[k]} x \in \big(\pazo \mbfa_{R, \varpi}^{\textpd} \otimes_{\mbfa_R^+} \mbfn(T)\big)$ then $f^{[k]} x \in \Fil^k \big(\pazo \mbfa_{R,\varpi}^{\textpd} \otimes_{\mbfa_R^+} \mbfn(T)\big)$.

	Note that the claim is true for $k = 0$.
	So let $k \geq 1$ and $f$ as above.
	Let $f^{[k]} x \in \big(f^{[k]} \pazo \mbfa_{\varpi}^{\textpd} \otimes_{\ZZ_p} T\big) \setminus \big(f^{[k+1]} \pazo \mbfa_{\varpi}^{\textpd} \otimes_{\ZZ_p} T\big)$ such that $f^{[k]} x \in \big(\pazo \mbfa_{R,\varpi}^{\textpd} \otimes_{\mbfa_R^+} \mbfn(T)\big)$.
	Since $x \neq 0$, by induction on $k$ we may assume that $x = \sum_{i=1}^h x_i e_i \in \pazo \mbfa_{\varpi}^+ \otimes_{\ZZ_p} T$ with either $x_i=0$ or $x_i \in f^k \pazo \mbfa_{\varpi}^+ \setminus f^{k+1} \pazo \mbfa_{\varpi}^+$ for each $1 \leq i \leq h$ and at least one $x_i \neq 0$.
	Recall that we have $\pi^s \mbfa^+ \otimes_{\ZZ_p} T \subset \mbfa^+ \otimes_{\mbfa_R^+} \mbfn(T)$, therefore $\pi^s x \in \pazo \mbfa_{\varpi}^+ \otimes_{\mbfa_R^+} \mbfn(T)$.
	But then inside $\pazo \mbfb_{\crys}(\overline{R}) \otimes_{\mbfa_R^+} \mbfn(T)$ we must have
	\begin{equation*}
		f^{k} x = k!f^{[k]} x \in \big(\pazo \mbfa_{R,\varpi}^{\textpd} \otimes_{\mbfa_R^+} \mbfn(T)\big) \cap \tfrac{1}{\pi^s} \big(\pazo \mbfa_{\varpi}^+ \otimes_{\mbfa_R^+} \mbfn(T)\big) = \pazo \mbfa_{R,\varpi}^+ \otimes_{\mbfa_R^+} \mbfn(T).
	\end{equation*}
	Therefore, $f^k x \in \big(f^k \pazo \mbfa_{\varpi}^+ \otimes_{\ZZ_p} T) \cap \big(\pazo \mbfa_{R,\varpi}^+ \otimes_{\mbfa_R^+} \mbfn(T)\big) = \Fil^k\big(\pazo \mbfa_{R,\varpi}^+ \otimes_{\mbfa_R^+} \mbfn(T)\big)$ where the last equality follows from \eqref{eq:oarpiplus_fil}.
	Hence, inside $\pazo \mbfb_{\crys}(\overline{R}) \otimes_{\mbfa_R^+} \mbfn(T)$ we have $f^{[k]} x \in \frac{1}{k!} \Fil^k \big(\pazo \mbfa_{R,\varpi}^+ \otimes_{\mbfa_R^+} \mbfn(T)\big) \cap \big(\pazo \mbfa_{R,\varpi}^{\textpd} \otimes_{\mbfa_R^+} \mbfn(T)\big) = \Fil^k \big(\pazo \mbfa_{R,\varpi}^{\textpd} \otimes_{\mbfa_R^+} \mbfn(T)\big)$.

\end{proof}

\begin{lem}\label{lem:bpipd_fil}
	For $k \in \NN$ we have
	\begin{equation*}
		\Fil^k \big(\pazo \mbfa_{R,\varpi}^{\textpd} \otimes_{\mbfa_R^+} \mbfa^+) = \big(\pazo \mbfa_{R,\varpi}^{\textpd} \otimes_{\mbfa_R^+} \mbfa^+\big) \cap \Fil^k \pazo \mbfa_{\crys}(\overline{R}) \subset \pazo \mbfa_{\crys}(\overline{R}).
	\end{equation*}
\end{lem}
\begin{proof}
	Recall that filtrations on $\pazo \mbfa_{R,\varpi}^{\textpd}$ and $\pazo \mbfa_{\crys}(\overline{R})$ are compatible (see Remark \ref{rem:oarpd_oacris_fil_comp}).
	Moreover, from \S \ref{subsec:relative_phi_gamma_mod} the inclusion of rings $\mbfa^+ \subset \mbfa_{\varpi}^+ \subset \mbfa_{\inf}(\overline{R})$ is compatible with induced filtration from $\mbfa_{\crys}(\overline{R})$.
	From the discussion in Lemma \ref{lem:fil_oarpdn} we have an isomorphism of rings $\pazo \mbfa_{\varpi}^{\textpd} = \pazo \mbfa_{R,\varpi}^{\textpd} \otimes_{\mbfa_R^+} \mbfa^+ \isomorphic \pazo \mbfa_{R,\varpi}^{\textpd} \otimes_{\mbfa_{R, \varpi}^+} \mbfa_{\varpi}^+$ compatible with tensor product filtrations.
	Now by the description of filtration on the rightmost term we get that $\pazo \mbfa_{\varpi}^{\textpd}$ is equipped with filtration by divided powers of the ideal $(\xi, X_1-[X_1^{\flat}], \ldots, X_d-[X_d^{\flat}])\pazo \mbfa_{\varpi}^{\textpd}$.
	Finally, the natural multiplication map $\pazo \mbfa_{R,\varpi}^{\textpd} \otimes_{\mbfa_{R, \varpi}^+} \mbfa_{\varpi}^+ \rightarrow \pazo \mbfa_{\crys}(\overline{R})$ is injective.
	Hence, it follows that for $k \in \NN$
	\begin{equation*}
		\Fil^k \big(\pazo \mbfa_{R,\varpi}^{\textpd} \otimes_{\mbfa_R^+} \mbfa^+) = \big(\pazo \mbfa_{R,\varpi}^{\textpd} \otimes_{\mbfa_R^+} \mbfa^+\big) \cap \Fil^k \pazo \mbfa_{\crys}(\overline{R}) \subset \pazo \mbfa_{\crys}(\overline{R}).
	\end{equation*}
\end{proof}

\begin{lem}\label{lem:filb_wach}
	For $k \in \NN$ we have $(\Fil^k\mbfa^+ \otimes_{\ZZ_p} T) \cap \mbfn(T) = \Fil^k \mbfn(T)$.
\end{lem}
\begin{proof}
	It is enough to show that $(\Fil^k\mbfb^+ \otimes_{\QQ_p} V) \cap \mbfn(V) = \Fil^k \mbfn(V)$.
	Indeed, from Definition \ref{defi:wach_mod_fil} we have $\Fil^k \mbfn(T) = \Fil^k \mbfn(V) \cap \mbfn(T) = (\Fil^k\mbfb^+ \otimes_{\QQ_p} V) \cap \mbfn(V) \cap \mbfn(T) = (\Fil^k\mbfa^+ \otimes_{\QQ_p} T) \cap \mbfn(T)$ since $\Fil^k \mbfb^+ \cap \mbfa^+ = \Fil^k \mbfa^+$.

	Now let us show the modified claim.
	The inclusion $\Fil^k \mbfn(V) \subset (\Fil^k \mbfb^+ \otimes_{\QQ_p} V)$ is obvious.
	For the converse, we claim that it is enough to show that $(q^k\mbfb^+ \otimes_{\QQ_p} V) \cap \mbfn(V) = q^k \mbfn(V)$.
	Indeed, if we have $x \in (\Fil^k\mbfb^+ \otimes_{\QQ_p} V) \cap \mbfn(V)$ then $\varphi(x) \in (q^k\mbfb^+ \otimes_{\QQ_p} V) \cap \mbfn(V) = q^k \mbfn(V)$, i.e. $x \in \Fil^k \mbfn(V)$.

	The inclusion $q^k \mbfn(V) \subset (q^k\mbfb^+ \otimes_{\QQ_p} V) \cap \mbfn(V)$ is obvious.
	To show the converse, first let us assume that $\mbfn(V)$ is free with $\{f_1, f_2, \ldots, f_h\}$ as a $\mbfb_{R}^+\textrm{-basis}$, and let $\{e_1, \ldots, e_h\}$ be a $\QQ_p\textrm{-basis}$ of $V$.
	Now let $q^k x \in (q^k \mbfb^+ \otimes_{\QQ_p} V) \cap \mbfn(V)$ for $x = \sum_{i=1}^h x_i e_i \in \mbfb^+ \otimes_{\QQ_p} V$.
	We can also write $q^k x = \sum_{i=1}^h y_i f_i \in \mbfn(V)$ with $y_i \in \mbfb_{R}^+$.
	Next, from Proposition \ref{prop:wach_approx_aplus_admis} we have $\pi^s \mbfb^+ \otimes_{\QQ_p} V \subset \mbfb^+ \otimes_{\mbfb_{R}^+} \mbfn(V)$, so we can write 
	\begin{equation*}
		q^k x = \pi^{-s}q^k \sum_{i=1}^h x_i \pi^s e_i = \pi^{-s}q^k\sum_{i=1}^h x_i \sum_{j=1}^h z_{ij} f_j = \pi^{-s} q^k \sum_{i=1}^h (\sum_{j=1}^h x_j z_{ji}) f_i,
	\end{equation*}
	with $z_{ij} \in \mbfb^+$.
	But then we must have $\pi^{-s}q^k\sum_{j=1}^h x_j z_{ji} = y_i$ for all $1 \leq i \leq h$.
	Since $H_{R}$ acts trivially on $\pi$, $q$ and $y_i$, we get that $w_i := \sum_{j=1}^h x_j z_{ji} \in \mbfb_{R}^+$.
	But $y_i \in \mbfb_{R}^+$ and $\pi$ and $q$ are coprime in $\mbfb_{R}^+$ (since $q \equiv p \mod \pi \mbfb_R^+$), therefore we obtain that $w_i \in \pi^s\mbfb_{R}^+$.
	In particular, $y_i \in q^k \mbfb_{R}^+$, therefore $q^k x = \sum_{i=1}^h y_i f_i \in q^k \mbfn(V)$.
	Hence, $(q^k\mbfb^+ \otimes_{\QQ_p} V) \cap \mbfn(V) = q^k \mbfn(V)$.

	Next, if $\mbfn(V)$ is projective (and not free) over $\mbfb_{R}^+$, let $R\prm$ be the $\padic$ completion of a finite \'etale algebra over $R$ such that the scalar extension $\mbfb_{R\prm}^+ \otimes_{\mbfb_{R}^+} \mbfn(V)$ is a free module over $\mbfb_{R\prm}^+$ and $R\prm\big[\frac{1}{p}\big] / R\big[\frac{1}{p}\big]$ is Galois (see Definition \ref{defi:wach_reps}).
	Then we can argue as above and conclude by taking $\Gal\big(R\prm\big[\frac{1}{p}\big] / R\big[\frac{1}{p}\big]\big)\textrm{-invariants}$ of $q^k \mbfb_{R\prm}^+ \otimes_{\mbfb_{R}^+} \mbfn(V)$.
\end{proof}

\subsection{One-dimensional representations}\label{subsec:onedim_reps}

In this section we will show that all one-dimensional crystalline representations are of finite $q\textrm{-height}$ by writing down the corresponding Wach modules precisely.

\begin{prop}\label{prop:one_dim_crys_wach}
	All one-dimensional crystalline representations of $G_{R}$ are of finite $q\textrm{-height}$.
	Furthermore, for a one-dimensional crystalline representation $V$ we have an isomorphism of $R\big[\frac{1}{p}\big]\textrm{-modules}$
	\begin{equation*}
		\big(\pazo\mbfa_{R,\varpi}^{\textpd} \otimes_{\mbfa_R^+} \mbfn(V)\big)^{\Gamma_{R}} \isomorphic \pazo \mbfd_{\crys}(V).
	\end{equation*}
	Therefore, there exists natural isomorphisms
	\begin{equation*}
		\pazo \mbfa_{R,\varpi}^{\textpd} \otimes_{R} \pazo \mbfd_{\crys}(V) \lisomorphic \pazo \mbfa_{R,\varpi}^{\textpd} \otimes_{R} \big(\pazo\mbfa_{R,\varpi}^{\textpd} \otimes_{\mbfa_R^+} \mbfn(V)\big)^{\Gamma_{R}} \isomorphic \pazo \mbfa_{R,\varpi}^{\textpd} \otimes_{\mbfa_R^+} \mbfn(V),
	\end{equation*}
	compatible with Frobenius, filtration and the action of $\Gamma_{R}$.
\end{prop}
\begin{proof}
	The structure of one-dimensional crystalline representations of $G_{R}$ is well-known (see \cite[\S 8.6]{brinon-padicrep-relatif}).
	From Proposition \ref{prop:onedim_unramrep_struct} we have that for $\eta : G_{R} \rightarrow \ZZ_p^{\times}$, a continuous character, $V = \QQ_p(\eta)$ is crystalline if and only if we can write $\eta = \eta_{\fini}\eta_{\unrami} \chi^n$ with $n \in \ZZ$, and where $\eta_{\fini}$ is a finite unramified character, $\eta_{\unrami}$ is an unramified character taking values in $1 + p\ZZ_p$ and trivialized by an element $\alpha \in 1 + p\widehat{R^{\unrami}}$, and $\chi$ is the $\padic$ cyclotomic character.
	Recall that a $\padic$ representation of $G_{R}$ is unramified if the action of $G_{R}$ factorizes through the quotient $G_{R}^{\unrami}$ (see \S \ref{subsec:relative_padic_reps}).
	Moreover, if $\eta_{\fini}$ is trivial then $\pazo\mbfd_{\crys}(V)$ is a free $R\big[\frac{1}{p}\big]\textrm{-module}$ of rank 1.

	In Lemma \ref{lem:onedim_crysrep_wach_module} below, we show that crystalline representations $V_1 := \QQ_p(\eta_{\fini}\eta_{\unrami})$ and $V_2 := \QQ_p(\chi^n)$ are of finite $q\textrm{-height}$.
	For a one-dimensional crystalline representation $V := \QQ_p(\eta) = \QQ_p(\eta_{\fini}\eta_{\unrami}) \otimes_{\QQ_p} \QQ_p(\chi^n) = V_1 \otimes_{\QQ_p} V_2$ as above, by compatibility of tensor products in Propositions \ref{prop:wach_module_sum_tensor} we get that $V$ is a finite $q\textrm{-height}$ representation as well with $\mbfn(V) = \mbfn(V_1) \otimes_{\mbfb_{R}^+} \mbfn(V_2)$.

	Now, from the isomorphisms of $\pazo \mbfa_{R,\varpi}^{\textpd}\textrm{-modules}$ in Lemma \ref{lem:onedim_crysrep_wach_module} and compatibility of tensor product of Wach modules in Proposition \ref{prop:wach_module_sum_tensor} and compatibility of the functor $\pazo \mbfd_{\crys}$ with tensor products in \S \ref{subsec:relative_padic_reps} (see also \cite[Th\'eor\`eme 8.4.2]{brinon-padicrep-relatif}), we get a string of isomorphisms of $\pazo\mbfb_{R, \varpi}^{\textpd} := \pazo \mbfa_{R,\varpi}^{\textpd}\big[\frac{1}{p}\big]\textrm{-modules}$ compatible with Frobenius, filtration and the action of $\Gamma_{R}$,
	\begin{align*}
		\pazo \mbfa_{R,\varpi}^{\textpd} \otimes_{R} \pazo \mbfd_{\crys}(V) &\isomorphic \big(\pazo \mbfa_{R,\varpi}^{\textpd} \otimes_{R} \pazo \mbfd_{\crys}(V_1)\big) \otimes_{\pazo \mbfb_{R, \varpi}^{\textpd}} \big(\pazo \mbfa_{R,\varpi}^{\textpd} \otimes_{R} \pazo \mbfd_{\crys}(V_2)\big)\\
		&\lisomorphic \big(\pazo \mbfa_{R,\varpi}^{\textpd} \otimes_{\mbfa_R^+} \mbfn(V_1)\big) \otimes_{\pazo \mbfb_R^{\textpd}} \big(\pazo \mbfa_{R,\varpi}^{\textpd} \otimes_{\mbfa_R^+} \mbfn(V_2)\big)\\
		&\isomorphic \pazo \mbfa_{R,\varpi}^{\textpd} \otimes_{\mbfa_R^+} \mbfn(V_1) \otimes_{\mbfb_{R}^+} \mbfn(V_2)\\
		&\isomorphic \pazo \mbfa_{R,\varpi}^{\textpd} \otimes_{\mbfa_R^+} \mbfn(V_1 \otimes_{\QQ_p} V_2) \isomorphic \pazo \mbfa_{R,\varpi}^{\textpd} \otimes_{\mbfa_R^+} \mbfn(V).
	\end{align*}
	Taking $\Gamma_{R}\textrm{-invariants}$ of the first and the last term gives us that $\pazo \mbfd_{\crys}(V) \isomorphic \big(\pazo \mbfa_{R,\varpi}^{\textpd} \otimes_{\mbfa_R^+} \mbfn(V)\big)^{\Gamma_{R}}$, compatible with Frobenius and filtration.
\end{proof}

Following claim was used above:
\begin{lem}\phantomsection\label{lem:onedim_crysrep_wach_module}
	\begin{enumromanup}
	\item Let $\eta : G_{R} \rightarrow \ZZ_p^{\times}$ be a continuous unramified character.
		Then the $\padic$ representation $\QQ_p(\eta)$ is a finite $q\textrm{-height}$ representation.

	\item Let $\chi$ be the $\padic$ cyclotomic character then for $n \in \ZZ$, the $\padic$ representation $\QQ_p(n)$ is a finite $q\textrm{-height}$ representation.
	\end{enumromanup}

	Further, for $V = \QQ_p(\eta), \QQ_p(n)$ we have an isomorphism of $R\big[\frac{1}{p}\big]\textrm{-modules}$
	\begin{equation*}
		\big(\pazo\mbfa_{R,\varpi}^{\textpd} \otimes_{\mbfa_R^+} \mbfn(V)\big)^{\Gamma_{R}} \isomorphic \pazo \mbfd_{\crys}(V).
	\end{equation*}
	Therefore, there exists natural isomorphisms
	\begin{equation*}
		\pazo \mbfa_{R,\varpi}^{\textpd} \otimes_{R} \pazo \mbfd_{\crys}(V) \lisomorphic \pazo \mbfa_{R,\varpi}^{\textpd} \otimes_{R} \big(\pazo\mbfa_{R,\varpi}^{\textpd} \otimes_{\mbfa_R^+} \mbfn(V)\big)^{\Gamma_{R}} \isomorphic \pazo \mbfa_{R,\varpi}^{\textpd} \otimes_{\mbfa_R^+} \mbfn(V),
	\end{equation*}
	compatible with Frobenius, filtration and the action of $\Gamma_{R}$.
\end{lem}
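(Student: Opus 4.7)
The approach is to exhibit explicit Wach modules for each of the two families, then identify the $\Gamma_R$-invariants with $\pazo\mbfd_{\crys}(V)$ by matching distinguished generators. The real content of the lemma is in Part (ii) of the comparison, where one has to bridge between the natural generator $\pi^{-n}\otimes e_n$ of the Wach module and the natural generator $t^{-n}\otimes e_n$ of $\pazo\mbfd_{\crys}(V)$; Lemma \ref{lem:t_over_pi_unit} is exactly the tool that allows this.

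\emph{Construction of Wach modules.} For Part (i), by Proposition \ref{prop:onedim_unramrep_struct} (together with a Galois descent to absorb a possible finite-order factor of $\eta$), there exists $\alpha \in \mbfa^{+,\times}$ with $g(\alpha) = \eta(g)\alpha$ for all $g \in G_{R_0}$; concretely one has $\alpha \in 1 + p\widehat{R_0^{\unrami}}$ for the $\eta_{\unrami}$ piece. Setting $v := \alpha^{-1}\otimes e$ for a basis $e$ of $T$ produces a $G_{R_0}$-fixed element of $\mbfa^+\otimes_{\ZZ_p} T$, and I define $\mbfn(T) := \mbfa_{R_0}^+\cdot v$. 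Rank-one projectivity is immediate; $\varphi$-stability follows from $\varphi(v)/v = \alpha/\varphi(\alpha)\in\mbfa_{R_0}^{+,\times}$; since $v$ is $G_{R_0}$-fixed, $\Gamma_{R_0}$ acts trivially on all of $\mbfn(T)$; and Hodge-Tate weight zero forces $r_1 = s = 0$. For Part (ii), fix a basis $e_n$ of $T = \ZZ_p(n)$ with $g\cdot e_n = \chi(g)^n e_n$ and put $\mbfn(T) := \pi^{-n}\mbfa_{R_0}^+\otimes e_n$. Then $\varphi(\pi^{-n}\otimes e_n) = q^{-n}(\pi^{-n}\otimes e_n)$ gives axiom (ii) with $r_1 = -n$; combining the expansion $\gamma_0(\pi) = (1+\pi)^{\chi(\gamma_0)} - 1 \equiv \chi(\gamma_0)\pi \pmod{\pi^2}$ with $\gamma_i(\pi) = \pi$ and $\chi(\gamma_i) = 1$ for the geometric generators $1\le i\le d$ yields $\gamma(\pi^{-n}\otimes e_n) \equiv \pi^{-n}\otimes e_n$ modulo $\pi\mbfn(T)$ for every $\gamma\in\Gamma_{R_0}$, verifying axiom (iii); axiom (iv) holds with $s = -n$.

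\emph{Identification of invariants.} By Lemma \ref{lem:gamma_inv_oarpd}, $(\pazo\mbfa_R^{\textpd})^{\Gamma_R} = R_0$. In the unramified case $v$ is already $\Gamma_R$-fixed, so $M\big[\tfrac{1}{p}\big] = R_0\big[\tfrac{1}{p}\big]\cdot v$, which coincides with $\pazo\mbfd_{\crys}(V)$ under the canonical inclusion $\pazo\mbfa_R^{\textpd}\hookrightarrow\pazo\mbfb_{\crys}(R_0)$. The cyclotomic case is the delicate step: the canonical $\Gamma_R$-fixed vector in $\pazo\mbfb_{\crys}(R_0)\otimes V$ is $t^{-n}\otimes e_n$, and a priori it lies only in this larger ring, not in $\pazo\mbfa_R^{\textpd}\otimes\mbfn(V)$. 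This is resolved by Lemma \ref{lem:t_over_pi_unit}: since $t/\pi$ is a unit in $\mbfa_K^{\textpd}\hookrightarrow\pazo\mbfa_R^{\textpd}$,
\[
t^{-n}\otimes e_n = (t/\pi)^{-n}\bigl(1\otimes\pi^{-n}\otimes e_n\bigr) \in \pazo\mbfa_R^{\textpd}\sideset{ }{_{\mbfa_{R_0}^+}}\smotimes\mbfn(V).
\]
Using $\gamma(t/\pi)/(t/\pi) = \chi(\gamma)\pi/\gamma(\pi)$, the invariance condition on a coefficient $a\cdot(1\otimes\pi^{-n}\otimes e_n)$ reduces, after the substitution $a = c\cdot(t/\pi)^{-n}$, to $\gamma(c) = c$ for all $\gamma\in\Gamma_R$. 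Thus $M\big[\tfrac{1}{p}\big] = R_0\big[\tfrac{1}{p}\big]\cdot(t^{-n}\otimes e_n) = \pazo\mbfd_{\crys}(\QQ_p(n))$, and the matching of Frobenius, filtration, and connection follows from the explicit formulas.

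\emph{Extension of scalars.} In both cases $M\big[\tfrac{1}{p}\big]$ and $\pazo\mbfd_{\crys}(V)$ are free of rank one over $R_0\big[\tfrac{1}{p}\big]$ with matching distinguished generators, so the first displayed isomorphism $\pazo\mbfa_R^{\textpd}\otimes_{R_0}\pazo\mbfd_{\crys}(V) \isomorphic \pazo\mbfa_R^{\textpd}\otimes_{\mbfa_{R_0}^+}M\big[\tfrac{1}{p}\big]$ is immediate. The second, $\pazo\mbfa_R^{\textpd}\otimes_{\mbfa_{R_0}^+}M\big[\tfrac{1}{p}\big] \isomorphic \pazo\mbfa_R^{\textpd}\otimes_{\mbfa_{R_0}^+}\mbfn(V)$, is the natural map; in the unramified case it sends $1\otimes v$ to $1\otimes v$, and in the cyclotomic case it sends $1\otimes(t^{-n}\otimes e_n)$ to the unit multiple $(t/\pi)^{-n}\otimes(\pi^{-n}\otimes e_n)$. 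Both sides being free rank-one $\pazo\mbfa_R^{\textpd}$-modules, this is an isomorphism, and compatibility with Frobenius, filtration, connection, and $\Gamma_R$-action is read off the generators.
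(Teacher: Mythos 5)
Your part (ii) is essentially the paper's argument: the same module $\mbfa_{R_0}^+\pi^{-n}e_n$, the same identification of the invariants with $R_0\big[\frac{1}{p}\big]t^{-n}e_n$, and the same use of Lemma \ref{lem:t_over_pi_unit} to convert between the generators $\pi^{-n}\otimes e_n$ and $t^{-n}\otimes e_n$. The issue is in part (i).

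There you assert that, after ``Galois descent to absorb the finite-order factor'', there exists a unit $\alpha\in\mbfa^{+,\times}$ with $g(\alpha)=\eta(g)\alpha$ for the \emph{whole} character $\eta=\eta_{\fini}\eta_{\unrami}$, and you then set $\mbfn(T)=\mbfa_{R_0}^+\cdot(\alpha^{-1}\otimes e)$, a free module with a $G_{R_0}$-fixed generator $v$. Such an $\alpha$ need not exist when $\eta_{\fini}$ is non-trivial: if it did, then $v$ would be a $G_{R_0}$-fixed basis of $\pazo\mbfb_{\crys}(R_0)\otimes_{\QQ_p}V$, forcing $\pazo\mbfd_{\crys}(V)=R_0\big[\frac{1}{p}\big]\cdot v$ to be free of rank one — but Proposition \ref{prop:onedim_unramrep_struct} only guarantees freeness after a finite \'etale extension $R_0\to R_0'$, and unconditionally only when $\eta_{\fini}$ is trivial. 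Over a base ring (unlike over a field) the relevant $H^1$ of the finite Galois group with values in units does not vanish, so the rank-one projective module $\mbfd^+(T)$ is in general not free and descent produces a projective, not a free, module. The paper avoids this by taking $\mbfn(T)=\mbfd^+(T)$ (projective is all Definition \ref{defi:wach_reps} requires), proving the comparison isomorphism over the finite Galois extension $S_0$ of $R_0$ where $\eta_{\fini}$ trivializes, and then descending by taking invariants under the finite group $\Gal\big(S_0\big[\frac{1}{p}\big]/R_0\big[\frac{1}{p}\big]\big)$. Your construction is fine for $\eta=\eta_{\unrami}$ alone (where a trivializing $\alpha\in 1+p\widehat{R_0^{\unrami}}$ does exist), but the finite part needs the descent step, not a global trivializing unit.
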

\begin{proof}
	Let $\eta = \eta_{\fini} \eta_{\unrami}$, where $\eta_{\fini}$ is an unramified character of finite order and $\eta_{\unrami}$ is an unramified character taking values in $1+p\ZZ_p$ and trivialised by an element $\alpha \in 1+p\widehat{R^{\unrami}}$ (see Proposition \ref{prop:onedim_unramrep_struct}).

	First, let us consider the finite unramified character $\eta_{\fini}$.
	Set $T = \ZZ_p(\eta_{\fini}) = \ZZ_p e$, such that $g(e) = \eta_{\fini}(g)e$.
	We have
	\begin{equation*}
		\mbfd^+\big(\ZZ_p(\eta_{\fini})\big) = \big(\mbfa^+ \otimes_{\ZZ_p} \ZZ_p(\eta_{\fini})\big)^{H_{R}} \isomorphic \big\{a \otimes e, \hspace{0.5mm} \textrm{with} \hspace{1mm} a \in \mbfa^+ \hspace{1mm} \textrm{such that} \hspace{1mm} g(a) = \eta_{\fini}^{-1}(g)a, \hspace{0.5mm} \textrm{for} \hspace{1mm} g \in H_{R}\big\}.
	\end{equation*}
	Since $\eta_{\fini}$ is a finite unramified character, it trivializes over a finite Galois extension $S$ over $R$ (see \cite[Proposition 8.6.1]{brinon-padicrep-relatif}), and we have that $\Gal\big(S\big[\frac{1}{p}\big] / R\big[\frac{1}{p}\big]\big) = G_{R} / G_{S} = H_{R} / H_{S} = \Gamma_{R} / \Gamma_{S}$.
	As $S$ is finite \'etale over $R$ the construction of previous chapters apply and we obtain that the $\mbfa_{S}^+\textrm{-module}$ $\mbfd_{S}^+\big(\ZZ_p(\eta_{\fini})\big) = \big(\mbfa^+ \otimes_{\ZZ_p} \ZZ_p(\eta_{\fini})\big)^{H_{S}} = \mbfa_{S}^+ (\eta_{\fini}) = \mbfa_{S}^+ e$ is free of rank $1$.
	Further, we know that $\mbfd^+\big(\ZZ_p(\eta_{\fini})\big) = \mbfd_{S}^+\big(\ZZ_p(\eta_{\fini})\big)^{H_{R} / H_{S}}$, which implies that the natural inclusion
	\begin{equation*}
		\mbfa_{S}^+ \otimes_{\mbfa_R^+} \mbfd^+\big(\ZZ_p(\eta_{\fini})\big) \longrightarrow \mbfd_{S}^+\big(\ZZ_p(\eta_{\fini})\big),
	\end{equation*}
	is bijective.
	Now, since $\mbfa_R^+ \rightarrow \mbfa_{S}^+$ is faithfully flat, we obtain that $\mbfd^+\big(\ZZ_p(\eta_{\fini})\big)$ is projective of rank $1$.
	Moreover, $\mbfd^+\big(\ZZ_p(\eta_{\fini})\big)$ admits a Frobenius-semilinear endomorphism $\varphi$ such that $\mbfd^+\big(\ZZ_p(\eta_{\fini})\big) \isomorphic \varphi^{\ast}\big(\mbfd^+\big(\ZZ_p(\eta_{\fini})\big)\big)$ (one can obtain this after faithfully flat scalar extension $\mbfa_R^+ \rightarrow \mbfa_{S}^+$ and applying descent as above, since $\varphi$ commutes with $G_{R}\textrm{-action}$).
	The action of $\Gamma_{R}$ is trivial on $\mbfd^+\big(\ZZ_p(\eta_{\fini})\big)$.
	Now, we can take $\mbfn\big(\ZZ_p(\eta_{\fini})\big) = \mbfd^+\big(\ZZ_p(\eta_{\fini})\big)$.
	From the discussion above, $\mbfn\big(\ZZ_p(\eta_{\fini})\big)$ clearly satisfies the conditions of Definition \ref{defi:wach_reps}.
	Also, we have that $\mbfn(\QQ_p(\eta_{\fini})) = \mbfd^+(\QQ_p(\eta_{\fini}))$.
	On the other hand, we have
	\begin{equation*}
		\pazo \mbfd_{\crys}\big(\QQ_p(\eta_{\fini})\big) = \big(\pazo \mbfb_{\crys}(\overline{R}) \otimes_{\QQ_p} \QQ_p(\eta_{\fini})\big)^{G_{R}} = \big\{b \otimes e, \hspace{1mm} \textrm{with} \hspace{1mm} b \in \pazo\mbfb_{\crys}(\overline{R}) \hspace{1mm} \textrm{such that} \hspace{1mm} g(b) = \eta_{\fini}(g)b\big\}.
	\end{equation*}
	Since $\eta_{\fini}$ trivializes over the finite Galois extension $S$ over $R$, we have
	\begin{equation*}
		\big(\pazo \mbfa_{S, \varpi}^{\textpd} \otimes_{\mbfa_R^+} \mbfn\big(\QQ_p(\eta_{\fini})\big)\big)^{\Gamma_{S}} = S_0\big[\tfrac{1}{p}\big]e = \big(\pazo \mbfb_{\crys}(\overline{S}) \otimes_{\QQ_p} \QQ_p(\eta_{\fini})\big)^{G_{S}},
	\end{equation*}
	where the rings $\pazo \mbfa_{S, \varpi}^{\textpd}$ and $\pazo \mbfb_{\crys}(\overline{S})$ are defined for $S$ over which all the construction of previous sections apply (since $S$ is finite \'etale over $R$).
	Now taking invariants under the finite Galois group $\Gal\big(S\big[\frac{1}{p}\big] / R\big[\frac{1}{p}\big]\big) = G_{R} / G_{S}$, gives us
	\begin{equation*}
		\big(\pazo \mbfa_{R,\varpi}^{\textpd} \otimes_{\mbfa_R^+} \mbfn\big(\QQ_p\big(\eta_{\fini}\big)\big)\big)^{\Gamma_{R}} = \pazo \mbfd_{\crys}\big(\QQ_p(\eta_{\fini})\big).
	\end{equation*}
	Clearly, the natural maps
	\begin{equation*}
		\pazo \mbfa_{R,\varpi}^{\textpd} \otimes_{R} \pazo \mbfd_{\crys}\big(\QQ_p(\eta_{\fini})\big) \lisomorphic \pazo \mbfa_{R,\varpi}^{\textpd} \otimes_{R} \big(\pazo \mbfa_{R,\varpi}^{\textpd} \otimes_{\mbfa_R^+} \mbfn\big(\QQ_p(\eta_{\fini})\big)\big)^{\Gamma_{R}} \isomorphic \pazo \mbfa_{R,\varpi}^{\textpd} \otimes_{\mbfa_R^+} \mbfn\big(\QQ_p(\eta_{\fini})\big),
	\end{equation*}
	are isomorphisms compatible with Frobenius, filtration and the action of $\Gamma_{R}$.

	Next, let us consider the unramified character $\eta_{\unrami}$ which takes values in $1+p\ZZ_p$ and trivialised by an element $\alpha \in 1+p\widehat{R^{\unrami}}$ (see Proposition \ref{prop:onedim_unramrep_struct}).
	Set $T = \ZZ_p(\eta_{\unrami}) = \ZZ_p e$, such that $g(e) = \eta_{\unrami}(g)e$.
	We have
	\begin{equation*}
		\mbfd^+\big(\ZZ_p(\eta_{\unrami})\big) = \big(\mbfa^+ \otimes_{\ZZ_p} \ZZ_p(\eta_{\unrami})\big)^{H_{R}} = \mbfa_R^+ \alpha e.
	\end{equation*}
	So we take $\mbfn\big(\ZZ_p(\eta_{\unrami})\big) = \mbfd^+\big(\ZZ_p(\eta_{\unrami})\big) = \mbfa_R^+ \alpha e$.
	This clearly satisfies the conditions of Definition \ref{defi:wach_reps}.
	Also, we have that $\mbfn(\QQ_p(\eta_{\unrami})) = \mbfd^+(\QQ_p(\eta_{\unrami}))$.
	On the other hand, we have
	\begin{align*}
		\pazo \mbfd_{\crys}\big(\QQ_p(\eta_{\unrami})\big) &= \big(\pazo \mbfb_{\crys}(\overline{R}) \otimes_{\QQ_p} \QQ_p(\eta_{\unrami})\big)^{G_{R}}\\
		&= \big\{b \otimes e, \hspace{1mm} \textrm{with} \hspace{1mm} b \in \pazo\mbfb_{\crys}(\overline{R}) \hspace{1mm} \textrm{such that} \hspace{1mm} g(b) = \eta_{\unrami}(g)b\big\} = R\big[\tfrac{1}{p}\big] \alpha e.
	\end{align*}
	Therefore, we obtain
	\begin{equation*}
		\big(\pazo \mbfa_{R,\varpi}^{\textpd} \otimes_{\mbfa_R^+} \mbfn\big(\QQ_p(\eta_{\unrami})\big)\big)^{\Gamma_{R}} = R\big[\tfrac{1}{p}\big] \alpha e = \big(\pazo \mbfb_{\crys}(\overline{R}) \otimes_{\QQ_p} \QQ_p(\eta_{\unrami})\big)^{G_{R}}.
	\end{equation*}
	Clearly, the natural maps
	\begin{equation*}
		\pazo \mbfa_{R,\varpi}^{\textpd} \otimes_{R} \pazo \mbfd_{\crys}\big(\QQ_p(\eta_{\unrami})\big) \lisomorphic \pazo \mbfa_{R,\varpi}^{\textpd} \otimes_{R} \big(\pazo \mbfa_{R,\varpi}^{\textpd} \otimes_{\mbfa_R^+} \mbfn\big(\QQ_p(\eta_{\unrami})\big)\big)^{\Gamma_{R}} \isomorphic \pazo \mbfa_{R,\varpi}^{\textpd} \otimes_{\mbfa_R^+} \mbfn\big(\QQ_p(\eta_{\unrami})\big),
	\end{equation*}
	are isomorphisms compatible with Frobenius, filtration and the action of $\Gamma_{R}$.
	
	Finally, let $T = \ZZ_p(n) = \ZZ_p e_n$ such that $g(e_n) = \chi(g)^n e_n$, then $V = \QQ_p \otimes_{\ZZ_p} T$ is a crystalline representation.
	In this case, we can take $\mbfn\big(\ZZ_p(n)\big) = \mbfa_R^+ \pi^{-n} e_n$.
	Note that for $n \leq 0$, we have that $\mbfn\big(\ZZ_p(n)\big) / \varphi^{\ast}\big(\mbfn\big(\ZZ_p(n)\big)\big)$ is killed by $q^{-n}$, where $q = \frac{\varphi(\pi)}{\pi}$.
	It can easily be verified that $\Gamma_R$ acts trivially modulo $\pi$ on $\mbfn(T)$.
	So, we set $\mbfn\big(\QQ_p(n)\big) = \mbfb_{R}^+ \pi^{-n} e_n$.
	Similarly,
	\begin{equation*}
		\pazo \mbfd_{\crys}\big(\QQ_p(n)\big) = \big(\pazo \mbfb_{\crys}(\overline{R}) \otimes_{\QQ_p} \QQ_p(n)\big)^{G_{R}} = R\big[\tfrac{1}{p}\big] t^{-n} e_n,
	\end{equation*}
	and $\big(\pazo \mbfa_{R,\varpi}^{\textpd} \otimes_{\mbfa_R^+} \mbfn\big(\QQ_p(n)\big)\big)^{\Gamma_{R}} = R\big[\frac{1}{p}\big] t^{-n} e_n = \pazo \mbfd_{\crys}\big(\QQ_p(n)\big)$ compatible with Frobenius, filtration and connection on each side.
	Finally, the map
	\begin{align*}
		\pazo \mbfa_{R,\varpi}^{\textpd} \otimes_{R} \pazo \mbfd_{\crys}\big(\QQ_p(n)\big) &\longrightarrow \pazo \mbfa_{R,\varpi}^{\textpd} \otimes_{\mbfa_R^+} \mbfn\big(\QQ_p(n)\big)\\
		t^{-n}e_n &\longmapsto \tfrac{\pi^n}{t^n}\pi^{-n}e_n.
	\end{align*}
	is trivially an isomorphism compatible with Frobenius, filtration and the action of $\Gamma_{R}$, since $\frac{\pi^n}{t^n} \in \pazo \mbfa_{R,\varpi}^{\textpd}$ are units for $n \in \ZZ$ (see Lemma \ref{lem:t_over_pi_unit}).
	This proves the lemma.
\end{proof}

\begin{rem}\label{rem:onedim_crysrep_wachmod_intiso}
	Note that for $T = \ZZ_p\big(\eta_{\fini}\eta_{\unrami}\big)$ or $\ZZ_p(n)$, we even have an isomorphism on the integral level
	\begin{equation*}
\pazo \mbfa_{R,\varpi}^{\textpd} \otimes_{R} \big(\pazo \mbfa_{R,\varpi}^{\textpd} \otimes_{\mbfa_R^+} \mbfn(T)\big)\big)^{\Gamma_{R}} \isomorphic \pazo \mbfa_{R,\varpi}^{\textpd} \otimes_{\mbfa_R^+} \mbfn(T).
	\end{equation*}
\end{rem}

\cleardoublepage

\section{Relative Fontaine-Laffaille modules}\label{sec:fontaine_laffaile_to_wach}

In this section we will consider relative Fontaine-Laffaille data and construct Wach modules given such data.
Carrying out such a process would involve starting with a module over $R$ and constructing modules over the ring $\mbfa_{R, \varpi}^{\textpd}$ and $\mbfa_{R, \varpi}^+$, and finally descending over to the ring $\mbfa_{R}^+$.

Explicitly, we will work with objects in the category $\MF_{[0, p-2], \free}(R, \Phi, \partial)$, defined by \cite[\S 4]{tsuji-ainf-genrep} as a full subcategory of the abelian category $\mathfrak{MF}_{[0, p-2], \free}^{\nabla}(R)$ which was introduced by Faltings in \cite[\S II]{faltings-crystalline}.
In particular,
\begin{defi}\label{defi:rel_fontaine_laffaille}
	Define the category of \textit{free relative Fontaine-Laffaille} modules of level $[0, p-2]$, denoted by $\MF_{[0, p-2], \free}(R, \Phi, \partial)$, as follows:\\
	An object with weights in the interval $[0, p-2]$ is a quadruple $(M, \Fil^{\bullet} M, \partial, \Phi)$ such that,
	\begin{enumromanup}
	\item $M$ is a free $R\textrm{-module}$ of finite rank.

	\item $M$ is equipped with a decreasing filtration $\{\Fil^k M\}_{k \in \ZZ}$ by finite $R\textrm{-submodules}$ with $\Fil^0 M = M$ and $\Fil^{s+1} M = 0$ such that $\gr^k_{\Fil} M$ is a finite free $R\textrm{-module}$ for every $k \in \ZZ$.

	\item The connection $\partial : M \rightarrow M \otimes_{R} \Omega^1_{R}$ is $p\textrm{-adically}$ quasi-nilpotent and integrable, and satisfies Griffiths transversality with respect to the filtration, i.e. $\partial(\Fil^k M) \subset \Fil^{k-1} M \otimes_{R} \Omega^1_{R}$ for $k \in \ZZ$.	
	
	\item Let $(\varphi^{\ast}(M), \varphi^{\ast}(\partial))$ denote the pullback of $(M, \partial)$ by $\varphi : R \rightarrow R$, and equip it with a decreasing filtration $\Fil^k_p(\varphi^{\ast}(M)) = \sum_{i \in \NN} p^{[i]} \varphi^{\ast}(\Fil^{k-i} M)$ for $k \in \ZZ$.
		We suppose that there is an $R\textrm{-linear}$ morphism $\Phi : \varphi^{\ast}(M) \rightarrow M$ such that $\Phi$ is compatible with connections, $\Phi\big(\Fil^k_p(\varphi^{\ast}(M))\big) \subset p^k M$ for $0 \leq k \leq s$, and $\sum_{k=0}^{s} p^{-k}\Phi\big(\Fil^k_p(\varphi^{\ast}(M))\big) = M$.
		We denote the composition $M \rightarrow \varphi^{\ast}(M) \xrightarrow{\Phi} M$ by $\varphi$.
	\end{enumromanup}
	A morphism between two objects of the category $\MF_{[0, p-2], \free}(R, \Phi, \partial)$ is a continuous $R\textrm{-linear}$ map compatible with the homomorphism $\Phi$, the connection $\partial$ and filtration on each side.
\end{defi}

\begin{nota}
	By a slight abuse of notations, we will denote $(M, \Fil^k M, \partial, \Phi) \in \MF_{[0, p-2], \free}(R, \Phi, \partial)$ by $M$ and say that it is of level $[0, p-2]$.
\end{nota}

To an object $M \in \MF_{[0, p-2], \free}(R, \Phi, \partial)$, we associate a $\ZZ_p\textrm{-module}$ as
\begin{equation}
	T_{\crys}^{\ast}(M) := \Hom_{R, \hspace{0.3mm}\Fil, \hspace{0.3mm}\varphi, \hspace{0.3mm}\partial}(M, \pazo \mbfa_{\crys}(\overline{R})),
\end{equation}
i.e. $R\textrm{-linear}$ maps from $M$ to $\pazo \mbfa_{\crys}(\overline{R})$ compatible with Frobenius, filtration and connection, where we have $\varphi : M \rightarrow \varphi^{\ast}(M) \xrightarrow{\Phi} M$.

\begin{prop}\phantomsection\label{prop:tcrysast_fl}
	\begin{enumromanup}
	\item For a free Fontaine-Laffaille module $M$ of level $[0, p-2]$, the $\ZZ_p\textrm{-module}$ $T_{\crys}^{\ast}(M)$ is a free module of rank $=\textup{rk}_{R} M$ equipped with a continuous action of $G_{R}$.
		Further, the $\padic$ representation $V_{\crys}^{\ast}(M) := \QQ_p \otimes_{\ZZ_p} T_{\crys}^{\ast}(M)$ is a crystalline representation of $G_{R}$ with Hodge-Tate weights in the interval $[0, p-2]$.

	\item The contravariant $\ZZ_p\textrm{-linear}$ functor
		\begin{equation*}
			T_{\crys}^{\ast} : \MF_{[0, p-2], \free}(R, \Phi, \partial) \longrightarrow \Rep_{\ZZ_p, \free}(G_{R}),
		\end{equation*}
		is fully faithful.
		Here $\Rep_{\ZZ_p, \free}(G_{R})$ denotes the category of finite free $\ZZ_p\textrm{-modules}$ equipped with a continuous action of $G_{R}$.
	\end{enumromanup}
\end{prop}
\begin{proof}
	The claim in (i) follows from \cite[Theorem 2.4]{faltings-crystalline} and \cite[Proposition 66]{tsuji-ainf-genrep}.
	Further, the claim in (ii) follows from \cite[Theorem 2.4]{faltings-crystalline} and \cite[Theorem 77]{tsuji-ainf-genrep}.
\end{proof}

\begin{defi}\label{defi:tcrys_m}
	Let $M$ be a free relative Fontaine-Laffaille module of level $[0, p-2]$, and set 
	\begin{equation*}
		T_{\crys}(M) := \Hom_{\ZZ_p}(T_{\crys}^{\ast}(M), \ZZ_p),
	\end{equation*}
	which is a free $\ZZ_p\textrm{-module}$ of rank $=\textup{rk}_{R} M$, admitting a continuous action of $G_{R}$.
\end{defi}

The main result of this section is as follows:
\begin{thm}\label{thm:fl_to_wach}
	For a free relative Fontaine-Laffaille module $M$ over $R$ of level $[0, p-2]$, the associated representation $V_{\crys}(M) := \QQ_p \otimes_{\ZZ_p} T_{\crys}(M)$ is a positive finite $q\textrm{-height}$ representation (in the sense of Definition \ref{defi:wach_reps}).
\end{thm}

The proof crucially exploits the computation of Fontaine \cite{fontaine-corps-des-periodes}, Wach \cite{wach-cristallines-torsion} and Tsuji \cite{tsuji-ainf-genrep}.
It follows in three steps:
First, starting with a Fontaine-Laffaille module, we obtain an $\mbfa_{R, \varpi}^{\textpd}\textrm{-module}$ using formal consequences of crystalline site for maps $\theta : \mbfa_{R, \varpi}^{\textpd} \twoheadrightarrow R[\varpi]$, and $\theta_{R} : \pazo \mbfa_{R, \varpi}^{\textpd} \twoheadrightarrow R[\varpi]$ (see Proposition \ref{prop:arpd_mod_fl_data}, we also give an alternate proof of the proposition).
Next, we exploit equivalence of categories in Theorem \ref{thm:arplus_arpd_cat_equiv} obtained by scalar extension along the maps $\mbfa_{R, \varpi}^{\textpd} \twoheadrightarrow \mbfa_{R, \varpi}^{\textpd} / I^{(p-1)} \mbfa_{R, \varpi}^{\textpd} \lisomorphic \mbfa_{R, \varpi}^+ / I^{(p-1)} \mbfa_{R, \varpi}^+ \twoheadleftarrow \mbfa_{R, \varpi}^+$.
This gives us an $\mbfa_{R, \varpi}^+\textrm{-module}$ with precise description of the Frobenius and the action of $\Gamma_{R}$ (see Proposition \ref{prop:fl_data_arplus_mod}).
Finally, we descend over to the ring $\mbfa_{R}^+$ by exploiting the Frobenius and $\Gamma_{R}\textrm{-action}$, thus obtaining a Wach module over $\mbfa_{R}^+$ and proving the theorem (see \S \ref{subsubsec:obtain_wach_mod}).

For clarity of exposition and notational convenience in explaining the result of the first step, we start with preliminaries on some ideals of $\mbfa_{R, \varpi}^+$ and $\mbfa_{R, \varpi}^{\textpd}$ (appearing in the second step in the paragraph above) which will help us in proving categorical equivalence between certain modules over the concerned rings.

\subsection{Some ideals of \texorpdfstring{$\mbfa_{R, \varpi}^+$}{--} and \texorpdfstring{$\mbfa_{R, \varpi}^{\textpd}$}{--}}\label{subsec:ideals_arplus_arpd}

In this section, we will collect some technical results about the rings $\mbfa_{R, \varpi}^+$ and $\mbfa_{R, \varpi}^{\textpd}$ and some of their ideals.
The results are motivated by the corresponding results over $\mbfa_{\inf}(\overline{R})$ and $\mbfa_{\crys}(\overline{R})$ and their respective ideals, studied in \cite[\S 5]{fontaine-corps-des-periodes}.

\begin{lem}\phantomsection\label{lem:arplus_pa_complete}
	Let $a \in \mbfa_{R, \varpi}^+$ such that $\mbfa_{R, \varpi}^+ / p \mbfa_{R, \varpi}^+$ is $a\textrm{-torsion}$ free and $a\textrm{-adically}$ complete.
	Then,
	\begin{enumromanup}
	\item $\mbfa_{R, \varpi}^+$ is $(p, a)\textrm{-adically}$ complete.

	\item For $n \in \NN$, the rings $\mbfa_{R, \varpi}^+ / a^n \mbfa_{R, \varpi}^+$ are $p\textrm{-torsion}$ free and $\padic$ally complete.
		
	\item For $n \in \NN$, $\mbfa_{R, \varpi}^+$ and $\mbfa_{R, \varpi}^+ / p^n \mbfa_{R, \varpi}^+$ are $a\textrm{-torsion}$ free and $a\textrm{-adically}$ complete.

	\item The $(p, a)\textrm{-adic}$ topology coincides with $(p, \pi_m)\textrm{-adic}$ topology.
	\end{enumromanup}
\end{lem}
\begin{proof}
	As $\mbfa_{R, \varpi}^+$ is a flat $\ZZ_p\textrm{-algebra}$, claims (i), (ii) and (iii) follow from \cite[Lemma 2]{tsuji-ainf-genrep}.
	The last claim follows from \cite[Lemma 1]{tsuji-ainf-genrep} and the fact that $\mbfa_{R, \varpi}^+ \subset \mbfa_{\inf}(\overline{R})$, where the former ring is equipped with the induced topology.
\end{proof}

For $n \in \NN$, let us write $n = (p-1)f(n) + r(n)$, with $r(n), f(n) \in \NN$ and $0 \leq r(n) < p-1$.
Let $t^{\{n\}} := \frac{t^n}{p^{f(n)} f(n)!}$ (resp. $t^{\{n\}} := \frac{t^n}{p^n n!}$ if $p=2$).
\begin{lem}
	We have $t^{p-1} \in p \mbfa_{R, \varpi}^{\textpd}$, therefore $t^{\{n\}} \in \mbfa_{R, \varpi}^{\textpd}$.
\end{lem}
\begin{proof}
	Note that we have $q = \frac{\varphi(\pi)}{\pi} = p \varphi\big(\frac{\pi}{t}\big)\frac{t}{\pi}$.
	Since $\frac{t}{\pi}$ is a unit in $\mbfa_{R, \varpi}^{\textpd}$ (see Lemma \ref{lem:t_over_pi_unit}), we get that $q$ and $p$ are associates in $\mbfa_{R, \varpi}^{\textpd}$.
	But also, $q = \frac{\varphi(\pi)}{\pi} = \pi^{p-1} + p(\pi^{p-2} + \cdots + 1)$, i.e. $\pi^{p-1} \in p \mbfa_{R, \varpi}^{\textpd}$.
	Again, using Lemma \ref{lem:t_over_pi_unit}, we get that $t^{p-1} \in p \mbfa_{R, \varpi}^{\textpd}$.
\end{proof}
Note that we also have $\pi = \exp(t) - 1 = \sum_{n \geq 1} \frac{t^n}{n!} = \sum_{n \geq 1} c_n t^{\{n\}}$, where $c_n = \frac{p^{f(n)}f(n)!}{n!}$ (resp. $c_n = 2^n$ if $p=2$) such that $c_n \rightarrow 0$ as $n \rightarrow +\infty$ (see \cite[\S 5.2.4]{fontaine-corps-des-periodes}).
Let 
\begin{equation*}
	\Lambda := \big\{\sum_{n \in \NN} a_n t^{\{n\}} \hspace{1mm} \textrm{with} \hspace{1mm} a_n \in O_F \hspace{1mm} \textrm{such that} \hspace{1mm} a_n = 0 \hspace{1mm} \textrm{if} \hspace{1mm} (p-1) \nmid \hspace{1mm} n \textrm{ (resp. } 2 \nmid n \textrm{ if } p=2\textrm{)}\big\}
\end{equation*}
be a ring and let $z = \sum_{a \in \FF_p} [\varepsilon]^{[a]}$ (resp. $z = [\varepsilon] + [\varepsilon]^{-1}$ if $p=2$) and $\pi_0 = z - p$, then we have $\pi_0 = (p-1) \sum_{n \geq 1, \hspace{0.5mm} p-1 | n} \tfrac{t^n}{n!} \in \Lambda$ (resp. $\pi_0 = 2 \sum_{n \geq 1, \hspace{0.5mm} 2 | n} \tfrac{t^n}{n!} \in \Lambda$ if $p=2$).
Further, we have that $\pi_0 \in p\Lambda$ (resp. $\pi_0 \in 8\Lambda$ if $p=2$) and there exists $\upsilon \in \Lambda^{\times}$ such that $\frac{\pi_0}{p} = v \frac{t^{p-1}}{p}$ (resp. $\frac{\pi_0}{8} = v \frac{t^{2}}{8}$ if $p=2$), see \cite[\S 5.2.5]{fontaine-corps-des-periodes}.

Next, recall that the filtration on $\mbfa_{\crys}(\overline{R})$ is given as $\Fil^k \mbfa_{\crys}(\overline{R}) = \langle\xi^{[n]}, \hspace{1mm} n \geq k\rangle \subset \mbfa_{\crys}(\overline{R})$, for $k \in \NN$ (see \S \ref{subsec:relative_crystalline_period_rings}).
The filtration on $\mbfa_{\inf}(\overline{R})$ is defined as the induced filtration, i.e. $\Fil^k \mbfa_{\inf}(\overline{R}) = \Fil^k \mbfa_{\crys}(\overline{R}) \cap \mbfa_{\inf}(\overline{R}) = \xi^k \mbfa_{\inf}(\overline{R})$.
Similarly, the filtration on $\mbfa_{R, \varpi}^{\textpd}$ is again given by divided powers of $\xi$, i.e. $\Fil^k \mbfa_{R, \varpi}^{\textpd} = \langle\xi^{[n]}, \hspace{1mm} n \geq k\rangle \subset \mbfa_{R, \varpi}^{\textpd}$, for $k \in \NN$ (see Definition \ref{defi:filtration_vanishing_varpi}).
The filtration on $\mbfa_{R, \varpi}^+$ is defined as the induced filtration, i.e. $\Fil^k \mbfa_{R, \varpi}^+ = \Fil^k \mbfa_{R, \varpi}^{\textpd} \cap \mbfa_{R, \varpi}^+ = \xi^k \mbfa_{\inf}(\overline{R})$.

Now, for $k \in \NN$ let us define an ideal of $\mbfa_{\inf}(\overline{R})$ as 
\begin{equation*}
	I^{(k)} \mbfa_{\inf}(\overline{R}) = \{x \in \mbfa_{\inf}(\overline{R}) \hspace{1mm} \textrm{such that} \hspace{1mm} \varphi^n(x) \in \Fil^k \mbfa_{\inf}(\overline{R}) \hspace{1mm} \textrm{for} \hspace{1mm} n \in \NN\}.
\end{equation*}
Similarly, we can define respective ideals $I^{(k)} \mbfa_{\crys}(\overline{R}) \subset \mbfa_{\crys}(\overline{R})$, $I^{(k)} \mbfa_{R, \varpi}^+ \subset \mbfa_{R, \varpi}^+$ and $I^{(k)} \mbfa_{R, \varpi}^{\textpd} \subset \mbfa_{R, \varpi}^{\textpd}$.
Since the natural map $\mbfa_{R, \varpi}^+ \rightarrow \mbfa_{\inf}(\overline{R})$ is flat and we have $\mbfa_{R, \varpi}^+ = \mbfa_{\inf}(\overline{R}) \cap \mbfa_{R, \varpi} \subset W(\CC(\overline{R})^{\flat})$, we obtain that

\begin{lem}\phantomsection\label{lem:ideal_generator}
	\begin{enumromanup}
	\item The ideal $I^{(k)} \mbfa_{R, \varpi}^+$ is a principal ideal generated by $\pi^k$.

	\item The element $\pi_0$ is a generator of $I^{(p-1)} \mbfa_{R, \varpi}^+$ (resp. $I^{(2)} \mbfa_{R, \varpi}^+$ if $p=2$).

	\item Let $S_0 = W[[\pi_0]]$ then there exists a unit $u \in S_0$ such that $\varphi(\pi_0) = u \pi_0 z^{p-1}$ (resp. $\varphi(\pi_0) = u \pi_0 z^2$ if $p=2$).
	\end{enumromanup}
\end{lem}
\begin{proof}
	We only show the case $p \neq 2$, the claims for $p=2$ follow analogously.
	\begin{enumromanup}
	\item From the definitions it is clear that $I^{(k)} \mbfa_{\inf}(\overline{R}) \cap \mbfa_{R, \varpi}^+ = I^{(k)} \mbfa_{R, \varpi}^+$, where we take the intersection inside $\mbfa_{\inf}(\overline{R})$.
		Now, from \cite[\S 5.1.3, Proposition]{fontaine-corps-des-periodes} we have that $I^{(k)} \mbfa_{\inf}(\overline{R}) = \pi^k \mbfa_{\inf}(\overline{R})$.
		Since the map $\mbfa_{R, \varpi}^+ \rightarrow \mbfa_{\inf}(\overline{R})$ is flat and $\mbfa_{R, \varpi}^+ = \mbfa_{\inf}(\overline{R}) \cap \mbfa_{R, \varpi}$, we obtain that $I^{(k)} \mbfa_{R, \varpi}^+ = I^{(k)} \mbfa_{\inf}(\overline{R}) \cap \mbfa_{R, \varpi}^+ = \pi^k \mbfa_{\inf}(\overline{R}) \cap \mbfa_{R, \varpi}^+ = \pi^k \mbfa_{R, \varpi}^+$.

	\item Since $\pi_0 \in \mbfa_{R, \varpi}^+$, the map $\mbfa_{R, \varpi}^+ \rightarrow \mbfa_{\inf}(\overline{R})$ is flat and $\mbfa_{R, \varpi}^+ = \mbfa_{\inf}(\overline{R}) \cap \mbfa_{R, \varpi}$, we have $\pi_0\mbfa_{\inf}(\overline{R}) \cap \mbfa_{R, \varpi}^+ = \pi_0 \mbfa_{R, \varpi}^+$.
		Now, from \cite[\S 5.2.6, Proposition (i)]{fontaine-corps-des-periodes} we have that $I^{(p-1)} \mbfa_{\inf}(\overline{R}) = \pi_0 \mbfa_{\inf}(\overline{R})$.
		So we obtain that $I^{(p-1)} \mbfa_{R, \varpi}^+ = I^{(p-1)} \mbfa_{\inf}(\overline{R}) \cap \mbfa_{R, \varpi}^+ = \pi_0 \mbfa_{\inf}(\overline{R}) \cap \mbfa_{R, \varpi}^+ = \pi_0 \mbfa_{R, \varpi}^+$.

	\item This follows from \cite[\S 5.2.6, Proposition (ii)]{fontaine-corps-des-periodes}.
	\end{enumromanup}
\end{proof}

\begin{prop}\label{prop:arpd_alt_desc}
	The continuous morphism of $\mbfa_{R, \varpi}^+\textrm{-algebras}$
	\begin{align*}
		\alpha : \mbfa_{R, \varpi}^+ \hspace{0.5mm}\widehat{\otimes}_{S_0} \Lambda &\longrightarrow \mbfa_{R, \varpi}^{\textpd}\\
				\sum_{n \in \NN} a_n \otimes \big(\tfrac{\pi_0}{p}\big)^{[n]} &\longmapsto \sum_{n \in \NN} a_n \big(\tfrac{\pi_0}{p}\big)^{[n]},
	\end{align*}
	is an isomorphism.
\end{prop}
\begin{proof}
	The proof follows in a manner similar to the proof of \cite[\S 5.2.7, Th\'eor\`eme]{fontaine-corps-des-periodes}.
	We will only show the case $p \neq 2$, the claim for $p=2$ follows analogously.

	The homomorphism $\alpha$ in the claim is well defined and continuous since $\frac{\pi_0}{p} \in \Fil^1 \mbfa_{R, \varpi}^{\textpd}$.
	So we are left to show that $\alpha$ is an isomorphism.
	Since the source and targets are $\padic$ally complete $p\textrm{-torsion}$-free rings, it is enough to show that $\alpha$ is an isomorphism modulo $p$.

	Let $z_1 = \varphi^{-1}(z) \in \mbfa_{R, \varpi}^+$.
	Note that $\mbfa_{R, \varpi}^{\textpd}$ modulo $p$ is the divided power envelope of $\mbfe_{R, \varpi}^+$ with respect to the ideal generated by $\overline{z_1} \equiv \overline{\xi} \mod p$.
	Therefore, it is a free module over $\mbfe_{R, \varpi}^+ / \overline{z_1}^p$ with basis the images of $z_1^{[pn]}$, or equivalently $\big(\frac{z_1^p}{p}\big)^{[n]}$.
	From Lemma \ref{lem:ideal_generator} (iii), we have that $\varphi(\pi_0) = u \pi_0 z^{p-1}$, with $u \in S_0^{\times}$.
	Therefore, $\pi_0 = \varphi^{-1}(u) \varphi^{-1}(\pi_0) z_1^{p-1} = \varphi^{-1}(u)(z_1 - p)z_1^{p-1}$, which implies that $\mbfe_{R, \varpi}^+ / \overline{z_1}^p = \mbfe_{R, \varpi}^+ / \overline{\pi}_0$ and $\mbfa_{R, \varpi}^{\textpd}$ modulo $p$ is a free module over $\mbfe_{R, \varpi}^+ / \overline{\pi}_0$ with basis the images of $\big(\frac{\pi_0}{p}\big)^{[n]}$.
	Since it is immediate that the same is true for $\mbfa_{R, \varpi}^+ \otimes_{S_0} \Lambda$ modulo $p$, we get the claim.
\end{proof}

\begin{lem}\label{lem:ikarpd_alt_desc}
	For $k \in \NN$ the ideal $I^{(k)} \mbfa_{R, \varpi}^{\textpd}$ is a divided power ideal which is the associated $\mbfa_{R, \varpi}^+\textrm{-submodule}$ of $\mbfa_{R, \varpi}^{\textpd}$ generated by $t^{\{n\}}$ for $n \geq k$.
\end{lem}
\begin{proof}
	The proof follows in a manner similar to the proof of \cite[\S 5.3.5, Proposition]{fontaine-corps-des-periodes}.
	Let $J^{(k)}$ be the $\mbfa_{R, \varpi}^+\textrm{-submodule}$ of $\mbfa_{R, \varpi}^{\textpd}$ generated by $t^{\{n\}}$ for $n \geq k$.
	It is straightforward to check that $J^{(k)} \subset I^{(k)}$, and $J^{(k)}$ is a divided power ideal.
	Thus it remains to show that $I^{(k)} \subset J^{(k)}$.
	We will show this by induction on $k$.
	The case $k = 0$ is trivial.

	Now suppose $k \geq 1$ and $x \in I^{(k)}$.
	The induction hypothesis allows us to write $x = \sum_{n \geq k-1} a_n t^{\{n\}}$ where $a_n \in \mbfa_{R, \varpi}^+$ goes to $0$ as $n \rightarrow +\infty$.
	If $b = a_{n-1}$, we have $a = bt^{\{n-1\}} + a\prm$ where $a\prm \in J^{(k)} \subset I^{(k)}$, thus $b t^{\{k-1\}} \in I^{(k)}$.
	But $\varphi^s(b t^{\{k-1\}}) = p^{(k-1)s} \varphi^s(b) t^{\{k-1\}} = c_{k, s} \varphi^s(b) t^{\{k-1\}}$, where $c_{k, s}$ is a nonzero rational number.
	Since $t^{k-1} \in \Fil^{k-1} \mbfa_{R, \varpi}^{\textpd} \setminus \Fil^k \mbfa_{R, \varpi}^{\textpd}$, one has $b \in I^{(1)}\mbfa_{R, \varpi}^{\textpd} \cap \mbfa_{R, \varpi}^+$, which is the principal ideal generated by $\pi$.
	Thus $bt^{\{k-1\}}$ belongs to an ideal of $\mbfa_{R, \varpi}^{\textpd}$ generated by $\pi t^{\{k-1\}}$.
	But $\frac{t}{\pi} \in \mbfa_{R, \varpi}^{\textpd}$ is a unit (see Lemma \ref{lem:t_over_pi_unit}).
	Hence, $bt^{\{k-1\}}$ belongs to an ideal generated by $t \cdot t^{\{k-1\}}$, which is contained in $J^{(k)}$.
\end{proof}

Following is an immediate consequence of Lemma \ref{lem:ikarpd_alt_desc}:
\begin{cor}\label{cor:ik_arpd_1}
	For $k \in \NN$, consider the homomorphism $\mbfa_{R, \varpi}^+ \rightarrow I^{(k)} \mbfa_{R, \varpi}^{\textpd}$ sending $x \mapsto x \cdot t^{\{k\}}$.
	Then, the induced map $\mbfa_{R, \varpi}^+ / I^{(1)} \mbfa_{R, \varpi}^+ \rightarrow I^{(k)} \mbfa_{R, \varpi}^{\textpd} / I^{(k+1)} \mbfa_{R, \varpi}^{\textpd}$ is bijective.
\end{cor}

Now, from \cite[\S 5.3.5, Proposition]{fontaine-corps-des-periodes}, we have a natural isomorphism $\mbfa_{\inf}(\overline{R}) / I^{(k)} \mbfa_{\inf}(\overline{R}) \isomorphic \mbfa_{\crys}(\overline{R}) / I^{(k)} \mbfa_{\crys}(\overline{R})$, for $0 \leq k \leq p-1$.
A similar statement is true in our setting:
\begin{prop}\label{prop:arplus_mod_arpd_mod_iso}
	For $k \in \NN$, the rings $\mbfa_{R, \varpi}^+ / I^{(k)}\mbfa_{R, \varpi}^+$ and $\mbfa_{R, \varpi}^{\textpd} / I^{(k)}\mbfa_{R, \varpi}^{\textpd}$ are $p\textrm{-torsion}$ free.
	Moreover, if $0 \leq k \leq p-1$, then the natural map $\mbfa_{R, \varpi}^+ / I^{(k)} \mbfa_{R, \varpi}^+ \rightarrow \mbfa_{R, \varpi}^{\textpd} / I^{(k)} \mbfa_{R, \varpi}^{\textpd}$ is an isomorphism.
\end{prop}
\begin{proof}
	The proof follows from arguments similar to the proof of \cite[\S 5.3.5, Proposition]{fontaine-corps-des-periodes}.
	First, note that for every $k \in \NN$, $\mbfa_{R, \varpi}^{\textpd} / \Fil^k \mbfa_{R, \varpi}^{\textpd}$ is torsion free.
	Further, the kernel of the map
	\begin{align*}
		\mbfa_{R, \varpi}^{\textpd} &\longrightarrow (\mbfa_{R, \varpi}^{\textpd} / \Fil^k \mbfa_{R, \varpi}^{\textpd})^{\NN} \\
		x &\longmapsto (\varphi^n(x) \mod \Fil^k \mbfa_{R, \varpi}^{\textpd})_{k \in \NN},
	\end{align*}
	is $I^{(k)} \mbfa_{R, \varpi}^{\textpd}$.
	Therefore, $\mbfa_{R, \varpi}^+ / I^{(k)} \mbfa_{R, \varpi}^+ \rightarrowtail \mbfa_{R, \varpi}^{\textpd} / I^{(k)} \mbfa_{R, \varpi}^{\textpd} \rightarrowtail (\mbfa_{R, \varpi}^{\textpd} / \Fil^k \mbfa_{R, \varpi}^{\textpd})^{\NN}$, which implies that the former two rings are torsion free.

	Now from Proposition \ref{prop:arpd_alt_desc} and Lemma \ref{lem:ikarpd_alt_desc}, it follows that as $\mbfa_{R, \varpi}^+\textrm{-module}$, $\mbfa_{R, \varpi}^{\textpd} / I^{(k)} \mbfa_{R, \varpi}^{\textpd}$ is generated by the images of $\big(\frac{\pi_0}{p}\big)^{[n]}$ for $0 \leq (p-1)n < k$.
	For $0 \leq k \leq p-1$, we have that $\big(\frac{\pi_0}{p}\big)^{[n]} \in \mbfa_{R, \varpi}^+$, hence we get the claim.
\end{proof}

Next, we mention a lemma useful for the proof of Proposition \ref{prop:arplus_arpd_cond_equiv}.
\begin{lem}\label{lem:ik_arpd_pcomplete}
	\begin{enumromanup}
	\item For $0 \leq k < j$, we have that $I^{(k)} \mbfa_{R, \varpi}^{\textpd} / I^{(j)} \mbfa_{R, \varpi}^{\textpd}$ is $p\textrm{-torsion}$ free.

	\item For $k \in \NN$, we have that $I^{(k)} \mbfa_{R, \varpi}^{\textpd}$ is $\padic$ally complete.
	\end{enumromanup}
\end{lem}
\begin{proof}
	\begin{enumromanup}
	\item The proof is similar to the proof of \cite[Lemma A3.19 (1)]{tsuji-comparison}.
		Let $x \in I^{(k)} \mbfa_{R, \varpi}^{\textpd}$ and assume that $px \in I^{(j)} \mbfa_{R, \varpi}^{\textpd}$.
		Then $p\varphi^i(x) \in \Fil^j \mbfa_{R, \varpi}^{\textpd}$ for all $i \in \NN$.
		Since $\mbfa_{R, \varpi}^{\textpd} / \Fil^j \mbfa_{R, \varpi}^{\textpd} \subset \mbfa_{\crys}(\overline{R}) / \Fil^j \mbfa_{\crys}(\overline{R})$ is $p\textrm{-torsion}$ free (see \cite[Lemma A2.11 (2)]{tsuji-comparison}), we get that $\varphi^i(x) \in \Fil^j \mbfa_{R, \varpi}^{\textpd}$ for all $i \in \NN$, i.e. $x \in I^{(j)} \mbfa_{R, \varpi}^{\textpd}$.

	\item The proof is similar to the proof of \cite[Lemma A3.27]{tsuji-comparison}.
		We will prove the statement by induction on $k$.
		For $k = 0$, the statement is trivial by the definition of $\mbfa_{R, \varpi}^{\textpd}$.
		Next, from part (i) and Corollary \ref{cor:ik_arpd_1}, we have that $I^{(k)} \mbfa_{R, \varpi}^{\textpd} / I^{(k+1)} \mbfa_{R, \varpi}^{\textpd}$ is $p\textrm{-torsion}$ free and $\padic$ally complete.
		Therefore, we obtain exact sequences
		\begin{equation*}
			0 \longrightarrow \lim_n \big(I^{(k+1)} \mbfa_{R, \varpi}^{\textpd} \otimes \ZZ/p^n\ZZ\big) \longrightarrow \lim_n \big(I^{(k)} \mbfa_{R, \varpi}^{\textpd} \otimes \ZZ/p^n\ZZ\big) \longrightarrow I^{(k)} \mbfa_{R, \varpi}^{\textpd} / I^{(k+1)} \mbfa_{R, \varpi}^{\textpd} \longrightarrow 0.
		\end{equation*}
		The statement now follows by induction on $k$.
	\end{enumromanup}
\end{proof}

\subsection{Equivalence of categories}

In \cite{tsuji-ainf-genrep}, Tsuji has established a relationship between free relative Fontaine-Laffaille modules (see Definition \ref{defi:rel_fontaine_laffaille}) and $\mbfa_{\inf}(\overline{R})\textrm{-representations}$ as well as $\mbfa_{\crys}(\overline{R})\textrm{-representations}$ of $G_{R}$ (in a precise functorial manner).
Tsuji's computations are motivated by computations of Wach in \cite{wach-cristallines-torsion} for the arithmetic case.

Recall from \S \ref{subsec:ideals_arplus_arpd} that for $k \in \NN$ we have the ideal 
\begin{equation*}
	I^{(k)} \mbfa_{\inf}(\overline{R}) = \{x \in \mbfa_{\inf}(\overline{R}) \hspace{1mm} \textrm{such that} \hspace{1mm} \varphi^n(x) \in \Fil^k \mbfa_{\inf}(\overline{R}) \hspace{1mm} \textrm{for} \hspace{1mm} n \in \NN\}.
\end{equation*}
Similarly, we can define respective ideals $I^{(k)} \mbfa_{\crys}(\overline{R}) \subset \mbfa_{\crys}(\overline{R})$, $I^{(k)} \mbfa_{R, \varpi}^+ \subset \mbfa_{R, \varpi}^+$ and $I^{(k)} \mbfa_{R, \varpi}^{\textpd} \subset \mbfa_{R, \varpi}^{\textpd}$.
Given a free Fontaine-Laffaille module, in \cite[\S 5]{tsuji-ainf-genrep} Tsuji functorially obtains an $\mbfa_{\crys}(\overline{R})\textrm{-module}$ (in a manner similar to Proposition \ref{prop:arpd_mod_fl_data}).
Further, he exploits the isomorphism $\mbfa_{\inf}(\overline{R}) / I^{(p-1)} \mbfa_{\inf}(\overline{R}) \isomorphic \mbfa_{\crys}(\overline{R}) / I^{(p-1)} \mbfa_{\crys}(\overline{R})$, to construct an $\mbfa_{\inf}(\overline{R})\textrm{-representation}$ of $G_{R}$.
The last step is carried out by establishing certain equivalence of categories.
Tsuji's computations are general and follows from certain assumptions on the structure of the rings and modules, one is studying.
In this section, we will recall and verify those assumptions in our case, which would help us in establishing equivalence between several categories (see Theorem \ref{thm:arplus_arpd_cat_equiv}).

Let $A = \mbfa_{R, \varpi}^+$, $\mbfa_{R, \varpi}^+ / I^{(p-1)} \mbfa_{R, \varpi}^+$, $\mbfa_{R, \varpi}^{\textpd}$, or $\mbfa_{R, \varpi}^{\textpd} / I^{(p-1)} \mbfa_{R, \varpi}^{\textpd}$.
\begin{lem}\label{lem:q_nonzerodivisor}
	Let $q = \frac{\varphi(\pi)}{\pi} \in A$, then $q$ is a non-zero-divisor in $A$.
\end{lem}
\begin{proof}
	For $A = \mbfa_{R, \varpi}^+$ and $\mbfa_{R, \varpi}^{\textpd}$, the claim follows from the definitions.
	Next, note that we have $q = \frac{\varphi(\pi)}{\pi} = \pi^{p-1} + p u \in \mbfa_{R, \varpi}^+$ for some unit $u \in \mbfa_{R, \varpi}^+$, in particular, $q \equiv pu \mod \pi^{p-1}$.
	Now since $I^{(p-1)} \mbfa_{R, \varpi}^+ = \pi^{p-1} \mbfa_{R, \varpi}^+$ by Lemma \ref{lem:ideal_generator} (ii), we obtain that $q$ and $p$ are associates in $\mbfa_{R, \varpi}^+ / I^{(p-1)} \mbfa_{R, \varpi}^+ \isomorphic \mbfa_{R, \varpi}^{\textpd} / I^{(p-1)} \mbfa_{R, \varpi}^{\textpd}$.
	Since $\mbfa_{R, \varpi}^+ / I^{(p-1)} \mbfa_{R, \varpi}^+$ and $\mbfa_{R, \varpi}^{\textpd} / I^{(p-1)} \mbfa_{R, \varpi}^{\textpd}$ are $p\textrm{-torsion}$ free by Proposition \ref{prop:arplus_mod_arpd_mod_iso}, we get the claim.
\end{proof}

Next, note that we have $\Fil^0 A = A$ and $\Fil^i A \cdot \Fil^j A \subset \Fil^{i+j} A$ for $i, j \in \ZZ$, and $\varphi(\Fil^k A) \subset q^k A$ for $k \in \NN$.
In particular, we see that our choice of $A$ and $q$ satisfies \cite[Condition 39]{tsuji-ainf-genrep}.
\begin{defi}\label{defi:mf_free_a}
	Define the category $\MF^{q}_{[0, p-2], \free}(A, \varphi, \Gamma_{R})$ as follows:
	An object is a triplet $(N, \Fil^k N, \varphi)$ such that,
	\begin{enumromanup}
	\item $N$ is a free $A\textrm{-module}$ of rank $h$.

	\item The filtration $\Fil^k N$ is decreasing, and there exists an $A\textrm{-basis}$ $\{e_1, \ldots, e_h\}$ of $N$ and $k_1, \ldots, k_h \in \NN_{\leq p-2}$ such that $\Fil^k N = \sum_{i=1}^h \Fil^{k-k_i} A e_i$ for $0 \leq k \leq p-2$.

	\item A Frobenius-semilinear endomorphism $\varphi : N \rightarrow N$ such that $\varphi(\Fil^k N) \subset q^k N$ for $0 \leq k \leq p-2$, and $\sum_{k=0}^{p-2}A \cdot q^{-k}\varphi(\Fil^k N) = N$.

	\item $N$ is equipped with a continuous action of $\Gamma_{R}$ such that $\Fil^k N$ is stable under this action, and the endomorphism $\varphi$ commutes with the action of $\Gamma_{R}$.
	\end{enumromanup}
	A morphism between two objects of the category $\MF^{q}_{[0, p-2], \free}(A, \varphi, \Gamma_{R})$ is a continuous $A\textrm{-linear}$ morphism commuting with the endomorphism $\varphi$ and the action of $\Gamma_{R}$ on each side.
\end{defi}

\begin{nota}
	By a slight abuse of notations, we will denote $(N, \Fil^k N, \partial, \Phi) \in \MF^{q}_{[0, p-2], \free}(A, \varphi, \Gamma_{R})$ by $N$ and say that it has filtration of level $[0, p-2]$.
\end{nota}

\begin{rem}\label{rem:q_p_associates}
	In $\mbfa_{R, \varpi}^{\textpd}$, note that we can write $q = p\frac{t}{\pi}\varphi\big(\frac{\pi}{t}\big)$, and since $\frac{t}{\pi}$ is a unit in $\mbfa_{R, \varpi}^{\textpd}$ (see Lemma \ref{lem:t_over_pi_unit}), we obtain that $q$ and $p$ are associates in $\mbfa_{R, \varpi}^{\textpd}$.
	Therefore, for $A = \mbfa_{R, \varpi}^{\textpd}$ in Definition \ref{defi:mf_free_a}, we can replace $q$ by $p$.
	Further, since $q = \pi^{p-1} + pu$ for $u \in (\mbfa_{R, \varpi}^+)^{\times}$ and $\pi^{p-1}$ generates $I^{(p-1)} \mbfa_{R, \varpi}^+$ (see Lemma \ref{lem:ideal_generator} (ii)), we obtain that $q \equiv pu \mod I^{(p-1)} \mbfa_{R, \varpi}^+$, i.e.\ $q$ and $p$ are associates in $\mbfa_{R, \varpi}^+ / I^{(p-1)} \mbfa_{R, \varpi}^+ \isomorphic \mbfa_{R, \varpi}^{\textpd} / I^{(p-1)} \mbfa_{R, \varpi}^{\textpd}$.
	Therefore, for $A = \mbfa_{R, \varpi}^+ / I^{(p-1)} \mbfa_{R, \varpi}^+$ in Definition \ref{defi:mf_free_a}, we can replace $q$ by $p$, and similarly for $\mbfa_{R, \varpi}^{\textpd} / I^{(p-1)} \mbfa_{R, \varpi}^{\textpd}$.
\end{rem}

\begin{lem}[{\cite[Lemma 41]{tsuji-ainf-genrep}}]\label{lem:filmod_strong_div}
	Let $(N, \Fil^k N)$ be as in Definition \ref{defi:mf_free_a} (i), (ii).
	Then a Frobenius-semilinear endomorphism $\varphi : N \rightarrow N$ satisfies the conditions in Definition \ref{defi:mf_free_a} (iii) if and only if $\varphi(e_i) \in q^{k_i} N$ for $1 \leq i \leq h$ and $\{q^{-k_1}\varphi(e_1), \ldots, q^{-k_h}\varphi(e_h)\}$ is an $A\textrm{-basis}$ of $N$.
\end{lem}
\begin{proof}
	Let us assume that $(N, \Fil^k N)$ satisfies the condition in Definition \ref{defi:mf_free_a} (iii).
	Then, since $e_i \in \Fil^{k_i} N$, we have $\varphi(e_i) \in q^{k_i} N$ for $1 \leq i \leq h$.
	Now for $0 \leq k \leq p-2$, we have 
	\begin{equation*}
		\varphi(\Fil^{k-k_i} A e_i) = \varphi(\Fil^{k-k_i} A) \varphi(e_i) \subset q^k A \cdot q^{-k_i} \varphi(e_i) \subset q^k N.
	\end{equation*}
	Therefore, from the identity $\sum_{k=0}^{p-2}A \cdot q^{-k}\varphi(\Fil^k N) = N$, we obtain that $\{q^{-k_1} \varphi(e_1), \ldots, q^{-k_h} \varphi(e_h)\}$ generate $N$ as an $A\textrm{-module}$.
	Since $N$ is free of rank $h$ over $A$, we get that $\{q^{-k_1} \varphi(e_1), \ldots, q^{-k_h} \varphi(e_h)\}$ is indeed a basis.

	Conversely, assume that $\varphi(e_i) \in q^{k_i} N$ for $1 \leq i \leq h$ and $\{q^{-k_1} \varphi(e_1), \ldots, q^{-k_h} \varphi(e_h)\}$ form an $A\textrm{-basis}$ of $N$.
	Then, from Definition \ref{defi:mf_free_a} (ii), we have 
	\begin{equation*}
		\varphi(\Fil^k N) = \varphi\big(\sum_{i=1}^h \Fil^{k-k_i} A e_i\big) \subset \sum_{i=1}^h q^{k-k_i} A \varphi(e_i) = q^k \sum_{i=1}^h A \cdot q^{-k_i} \varphi(e_i) = q^k N.
	\end{equation*}
	Further, since $\{q^{-k_1} \varphi(e_1), \ldots, q^{-k_h} \varphi(e_h)\} \in \sum_{k=0}^{p-2}A \cdot q^{-k}\varphi(\Fil^k N)$, we obtain the last equality in Definition \ref{defi:mf_free_a} (iii).
\end{proof}

Now we introduce some necessary conditions in order to adapt Tsuji's results from \cite[\S 4 - \S 8]{tsuji-ainf-genrep}.
\begin{cond}\label{cond:essential_cond_equiv}
	Let $A = \mbfa_{R, \varpi}^+$, $\mbfa_{R, \varpi}^{\textpd}$ and $q = \frac{\varphi(\pi)}{\pi} \in A$.
	Consider the projection map $A \twoheadrightarrow A/J$ for some ideal $J \subset A$ and assume that
	\begin{enumromanup}
	\item The ideal $J$ is contained in the Jacobson radical of $A$, and $J \subset \Fil^{p-2} A$.
		Moreover, $\varphi(J) \subset J$ and $\varphi(J) \subset q^{p-1} A$.
		Further, the ideal $J$ is preserved under the action of $\Gamma_{R}$.

	\item The ideal $J$ is closed as a submodule of $A$.

	\item There exists a decreasing sequence of ideals $\cdots \subset H_{n+1} \subset H_n \subset \cdots \subset H_0 \subset A$ for $n \in \NN$, such that $H_n$ form a fundamental system of neighborhoods of $0$ in $A$, the homomorphism $A \rightarrow \lim_n A/H_n$ is an isomorphism, and $q^{-(p-1)} \varphi(H_n \cap J) \subset H_n \cap J$ for every $n \in \NN$.

	\item The image of $q$ in $A/J$ is a non-zero-divisor.
		Moreover, the sequence $\prod_{k=0}^n \varphi^k(q) \in A$ converges to $0$ as $n \rightarrow +\infty$.

	\item The homomorphism $\varphi : A \rightarrow A$ is continuous and multiplication by $q$ induces a homeomorphism $A \rightarrow qA$, where the latter is equipped with the induced topology.
	\end{enumromanup}
\end{cond}

\begin{prop}\phantomsection\label{prop:arplus_arpd_cond_equiv}
	\begin{enumromanup}
	\item Let $A = \mbfa_{R, \varpi}^+$ with $J = I^{(p-1)} \mbfa_{R, \varpi}^+$, and $H_n = p^n \mbfa_{R, \varpi}^+ + \pi^{n+p-1} \mbfa_{R, \varpi}^+$.
		Then $\mbfa_{R, \varpi}^+$ satisfies Condition \ref{cond:essential_cond_equiv}.

	\item Let $A = \mbfa_{R, \varpi}^{\textpd}$ with $J = I^{(p-1)} \mbfa_{R, \varpi}^{\textpd}$, and $H_n = p^n \mbfa_{R, \varpi}^{\textpd}$.
		Then $\mbfa_{R, \varpi}^{\textpd}$ satisfies Condition \ref{cond:essential_cond_equiv}.
	\end{enumromanup}
\end{prop}
\begin{proof}
	The proof follows in a manner similar to the proof of \cite[Proposition 59]{tsuji-ainf-genrep}.
	\begin{enumromanup}
	\item The ring $\mbfa_{R, \varpi}^+$ is $\pi\textrm{-adically}$ complete, and since $I^{(p-1)} \mbfa_{R, \varpi}^+ = \pi^{p-1} \mbfa_{R, \varpi}^+ \subset \pi \mbfa_{R, \varpi}^+$, we see that $I^{(p-1)} \mbfa_{R, \varpi}^+$ is contained in the Jacobson radical of $\mbfa_{R, \varpi}^+$.
		Moreover, we have $\varphi(\pi^{p-1}\mbfa_{R, \varpi}^+) \subset q^{p-1}\pi^{p-1} \mbfa_{R, \varpi}^+ \subset \pi^{p-1} \mbfa_{R, \varpi}^+$, therefore $I^{(p-1)} \mbfa_{R, \varpi}^+ \subset \Fil^{p-2} \mbfa_{R, \varpi}^+$.
		It is clear that $I^{(p-1)} \mbfa_{R, \varpi}^+$ is stable under the action of $\Gamma_{R}$.
		Therefore, Condition \ref{cond:essential_cond_equiv} (i) is satisfied.

		Now we have $H_n = p^n \mbfa_{R, \varpi}^+ + \pi^{n+p-1} \mbfa_{R, \varpi}^+$ for $n \in \NN$, which is a fundamental system of neighborhoods of $0 \in \mbfa_{R, \varpi}^+$ and $\mbfa_{R, \varpi}^+ = \lim_n \mbfa_{R, \varpi}^+ / H_n$ (see Lemma \ref{lem:arplus_pa_complete}).
		Further, since $\mbfa_{R, \varpi}^+ / I^{(p-1)} \mbfa_{R, \varpi}^+$ is $p\textrm{-torsion}$ free, we obtain that $H_n \cap I^{(p-1)} \mbfa_{R, \varpi}^+ = (p^n \mbfa_{R, \varpi}^+ + \pi^{n+p-1} \mbfa_{R, \varpi}^+) \cap I^{(p-1)} \mbfa_{R, \varpi}^+ = p^n I^{(p-1)} \mbfa_{R, \varpi}^+ + \pi^n I^{(p-1)} \mbfa_{R, \varpi}^+$.
		The Condition \ref{cond:essential_cond_equiv} (iii) now follows from this.
		Moreover, $I^{(p-1)}\mbfa_{R, \varpi}^+$ is a free $\mbfa_{R, \varpi}^+\textrm{-module}$ of rank 1, so it follows that $J$ is a closed submodule of $\mbfa_{R, \varpi}^+$ by Lemma \ref{lem:arplus_pa_complete} (i) \& (iv), verifying Condition \ref{cond:essential_cond_equiv} (ii).
		
		Next, from Lemma \ref{lem:q_nonzerodivisor}, it follows that $q$ is a non-zero-divisor in $\mbfa_{R, \varpi}^+ / I^{(p-1)} \mbfa_{R, \varpi}^+$.
		Further, for $k \in \NN$, we have $\varphi^k(q) = \varphi^{k+1}(\xi) \in \varphi^{k+1}(\Fil^1 \mbfa_{R, \varpi}^+) \subset \varphi^{k+1}(p \mbfa_{R, \varpi}^+ + \pi_1 \mbfa_{R, \varpi}^+) \subset p \mbfa_{R, \varpi}^+ + \pi \mbfa_{R, \varpi}^+$.
		Therefore, $\prod_{k=0}^n \varphi^k(q)$ converges to $0$ as $n \rightarrow + \infty$, and Condition \ref{cond:essential_cond_equiv} (iv) has been verified.

		By the definition of $\varphi$ in \S \ref{subsec:cyclotomic_embeddings}, we see that it is continuous.
		Further, from Lemma \ref{lem:arplus_pa_complete} (iii), it follows that $\mbfa_{R, \varpi}^+ / q\mbfa_{R, \varpi}^+$ is $p\textrm{-torsion}$ free.
		Therefore, we have $(p^n \mbfa_{R, \varpi}^+ + q^{n+1} \mbfa_{R, \varpi}^+) \cap q \mbfa_{R, \varpi}^+ = p^n(q \mbfa_{R, \varpi}^+) + q^n(q \mbfa_{R, \varpi}^+)$.
		By Lemma \ref{lem:arplus_pa_complete} (i), it follows that $\mbfa_{R, \varpi}^+ \xrightarrow{\times q} q\mbfa_{R, \varpi}^+$ is a homeomorphism, which verifies Condition \ref{cond:essential_cond_equiv} (v).

	\item Note that we have $q = p\varphi\big(\frac{\pi}{t}\big)\frac{t}{\pi}$, which implies that $q$ and $p$ are associates in $\mbfa_{R, \varpi}^{\textpd}$ (see Lemma \ref{lem:t_over_pi_unit}).
		Therefore, it is enough to verfiy Condition \ref{cond:essential_cond_equiv}, with $q$ replaced by $p$ everywhere.

		We have $I^{(p-1)} \mbfa_{R, \varpi}^{\textpd} \subset \Fil^1 \mbfa_{R, \varpi}^{\textpd} + p \mbfa_{R, \varpi}^{\textpd}$, $\mbfa_{R, \varpi}^{\textpd}$ is $\padic$ally complete and $\Fil^1 \mbfa_{R, \varpi}^{\textpd} / p\Fil^1 \mbfa_{R, \varpi}^{\textpd}$ is a nil ideal of $\mbfa_{R, \varpi}^{\textpd} / p\mbfa_{R, \varpi}^{\textpd}$.
		Therefore, $I^{(p-1)} \mbfa_{R, \varpi}^{\textpd}$ is contained in the Jacobson radical of $\mbfa_{R, \varpi}^{\textpd}$.
		Further, by definitions we have $I^{(p-1)} \mbfa_{R, \varpi}^{\textpd} \subset \Fil^{p-2} \mbfa_{R, \varpi}^{\textpd}$ and $\varphi\big(I^{(p-1)} \mbfa_{R, \varpi}^{\textpd}\big) \subset I^{(p-1)} \mbfa_{R, \varpi}^{\textpd}$.
		Also, $\varphi\big(I^{(p-1)} \mbfa_{R, \varpi}^{\textpd}\big) \subset q^{p-1} \mbfa_{R, \varpi}^{\textpd} = p^{p-1} \mbfa_{R, \varpi}^{\textpd}$.
		It is clear that $I^{(p-1)} \mbfa_{R, \varpi}^{\textpd}$ is stable under the action of $\Gamma_{R}$.
		Therefore, Condition \ref{cond:essential_cond_equiv} (i) is satisfied.

		Next, we know that $\mbfa_{R, \varpi}^{\textpd}$ is $\padic$ally complete and $\mbfa_{R, \varpi}^{\textpd} / I^{(p-1)} \mbfa_{R, \varpi}^{\textpd}$ is $p\textrm{-torsion}$ free by Proposition \ref{prop:arplus_mod_arpd_mod_iso}, therefore $p^n \mbfa_{R, \varpi}^{\textpd} \cap I^{(p-1)} \mbfa_{R, \varpi}^{\textpd} = p^n I^{(p-1)} \mbfa_{R, \varpi}^{\textpd}$.
		This gives us Condition \ref{cond:essential_cond_equiv} (iii).
		Further, $I^{(p-1)} \mbfa_{R, \varpi}^{\textpd}$ is $\padic$ally complete by Lemma \ref{lem:ik_arpd_pcomplete} (ii), so we get Condition \ref{cond:essential_cond_equiv} (ii).

		Condition \ref{cond:essential_cond_equiv} (iv) \& (v) follow trivially from the fact that $\mbfa_{R, \varpi}^{\textpd} = \lim_n \mbfa_{R, \varpi}^{\textpd} / p^n \mbfa_{R, \varpi}^{\textpd}$.
	\end{enumromanup}
\end{proof}

Finally, we come to the main result of this section.
Note that the categories $\MF^p$ below are defined by combining Definition \ref{defi:mf_free_a} and Remark \ref{rem:q_p_associates}.
\begin{thm}\label{thm:arplus_arpd_cat_equiv}
	The natural maps $\mbfa_{R, \varpi}^+ \twoheadrightarrow \mbfa_{R, \varpi}^+ / I^{(p-1)} \mbfa_{R, \varpi}^+ \isomorphic \mbfa_{R, \varpi}^{\textpd} / I^{(p-1)}\mbfa_{R, \varpi}^{\textpd} \twoheadleftarrow \mbfa_{R, \varpi}^{\textpd}$, induce equivalence of categories:
	\begin{align*}
		\MF_{[0, p-2], \free}^p(\mbfa_{R, \varpi}^{\textpd}, \varphi, \Gamma_{R}) &\xrightarrow[\hspace{1mm} \sim \hspace{1mm}]{\hspace{1mm} (1) \hspace{1mm}} \MF_{[0, p-2], \free}^{p}(\mbfa_{R, \varpi}^{\textpd} / I^{(p-1)}\mbfa_{R, \varpi}^{\textpd}, \varphi, \Gamma_{R})\\
		&\xleftarrow[\hspace{1mm} \sim \hspace{1mm}]{\hspace{1mm} (2) \hspace{1mm}} \MF_{[0, p-2], \free}^{p}(\mbfa_{R, \varpi}^+ / I^{(p-1)}\mbfa_{R, \varpi}^+, \varphi, \Gamma_{R})\\
		&\xleftarrow[\hspace{1mm} \sim \hspace{1mm}]{\hspace{1mm} (3) \hspace{1mm}} \MF_{[0, p-2], \free}^q(\mbfa_{R, \varpi}^+, \varphi, \Gamma_{R}).
	\end{align*}
\end{thm}
\begin{proof}
	The natural projection map $\mbfa_{R, \varpi}^+ \twoheadrightarrow \mbfa_{R, \varpi}^+ / I^{(p-1)} \mbfa_{R, \varpi}^+$ is compatible with Frobenius and the action of $\Gamma_{R}$ and we have $q \equiv pu \mod I^{(p-1)} \mbfa_{R, \varpi}^+$ for $u \in (\mbfa_{R, \varpi}^+)^{\times}$ (see also Remark \ref{rem:q_p_associates}), i.e.\ $q$ and $p$ are associates in $\mbfa_{R, \varpi}^+ / I^{(p-1)} \mbfa_{R, \varpi}^+$.
	Further, $\mbfa_{R, \varpi}^+$ satisfies Condition \ref{cond:essential_cond_equiv}.
	Therefore, from \cite[Proposition 56]{tsuji-ainf-genrep}, we obtain that the functor in (3) is an equivalence of categories.

	Next, from Proposition \ref{prop:arplus_mod_arpd_mod_iso}, we have an isomorphism of rings $\mbfa_{R, \varpi}^+ / I^{(p-1)} \mbfa_{R, \varpi}^+ \sim \mbfa_{R, \varpi}^{\textpd} / I^{(p-1)}\mbfa_{R, \varpi}^{\textpd}$, compatible with Frobenius and the action of $\Gamma_{R}$.
	Therefore, we obtain that the functor in (2) is an equivalence of catgeories.

	Finally, the natural projection map $\mbfa_{R, \varpi}^{\textpd} \twoheadrightarrow \mbfa_{R, \varpi}^{\textpd} / I^{(p-1)} \mbfa_{R, \varpi}^{\textpd}$ is compatible with Frobenius and the action of $\Gamma_{R}$ and we have $q \equiv pu \mod I^{(p-1)} \mbfa_{R, \varpi}^+$ for $u \in (\mbfa_{R, \varpi}^+)^{\times}$ (see also Remark \ref{rem:q_p_associates}), i.e.\ $q$ and $p$ are associates in $\mbfa_{R, \varpi}^+ / I^{(p-1)} \mbfa_{R, \varpi}^+ \isomorphic \mbfa_{R, \varpi}^{\textpd} / I^{(p-1)} \mbfa_{R, \varpi}^{\textpd}$.
	Further, $\mbfa_{R, \varpi}^{\textpd}$ satisfies Condition \ref{cond:essential_cond_equiv}.
	Therefore, from \cite[Proposition 56]{tsuji-ainf-genrep}, we obtain that the functor in (1) is an equivalence of categories.
\end{proof}

\subsection{Wach modules from Fontaine-Laffaille data}

In this section, we will work with objects in the category $\MF_{[0, p-2], \free}(R, \Phi, \partial)$ (see Definition \ref{defi:rel_fontaine_laffaille}) and obtain Wach modules over $\mbfa_{R}^+$ (see Definition \ref{defi:rel_wach_mods}).
In \S \ref{subsubsec:fl_to_arplus_mod}, starting with a Fontaine-Laffaille module, we will first obtain a free module over $\mbfa_{R, \varpi}^+$ with desired properties and in \S \ref{subsubsec:obtain_wach_mod} we will descend over to $\mbfa_R^+$.
Note that in \S \ref{subsubsec:fl_to_arplus_mod}, we will first establish a mod $p^n$ statement (see \eqref{eq:flmod_wachpdmod_n}) and as a consequence deduce a $\padic$ statement (see Proposition \ref{prop:arpd_mod_fl_data}).
However, it is possible to prove the $\padic$ statement directly (see another proof of Proposition \ref{prop:arpd_mod_fl_data}).
Readers interested only in the $\padic$ statement can directly skip to Proposition \ref{prop:arpd_mod_fl_data}.

\subsubsection{From Fontaine-Laffaille modules to \texorpdfstring{$\mbfa_{R, \varpi}^+$}{--}-modules}\label{subsubsec:fl_to_arplus_mod}

Following \cite[\S 4]{tsuji-ainf-genrep}, for $n \in \NN_{>0}$ we set $X_n = \Spec(R / p^n)$ and $\Sigma_n = \Spec(O_F / p^n)$ and consider the big crystalline sites $\CRYS(X_n, \Sigma_n)$ and $\CRYS(X_1, \Sigma_n)$, and the respective topos $(X_n / \Sigma_n)_{\CRYS}$ and $(X_1 / \Sigma_n)_{\CRYS}$, with the PD-ideal $(p(O_F / p^n), [\hspace{0.5mm}])$.
Let $F_{\Sigma_n} : \Sigma_n \rightarrow \Sigma_n$ denote a lifting of the absolute Frobenius of $\Sigma_1$, such that it is a PD-morphism with repsect to the PD-structure.
The absolute Frobenius $F_{X_1}$ of $X_1$ and $F_{\Sigma_n}$ define a morphism of PD-ringed topos $F_{X_1 / \Sigma_n, \CRYS} : (X_1 / \Sigma_n)_{\CRYS} \rightarrow (X_1 / \Sigma_n)_{\CRYS}$.

Let $(M, \Fil^{\bullet} M, \partial, \Phi) \in \MF_{[0, p-2], \free}(R, \Phi, \partial)$ be a free relative Fontaine-Laffaille module (see Definition \ref{defi:rel_fontaine_laffaille}), and let $(M_n, \Fil^{\bullet} M_n, \partial, \Phi)$ denote its modulo $p^n$ reduction.
Then, by \cite[Definition 26, Theorems 17 \& 29]{tsuji-ainf-genrep} this data corresponds to a quasi-coherent filtered crystal $(\pazf_n, \Fil^{\bullet} \pazf_n)$ on $\CRYS(X_n / \Sigma_n)$.
Similarly, by \cite[Definition 26, Theorems 22 \& 29]{tsuji-ainf-genrep} this data also corresponds to a quasi-coherent crystal $\pazg_n$ on $\CRYS(X_1 / \Sigma_n)$.
The reduction modulo $p^n$ of $\Phi : \varphi^{\ast} M \rightarrow M$ equip $\pazg_n$ with a morphism $\Phi_{\pazg_n} : F^{\ast}_{X_1/\Sigma_n, \CRYS}(\pazg_n) \rightarrow \pazg_n$.
Further, for the morphism of ringed topos $i_{n, \CRYS} : (X_1 / \Sigma_n)_{\CRYS} \rightarrow (X_n / \Sigma_n)_{\CRYS}$ induced by the closed immersion $i_n : X_1 \rightarrow X_n$ over $\textup{id}_{\Sigma_n}$, we have $i_{n, \CRYS}^{\ast}(\pazf_n) = \pazg_n$ (see \cite[Propositions 25 \& 32]{tsuji-ainf-genrep}).
Moreover, we have similar statements for the morphism of ringed topos induced by $X_n \rightarrow X_{n+1}$ and $\Sigma_n \rightarrow \Sigma_{n+1}$.

Now, for $n \in \NN_{> 0}$ let $X_n\prm := \Spec(R[\varpi] / p^n)$, $D_n := \Spec(\mbfa_{R, \varpi}^{\textpd} / p^n)$ and $F_{D_n} : D_n \rightarrow D_n$ be the lifting of the absolute Frobenius on $D_1$ defined by $\varphi$ of $\mbfa_{R, \varpi}^{\textpd} / p^n$.
We have the surjective map $\theta : \mbfa_{R, \varpi}^+ \twoheadrightarrow R[\varpi]$.
So taking mod $p^n$ reduction, we obtain an embedding $X_n\prm \rightarrowtail \Spec(\mbfa_{R, \varpi}^+ / p^n)$ and taking divided power envelope, we obtain a closed immersion $X_n\prm \rightarrowtail D_n$ (resp. $X_1\prm \rightarrowtail D_n$) which can naturally be regarded as an object of the site $\CRYS(X_n / \Sigma_n)$ (resp. $\CRYS(X_1 / \Sigma_n)$), endowed with a right action of $\Gamma_{R}$.

\begin{defi}
	Define an $\mbfa_{R, \varpi}^{\textpd} / p^n\textrm{-module}$ as $N^{\textpd}_n := \Gamma(X_n\prm \rightarrowtail D_n, \pazf_n) \isomorphic \Gamma(X_1\prm \rightarrowtail D_n, \pazg_n)$.
\end{defi}

The right action of $\Gamma_{R}$ on $D_n$ induces a left action on $N^{\textpd}_n$.
The filtration on $\pazf_n$ induces a filtration by $\mbfa_{R, \varpi}^{\textpd} / p^n\textrm{-submodules}$ on $N^{\textpd}_n$, which is stable under the $\Gamma_{R}\textrm{-action}$.
Then $N^{\textpd}_n$ is a finite free filtered $\mbfa_{R, \varpi}^{\textpd} / p^n\textrm{-module}$ of level $[0, p-2]$ (see \cite[Lemma 20]{tsuji-ainf-genrep}).
The Frobenius $\Phi_{\pazg_n}$ of $\pazg_n$  and the lifting of Frobenius $F_{D_n}$ on $D_n$ define a semilinear $\Gamma_{R}\textrm{-equivariant}$ endomorphism of $\Gamma(X_1\prm \rightarrowtail D_n, \pazg_n)$ and hence that of $N^{\textpd}_n$ as $\Gamma(X_1\prm \rightarrowtail D_n, \pazg_n) \rightarrow \Gamma(X_1\prm \rightarrowtail D_n, F_{X_1, \CRYS}^{\ast}\pazg_n) \xrightarrow{\Phi_{\pazg_n}} \Gamma(X_1\prm \rightarrowtail D_n, \pazg_n)$, where the first homomorphism is induced by $F_{X_1\prm}$ and $F_{D_n}$.

Now, let $[\hspace{0.5mm}]$ denote the PD-structure on the ideal $p(\mbfa_{R, \varpi}^{\textpd} / p^n) + \Fil^1 \mbfa_{R, \varpi}^{\textpd} / p^n$ of $\mbfa_{R, \varpi}^{\textpd} / p^n$.
Then we have the big crystalline sites $\CRYS(X_n\prm / D_n)$ and $\CRYS(X_1\prm / D_n)$, and the respective topos $(X_n\prm / D_n)_{\CRYS}$ and $(X_1\prm / D_n)_{\CRYS}$ with the PD-ideal $(p(\mbfa_{R, \varpi}^{\textpd} / p^n) + \Fil^1 \mbfa_{R, \varpi}^{\textpd} / p^n, [\hspace{0.5mm}])$ of $\mbfa_{R, \varpi}^{\textpd} / p^n$.
By taking the pullback of $(\pazf_n, \Fil^{\bullet} \pazf_n)$ (resp. $\pazg_n$) under the morphism of ringed topos $(X_n\prm / D_n)_{\CRYS} \rightarrow (X_n / \Sigma_n)_{\CRYS}$ (resp. $(X_1\prm / D_n)_{\CRYS} \rightarrow (X_1 / \Sigma_n)_{\CRYS}$), we obtain a quasi-coherent filtered crystal $(\pazf_n\prm, \Fil^{\bullet} \pazf_n\prm)$ (resp. a quasi-coherent crystal $\pazg_n\prm$ with a morphism $\Phi_{\pazg_n\prm} : F_{X_1\prm / D_n, \CRYS}^{\ast}(\pazg_n\prm) \rightarrow \pazg_n\prm$), endowed with compatible $\Gamma_{R}\textrm{-action}$.
Since $X_n\prm \rightarrowtail D_n$ (resp. $X_1\prm \rightarrowtail D_n$) is a final object of $\CRYS(X_n\prm / D_n)$ (resp. $\CRYS(X_1\prm / D_n)$), we have canonical $\mbfa_{R, \varpi}^{\textpd} / p^n\textrm{-linear}$ ismorphisms $N^{\textpd}_n \isomorphic \Gamma((X_n\prm / D_n)_{\CRYS}, \pazf_n\prm) \isomorphic \Gamma((X_1\prm / D_n)_{\CRYS}, \pazg_n\prm)$ compatible with supplementary structures (see \cite[p. 188-189]{tsuji-ainf-genrep}).

Next, for $n \in \NN_{> 0}$, similar to above let $E_n := \Spec(\pazo\mbfa_{R, \varpi}^{\textpd} / p^n)$ and $F_{E_n} : E_n \rightarrow E_n$ be the lifting of the absolute Frobenius on $E_1$ defined by $\varphi$ of $\pazo\mbfa_{R, \varpi}^{\textpd} / p^n$.
We have the surjective map $\theta_{R} : R \otimes_{\ZZ} \mbfa_{R, \varpi}^+ \twoheadrightarrow R[\varpi]$.
So taking mod $p^n$ reduction, we have an embedding $X_n\prm \rightarrowtail \Spec(R \otimes_{\ZZ} \mbfa_{R, \varpi}^+ / p^n)$ and taking divided power envelope, we obtain a closed immersion $X_n\prm \rightarrowtail E_n$ (resp. $X_1\prm \rightarrowtail E_n$) which can naturally be regarded as an object of the site $\CRYS(X_n\prm / D_n)$ (resp. $\CRYS(X_1\prm / D_n)$), endowed with a right action of $\Gamma_{R}$.

\begin{defi}
	Define an $\pazo \mbfa_{R, \varpi}^{\textpd} / p^n\textrm{-module}$ as $\pazo N^{\textpd}_n := \Gamma(X_n\prm \rightarrowtail E_n, \pazf_n\prm) \isomorphic \Gamma(X_1\prm \rightarrowtail E_n, \pazg_n\prm)$.
\end{defi}

The right action of $\Gamma_{R}$ on $E_n$ induces a left action on $\pazo N^{\textpd}_n$.
The filtration on $\pazf_n\prm$ induces a filtration by $\pazo \mbfa_{R, \varpi}^{\textpd} / p^n\textrm{-submodules}$ on $\pazo N^{\textpd}_n$, which is stable under the $\Gamma_{R}\textrm{-action}$.
Then $\pazo N^{\textpd}_n$ is a finite free filtered $\pazo \mbfa_{R, \varpi}^{\textpd} / p^n\textrm{-module}$ of level $[0, p-2]$ (see \cite[Lemma 20]{tsuji-ainf-genrep}).
Further, by \cite[Theorem 29, Proposition 32]{tsuji-ainf-genrep} $\pazo N^{\textpd}_n$ is equipped with a $\Gamma_{R}\textrm{-equivariant}$ integrable connection compatible with the connection on $\pazo \mbfa_{R, \varpi}^{\textpd} / p^n$ and satisfying Griffiths transversality with the respect to the filtration.
Moreover, this the $\Gamma_{R}\textrm{-action}$ and connection are compatible with the respective structures on $\Gamma(X_1\prm \rightarrowtail E_n, \pazg_n\prm)$ (see \cite[Propositions 25 \& 32]{tsuji-ainf-genrep}).
The Frobenius $\Phi_{\pazg_n\prm}$ of $\pazg_n\prm$  and the lifting of Frobenius $F_{E_n}$ on $E_n$ define a semilinear $\Gamma_{R}\textrm{-equivariant}$ endomorphism $\varphi$ of $\Gamma(X_1\prm \rightarrowtail E_n, \pazg_n\prm)$ and hence that of $\pazo N^{\textpd}_n$.
Further, the Frobenius-semilinear endomorphism $\varphi$ commutes with the connection on $\pazo N^{\textpd}_n$.

From \cite[Proposition 4.1.4]{berthelot-cohomologie-cristalline} and \cite[Theorem 7.1]{berthelot-ogus-crystalline}, we have a description of the global sections of a crystal in terms of horizontal sections of the corresponding module with an integrable connection on the PD-envelope of an embedding into a smooth scheme.
In other words, we have an $\mbfa_{R, \varpi}^{\textpd} / p^n\textrm{-linear}$ isomorphism
\begin{equation*}
	N^{\textpd}_n \isomorphic \big(\pazo N^{\textpd}_n\big)^{\partial=0},
\end{equation*}
compatible with filtration, Frobenius and the action of $\Gamma_{R}$ on each side (see \cite[p. 190]{tsuji-ainf-genrep}).
Since $X_n\prm \rightarrowtail D_n$ (resp. $X_1\prm \rightarrowtail D_n$) is a final object of $\CRYS(X_n\prm / D_n)$ (resp. $\CRYS(X_1\prm / D_n)$), we obtain a canonical $\pazo \mbfa_{R, \varpi}^{\textpd} / p^n\textrm{-linear}$ isomorphism
\begin{equation*}
	\pazo \mbfa_{R, \varpi}^{\textpd} / p^n \otimes_{\mbfa_{R, \varpi}^{\textpd} / p^n} N^{\textpd}_n \isomorphic \pazo N^{\textpd}_n,
\end{equation*}
compatible with Frobenius, filtration, connection and the action of $\Gamma_{R}$ on each side.
Here the connection on the tensor product on the left is given as $\partial_{\pazo \mbfa_{R, \varpi}^{\textpd}} \otimes 1$.
Moreover, from \cite[Propositions 24, 25 \& 32]{tsuji-ainf-genrep}, we obtain an $\pazo \mbfa_{R, \varpi}^{\textpd} / p^n\textrm{-linear}$ ismorphism
\begin{equation*}
	\pazo \mbfa_{R, \varpi}^{\textpd} / p^n \otimes_{R / p^n} M / p^n \isomorphic \pazo N^{\textpd}_n,
\end{equation*}
compatible with Frobenius, filtration, connection and the action of $\Gamma_{R}$ on each side (see \cite[p. 191]{tsuji-ainf-genrep}).
Here the connection on the tensor product on the left is given as $\partial_{\pazo \mbfa_{R, \varpi}^{\textpd}} \otimes 1 + 1 \otimes \partial_M$.
Combining the two isomorphisms above, we obtain an $\pazo \mbfa_{R, \varpi}^{\textpd} / p^n\textrm{-linear}$ isomorphism
\begin{equation}\label{eq:flmod_wachpdmod_n}
	\pazo \mbfa_{R, \varpi}^{\textpd} / p^n \otimes_{\mbfa_{R, \varpi}^{\textpd} / p^n} N^{\textpd}_n \isomorphic \pazo \mbfa_{R, \varpi}^{\textpd} / p^n \otimes_{R / p^n} M / p^n,
\end{equation}
compatible with Frobenius, filtration, connection and the action of $\Gamma_{R}$ on each side.
Therefore, we also have an $\mbfa_{R, \varpi}^{\textpd} / p^n\textrm{-linear}$ ismorphism 
\begin{equation*}
	N^{\textpd}_n \isomorphic \big(\pazo \mbfa_{R, \varpi}^{\textpd} / p^n \otimes_{R / p^n} M / p^n\big)^{\partial=0}
\end{equation*}
compatible with Frobenius, filtration and the action of $\Gamma_{R}$ on each side.

\begin{defi}
	Define an $\mbfa_{R, \varpi}^{\textpd}\textrm{-module}$ as $N^{\textpd}(M) := \lim_n N^{\textpd}_n$, equipped with a semilinear and continuous action of $\Gamma_{R}$, a filtration given as $\Fil^k N^{\textpd}(M) := \lim_n \Fil^k N^{\textpd}_n$, which is stable under the action of $\Gamma_{R}$, and a Frobenius-semilinear $\Gamma_{R}\textrm{-equivariant}$ endomorphism $\varphi$.
\end{defi}

Passing to the limit in \eqref{eq:flmod_wachpdmod_n} we obtain an $\pazo \mbfa_{R, \varpi}^{\textpd}\textrm{-linear}$ isomorphism
\begin{equation*}
	\pazo \mbfa_{R, \varpi}^{\textpd} \otimes_{\mbfa_{R, \varpi}^{\textpd}} N^{\textpd}(M) \isomorphic \pazo \mbfa_{R, \varpi}^{\textpd} \otimes_{R} M,
\end{equation*}
compatible with Frobenius, filtration, connection and the action of $\Gamma_{R}$ on each side.
Therefore, we have the following conclusion:
\begin{prop}\label{prop:arpd_mod_fl_data}
	Let $M$ be a free relative Fontaine-Laffaille module.
	Then 
	\begin{equation*}
		N^{\textpd}(M) := \big(\pazo \mbfa_{R, \varpi}^{\textpd} \otimes_{R} M\big)^{\partial = 0},
	\end{equation*}
	is a finite free $\mbfa_{R, \varpi}^{\textpd}\textrm{-module}$ equipped with a decreasing filtration of level $[0, p-2]$, a Frobenius-semilinear endomorphism $\varphi : N^{\textpd}(M) \rightarrow N^{\textpd}(M)$ and a continuous action of $\Gamma_{R}$ on each side.
	In particular, $N^{\textpd}(M) \in \MF_{[0, p-2], \free}^p(\mbfa_{R, \varpi}^{\textpd}, \varphi, \Gamma_{R})$.
	Further, we have a natural isomorphism 
	\begin{equation}\label{eq:flmod_wachpdmod}
		\pazo \mbfa_{R, \varpi}^{\textpd} \otimes_{\mbfa_{R, \varpi}^{\textpd}} N^{\textpd}(M) \isomorphic \pazo \mbfa_{R, \varpi}^{\textpd} \otimes_{R} M,
	\end{equation}
	compatible with the Frobenius, filtration, connection and the action of $\Gamma_{R}$ on each side.
\end{prop}
\begin{proof}[Another proof of Proposition \ref{prop:arpd_mod_fl_data}]
	Let us consider the injective map $R \rightarrow \mbfa_{R, \varpi}^{\textpd}$ sending $X_i \rightarrow [X_i^{\flat}]$.
	\begin{lem}\label{lem:arpd_mod_fl_data}
		We have an $\mbfa_{R, \varpi}^{\textpd}\textrm{-linear}$ isomorphism $\mbfa_{R, \varpi}^{\textpd} \otimes_R M \isomorphic \big(\pazo \mbfa_{R, \varpi}^{\textpd} \otimes_{R} M\big)^{\partial = 0}$.
	\end{lem}
	\begin{proof}
		Let $J = ([X_1^{\flat}]-X_1, \ldots, [X_d^{\flat}]-X_d)\pazo \mbfa_{R, \varpi}^{\textpd}$ and let $J^{[n]}$ denote its $n\textrm{-th}$ divided power for $n \geq 1$.
		We have the projection map,
		\begin{equation*}
			\pazo \mbfa_{R, \varpi}^{\textpd} \otimes_R M \longrightarrow \mbfa_{R, \varpi}^{\textpd} \otimes_R M,
		\end{equation*}
		via the map $X_i \mapsto [X_i^{\flat}]$ and the kernel is given as $J^{[1]}\pazo \mbfa_{R, \varpi}^{\textpd} \otimes_R M$.
		Moreover, we have an $\mbfa_{R, \varpi}^{\textpd}\textrm{-linear}$ section of the projection above given as
		\begin{align}\label{eq:arpd_section}
			\begin{split}
				\mbfa_{R, \varpi}^{\textpd} \otimes_R M &\longrightarrow \pazo \mbfa_{R, \varpi}^{\textpd} \otimes_R M\\
				1 \otimes d &\longmapsto \sum_{\smbfk \in \NN^d} \prod_{i=1}^d \partial_i^{k_i}(d) \prod_{i=1}^d ([X_i^{\flat}] - X_i)^{[k_i]}.
			\end{split}
		\end{align}
		Note that the image of the section lies in $(\pazo \mbfa_{R, \varpi}^{\textpd} \otimes_R M)^{\partial=0}$.
		Now let $Q = J^{[1]} \pazo \mbfa_{R, \varpi}^{\textpd} \otimes_R M$ and $Q' = (\pazo \mbfa_{R, \varpi}^{\textpd} \otimes_R M)/(\pazo \mbfa_{R, \varpi}^{\textpd} \otimes_R M)^{\partial=0}$ and we consider the following diagram with exact rows (the top row is split exact)
		\begin{center}
			\begin{tikzcd}
				0 \arrow[r] & \mbfa_{R, \varpi}^{\textpd} \otimes_R M \arrow[r] \arrow[d] & \pazo \mbfa_{R, \varpi}^{\textpd} \otimes_R M \arrow[r] \arrow[d, equal] & Q \arrow[r] \arrow[d] & 0\\
				0 \arrow[r] & (\pazo \mbfa_{R, \varpi}^{\textpd} \otimes_R M)^{\partial=0} \arrow[r] & \pazo \mbfa_{R, \varpi}^{\textpd} \otimes_R M \arrow[r] & Q' \arrow[r] & 0.
			\end{tikzcd}
		\end{center}
		Note that the left vertical arrow is an injection and the right vertical arrow is a surjection.
		To get that the left vertical arrow is a bijection we need to show that the right vertical arrow is an injection.
		We have 
		\begin{equation*}
			(J^{[1]} \pazo \mbfa_{R, \varpi}^{\textpd} \otimes_R M)^{\partial = 0} \subset \big(J^{[1]} \pazo \mbfb_{\crys}(\overline{R}) \otimes_{R\big[\frac{1}{p}\big]} \pazo \mbfd_{\crys}(V)\big)^{\partial=0} = \big(J^{[1]} \pazo \mbfb_{\crys}(\overline{R}) \otimes_{\QQ_p} V\big)^{\partial=0},
		\end{equation*}
		where $V = V_{\crys}(M)$ is crystalline (see Proposition \ref{prop:tcrysast_fl} (i)) and $J^{[1]} \pazo \mbfb_{\crys}(\overline{R}) \subset \pazo \mbfb_{\crys}(\overline{R})$ is the divided power ideal generated by $([X_1^{\flat}]-X_1, \ldots, [X_d^{\flat}]-X_d)$.
		Then it easily follows that $(J^{[1]} \pazo \mbfb_{\crys}(\overline{R}) \otimes_{\QQ_p} V\big)^{\partial=0} = 0$ and we conclude that $\mbfa_{R, \varpi}^{\textpd} \otimes_R M \isomorphic \big(\pazo \mbfa_{R, \varpi}^{\textpd} \otimes_{R} M\big)^{\partial = 0}$.
	\end{proof}

	From the identification $N^{\textpd}(M) = \big(\pazo \mbfa_{R, \varpi}^{\textpd} \otimes_{R} M\big)^{\partial = 0} \isomorphic \mbfa_{R, \varpi}^{\textpd} \otimes_R M$ (where the rightmost term is equipped with a $\Gamma_R\textrm{-action}$ as in Remark \ref{rem:arpd_mod_fl_data_gamma_act}), it easily follows that $N^{\textpd}(M) \in \MF_{[0, p-2], \free}^p(\mbfa_{R, \varpi}^{\textpd}, \varphi, \Gamma_{R})$.
	Next, we can $\pazo \mbfa_{R, \varpi}^{\textpd}\textrm{-linearly}$ extend the map in \eqref{eq:arpd_section} to obtain
	\begin{align}\label{eq:oarpd_iso}
		\begin{split}
			\pazo \mbfa_{R, \varpi}^{\textpd} \otimes_{\mbfa_{R, \varpi}^{\textpd}} (\mbfa_{R, \varpi}^{\textpd} \otimes_R M) &\longrightarrow \pazo \mbfa_{R, \varpi}^{\textpd} \otimes_R M\\
			1 \otimes d &\longmapsto \sum_{\smbfk \in \NN^d} \prod_{i=1}^d \partial_i^{k_i}(d) \prod_{i=1}^d ([X_i^{\flat}] - X_i)^{[k_i]}.
		\end{split}
	\end{align}
	We equip the left term with a $\Gamma_R\textrm{-action}$ as in Remark \ref{rem:arpd_mod_fl_data_gamma_act}.
	Choosing a basis of $M$ it is easy to see that the determinant of the map in \eqref{eq:oarpd_iso} is invertible in $\pazo \mbfa_{R, \varpi}^{\textpd}$, i.e. the map \eqref{eq:oarpd_iso} is bijective.
	Moreover, it is compatible with Frobenius, filtration, connection and the action of $\Gamma_R$.
	Now we have a natural injective map
	\begin{equation*}
		\pazo \mbfa_{R, \varpi}^{\textpd} \otimes_{\mbfa_{R, \varpi}^{\textpd}} N^{\textpd}(M) \longrightarrow \pazo \mbfa_{R, \varpi}^{\textpd} \otimes_{R} M,
	\end{equation*}
	compatible with the Frobenius, filtration, connection and the action of $\Gamma_{R}$ on each side.
	The map above is bijective because of the following commutative diagram
	\begin{center}
		\begin{tikzcd}
			\pazo \mbfa_{R, \varpi}^{\textpd} \otimes_{\mbfa_{R, \varpi}^{\textpd}} N^{\textpd}(M) \arrow[r, rightarrowtail] \arrow[d, "\wr"] & \pazo \mbfa_{R, \varpi}^{\textpd} \otimes_{R} M \arrow[d, equal]\\
			\pazo \mbfa_{R, \varpi}^{\textpd} \otimes_{\mbfa_{R, \varpi}^{\textpd}} (\mbfa_{R, \varpi}^{\textpd} \otimes_R M) \arrow[r, "\sim"] & \pazo \mbfa_{R, \varpi}^{\textpd} \otimes_{R} M,
		\end{tikzcd}
	\end{center}
	where the the bottom horizontal arrow is the isomorphism in \eqref{eq:oarpd_iso}.
	This concludes the proof.
\end{proof}

\begin{rem}\label{rem:arpd_mod_fl_data_gamma_act}
	Using the $\mbfa_{R, \varpi}^{\textpd}\textrm{-linear}$ isomorphism in Lemma \ref{lem:arpd_mod_fl_data}, $\mbfa_{R, \varpi}^{\textpd} \otimes_R M \isomorphic \big(\pazo \mbfa_{R, \varpi}^{\textpd} \otimes_{R} M\big)^{\partial = 0}$, we can describe the action of $\Gamma_R$ on the left term explitcitly.
	The action can be given by the formula $g(a \otimes d) = g(a) \otimes \sum_{\smbfk \in \NN^d} \prod_{i=1}^d \partial_i^{k_i}(d) \prod_{i=1}^d (g([X_i^{\flat}]) - [X_i^{\flat}])^{[k_i]}$, for $g \in \Gamma_R$.
\end{rem}

\begin{lem}
	Let $N^{\textpd}(M)$ as in Proposition \ref{prop:arpd_mod_fl_data}.
	Then, the action of $\Gamma_{R, \varpi}$ is trivial on $N^{\textpd}(M) / \pi N^{\textpd}(M)$, whereas $\Gamma_{R} / \Gamma_{R, \varpi}$ acts trivially over $N^{\textpd}(M) / \pi_m N^{\textpd}(M)$.
\end{lem}
\begin{proof}
	This follows from the $\Gamma_{R}\textrm{-equivariant}$ isomorphism in \eqref{eq:flmod_wachpdmod} (or from Lemma \ref{lem:arpd_mod_fl_data} and Remark \ref{rem:arpd_mod_fl_data_gamma_act}) and the action of $\Gamma_{R}$ on $\pazo \mbfa_{R, \varpi}^{\textpd}$ (see Lemma \ref{lem:gamma_inv_oarpd} (i)).
\end{proof}

\begin{prop}\label{prop:fl_data_arpd_ff}
	The functor
	\begin{align*}
		N^{\textpd} : \MF_{[0, p-2], \free}(R, \Phi, \partial) &\longrightarrow \MF_{[0, p-2], \free}^p(\mbfa_{R, \varpi}^{\textpd}, \varphi, \Gamma_{R})\\
				M &\longmapsto N^{\textpd}(M) = \big(\pazo \mbfa_{R, \varpi}^{\textpd} \otimes_{R} M\big)^{\partial=0},
	\end{align*}
	is fully faithful.
\end{prop}
\begin{proof}
	By taking $\Gamma_R\textrm{-invariants}$ in \eqref{eq:flmod_wachpdmod}, we obtain an $R\textrm{-linear}$ isomorphism $\big(\pazo \mbfa_{R, \varpi}^{\textpd} \otimes_{\mbfa_{R, \varpi}^{\textpd}} N^{\textpd}(M)\big)^{\Gamma_R} \isomorphic M$ compatible with Frobenius, filtration, connection on each side, and functorial in $M$.
\end{proof}

Having obtained a finite free module with desired structures over the ring $\mbfa_{R, \varpi}^{\textpd}$, we will now pass to the ring $\mbfa_{R, \varpi}^+$.
Let $M \in \MF_{[0, p-2], \free}(R, \Phi, \partial)$ and $N^{\textpd}(M) \in \MF_{[0, p-2], \free}^p(\mbfa_{R, \varpi}^{\textpd}, \varphi, \Gamma_{R})$ the $\mbfa_{R, \varpi}^{\textpd}\textrm{-module}$ obtained under the functor of Proposition \ref{prop:fl_data_arpd_ff}.

Next, from Theorem \ref{thm:arplus_arpd_cat_equiv}, we have an equivalence of catgeories $\MF_{[0, p-2], \free}^p(\mbfa_{R, \varpi}^{\textpd}, \varphi, \Gamma_{R}) \isomorphic \MF_{[0, p-2], \free}^q(\mbfa_{R, \varpi}^+, \varphi, \Gamma_{R})$ sending $N^{\textpd} \mapsto N^+$.
Combining this with Propositions \ref{prop:arpd_mod_fl_data} \& \ref{prop:fl_data_arpd_ff}, we obtain:
\begin{prop}\label{prop:fl_data_arplus_mod}
	The functor
	\begin{align*}
		N^+ : \MF_{[0, p-2], \free}(R, \Phi, \partial) &\longrightarrow \MF_{[0, p-2], \free}^q(\mbfa_{R, \varpi}^+, \varphi, \Gamma_{R})\\
				M &\longmapsto N^+(M),
	\end{align*}
	is fully faithful.
	Further, for $M$ and $N^+(M)$ as above, we have a natural isomorphism 
	\begin{equation}\label{eq:flmod_wachplusmod}
		\pazo \mbfa_{R, \varpi}^{\textpd} \otimes_{\mbfa_{R, \varpi}^+} N^+(M) \isomorphic \pazo \mbfa_{R, \varpi}^{\textpd} \otimes_{R} M,
	\end{equation}
	compatible with the Frobenius, filtration, connection and the action of $\Gamma_{R}$ on each side.
\end{prop}

\begin{lem}\label{lem:gamma_action_nr}
	Let $N^+(M)$ as in Proposition \ref{prop:fl_data_arplus_mod}.
	Then, the action of $\Gamma_{R, \varpi}$ is trivial on $N^+(M) / \pi N^+(M)$, whereas $\Gamma_{R} / \Gamma_{R, \varpi}$ acts trivially over $N^+(M) / \pi_m N^+(M)$.
\end{lem}
\begin{proof}
	This follows from the $\Gamma_{R}\textrm{-equivariant}$ isomorphism in \eqref{eq:flmod_wachplusmod} and the action of $\Gamma_{R}$ on $\pazo \mbfa_{R, \varpi}^{\textpd}$ (see Lemma \ref{lem:gamma_inv_oarpd} (i)).
\end{proof}

\subsubsection{Obtaining Wach modules}\label{subsubsec:obtain_wach_mod}

For the rest of this section we will fix $m = 1$ (fix $m=2$ if $p=2$), i.e. we take $K = F(\zeta_p)$ (take $K = F(\zeta_{p^2})$ if $p=2$).
Consider the localization $S = \mbfa_{R, \varpi}^+\big[\frac{1}{\pi_1}\big]$.
Let $M$ and $M\prm$ be free relative Fontaine-Laffaille modules and $N^+(M)$ and $N^+(M\prm)$ the respective $\mbfa_{R, \varpi}^+\textrm{-modules}$ obtained by the functor in Proposition \ref{prop:fl_data_arplus_mod}.

\begin{lem}\label{lem:localize_ff_gamma}
	The natural map
	\begin{equation}\label{eq:localize_ff}
		\Hom_{\mbfa_{R, \varpi}^+, \hspace{0.3mm}\Gamma_{R}}(N^+(M), N^+(M\prm)) \isomorphic \Hom_{S, \hspace{0.3mm}\Gamma_{R}}\big(N^+(M)\big[\tfrac{1}{\pi_1}\big], N^+(M\prm)\big[\tfrac{1}{\pi_1}\big]\big),
	\end{equation}
	is bijective.
\end{lem}
\begin{proof}
	As we are working with free modules and the morphism of rings $\mbfa_{R, \varpi}^+ \rightarrow \mbfa_{R, \varpi}^+\big[\frac{1}{\pi_1}\big] = S$ is flat, we obtain that \eqref{eq:localize_ff} is injective.
	To check surjectivity, let $f : N^+(M)\big[\frac{1}{\pi_1}\big] \rightarrow N^+(M\prm)\big[\frac{1}{\pi_1}\big]$ be an $S\textrm{-linear}$ and $\Gamma_{R}\textrm{-equivariant}$ morphism.
	We need to show that $f(N^+(M)) \subset N^+(M\prm)$.
	Assume $f(N^+(M)) \subset \pi_1^{-k} N^+(M\prm)$ for $k \in \NN$, and consider the reduction of $f$ modulo $\pi$, which is again $\Gamma_{R}\textrm{-equivariant}$.
	Now from Lemma \ref{lem:gamma_action_nr}, we have that $\Gamma_{R}$ acts trivially over $N^+(M) / \pi_1 N^+(M)$, whereas the action of $\Gamma_{R}$ is non-trivial over $\pi_1^{-k}N^+(M\prm) / \pi_1^{-k+1} N^+(M\prm)$ for $k \neq 0$ (the action of $\gamma_0 \in \Gamma_K$ is non-trivial for $k \neq 0$).
	Hence, we must have $k = 0$, i.e. $f(N^+(M)) \subset N^+(M\prm)$, which shows the claim.
\end{proof}

Now note that we have a morphism $\varphi : S = \mbfa_{R, \varpi}^+\big[\frac{1}{\pi_1}\big] \rightarrow \mbfa_{R, \varpi}^+\big[\frac{1}{\pi}\big]$.
The respective Frobenius-semilinear endomorphisms $\varphi$ on $N^+(M)$ and $N^+(M')$ induce semilinear morphisms $\varphi : N^+(M)\big[\frac{1}{\pi_1}\big] \rightarrow N^+(M)\big[\frac{1}{\pi}\big]$ and $\varphi: N^+(M')\big[\frac{1}{\pi_1}\big] \rightarrow N^+(M')\big[\frac{1}{\pi}\big]$.
Now let $f \in \Hom_{S, \hspace{0.3mm}\Gamma_{R}}\big(N^+(M)\big[\tfrac{1}{\pi_1}\big], N^+(M\prm)\big[\tfrac{1}{\pi_1}\big]\big)$ be a morphism, such that the following diagram commutes
\begin{center}
	\begin{tikzcd}
		N^+(M)\big[\tfrac{1}{\pi_1}\big] \arrow[r, "f"] \arrow[d, "\varphi"] & N^+(M')\big[\tfrac{1}{\pi_1}\big] \arrow[d, "\varphi"]\\
		N^+(M)\big[\tfrac{1}{\pi}\big] \arrow[r, "f"] & N^+(M')\big[\tfrac{1}{\pi}\big],
	\end{tikzcd}
\end{center}
where the bottom horizontal arrow is well-defined due to Lemma \ref{lem:localize_ff_gamma}.
We will call such a morphism $f$ to be $(\varphi, \Gamma_R)\textrm{-equivariant}$.


\begin{lem}\label{lem:localize_ff}
	The natural map
	\begin{equation*}
		\Hom_{\mbfa_{R, \varpi}^+, \hspace{0.3mm}\varphi, \hspace{0.3mm}\Gamma_{R}}(N^+(M), N^+(M\prm)) \isomorphic \Hom_{S, \hspace{0.3mm}\varphi, \hspace{0.3mm}\Gamma_{R}}\big(N^+(M)\big[\tfrac{1}{\pi_1}\big], N^+(M\prm)\big[\tfrac{1}{\pi_1}\big]\big),
	\end{equation*}
	is bijective.
\end{lem}

\begin{proof}[\textbf{Proof of Theorem \ref{thm:fl_to_wach}}]
	Let $M \in \MF_{[0, p-2], \free}(R, \Phi, \partial)$ and let $N^+(M)$ denote the $\mbfa_{R, \varpi}^+\textrm{-module}$ obtained from $M$ from the functor of Proposition \ref{prop:fl_data_arplus_mod}.
	We will show that a basis of $N^+(M)$ descends over to $\mbfa_{R}^+$.

	In the notation of Definition \ref{defi:mf_free_a}, let $\{e_1, \ldots, e_h\}$ denote an $\mbfa_{R, \varpi}^+\textrm{-basis}$ of $N^+(M)$.
	Then from Lemma \ref{lem:filmod_strong_div}, we have that $\{q^{-k_1}\varphi(e_1), \ldots, q^{-k_h}\varphi(e_h)\}$ is also an $\mbfa_{R, \varpi}^+\textrm{-basis}$ of $N^+(M)$.
	Without loss of generality, we may further assume that $k_h \leq k_{h-1} \leq \cdots \leq k_1$.
	Let us set $s := k_1$, so we get that $N^+(M) / \varphi^{\ast}(N^+(M))$ is killed by $q^s$ and $s \in \NN$ is the smallest such number.

	Let $D(M) := N^+(M)\big[\frac{1}{\pi_1}\big]^{\wedge}$, where ${ }^{\wedge}$ denotes the $\padic$ completion.
	Then $D(M)$ is an \'etale $(\varphi, \Gamma_{R, \varpi})\textrm{-module}$ over $\mbfa_{R, \varpi} = \mbfa_{R, \varpi}^+\big[\frac{1}{\pi_1}\big]^{\wedge}$, free of rank $h$.
	Further, combining Lemma \ref{lem:localize_ff} with \cite[Theorem 8.14]{matsumura-reid-commutative} (the $\padic$ completion, i.e. $\mbfa_{R, \varpi}^+\big[\frac{1}{\pi_1}\big] \rightarrow \mbfa_{R, \varpi}$ is faithfully flat since $p$ is in the Jacobson radical of $\mbfa_{R, \varpi}^+\big[\frac{1}{\pi}\big]$), we obtain that the functor 
	\begin{align*}
		\MF_{[0, p-2], \free}(R, \Phi, \partial) &\longrightarrow (\varphi, \Gamma_R)\textup{-Mod}_{\mbfa_{R, \varpi}}^{\etale}\\
				M &\longmapsto N^+(M)\big[\tfrac{1}{\pi_1}\big]^{\wedge},
	\end{align*}
	is fully faithful.
	
	Now, from Proposition \ref{prop:tcrysast_fl} and Definition \ref{defi:tcrys_m} we have that $T := T_{\crys}(M)$ is a free $\ZZ_p\textrm{-representation}$ of $G_{R}$.
	Considering $T$ as a representation of $G_{R, \varpi}$, we have the associated $(\varphi, \Gamma_{R, \varpi})\textrm{-module}$ $\mbfd_{R, \varpi}(T)$ over $\mbfa_{R, \varpi}$.
	By the full faithfullness of the functor above and equivalence of categories in \eqref{eq:phi_gamma_equiv}, we conclude that $D(M) \isomorphic \mbfd_{R, \varpi}(T)$ as \'etale $(\varphi, \Gamma_{R, \varpi})\textrm{-module}$ over $\mbfa_{R, \varpi}$.
	Also, we have $\varphi(\mbfd_{R, \varpi}(T)) \subset \mbfd(T)$, where the latter module is an étale $(\varphi, \Gamma_{R})\textrm{-module}$ over $\mbfa_{R}$, free of rank $h$.
	
	Next, let $N := N^+(M) \cap \mbfd(T)$ where we take the intersection inside $\mbfd_{R, \varpi}(T)$.
	Note that $N$ is equipped with a Frobenius-semilinear endomorphism $\varphi$ and it is stable under the action of $\Gamma_{R}$.
	We claim that
	\begin{lem}
		The elements $\{q^{-k_1}\varphi(e_1), \ldots, q^{-k_h}\varphi(e_h)\}$ form a basis of $N$.
	\end{lem}
	\begin{proof}
		Let us set $N\prm := \sum_{i=1}^h \mbfa_{R}^+ q^{-k_i} \varphi(e_i)$
		Since $q^{-k_i}\varphi(e_i) \in N^+(M) \cap \mbfd(T) = N$, we have $N\prm \subset N$.
		This also implies that $\varphi(e_i) \in q^{k_i} N$.
		Extending scalars along the faithfully flat morphism of rings $\mbfa_{R}^+ \rightarrow \mbfa_{R, \varpi}^+$, we get that $N^+(M) = \mbfa_{R, \varpi}^+ \otimes_{\mbfa_{R}^+} N\prm \subset \mbfa_{R, \varpi}^+ \otimes_{\mbfa_{R}^+} N \subset N^+(M)$.
		Therefore, $\mbfa_{R, \varpi}^+ \otimes_{\mbfa_{R}^+} N\prm \isomorphic \mbfa_{R, \varpi}^+ \otimes_{\mbfa_{R}^+} N$.
		But since the map $\mbfa_{R}^+ \rightarrow \mbfa_{R, \varpi}^+$ is faithfully flat, we obtain that $N\prm \isomorphic N$.
	\end{proof}
	
		We will now verify the conditions of Definition \ref{defi:wach_reps} for $V = \QQ_p \otimes_{\ZZ_p} T$.
		Since $V$ arises from a Fontaine-Laffaille module of level $[0, p-2]$, we have that $V$ is crystalline with non-positive Hodge-Tate weights.
		We have that $N$ is a free $\mbfa_{R}^+\textrm{-module}$ of rank $h$ stable under $\varphi$ and $\Gamma_{R}$, and such that $N \subset \mbfd^+(T)$ as well as $\mbfa_{R} \otimes_{\mbfa_{R}^+} N \isomorphic \mbfd(T)$.
		Next, we want to show that $q^s N \subset \varphi^{\ast}(N)$ as $\mbfa_{R}^+\textrm{-modules}$, where $s = k_1$.
		Since $\mbfa_{R}^+ \rightarrow \mbfa_{R, \varpi}^+$ is faithfully flat, it is equivalent to showing that $q^s \mbfa_{R, \varpi}^+ \otimes_{\mbfa_{R}^+} N \subset \mbfa_{R, \varpi}^+ \otimes_{\mbfa_{R}^+} \varphi^{\ast}(N)$.
		But the latter inclusion can be re-expressed as $q^s N^+(M) \subset \varphi^{\ast}(N^+(M))$ as $\mbfa_{R, \varpi}^+\textrm{-modules}$, which was established above by showing that $N^+(M) / \varphi^{\ast}(N^+(M))$ is killed by $q^s$.
		Therefore, we conclude that $N / \varphi^{\ast}(N)$ is killed by $q^s$ and $s \in \NN$ is the smallest such number.

		Next, we look at the action of $\Gamma_{R}$ over $N$.
		Recall from \S \ref{subsec:relative_phi_gamma_mod} that we have $\{\gamma_0, \gamma_1, \ldots, \gamma_d\}$ as topological generators of $\Gamma_{R, \varpi}$, where $\gamma_0$ is a lift of a topological generator of $\Gamma_K$.
		The action of $\gamma_j$ on the basis elements of $N^+(M)$ can be given as
		\begin{equation*}
			\gamma_j(e_i) = e_i + \pi x_{i, j} \hspace{2mm} \textrm{for} \hspace{2mm} 1 \leq i \leq h, \hspace{1mm} 0 \leq j \leq d \hspace{2mm} \textrm{and} \hspace{2mm} x_{i, j} \in \mbfa_{R, \varpi}^+.
		\end{equation*}
		Since $\varphi$ is $\Gamma_{R}\textrm{-equivariant}$, we get that $\gamma_j(\varphi(e_i)) = \varphi(e_i) + q\pi \varphi(x_{i, j})$, where $\varphi(x_{i, j}) \in \varphi(N^+(M)) \subset N^+(M) \cap \mbfd(T) = N$.
		Now $\varphi(e_i) \in q^{k_i} N$, so we must have that $q \pi \varphi(x_{i, j}) \in q^{k_i}N \cap q\pi N = q^{k_i}\pi N \subset N$, for $1 \leq i \leq h$ and $0 \leq j \leq d$.
		Therefore, we get that 
		\begin{equation*}
			\gamma_j(q^{-k_i} \varphi(e_i)) \equiv q^{-k_i} \varphi(e_i) \mod \pi N \hspace{2mm} \textrm{for} \hspace{2mm} 1 \leq j \leq d.
		\end{equation*}
		For $j = 0$, recall that $\gamma_0(\pi) = \chi(\gamma_0) \pi u$ for some unit $u \in 1 + \pi \mbfa_{R}^+$.
		Therefore, we have $\gamma_0(q) = q \varphi(u) u^{-1}$ and $\gamma_0(q^{-1}) = q^{-1} \varphi(u^{-1}) u$.
		So we obtain 
		\begin{equation*}
			\gamma_0(q^{-k_i} \varphi(e_i)) = \gamma_0(q^{-k_i}) \gamma_0(\varphi(e_i)) = q^{-k_i} \varphi(u^{-k_i}) u^{k_i}(\varphi(e_i) + q \pi \varphi(x_{i, j})) \equiv q^{-k_i} \varphi(e_i) \mod \pi N.
		\end{equation*}
		Finally, let $g \in \Gamma_{R}$ be a lift of a generator $\overline{g} \in \Gamma_{R} / \Gamma_{R, \varpi}$, a finite group of order $p-1$.
		Then we have $g(e_i) = e_i + \pi_1 y_i$ for $1 \leq i \leq h$ and $y_i \in N^+(M)$.
		Since $\varphi$ is $\Gamma_{R}\textrm{-equivariant}$, we get that $g(\varphi(e_i)) = \varphi(e_i) + \pi \varphi(y_i)$, where $\varphi(y_i) \in \varphi(N^+(M)) \subset N^+(M) \cap \mbfd(T) = N$.
		Now $\varphi(e_i) \in q^{k_i} N$, so we must have that $\pi \varphi(y_i) \in q^{k_i}N \cap \pi N = q^{k_i}\pi N \subset N$, for $1 \leq i \leq h$.
		Further, we know that $g(\pi) = \chi(g)\pi v$ for some unit $v \in 1 + \pi \mbfa_R^+$, which gives us that $g(q) = q \varphi(v) v^{-1}$.
		Therefore, $g(q^{-k_i}\varphi(e_i)) = q^{-k_i}\varphi(u^{-k_i}) u^{k_i} (\varphi(e_i) + \pi \varphi(y_i)) \equiv q^{-k_i}\varphi(e_i) \mod \pi N$, for $1 \leq i \leq h$.
		Hence, $\Gamma_{R}$ acts trivially over $N / \pi N$.
	
		Setting $\mbfn(T) := N$, we see that conditions of Definition \ref{defi:wach_reps} have been satisfied.
		In particular, $V$ is a positive finite $q\textrm{-height}$ representation.
\end{proof}

\cleardoublepage



\nocite{*}
\phantomsection
\printbibliography[heading=bibintoc, title={References}]

\Addresses

\end{document}